\newcommand{\R}{\mathbb{R}}
\newtheorem{theorem}{Theorem}[section]
\newtheorem{lemma}[theorem]{Lemma}
\newtheorem{corollary}[theorem]{Corollary}
\newtheorem{proposition}[theorem]{Proposition}
\theoremstyle{remark}
\newtheorem{remark}[theorem]{Remark}
\numberwithin{equation}{section}
\newcommand*{\bydef}{\overset{\rm def}{=}}
\newcommand*{\norm}[1]{\left\Vert #1\right\Vert}
\renewcommand*{\div}{\operatorname{div}}
\newcommand*{\id}{\operatorname{Id}}
\newcommand*{\supp}{\operatorname{supp}}
\newcommand*{\loc}{\mathrm{loc}}
\title[Inhomogeneous Navier--Stokes--Maxwell]{Global unique solutions to the planar inhomogeneous Navier--Stokes--Maxwell equations}
\author{Diogo Ars\'enio}
\address{New York University Abu Dhabi \\
Abu Dhabi \\
United Arab Emirates  }
\email{\href{mailto:diogo.arsenio@nyu.edu}{diogo.arsenio@nyu.edu}}
\author{Haroune Houamed}
\address{New York University Abu Dhabi \\
Abu Dhabi \\
United Arab Emirates  }
\email{\href{mailto:haroune.houamed@nyu.edu}{haroune.houamed@nyu.edu}}
\author{Belkacem Said--Houari}
\address{University of Sharjah  \\
Sharjah   \\
United Arab Emirates  }  
\email{\href{mailto:bhouari@sharjah.ac.ae}{bhouari@sharjah.ac.ae}}
\begin{document}

\begin{abstract}
The evolution of an electrically conducting incompressible fluid with nonconstant density can be described by a set of equations combining the continuity,   momentum   and   Maxwell's equations; altogether known as the  inhomogeneous Navier--Stokes--Maxwell system. 

In this paper, we focus on the global well-posedness of these equations in two dimensions. Specifically, we are able to prove the existence of global energy solutions, provided that the initial velocity field belongs to the Besov space $\dot{B}^{r}_{p,1}(\mathbb{R}^2)$, with $r=-1+\frac{2}{p}$, for some $p\in (1,2)$, while the initial electromagnetic field enjoys some $H^s(\mathbb{R}^2)$ Sobolev regularity, for some $s \geq 2-\frac{2}{p} \in (0,1)$, and whenever the initial fluid density is bounded pointwise and close to a nonnegative constant.  Moreover, if it is assumed that $s>\frac{1}{2}$, then the solution is shown to be unique in the class of all energy solutions.

It is to be emphasized that the solutions constructed here are global and uniformly bounded with respect to the speed of light $c\in (0,\infty)$. This important fact allows us to derive the inhomogeneous MHD system as the speed of light tends to infinity.
\end{abstract}

\maketitle

\tableofcontents

\section{Introduction}

The evolution of an  electrically conducting inhomogeneous fluid of density $\rho=\rho(t,x)$, velocity $u=u(t,x)$ and pressure  $p=p(t,x)$ is governed by the inhomogeneous Navier--Stokes--Maxwell equations 
\begin{subequations}\label{Main_System}
\begin{equation}\label{MHD_System_Nonhom}
	\begin{cases}
		\begin{aligned}
	\text{\tiny(Continuity equation)}&&&	\partial_t\rho+\operatorname{div} (\rho u)=0, \\ 
			\text{\tiny(Navier--Stokes's equations)}&&&\partial_t (\rho u)+\operatorname{div}(\rho u\otimes u) -\nu\Delta u+\nabla p=j\times B,
			\\
			\text{\tiny(Amp\`ere's equation)}&&&\frac{1}{c} \partial_t E - \nabla \times B =- j ,
			\\
			\text{\tiny(Faraday's equation)}&&&\frac{1}{c} \partial_t B + \nabla \times E  = 0 ,
			\\
			&&&\div u=0,\ \div B=0,
		\end{aligned}
	\end{cases}  
\end{equation} 
where $\nu,c>0$ represent the viscosity of the fluid and the speed of light, respectively. Above,  the time-space variables $(t,x)$ are taken over the whole space $   \R^+ \times \R^2$.  The set of equations \eqref{MHD_System_Nonhom} is supplemented with the initial data 
\begin{eqnarray} 
(\rho, u, E, B)|_{t=0}=(\rho_0, u_0, E_0, B_0)
\end{eqnarray}
and completed by Ohm's law that connects the current density $j$ to the velocity and the electromagnetic field $(E,B)$. One may find at least two types of Ohm's law in the literature. They write
\begin{eqnarray}
&\text{\tiny(Compressible Ohm's law)}&\qquad  j= \sigma \left( cE +  u \times B \right),  \label{Ohms-law1} \\
&\text{\tiny(Incompressible Ohm's law)} &\qquad j= \sigma \big( cE + \nabla \overline{p}+  u \times B \big) , \quad \div j = \div E=0,\label{Ohms-law2}
\end{eqnarray}
depending on whether or not the fluid is assumed to have a neutral charge density, where  $\sigma>0$ refers to  the electrical conductivity. It is to be emphasized that the analysis we perform in the present paper applies to both versions of Ohm's law. 
\end{subequations}  
  
  \subsection{Overview of previous results}
  
  Generally speaking, $u$, $E$ and $B $ are three dimensional vector fields, 
 whereas   $\rho $ and $p $ are scalar functions.    
The system of equations \eqref{MHD_System_Nonhom} consists of the incompressible inhomogeneous Navier--Stokes system 
\begin{equation}\label{Navier_Stokes}
\left\{
\begin{array}{ll}
\partial_t\rho+\operatorname{div} (\rho u)=0,\vspace{0.1cm}\\
\partial_t (\rho u)+\operatorname{div}(\rho u\otimes u) -\nu 
\Delta u+\nabla p=0,\vspace{0.1cm}\\
\div   u=0, 
\end{array}  
\right. \tag{NS}
\end{equation}
coupled with the equations for the electromagnetic field (that is, the Maxwell equations) 
\begin{equation}\label{Maxwell_System}
\left\{
\begin{array}{ll}
\partial_t E-\nabla\times B=-j,\vspace{0.1cm}\\
\partial_t B+\nabla\times E=0,\vspace{0.1cm}\\
\sigma E=j,\vspace{0.1cm}\\
\div  B=0,
\end{array}
\right. 
\end{equation}
through   Lorentz's force $j\times B$.  In a conducting fluid, we should replace  the third equation in \eqref{Maxwell_System} by either \eqref{Ohms-law1} or \eqref{Ohms-law2}, thereby yielding the system of interest in this paper. 

Note that the divergence-free condition on $B$ is not an additional constraint, and the system is therefore not overdetermined. It is, however, a property     which is propagated for all positive times as soon as it is assumed to hold initially. This can be seen by taking the divergence of Faraday's equation.

When the density is constant,  the system of equations \eqref{MHD_System_Nonhom} is reduced to the homogeneous Navier--Stokes--Maxwell system
  \begin{equation}\label{NSM-equa}
	\begin{cases}
		\begin{aligned} 
	  &\partial_t u+ u\cdot \nabla u -\nu\Delta u+\nabla p=j\times B, &\div u =0,&
			\\
			 &\frac{1}{c} \partial_t E - \nabla \times B =- j ,  &
			\\
			& \frac{1}{c} \partial_t B + \nabla \times E  = 0 , &\div B = 0,   & 
		\end{aligned}
	\end{cases}\tag{h-NSM}
\end{equation} 
 coupled with   \eqref{Ohms-law1} or  \eqref{Ohms-law2}. 

The  systems     \eqref{Main_System} and \eqref{NSM-equa} arise  in  the study of the flow of    electrically conducting
fluids in plasma dynamics, which models the motion of charged particles (ions and electrons) in an electromagnetic field.
The  interested    reader is referred to   \cite{D-book,Pai_1962,Lions_1996_Book_1,Biskamp_1993} for the derivation of the foregoing equations and more details on the physics behind them.

  Allow us now to quickly review some properties of the homogeneous model \eqref{NSM-equa}. At least formally, smooth solutions of that system enjoy the following energy identity
\begin{equation}\label{Ident_Energy_1}
\frac{1}{2}\frac{d}{dt} \Vert (u, E, B)(t)\Vert_{L^2}^2 + \nu \Vert \nabla u (t)\Vert_{L^2}^2+ \frac{1}{\sigma}\Vert j(t)\Vert_{L^2}^2=0,  
\end{equation}
for all $t\geq 0$,  which is the only known a priori bound for  \eqref{NSM-equa}, so far.

 The Navier--Stokes--Maxwell equations are particularly difficult to analyze when compared to the Navier--Stokes   and the MHD equations. This is basically due to  the hyperbolic nature of Maxwell's equations and the structure of the nonlinear Lorentz force.  It is because of this special structure that   the existence of global weak solution   at the level of the energy \eqref{Ident_Energy_1}   remains an outstanding   open problem. 

More precisely, the bounds that are derived from the  energy identity \eqref{Ident_Energy_1} are not enough to build solutions,  for  classical methods fail to recover any compactness properties for the Lorentz force, thereby leaving the construction of global weak solutions with only square integrable  initial data  completely open.   

In order to overcome this lack of compactness, the idea is   to propagate more regularity for the electromagnetic fields.    In this direction, the homogeneous version of the equations \eqref{NSM-equa} has been considered by several authors where it is shown that global solutions do 
exist as soon as   the initial electromagnetic fields belong to   spaces that are slightly better than $L^2$. Some relevant results in this line of research can be found in \cite{a19, ag20, aim15, as, Ger_Mess_Ibra_2014, Ibrahim_2011, Masmoudi_2010}.  

In particular, in \cite{Masmoudi_2010}, the author proved the first result on the existence and uniqueness  of global  solutions to \eqref{NSM-equa}  for initial data $(u_0, E_0, B_0)$ belonging to $ L^2(\R^2)\times H^s(\R^2)\times H^s(\R^2), $ for  some $s\in (0,1)$. 
There, the main ingredient used to build such solutions relies on a functional inequality to bound the $L^1_tL^\infty_x$ norm of the velocity field in terms of a logarithmic contribution of Sobolev norms of the electromagnetic fields. It is then shown that the $\dot{H} ^s$ norm of the electromagnetic fields grows at most exponentially in time.

More recently, the work from \cite{ag20} established the first global well posedness result for the two-dimensional system \eqref{NSM-equa} with Ohms law \eqref{Ohms-law1}   where the solution $(u,E,B)$  is shown to be  bounded in
\begin{equation*}
	L^2_{\loc}(\mathbb{R}^+; L^\infty(\mathbb{R}^2))\times L^\infty _{\loc}(\mathbb{R}^+; H^s(\mathbb{R}^2))   \times L^\infty _{\loc}(\mathbb{R}^+; H^s(\mathbb{R}^2)) 
\end{equation*}
uniformly  with respect to the speed of light $c\in (0,\infty)$. As a consequence, this allowed the authors
to derive the MHD equations from Navier--Stokes--Maxwell equations as $c\to \infty$.  It is emphasized therein that the construction of global solutions uniformly with respect to the speed of light relies upon the improvement of the $L^1_tL^\infty$-bound on the velocity field, previously established in \cite{Masmoudi_2010}.
 
 As we are also  interested in the behavior of solutions in the asymptotic regime $c\rightarrow  \infty$, allow us  now to briefly highlight a few details on the resulting equations in that regime.  Observe, in particular, that  the system \eqref{Main_System} is reduced to the following inhomogeneous MHD equations when the speed of light   goes to infinity:
 \begin{equation}\label{MHD_System}
	\begin{cases}\tag{MHD}
		\begin{aligned} 
	&	\partial_t\rho+\operatorname{div} (\rho u)=0,\vspace{0.2cm}\\
&\partial_t (\rho u)+\operatorname{div}(\rho u\otimes u) -\nu 
\Delta u +\nabla\left( p- \frac{|B|^2}{2}\right)=B\cdot \nabla B, &\div u =0,\vspace{0.2cm}\\  
			&  \partial_t B  + u\cdot \nabla B - \frac{1}{\sigma} \Delta B  = B\cdot \nabla u , &\div B = 0.  & 
		\end{aligned}
	\end{cases}
\end{equation} 
Unlike \eqref{Main_System} and \eqref{NSM-equa}, the system \eqref{MHD_System}  enjoys a scaling invariance property. More precisely,  for any $ \lambda >0$, it is invariant under the transformation 
\begin{equation*}
(\rho_\lambda, u_\lambda, B_\lambda)(t,x) \bydef (\rho(\lambda^2t, \lambda x),   \lambda u(\lambda^2 t, \lambda x), \lambda B(\lambda^2t, \lambda x)) ,
\end{equation*}
in the sense that, if $(\rho,u,B)$ is a solution of \eqref{MHD_System} with the initial data $(\rho_0, u_0,B_0)$, then $(\rho_\lambda, u_\lambda, B_\lambda) $ is   also a  solution to the same equations   with the scaled initial data $(\rho_0( \lambda\cdot  ), \lambda u_0( \lambda\cdot   ),\lambda 
B_0(\lambda 
\cdot  ) )$.

Due to its parabolic nature, there are many more results on the analysis of \eqref{MHD_System} than there are on the Navier--Stokes--Maxwell equations \eqref{Main_System}. We refer to \cite{AMBP_2007__14_1_103_0, AP08, BIE201985, CL18, CTZ11,  DZ18,  Gerbeau_1997, GG14,   HW13,  HUANG2013511, LTY17,  LXZ15, MT83, WZ17} for some literature on the mathematical analysis of the inhomogenous system \eqref{MHD_System} as well as its homogeneous counterpart (i.e., in the case $\rho \equiv 1$).  We also refer the interested reader to \cite{Abidi:2021ud, AGZ12, AP07, D23, DM12, DM19, HB12, PZZ13} for more results on the particular case of a trivial magnetic field, i.e., when the system reduces to the inhomogeneous Navier--Stokes equations \eqref{Navier_Stokes}.

 Apart from that, the inviscid version of \eqref{NSM-equa}, corresponding the case $\nu=0$, is less studied and much more complicated since the   equations in that case are purely hyperbolic. We refer to \cite{ah, ah2} for recent advances on the analysis of the homogeneous Euler--Maxwell equations in the plane.

\subsection{Aims and main results}

The purpose of this paper is to investigate the global existence and uniqueness of solutions of  the inhomogeneous Navier--Stokes--Maxwell  equations \eqref{Main_System}, uniformy with respect to the speed of light, in functional spaces that are as close as possible to the natural energy of the system. 
   
Before we move on to the statements of our results, allow us first to  recall the formal bounds which follow from the structure of the equations \eqref{Main_System}. A calculation similar to \eqref{Ident_Energy_1} shows that smooth   solutions  to \eqref{Main_System}   satisfy the identity
\begin{equation}\label{Energy_Identity_u}
\mathcal{E}^2 (t) +2\nu\int_0^t \Vert \nabla u(\tau)\Vert_{L^2}^2d\tau + \frac{2}{\sigma}\int_0^t \Vert j(\tau) \Vert_{L^2}^2d\tau= \mathcal{E}^2(0)\bydef\mathcal{E}_0^2,
\end{equation}
for all $t\geq 0,$ where we set     
$$ \mathcal{E}^2(t) \bydef\|\sqrt{\rho} u(t)\|_{L^2}^2 + \|E(t)\|_{L^2}^2 + \|B(t)\|_{L^2}^2 .$$

 On the other hand, due to the incompressibility of the fluid, the continuity  equation  allows us to recover a useful maximum principle. Specifically, it holds,  for all positive times $t> 0$, that
 \begin{subequations}\label{Cons_L_p_density}  
 \begin{equation}\label{mass:conservation}
\Vert \rho(t)\Vert_{L^\infty}=\Vert \rho_0\Vert_{L^\infty} ,
\end{equation}
as well as 
 \begin{equation}\label{mass:conservation2}
\Vert 1-\rho(t)\Vert_{L^\infty}=\Vert 1- \rho_0\Vert_{L^\infty}.
\end{equation}
\end{subequations}

Throughout this paper, we only consider the no-vacuum case, i.e., we assume that the initial density is bounded  and remains away from zero.  That is to say, it is assumed, for some threshold constants  $\underline{\rho}$ and  $\overline{\rho}$,  that   
\begin{subequations} 
\begin{equation}\label{Assumption A}
 0< \underline{\rho} \leq \rho_0(x) \leq   \overline{\rho},  
\end{equation}  
for all $x\in \mathbb{R}^2$.

  Furthermore, we  assume that the density is close to a constant value, which can be assumed to be ``one''. For simplicity, we write that
  \begin{equation}\label{assumption:rho2}
\norm {\rho_0 - 1}_{L^\infty(\mathbb{R}^2)}  \ll 1,
\end{equation}
in the sense that the left-hand side is smaller than a small (universal) constant which is independent of the variables of the problem.  As we will see, this assumption is crucial in the proofs of our results.
\end{subequations}  

 The next result is central to our work and establishes the existence  and uniqueness of global weak solutions to \eqref{Main_System}, uniformly with respect to the speed of light.

\begin{theorem}\label{Thm:1}
	Let $p\in (1,2)$ and $s\in [s_*,1)$, with $s_* \bydef 2 \left( 1- \frac{1}{p}\right) \in (0,1)$.
	Let $\rho_0$ be a bounded function satisfying assumptions \eqref{Assumption A} and \eqref{assumption:rho2}. Further consider divergence-free vector fields $(u_0,E_0,B_0)$ satisfying the bounds
	$$ u_0 \in \dot{B}^{-1+\frac{2}{p}}_{p,1}(\mathbb{R}^2),\qquad (E_0,B_0)\in H^s(\mathbb{R}^2).$$
	Note that $E_0$ needs to be divergence-free when considering Ohm's law \eqref{Ohms-law2}. However, it does not need to be divergence-free if one considers \eqref{Ohms-law1}.
	
	Then, the inhomogeneous Navier--Stokes--Maxwell equations \eqref{Main_System}, with initial data $(\rho_0,u_0,E_0,B_0)$, has at least one global solution $(\rho,u,E,B)$ that satisfies the energy inequality \eqref{Energy_Identity_u}, the maximum principle \eqref{Cons_L_p_density} and enjoys the additional bounds
	\begin{equation*}
		\begin{gathered}
			u \in  L^\infty _{\loc}(\mathbb{R}^+; \dot{B}^{-1+\frac{2}{p}}_{p,1} (\mathbb{R}^2))
			\cap L^2_{\loc}(\mathbb{R}^+; \dot H^1\cap L^\infty(\mathbb{R}^2))
			\cap L^{q,1}_{\loc}(\mathbb{R}^+; \dot{W}^{2,p} (\mathbb{R}^2)),
			\\
			(E,B)\in L^\infty_{\loc}( \mathbb{R}^+;  \dot{H}^{s}(\mathbb{R}^2)) ,
			\qquad cE \in L^2_{\loc}( \mathbb{R}^+; \dot{H}^{s}(\mathbb{R}^2)),
		\end{gathered}
	\end{equation*}
	uniformly with respect to $c\in (0,\infty)$,
	where $\frac 1p+\frac 1q=\frac 32$.
	
	Moreover, if $s\in (\frac{1}{2},1)$, then the velocity field enjoys the Lipschitz bound
	$$ u \in L^1 _{\loc}( \mathbb{R}^+;  \dot{W}^{1,\infty}(\mathbb{R}^2))$$
	 and $ (\rho,u,E,B)$ is actually the unique energy solution to \eqref{Main_System}.
\end{theorem}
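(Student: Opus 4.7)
The plan is to follow the standard three-step scheme: regularize the initial data, construct approximate solutions on a linearized problem via a fixed-point argument, and pass to the limit through uniform a priori estimates and compactness. Since the whole analysis must be uniform in $c\in(0,\infty)$, the key structural fact to exploit is that combining Ampère's equation with Ohm's law produces a parabolic smoothing on $cE$, which is what prevents the Maxwell block from degenerating as $c\to\infty$. The novel difficulty compared to the homogeneous case \eqref{NSM-equa} is that the density is now variable and only bounded, so only the closeness assumption \eqref{assumption:rho2} lets us treat $(\rho-1)\partial_t u$ and $(\rho-1)(u\cdot\nabla u)$ as small perturbations of the constant-density Stokes operator.

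The heart of the argument is the chain of a priori bounds. The energy identity \eqref{Energy_Identity_u} and the maximum principle \eqref{Cons_L_p_density} furnish $L^2$-control on $(u,E,B)$ and pointwise control on $\rho$. To propagate $u\in \dot B^{-1+2/p}_{p,1}$, I would apply the maximal regularity of the inhomogeneous Stokes system in the framework of Danchin--Mucha, rewriting the momentum equation as a Stokes problem with source $(1-\rho)\partial_t u - \rho\,u\cdot\nabla u + j\times B$ and closing in $L^{q,1}_{\loc}(\R^+;\dot W^{2,p}(\R^2))$, with $\tfrac1p+\tfrac1q=\tfrac32$. Assumption \eqref{assumption:rho2} absorbs the first term into the left-hand side, the Sobolev embedding $\dot W^{2,p}\hookrightarrow L^\infty$ (valid in dimension two for $p>1$) gives the crucial $u\in L^{q,1}_{\loc} L^\infty$, and the convection term is handled by standard paraproduct estimates. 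Simultaneously, the $H^s$ estimate on $(E,B)$ is obtained by applying $\dot\Lambda^s$ to the Maxwell--Ohm block, using the dissipation $\tfrac1\sigma\|j\|_{\dot H^s}^2$ to absorb the commutator-type terms arising from $u\times B$ via Besov product laws; the dissipation also yields $cE\in L^2_{\loc}\dot H^s$.

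The main obstacle is closing this loop uniformly in $c$. The delicate point is to bound $\|u\|_{L^1_{\mathrm{loc}} L^\infty}$ from the Lorentz force $j\times B$, which only lives in a mixed space controlled by $\|j\|_{L^2_t\dot H^s}$ and $\|B\|_{L^\infty_t\dot H^s}$. The interpolation must be carried out so that the resulting bound depends only on $\|j\|_{L^2_t \dot H^s}$ (which is bounded independently of $c$ by the gain on $cE$) and not on $c\|E\|_{L^2_t\dot H^s}$ individually. This is the same type of logarithmic/Lorentz-space argument used in \cite{ag20}, but here one must additionally keep the density terms under control through \eqref{assumption:rho2}.

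For the uniqueness part under the stronger assumption $s>\tfrac12$, the Sobolev embedding $\dot H^s\hookrightarrow L^\infty$ fails by a logarithmic factor; the Lipschitz bound $u\in L^1_{\mathrm{loc}}\dot W^{1,\infty}$ must instead be extracted by combining $u\in L^{q,1}_{\mathrm{loc}}\dot W^{2,p}$ with the improved regularity $\nabla(j\times B)$ in suitable Besov/Lorentz spaces coming from $j\in L^2_t\dot H^s$ and $B\in L^\infty_t\dot H^s$ with $s>\tfrac12$. Once $\nabla u\in L^1_{\mathrm{loc}}L^\infty$ is in hand, uniqueness is proved by taking the difference $(\delta\rho,\delta u,\delta E,\delta B)$ of two energy solutions sharing the same initial data and estimating it in a weak norm: $\delta\rho$ is propagated in $L^\infty$ by the transport estimate using the Lipschitz bound on $u$, $\delta u$ is controlled in the energy norm using Danchin--Mucha type stability estimates for the variable-density Stokes system, and the EM differences are controlled via energy estimates on the linearized Maxwell--Ohm system. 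A Gronwall argument then forces all differences to vanish, concluding the proof.
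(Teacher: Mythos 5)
Your high-level outline (regularize, fixed point, a priori bounds, compactness, then weak--strong uniqueness) is consistent with the paper, and you correctly identify that \eqref{assumption:rho2} is what makes the variable-density Stokes operator treatable by maximal regularity, and that the closure loop runs through an $L^2_tL^\infty$ bound on $u$ plugged into Proposition~\ref{energy-Hs}. However, there are three places where your proposal either misses the central technical idea or would actually break down.

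First, you attempt to close the whole momentum estimate on $u$ directly in $L^{q,1}_t\dot W^{2,p}$ with source $(1-\rho)\partial_tu-\rho\,u\cdot\nabla u+j\times B$. This would give $\|u\|_{L^2_tL^\infty}\lesssim \|j\times B\|_{L^{q,1}_tL^p}\lesssim \mathcal{E}_0\|B\|_{L^\infty_t\dot H^s}$, i.e.\ a \emph{linear} dependence on $\|B\|_{L^\infty_t\dot H^s}$. Feeding that into Proposition~\ref{energy-Hs}, which produces growth $\exp\big(C\|u\|_{L^2_tL^\infty}^2\big)$, yields a double-exponential ODE that blows up in finite time and gives nothing global. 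The paper's way out is the decomposition $u=v+w$ with \eqref{v:equa}--\eqref{w:equa}: $v$ carries the critical initial datum with no Lorentz-force source and is estimated in $L^{q,1}_t\dot W^{2,p}$ using the coupling term $\rho u\cdot\nabla v$ only; $w$ carries the $j\times B$ source with \emph{zero} initial datum and is estimated in the supercritical space $L^\infty_t\dot B^1_{p,2}\cap L^2_t\dot W^{2,p}$ (Lemma~\ref{lemma:w-ES}). The $L^2_tL^\infty$ bound on $w$ is then logarithmic in $\|B\|_{L^\infty_t\dot H^s}$ via Lemma~\ref{Lemma_h_low_Estimate}, and the $L^2_tL^\infty$ bound on $v$ involves $\|w\|_{L^{\eta,2}_tL^m}$, which is turned into a logarithmic quantity through the Lorentz-space interpolation \eqref{Interpolation_Ineq}. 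Without this decomposition, the exponent in the closure loop is wrong and globality is lost. This is the heart of the paper, and it is absent from your proposal.

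Second, you state that the required log-type argument is ``the same type of logarithmic/Lorentz-space argument used in \cite{ag20}.'' The paper is explicit that the key inequality \eqref{AAAaAA} from \cite{ag20} cannot be imported here precisely because of the rough, non-constant density $\rho$; instead, the new ingredients are the $u=v+w$ decomposition and the Lorentz-interpolation inequality \eqref{Interpolation_Ineq} of Lemma~\ref{Lemma:interpolation X}, which are substantively different. Also, the uniformity in $c$ does not hinge on re-expressing things via $\|j\|_{L^2_t\dot H^s}$ as you suggest; the delicate quantity in the loop is $\|B\|_{L^\infty_t\dot H^s}$, and what must be logarithmic is the dependence of $\|u\|_{L^2_tL^\infty}$ on that norm.

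Third, for uniqueness you propose to propagate $\delta\rho$ in $L^\infty$ ``using the Lipschitz bound on $u$.'' This cannot work at the regularity level of the theorem: the transport equation for $\delta\rho$ is $\partial_t\delta\rho+u_2\cdot\nabla\delta\rho=-\delta u\cdot\nabla\rho_1$, and $\nabla\rho_1$ is only a distribution (the density is merely bounded), so there is no $L^\infty$ bound on $\delta\rho$ even with Lipschitz velocities. The paper instead rewrites the source as $-\div(\delta u\,\rho_1)$, which \emph{is} manageable, and controls $\delta\rho$ in the time-weighted negative Sobolev norms $X(t)=\sup_{\tau\le t}\tau^{-1/2}\|\delta\rho(\tau)\|_{\dot W^{-1,r}}$ and $Z(t)=\sup_{\tau\le t}\tau^{-1}\|\delta\rho(\tau)\|_{\dot H^{-1}}$ via the transport estimates of Propositions~\ref{Prop:TR:***} and \ref{Prop:TR:***2}. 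Paired with the time-weighted estimates on $\dot v_2$ and $\dot w_2$ from Section~\ref{Section.TW:es2}, the ensuing Osgood argument closes. Your energy-in-$L^\infty$ picture for $\delta\rho$ would leave the term $\int \delta\rho\,\dot u_2\cdot\delta u$ uncontrollable.
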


\begin{remark}
	In our main theorem, above, all bounds are uniform with respect to the speed of light $c\gg 1$, except the Lipschitz bound on the velocity field. However, this latter bound is only required to establish the uniqueness of solutions, whereas the proof of existence of solutions (uniformly with respect to the speed of light) does not rely on it.
\end{remark}

The proof of Theorem \ref{Thm:1} is given in several steps which are laid out throughout Sections \ref{Section:a priori ES} to \ref{Section.TW:es2}. More precisely, Section \ref{Section:a priori ES} is concerned with the justification of the existence of at least one global solution $(\rho,u,E,B)$. The proof of existence \emph{per se} is completed in Section \ref{proof_existence}. Then, the crucial Lipschitz bound on $u$, in the case $s\in (\frac 12, 1)$, is established in Proposition \ref{corollary:u-Lip} from Section \ref{Section.TW1}, as a consequence of essential time-weighted estimates. As for the uniqueness of solutions, it is split into two parts. First, in Section \ref{Section.stability}, we establish a general weak--strong uniqueness principle, which is stated in Theorem \ref{Thm:stability}. Second, we show in Section \ref{Section.TW:es2} that, in fact, when $s\in (\frac 12, 1)$, the solutions constructed earlier do enjoy all assumptions required by Theorem \ref{Thm:stability}, thereby yiedling the uniqueness of solutions.

\begin{remark}
	Theorem \ref{Thm:1} is comparable to the recent results on the inhomogeneous Navier--Stokes equations \eqref{Navier_Stokes} featured in \cite{Danchin_Wang_22}. Indeed, if $E\equiv B \equiv 0$, then the existence statement from Theorem \ref{Thm:1} is analogous, and almost identical, to Theorem 2.4 in \cite{Danchin_Wang_22}. However, our uniqueness statement remains slightly weaker due to the restriction of the parameter $s$ to the range $(\frac 12, 1)$, which is related to the impact of the electromagnetic field.
\end{remark}

\begin{remark}
	Theorem \ref{Thm:1} is also comparable to the results on the homogeneous Navier--Stokes--Maxwell equations \eqref{NSM-equa} from \cite{ag20} and \cite{Masmoudi_2010}. Indeed, assuming $\rho\equiv 1$ in the statement of Theorem \ref{Thm:1} leads to a well-posedness result for \eqref{NSM-equa} where the velocity field $u$ belongs to functional spaces which have the same homogeneity as the natural energy space $L^\infty(\mathbb{R}^+;L^2(\mathbb{R}^2))\cap L^2(\mathbb{R}^+;\dot H^1(\mathbb{R}^2))$, while the smoothness of the electromagnetic field $(E,B)$ is propagated in $H^s(\mathbb{R}^2)$.
\end{remark}

\begin{remark}
	The results from \cite{Abidi:2021ud} on the inhomogeneous Navier--Stokes system \eqref{NSM-equa} show that it is possible to construct solutions where the initial velocity field $u_0$ belongs to $B^0_{2,1}(\mathbb{R}^2)$, only. However, this comes at the cost of assuming that the initial density $\rho_0$ satisfies the regularity condition $\rho_0^{-1}-1\in \dot{B}^{\varepsilon}_{\frac{2}{\varepsilon},1}(\mathbb{R}^2)$, for some $\varepsilon>0$. It would be interesting to see whether Theorem \ref{Thm:1} can be adapted to reach a similar result in the endpoint case $p=2$.
\end{remark}

The fact that the bounds in Theorem \ref{Thm:1} are uniform with respect to the speed of light $c\in (0,\infty)$ allows us to study the limit $c\to \infty$ and derive the inhomogeneous MHD equations \eqref{MHD_System}. More precisely, we obtain the following corollary.

\begin{corollary}
	For any given initial data $(\rho_0,u_0,E_0,B_0)$ as in Theorem \ref{Thm:1}, consider the global solution $(\rho^c,u^c,E^c,B^c)$, for each $c\in (0,\infty)$, provided by that theorem. Then,  the set $\{(u^c,E^c,B^c)\}_{c>0}$ is relatively compact in $L^2_{t,x,\loc}$. More precisely, for any sequence  $\{(\rho^{c_n},u^{c_n},E^{c_n},B^{c_n})\}_{n\in \mathbb{N}}$ with $c_n\to \infty$, there is a subsequence, which we do not relabel, for simplicity, satisfying, as $n\to \infty$, that
	\begin{equation*}
		\rho^{c_n}   \rightharpoonup \rho , \quad \text{in} \quad L^2_{t,x,\loc},
	\end{equation*}
	and
	\begin{equation*}
		(u^{c_n},E^{c_n},B^{c_n})   \rightarrow (u ,E ,B ), \quad \text{in} \quad L^2_{t,x,\loc},
	\end{equation*}
	where $(\rho ,u ,E ,B )$ solves \eqref{MHD_System}. 
\end{corollary}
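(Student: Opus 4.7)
The plan is to extract weakly/strongly convergent subsequences from the family $\{(\rho^c,u^c,E^c,B^c)\}_{c>0}$ supplied by Theorem \ref{Thm:1}, and then pass to the limit in each equation of \eqref{Main_System}. First, the uniform-in-$c$ bounds of Theorem \ref{Thm:1} yield, up to extraction, $\rho^{c_n}\overset{*}{\rightharpoonup}\rho$ in $L^\infty_{t,x,\loc}$, $u^{c_n}\rightharpoonup u$ weakly in $L^2_{\loc}(\mathbb{R}^+;\dot H^1)$, $B^{c_n}$ converging weakly-$\ast$ in $L^\infty_{\loc}(\mathbb{R}^+;\dot H^s)$ to some $B$, $j^{c_n}\rightharpoonup j$ weakly in $L^2_{t,x}$, and $c_nE^{c_n}\rightharpoonup\widetilde E$ weakly in $L^2_{\loc}(\mathbb{R}^+;\dot H^s)$. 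Writing $E^{c_n}=c_n^{-1}(c_nE^{c_n})$ already shows that $E^{c_n}\to 0$ strongly in $L^2_{\loc}(\mathbb{R}^+;\dot H^s)$, which accounts for the $E$--component of the strong convergence in the statement (and identifies the limit $E$ as identically zero).

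Next, I would upgrade the convergences of $u^{c_n}$ and $B^{c_n}$ to strong $L^2_{t,x,\loc}$ convergence via the Aubin--Lions lemma. For the magnetic field, Faraday's equation gives $\partial_tB^{c_n}=-c_n\nabla\times E^{c_n}$, so the uniform bound on $c_nE^{c_n}$ in $L^2_t\dot H^s$ produces a uniform bound on $\partial_tB^{c_n}$ in $L^2_t\dot H^{s-1}$; combined with the spatial bound in $L^\infty_t\dot H^s$ and Rellich's compactness, Aubin--Lions delivers the strong convergence $B^{c_n}\to B$ in $L^2_{t,x,\loc}$. For the velocity, I would read the momentum equation as an evolution equation for $\rho^{c_n}u^{c_n}$: the convective term, the viscous term, the pressure gradient (handled via Leray's projector), and the Lorentz force $j^{c_n}\times B^{c_n}$ (controlled in some $L^q_tL^r_\loc$ via $j^{c_n}\in L^2_{t,x}$ and the Sobolev embedding of $\dot H^s$ for $s\in (1/2,1)$) all enjoy uniform bounds in a space of negative spatial regularity; together with the spatial compactness from $\nabla u^{c_n}\in L^2_{t,x}$, Aubin--Lions produces strong convergence of $\rho^{c_n}u^{c_n}$, and the uniform positivity $\rho^{c_n}\geq\underline\rho>0$ coming from the maximum principle \eqref{Cons_L_p_density} and assumption \eqref{Assumption A} then transfers this to strong $L^2_{t,x,\loc}$ convergence of $u^{c_n}$.

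Once the strong convergences are in hand, passing to the limit is essentially mechanical. The continuity equation is preserved by a weak$\times$strong product argument. Amp\`ere's equation $c_n^{-1}\partial_tE^{c_n}-\nabla\times B^{c_n}=-j^{c_n}$ identifies $j=\nabla\times B$ in the limit, since $c_n^{-1}\partial_tE^{c_n}=\partial_t(c_n^{-1}E^{c_n})\to 0$ in $\mathcal{D}'$; the Lorentz force therefore reduces to $(\nabla\times B)\times B=B\cdot\nabla B-\nabla(|B|^2/2)$, and the gradient part is absorbed into the pressure, yielding the MHD momentum equation. Finally, Ohm's law gives $c_nE^{c_n}=\sigma^{-1}j^{c_n}-u^{c_n}\times B^{c_n}$ (up to a gradient in the incompressible variant), and inserting this into Faraday's equation $\partial_tB^{c_n}+\nabla\times(c_nE^{c_n})=0$, together with $\nabla\times j=-\Delta B$ (since $\operatorname{div} B=0$), recovers the induction equation of \eqref{MHD_System}.

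The main obstacle will be the strong compactness of $u^{c_n}$ in $L^2_{t,x,\loc}$: the momentum equation couples density and velocity nonlinearly through $\partial_t(\rho^{c_n}u^{c_n})$, and the Lorentz force only carries $L^2_{t,x}$--type bounds, so upgrading time regularity of $u^{c_n}$ itself, as opposed to of $\rho^{c_n}u^{c_n}$, requires a delicate use of the continuity equation and of the uniform lower bound on $\rho^{c_n}$. This is however standard in the framework of inhomogeneous fluids and should follow the arguments already developed in the proof of Theorem \ref{Thm:1}.
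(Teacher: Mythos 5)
Your proposal is correct and follows essentially the same route as the paper, which itself only sketches the argument: the authors cite the uniform-in-$c$ bounds of Theorem \ref{Thm:1} together with the Aubin--Lions (or Simon) compactness lemma and refer to \cite[Corollary 1.3]{ag20} and \cite[Corollary 1.2]{ah} for the details of the extraction and limit-passing. Your write-up fills in exactly those steps -- weak limits from the uniform energy bounds, strong $L^2_{t,x,\loc}$ compactness of $B^{c_n}$ via Faraday's equation and of $u^{c_n}$ via the momentum equation combined with the uniform lower bound on the density (the standard $\int \rho^{c_n}|u^{c_n}-u|^2\to 0$ trick), identification of $j=\nabla\times B$ from Amp\`ere's law, and recovery of the MHD momentum and induction equations from the Lorentz-force identity and Ohm's law -- which matches the intended argument.
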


The proof of the preceding corollary is performed in the same fashion as the proof of \cite[Corollary 1.3]{ag20} and \cite[Corollary 1.2]{ah}. More precisely, it is established by relying on the uniform bounds satisfied by the solutions from Theorem \ref{Thm:1}, along with classical compactness techniques due to Aubin and Lions \cite{AJ63,  L61} (one can also make use of the sharp compactness criteria by Simon \cite[Corollary 1]{JS87}).

\subsection{Challenges and strategy of proof.}

We discuss now the main challenges in the analysis of \eqref{Main_System} and the crucial ingredients that are employed in the proof of Theorem \ref{Thm:1}. We split the discussion into two parts, which respectively address the existence and the uniqueness of solutions.

\subsubsection*{Existence of global solutions}

The existence of weak solutions to \eqref{Main_System} is proven by means of compactness arguments. Generally speaking, the idea is to approximate \eqref{Main_System} by another system with a unique smooth solution. Most importantly, the approximate system needs to be constructed in a way which preserves the same essential properties (such as conservation laws and a priori energy bounds) of the original set of equations \eqref{Main_System}.
Then, the next step consists in showing that the approximate solutions converge, at least in a distributional sense, to a global weak solution of the original system \eqref{Main_System}.

As previously explained, the energy a priori bound is not enough to provide us with sufficient information that would ensure the convergence
$$  j_n\times B_n \rightarrow j\times B, \quad  \text{in} \quad\mathcal{D}'(\mathbb{R}^2),$$
where $(j_n,B_n)_{n\in\mathbb{N}}$ is the smooth approximation of $(j, B)$. In order to overcome this lack of compactness, the idea is to seek the propagation of higher regularities of the electromagnetic field $(E,B)$ in $H^s(\mathbb{R}^2)$, with $s>0$. To that end, as shown in Proposition \ref{energy-Hs}, it will be necessary to establish a control of the velocity field $u$ in the space $L^2_{\loc} (\mathbb{R}^ +;L^\infty (\mathbb{R}^2))$. Showing this bound on $u$ will be one of the most challenging steps in our work.

To achieve it, we will split the velocity field $u=v+w$ into two parts: a fluid part denoted by $v$, and an electromagnetic part denoted by $w$. The corresponding equations are given by
\begin{equation}\label{v:equa}
\left\{
\begin{array}{ll}  \tag{v-EQ}
\rho  \left(  \partial_t v + u\cdot \nabla v  \right)- \Delta v + \nabla p_v   =0 , \\
\div v=0,\\
v|_{t=0}= u_0 
\end{array}
\right.   
\end{equation}  
and
\begin{equation}\label{w:equa}
\left\{
\begin{array}{ll}  \tag{w-EQ}
\rho  \left(  \partial_t w + u\cdot \nabla w  \right) - \Delta w + \nabla p_w =     j \times B, \\
\div  w=0,\\
w|_{t=0}=0,
\end{array}   
\right.   
\end{equation}
where the pressure is split as $p= p_v + p_w$, as well. Note that the above systems are both advected by the full velocity field $u=v+w$.

There are several reasons behind the preceding decomposition. In particular, observe first that the fluid system \eqref{v:equa} is precisely the inhomogeneous Navier--Stokes equations if there is no electromagnetic field, i.e., if $E\equiv B \equiv 0$. Accordingly, one can expect \eqref{v:equa} to be similar to \eqref{Navier_Stokes}.

Second, due to the divergence-free condition on the velocity field $u$, we can check that $v$ satisfies an $L^2$-energy identity (see Lemma \ref{Energy_Estimate_v_w}). Therefore, by combining that identity with the $L^2$-energy of the full velocity field \eqref{Energy_Identity_u}, it follows that $w$ also has finite energy.

A key idea in the analysis of $v$ relies on maximal parabolic regularity estimates in Lorentz spaces (in the time variable), in combination with time-weighted estimates in functional spaces which have the same homogeneity as the natural energy spaces $L^\infty_tL^2_x \cap L^2_t\dot{H}^1_x$. This step is adapted from the work of \cite{Danchin_Wang_22} on the inhomogeneous Navier--Stokes equations \eqref{Navier_Stokes}. However, this adaptation is not straightforward since the fluid component $v$ is advected by the full velocity field $u$, which is influenced by the electromagnetic field.

On the other hand, most of the estimates on $w$ will be done in higher regularity spaces. This will be possible by virtue of the additional Sobolev regularity of the electromagnetic field, which is exploited in the control of Lorentz's source term $ j\times B$.

This strategy eventually leads to the required control of $u$ in $L^2_{\loc} (\mathbb{R}^ +;L^\infty (\mathbb{R}^2))$, in the form of a logarithmic estimate
\begin{equation} \label{U-BOUND}
\begin{aligned}
\int_{t_0}^t  \norm {u(\tau)}^2_{  L^\infty }  d\tau
& \lesssim     \Big( g (t_0,t) + f(t_0,t) \log  \Big( e+     \frac{   \mathcal{E}_0 \norm  { B }_{L^\infty ([t_0,t ]; \dot{H}^s)}^2}{ f(t_0,t) }  \Big) \Big) 
\\
& \quad  \times \exp \left( C \norm {\rho_0}_{ L^\infty}^2\mathcal{E}_0^2 \right)  ,
\end{aligned}
\end{equation}
for any $0\leq t_0<t$,
where $g$ and $f$ are continuous functions of time which are locally bounded by constants depending only on the initial data, uniformly with respect to the speed of light. This estimate is established in Lemma \ref{lemma.boundedness-u}.

Now, on the one hand, we have already emphasized how the norm of the electromagnetic field $(E,B)$ in $L^\infty_{\loc} (\mathbb{R}^ +;H^s (\mathbb{R}^2))$ is controled by the norm of the velocity field $u$ in $L^2_{\loc} (\mathbb{R}^ +;L^\infty (\mathbb{R}^2))$. On the other hand, the preceding estimate shows that $u$ in $L^2_{\loc} (\mathbb{R}^ +;L^\infty (\mathbb{R}^2))$ is controled by $(E,B)$ in $L^\infty_{\loc} (\mathbb{R}^ +;H^s (\mathbb{R}^2))$. Combining these two estimates allows us to propagate the control of these norms on $u$ and $(E,B)$, for all positive times, uniformly with respect to $c\in (0,\infty)$.

Notice that \eqref{U-BOUND} is similar to a logarithmic bound established in \cite{ag20} for the homogeneous Navier--Stokes--Maxwell system \eqref{NSM-equa}. A key argument used therein relies on the functional interpolation inequality
\begin{equation}
	\label{AAAaAA}
	\norm { ({\rm Id}- S_0)h}_{L^2 ([t_0,t];L^\infty)}\lesssim \norm {  h}_{L^2([t_0,t]; \dot{H}^{1}) }
	\log^{\frac{1}{2}}  \left( e + \frac{  \norm { h}_{L^2([t_0,t];\dot{B}_{2,\infty}^{s}) }}{ \norm {  h}_{L^2([t_0,t]; \dot{H}^{1}) }} \right) ,
\end{equation}
for $s>1$ and suitable functions $h$, where $S_0$ is an operator which truncates the frequencies of $h$ to the unit ball (in Fourier variables).

The use of \eqref{AAAaAA} works well in the context of \eqref{NSM-equa}. However, in the setting provided by the inhomogeneous Navier--Stokes--Maxwell equations \eqref{Main_System}, this is not the case, due to the influence of the non-constant and non-smooth fluid density $\rho$. Nevertheless, we are able to show the validity of \eqref{U-BOUND} by relying on an alternative approach based on the following two key observations:
\begin{itemize}
	
	\item
	An analysis of \eqref{v:equa} shows that the bounds on $v$ in critical spaces, and eventually in $L^2_t L^\infty_x$, come with  a linear contribution of $\norm {w}_{L^{\eta,2}_t L^m_x} $, with $\tfrac{1}{\eta}+ \tfrac{1}{m}=\tfrac{1}{2}$. Despite the fact that $\eta,m\in (2,\infty)$, it is not possible to show that the latter bound on $w$ can be obtained by an interpolation between $L^\infty_tL^2$ and $L^2_t \dot{H}^1$. In other words, the natural a priori energy estimate is insufficient to control $w$ in $L^{\eta,2}_t L^m_x$.
	
	However, we achieve  \eqref{U-BOUND} by first establishing a control of $\norm {w}_{L^{\eta,2}_t L^m_x} $ in terms of a logarithmic contribution of a slightly supercritical norm of $w$. To be precise, we employ the interpolation inequality \eqref{Interpolation_Ineq}, instead of \eqref{AAAaAA}, to derive that
	\begin{equation*} 
		\begin{aligned}
			\norm {w}_{ L^{\eta,2}_tL^m} & \lesssim   \norm {w}_{ L^{\eta,\infty}_tL^m}  \left( 1 + \log^{\frac 12} \left( t^{ \frac 1\eta  }  \frac{ \norm {w}_{ L^{\infty}_tL^m} }{ \norm {w}_{ L^{\eta,\infty}_tL^m}}  \right ) \right),
		\end{aligned}
	\end{equation*}
	where $\frac{1}{\eta} + \frac{1}{m} = \frac{1}{2}$.
	
	Then, we observe  that $  L^{\eta,\infty}_t L^m$ is an interpolation space between $L^\infty_tL^2$ and $L^2_t \dot{H}^1$, for $ \tfrac{1}{\eta}+ \tfrac{1}{m}=\tfrac{1}{2}$, and we combine this property with a     bound  on the $ L^{ \infty}_tL^m$ norm of $w$  in terms of the $H^s$ norm of the magnetic field $B$. At the end, the preceding arguments allow us to derive a suitable logarithmic bound on $v$ in $L^2_tL^\infty$.
	
	\item
	The $L^2_tL^\infty$ bound on $w$ is simpler and follows from the fact that $w$ enjoys the supercritical regularity $w\in L^\infty_t \dot B^1_{p,2}\cap L^2_t\dot W^{2,p}$. This regularity estimate is shown in Lemma \ref{lemma:w-ES}. The ensuing $L^2_tL^\infty$ bound  on $w$ is then given in \eqref{w-last1}.
	
\end{itemize}

\subsubsection*{Uniqueness of energy solutions}

The stability estimates leading to the uniqueness of the solution of \eqref{Main_System} are inspired by the methods from \cite{Danchin_Wang_22} on the inhomogeneous Navier--Stokes equations \eqref{Navier_Stokes}. However, the arguments therein need to be considerably refined in order to handle the bounds enjoyed by the solution $w$ of \eqref{w:equa}, which are constrained by the $H^s$ regularity of the electromagnetic field.

Specifically, in Section \ref{Section.stability}, below, we establish a general weak--strong stability result which, subsequently, allows us to deduce the uniqueness of solutions stated in Theorem \ref{Thm:1}. To that end, we first show that the velocity field $u$ is Lipschitz in $x$, despite the roughness of the non-constant density $\rho$. This is achieved by considering, again, the decomposition $u=v+w$, where $v$ and $w$ solve \eqref{v:equa} and \eqref{w:equa}, respectively. Then, we deal with each part of the velocity field separately.

For the fluid part $v$, we employ time-weighted estimates (see Lemma \ref{lemma:v2}), in the spirit of \cite{Danchin_Wang_22}. It is important to notice that adding a time-weight in the norms of $v$ allows us to control it in higher regularity spaces.

As for $w$, we observe that its initial data is identically zero. Hence, it can be shown that its spatial regularity only depends on the smoothness of electromagnetic fields, through Lorentz's source term $j\times B$. Specifically, we prove in Lemma \ref{lemma:w2}, below, that $w$ enjoys suitable additional supercritical bounds, as soon as the Sobolev regularity of the electromagnetic fields is large enough ($s>\frac 12$, as stated in Theorem \ref{Thm:1}). Consequently, by an abstract interpolation argument, one then shows that $w$ is Lipschitz, as well. We point to Proposition \ref{corollary:u-Lip}, which summarizes the additional bounds on the velocity field under the restriction $s>\frac{1}{2}$.

After this step, the final time-weighted estimates on $w$ are established in Section \ref{subsection:w-TW}, and more precisely in Proposition \ref{prop:w:tw**}. It is important to point out that the proof of this proposition does not require the use of maximal parabolic regularity estimates. Instead, it relies on a crucial bound, proven in Lemma 
\ref{lemma:duality}, which utilizes the structure of Lorentz's force and is based on a duality argument.

Once the main bounds on $v$ and $w$ are established, we prove the actual weak--strong stability result stated in Theorem \ref{Thm:stability}. A fundamental idea leading to this new stability result relies on controlling the difference of the densities of two solutions $\delta \rho\bydef \rho_1 - \rho_2$ in the space
$$
Z(t)+X(t) \bydef  \sup_{\tau \in (0,t]} \left( \tau^{-1}  \norm {\delta \rho(\tau)}_{\dot{H}^{-1}}\right) + \sup_{\tau \in (0,t]} \left( \tau^{-\frac{1}{2}  }  \norm {\delta \rho(\tau)}_{\dot{W}^{-1,r}}\right),
$$
for some $r\in (2,\infty)$. The two components of the preceding norm are designed to deal with the terms 
$$
\int_{\mathbb{R}^3} \delta \rho \partial_t v \delta u dx
\qquad \text{and } \qquad
\int_{\mathbb{R}^3} \delta \rho \partial_t w \delta u dx,
$$
respectively, which appear in the energy estimate for the difference of the velocities of two solutions $ \delta u \bydef u_1 - u_2$.

The key ingredients that we utilize to establish the stability of $\delta \rho$ are obtained in Section \ref{subsection:TE}, where general results on the transport equation in negative Sobolev spaces are proven. This section is self-contained and of independent interest.

In summary, the art of the proof of the weak--strong stability, and, subsequently, of the uniqueness statement in Theorem \ref{Thm:1}, is displayed, first, in the suitable splitting of the velocity field $u=v+w$ and, second, in the appropriate  choice of the space defined by $X(t)+Z(t)$ for the control of $\delta \rho$. These new ideas allow us to obtain a stability result at a minimum cost, in terms of Sobolev regularity  $s>\frac{1}{2}$, on the electromagnetic field. Finally, we should emphasize that we have no proof of optimality of the threshold value $s=\frac{1}{2}$. However, in our proofs below, this threshold is justified by the need of  the Lipschitz regularity of the velocity field $u$, which seems rather sharp.

\subsection{Notation}

We introduce here some notation which is used routinely throughout the paper.

First, we emphasize that the definitions of all relevant functional spaces, as well as their basic properties, are recalled in the appendix.

Next, for any Banach space $X$, all $p\in [1,\infty]$ and any $t\in [0,\infty)$, we adopt the shorthand notation
$$ L^p_tX \bydef L^p([0,t); X).$$
In particular, the quantity $\norm {\cdot}_{  L^p_tX}$ will be regarded as a function of time $t\in [0,\infty)$.

Finally, unless otherwise mentioned, the letter ``$C$'' will be used to denote a universal constant  which only depends on fixed parameters. In particular, such a constant will always be assumed to be independent of the speed of light $c$, but it may depend on the size of the initial data. Furthermore, 
we will regularly utilize the symbol ``$A\lesssim B$'' in order to express that  ``$A\leq C B$'', for some universal constant $C>0$.

\section{A priori estimates and existence of global solutions} \label{Section:a priori ES}

Our roadmap to build  global solutions of \eqref{MHD_System_Nonhom} is based on standard compactness arguments. We refer to \cite{ah, Danchin_Wang_22} for similar problems where these techniques have been implemented. Specifically, here, we first consider the approximation scheme   
  \begin{equation}\label{AP-system} 
		\begin{cases}
			\begin{aligned}
			&\partial_t \rho _n +(S_nu_n) \cdot\nabla \rho_n  =0,&\\
				&\rho _n  (\partial_t u_n +(S_nu_n) \cdot\nabla u_n )- \nu \Delta u_n = - \nabla p_n + (S_nj_n) \times B_n, &\div u_n =0,&
				\\
				&\frac{1}{c} \partial_t E_n - \nabla \times B_n = - j_n ,  &
				\\
				&\frac{1}{c} \partial_t B_n + \nabla \times E_n  = 0 , &\div B_n =0,&  
			\end{aligned}
		\end{cases}
	\end{equation}
	supplemented with the initial data $$(\rho_n,u_n,E_n,B_n)|_{t=0}= S_n(\rho_0,u_{0},E_0,B_0)$$  and Ohm's law
$$ j_n= \sigma \big(cE_n + S_n\mathbb{P} (u_n \times B_n)\big),  \qquad \div j_n =0,  $$
or 
$$  j_n= \sigma \big(cE_n + S_n (u_n \times B_n)\big)  ,$$	
where $S_n$ is a truncation  in Fourier space that restricts the frequencies to the domain $\{|\xi|\leq 2^n \}$ and  $\mathbb{P}$  denotes Leray's projector   $\mathbb{P}\bydef \id - \nabla \Delta^{-1}\div.$

We note that the above approximate system has a unique strong solution $$ (\rho_n,u_n,E_n,B_n),$$ for any $n\in \mathbb{N}$. (This follows from classical fixed point arguments.) This approximate solution, being regular, will   be suitable for all the computations in this section.
We are now going to establish adequate a priori bounds, uniformly with respect to the parameter $n$, on the approximate sequence of solutions, which will then allow us to deduce the existence of solutions to \eqref{Main_System}, by standard compactness arguments.

However, as is customary, in order to simplify the notation in the estimates below, we are now going to perform all estimates directly on \eqref{Main_System} by assuming that its solutions are smooth, rather than working on the approximate system \eqref{AP-system}.

  \subsection{Energy estimate for electromagnetic fields}
 A necessary step towards proving Theorem \ref{Thm:1} is the analysis of Maxwell's system
 \begin{equation}\label{MX*}
\left\{
\begin{array}{ll}
\frac{1}{c}\partial_t E-\nabla\times B  = - j ,\vspace{0.2cm}\\
\frac{1}{c}\partial_t B+\nabla\times E=0,
\end{array}
\right. 
\end{equation}
where $j$ is given by \eqref{Ohms-law1} or \eqref{Ohms-law2}. 

In this paper, we do not need to perform any deep analysis of the system above.  More precisely, we will only need to control the electromagnetic fields in Sobolev spaces, and this is done by a simple energy estimate.  The following proposition, whose proof can be found in \cite[Proposition 2.1]{ag20}, will serve our purpose, later on.

 \begin{proposition}\label{energy-Hs}   
 Let $s\in (-1,1)$ and $E$, $B$ be a solution of Maxwell's system \eqref{MX*} supplemented by either choice of Ohm's laws \eqref{Ohms-law1} or \eqref{Ohms-law2}. Then, it holds, for any $0\leq t_0\leq t$, that
 \begin{equation*}
 \begin{aligned}
 \Vert B(t)\Vert_{\dot{H}^s}^2&+ \Vert E(t)\Vert_{\dot{H}^s}^2 +\sigma c^2\int_{t_0}^t\Vert E(\tau)\Vert_{\dot{H}^s}^2 d\tau\\
& \qquad \leq\,  \left(\Vert B(t_0)\Vert_{\dot{H}^s}^2+ \Vert E(t_0)\Vert_{\dot{H}^s}^2\right) \exp\left(C\sigma \int_{t_0}^t \Vert u(\tau)\Vert_{L^\infty\cap \dot{B}^1_{2,\infty}}^2d\tau\right) ,
\end{aligned} 
\end{equation*}   
where   $C>0$ is a universal constant that is independent of $c$.
 \end{proposition}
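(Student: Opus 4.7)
The plan is to work at the Littlewood--Paley level. First, I would apply the homogeneous dyadic block $\dot\Delta_q$ to both Amp\`ere's and Faraday's equations in \eqref{MX*}, then take the $L^2$ inner product of the localized Amp\`ere equation with $\dot\Delta_q E$ and of the localized Faraday equation with $\dot\Delta_q B$, and add the resulting identities. After integration by parts in space, the curl terms cancel exactly (this is the usual ``duality'' $\int (\nabla\times B)\cdot E = \int B\cdot(\nabla\times E)$), leaving
\begin{equation*}
\frac{1}{2c}\frac{d}{dt}\Bigl(\|\dot\Delta_q E\|_{L^2}^2+\|\dot\Delta_q B\|_{L^2}^2\Bigr)
+\bigl\langle \dot\Delta_q j,\dot\Delta_q E\bigr\rangle = 0.
\end{equation*}

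Next, I would substitute Ohm's law. Under \eqref{Ohms-law1}, the term $\langle \dot\Delta_q j,\dot\Delta_q E\rangle$ produces the good absorption term $\sigma c\|\dot\Delta_q E\|_{L^2}^2$ together with the cross contribution $\sigma\langle \dot\Delta_q(u\times B),\dot\Delta_q E\rangle$. Under \eqref{Ohms-law2}, the extra $\nabla\bar p$ term contributes $\sigma\langle\dot\Delta_q\nabla\bar p,\dot\Delta_q E\rangle$, which vanishes since $\operatorname{div} E=0$ in that case; the rest is identical. Multiplying by $2c$, applying Young's inequality to absorb half of the good term, summing over $q\in\mathbb{Z}$ weighted by $2^{2qs}$, and using the characterization of $\dot H^s$, I obtain the differential inequality
\begin{equation*}
\frac{d}{dt}\Bigl(\|E\|_{\dot H^s}^2+\|B\|_{\dot H^s}^2\Bigr)
+\sigma c^2\|E\|_{\dot H^s}^2
\leq C\sigma \|u\times B\|_{\dot H^s}^2.
\end{equation*}

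Then the main work reduces to the product estimate
\begin{equation*}
\|u\times B\|_{\dot H^s}\lesssim \|u\|_{L^\infty\cap \dot B^1_{2,\infty}}\,\|B\|_{\dot H^s},\qquad s\in(-1,1),
\end{equation*}
which I would prove by Bony's paraproduct decomposition $uB = T_u B + T_B u + R(u,B)$. The paraproduct $T_uB$ is controlled by the $L^\infty$ norm of $u$ and the $\dot H^s$ norm of $B$; the symmetric paraproduct $T_B u$ requires the mild low-frequency control $\|u\|_{\dot B^1_{2,\infty}}$ (because the resulting Besov index is $1+s-1=s$, using $B\in \dot H^s=\dot B^s_{2,2}$); and the remainder $R(u,B)$ is handled analogously for $s>-1$. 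Inserting this product estimate into the differential inequality above and applying Gr\"onwall's lemma on $[t_0,t]$ yields the claimed bound, with the $\sigma c^2\int_{t_0}^t\|E\|_{\dot H^s}^2\,d\tau$ term kept on the left-hand side.

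The main subtlety is the product estimate in $\dot H^s$ for the whole range $s\in(-1,1)$, since the endpoint $s=-1$ is excluded precisely because the low-frequency analysis of $T_B u$ and $R(u,B)$ becomes critical there; this is also the reason for the $\dot B^1_{2,\infty}$ norm on $u$ (rather than $\dot H^1$), which is the weakest low-frequency control compatible with the paraproduct estimates and gives the sharpest dependence on the velocity appearing in the final exponential.
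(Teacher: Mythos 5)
Your proposal is correct and follows the same standard strategy that the paper defers to in \cite[Proposition 2.1]{ag20}: a frequency-localized $L^2$ energy estimate for Maxwell's system, cancellation of the curl terms, absorption of the $\sigma c^2\|E\|_{\dot H^s}^2$ dissipation via Young's inequality, the product law $\|u\times B\|_{\dot H^s}\lesssim\|u\|_{L^\infty\cap\dot B^1_{2,\infty}}\|B\|_{\dot H^s}$ (which is precisely \eqref{Prod_Law2} from the appendix, with $d=2$), and Gr\"onwall's lemma while retaining $\sigma c^2\int_{t_0}^t\|E\|_{\dot H^s}^2$ on the left. The only place worth spelling out a bit more is the remainder $R(u,B)$ for $s\in(-1,0]$, where one should apply Bernstein's inequality $L^1\to L^2$ (picking up a factor $2^q$) before H\"older, so that the resulting convolution kernel $2^{(q-j)(s+1)}$, $j\geq q-C$, is summable precisely because $s>-1$; this is why the stated range is $s\in(-1,1)$ and not wider.
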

 Proposition \ref{energy-Hs} shows that the propagation of Sobolev regularity of the electromagnetic fields relies on the boundedness of the velocity field $u$ in $L^2_{\loc}(\R^+, L^\infty(\mathbb{R}^2))$. This bound is the subject of the next section.

 \subsection{Boundedness of velocity field}\label{Sec_Boun_Veloc}
 This section is devoted to the derivation of an a priori bound on the velocity field $u$ in $L^2_tL^\infty_x$ which will be combined afterwards with the energy estimate from Proposition \ref{energy-Hs}, above.

 Henceforth, we will often assume, for simplicity of notation, that $\nu=1$ and we emphasize that all the arguments below hold for any fixed $\nu>0$, as well.

In order to analyze the momentum  equation from \eqref{Main_System}, we   introduce a specific splitting of the velocity field into a fluid   part  and a magnetic part corresponding respectively to the decomposition
 $$ u = v + w ,$$
 where, by definition, the two parts  are governed by the equations \eqref{v:equa} and  \eqref{w:equa}, respectively.

Then, we deal with the  estimates of each part differently. Roughly speaking, the system \eqref{v:equa} will  be treated in a similar way to the inhomogeneous Navier--Stokes equations \eqref{Navier_Stokes}, even though it is advected by the full velocity field $u$. 

On the other hand, the estimates for \eqref{w:equa} will mainly be performed in supercritical spaces, for the trivial initial data in \eqref{w:equa} is obviously smooth, and the magnetic field in the source term of \eqref{w:equa} enjoys some suitable regularity, as will be seen later on.

The primary step for obtaining the necessary  a priori estimates is to show that the preceding decomposition of the velocity field still preserves the $L^2$-energy bound on both  $v$ and $w$. More precisely, we now prove the following lemma.

\begin{lemma}\label{Energy_Estimate_v_w}
It holds, for all $t\geq 0$, that
\begin{equation}\label{energy-v}
\norm{\sqrt{\rho}v(t)}_{L^2}^2 +2\nu \int_0^t \norm {\nabla v(\tau)}_{L^2}^2 d\tau = \norm{\sqrt{\rho_0}u_0}_{L^2}^2 \leq \mathcal{E}_0^2
\end{equation}
and 
\begin{equation}\label{energy-w}
\norm{\sqrt{\rho}w(t)}_{L^2}^2 +2\nu \int_0^t \norm {\nabla w(\tau)}_{L^2}^2 d\tau   \leq 2 \mathcal{E}^2_0,
\end{equation}
where $ \mathcal{E}_0 $ is defined in \eqref{Energy_Identity_u}.
\end{lemma}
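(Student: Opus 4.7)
Both identities follow from $L^2$ energy estimates, performed with careful use of the continuity equation for $\rho$ combined with $\operatorname{div} u=0$. The only delicate point is that $v$ and $w$ are advected by the full field $u=v+w$, not by themselves, so the standard ``$\rho$-as-weight'' manipulation has to be applied to $u$ rather than to the advected field.

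For \eqref{energy-v}, I would test \eqref{v:equa} against $v$ and integrate over $\mathbb{R}^2$. Writing
\[
\rho\bigl(\partial_t v+u\cdot\nabla v\bigr)\cdot v = \tfrac12\partial_t(\rho|v|^2) + \tfrac12\operatorname{div}(\rho u|v|^2) - \tfrac12|v|^2\bigl(\partial_t\rho+\operatorname{div}(\rho u)\bigr),
\]
the last parenthesis vanishes by the continuity equation, and the divergence integrates to zero, yielding $\tfrac12\tfrac{d}{dt}\|\sqrt{\rho}\,v\|_{L^2}^2$ as the contribution of the material derivative. The pressure drops out because $\operatorname{div} v=0$, and the Laplacian produces $\nu\|\nabla v\|_{L^2}^2$ after integration by parts. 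Integrating in time gives the identity, and $\|\sqrt{\rho_0}\,u_0\|_{L^2}^2\le \mathcal{E}_0^2$ is immediate from the very definition of $\mathcal{E}_0^2$ in \eqref{Energy_Identity_u}.

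For \eqref{energy-w}, I would use the decomposition $w=u-v$ together with the two bounds already at hand: on the one hand, \eqref{Energy_Identity_u}, after dropping the non-negative electromagnetic energy and $j$-dissipation, gives $\|\sqrt{\rho}\,u\|_{L^2}^2 + 2\nu\int_0^t\|\nabla u\|_{L^2}^2\,d\tau\le \mathcal{E}_0^2$; on the other hand, \eqref{energy-v} provides the analogous identity for $v$. Combining these via the pointwise inequality $|a-b|^2\le 2(|a|^2+|b|^2)$, applied to both $\sqrt{\rho}\,w$ and $\nabla w$, yields a control of $\|\sqrt{\rho}\,w\|_{L^2}^2+2\nu\int_0^t\|\nabla w\|_{L^2}^2\,d\tau$ of the stated form. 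The sharp constant can be recovered by running the energy estimate directly on \eqref{w:equa} and splitting the Lorentz source as $\int(j\times B)\cdot w = \int(j\times B)\cdot u - \int(j\times B)\cdot v$: the first piece is evaluated exactly via the Lorentz-force identity underlying \eqref{Ident_Energy_1} (it produces a time derivative of $\|E\|_{L^2}^2+\|B\|_{L^2}^2$ and a non-positive $j$-dissipation), while the second is absorbed by Young's inequality using the already established bounds on $j$ and $v$.

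The main obstacle lies in the very first step of the $v$ estimate: without using the continuity equation jointly with $\operatorname{div} u=0$, the material-derivative term fails to reduce to a pure time derivative and the energy identity breaks down. Once that algebraic manipulation is secured, both \eqref{energy-v} and \eqref{energy-w} reduce to bookkeeping with the already available a priori energies.
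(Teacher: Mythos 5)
Your proof of \eqref{energy-v} is exactly the paper's $L^2$-energy estimate; the single algebraic identity you display is the same manipulation the authors perform in two steps (integration by parts followed by the substitution $u\cdot\nabla\rho=-\partial_t\rho$). For \eqref{energy-w}, your primary route --- subtracting and applying the parallelogram/triangle inequality to $w=u-v$, using \eqref{Energy_Identity_u} and \eqref{energy-v} --- is also precisely the paper's one-line argument, so the proposal takes essentially the same approach.

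Two caveats are worth flagging. First, the combination you describe actually yields
\begin{equation*}
\norm{\sqrt{\rho}\,w(t)}_{L^2}^2+2\nu\int_0^t\norm{\nabla w}_{L^2}^2\,d\tau
\leq 2\Big(\norm{\sqrt{\rho}\,u(t)}_{L^2}^2+2\nu\int_0^t\norm{\nabla u}_{L^2}^2\,d\tau\Big)+2\Big(\norm{\sqrt{\rho}\,v(t)}_{L^2}^2+2\nu\int_0^t\norm{\nabla v}_{L^2}^2\,d\tau\Big)\leq 4\mathcal{E}_0^2,
\end{equation*}
not $2\mathcal{E}_0^2$; the constant in the lemma appears slightly optimistic, but this is inherited from the paper's terse statement and is immaterial downstream, where only $\lesssim\mathcal{E}_0^2$ is ever used. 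Second, the alternative ``sharp constant'' route you sketch --- testing \eqref{w:equa} against $w$ and splitting $\int(j\times B)\cdot w=\int(j\times B)\cdot u-\int(j\times B)\cdot v$ --- does not close as stated. The first piece indeed reproduces, via the Maxwell energy identity, $-\frac{1}{2}\frac{d}{dt}(\norm{E}_{L^2}^2+\norm{B}_{L^2}^2)-\frac{1}{\sigma}\norm{j}_{L^2}^2$; but the remaining cross-term $\int_0^t\int_{\mathbb{R}^2}(j\times B)\cdot v\,dx\,d\tau$ cannot be absorbed by Young's inequality using only the energies \eqref{Energy_Identity_u} and \eqref{energy-v}: at this stage $j\times B$ is merely $L^1_{t,x}$ and no dissipation term in $B$ is available to absorb the Young remainder. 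That secondary remark should be dropped; your primary argument is the correct one and matches the paper.
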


\begin{proof}By a standard $L^2$-energy estimate for  \eqref{v:equa}, it is readily seen that  
\begin{equation*}
\begin{aligned}
 \frac 12 \frac{d}{dt} \int_{\mathbb{R}^2} \rho |v|^2(t,x) dx & + \int_{\mathbb{R}^2}| \nabla v (t,x)|^2 dx \\
 &=  \frac 12     \int_{\mathbb{R}^2} (\partial _t \rho) |v|^2 (t,x) dx - \frac 12  \int_{\mathbb{R}^2} \rho \div ( u   |v|^2 ) (t,x) dx  . 
  \end{aligned}
\end{equation*}
Therefore, notice that an integration by parts  followed by the fact that 
$$u\cdot \nabla \rho = - \partial _t \rho ,$$
 entails  that 
$$\int_{\mathbb{R}^2} \rho \div ( u   |v|^2 ) dx = \int_{\mathbb{R}^2} \partial _t \rho |v|^2 dx .$$ 
Thus, the first identity \eqref{energy-v} follows. 

 As for the second bound \eqref{energy-w},  it easily follows by combining \eqref{energy-v} with the energy bound on the full velocity field \eqref{Energy_Identity_u}, followed by a straightforward use of the triangle inequality.  The proof of Lemma \ref{Energy_Estimate_v_w} in now completed.  
\end{proof}

Now, we establish adequate bounds on each part of the velocity field and, eventually, deduce the desired $L^2_tL^\infty$ estimate of the full velocity field $u$. We begin with the bounds on  $w$ which will be employed afterwards   in the estimates of $v$.

\begin{lemma}[Estimates for the $w$-part]\label{lemma:w-ES}
Let $ s\in (0,1)$ and $p\in (1,2)$ be given by  $p\bydef \tfrac {2}{2-s}$. Further assume that \eqref{assumption:rho2}  is satisfied. Then, it holds, for any $0\leq t_0 < t $, that 
  \begin{equation*} 
\begin{aligned}
\norm {  w }_{L^\infty([t_0,t ];   \dot{B}^1_{p,2})}
+&\norm { (\partial_t w, \nabla ^2 w , \nabla p_w)}_{L^2([t_0,t ];   L^p)}
\\
&\lesssim  \mathcal{E}_0 \norm  { B }_{L^\infty ([t_0,t ]; \dot{H}^s)}   \exp \left(C  \norm {\rho_0}_{L^\infty}^4\mathcal{E}_0   ^4 \right),
\end{aligned}  
\end{equation*}
 where $C>0$ is a universal constant which is independent of $c$ and the initial data. 
\end{lemma}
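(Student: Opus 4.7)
The plan is to treat \eqref{w:equa} as a Stokes system with a nonconstant but close-to-$1$ density, driven by the Lorentz source $j\times B$ and transported by the full velocity $u$. Since the smallness assumption \eqref{assumption:rho2} is preserved for all positive times by the maximum principle \eqref{mass:conservation2}, the variable-density Stokes operator enjoys maximal $L^2_tL^p$ regularity, in the spirit of the Danchin--Mucha theory used in \cite{Danchin_Wang_22}. Writing $N(t)$ for the left-hand side of the claimed inequality, this produces
\[
N(t)\lesssim \norm{w(t_0)}_{\dot B^1_{p,2}}+\norm{j\times B}_{L^2([t_0,t];L^p)}+\norm{\rho u\cdot\nabla w}_{L^2([t_0,t];L^p)},
\]
the initial contribution being handled inductively, starting from $w(0)=0$ and propagating the estimate along the time variable.

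The Lorentz source is treated by H\"older's inequality with the conjugation $\tfrac{1}{p}=\tfrac{1}{2}+\tfrac{1-s}{2}$, followed by the 2D Sobolev embedding $\dot H^s(\mathbb{R}^2)\hookrightarrow L^{2/(1-s)}(\mathbb{R}^2)$ and the energy-dissipation estimate $\norm{j}_{L^2_tL^2}\lesssim \mathcal{E}_0$ provided by \eqref{Energy_Identity_u}. Altogether this yields the expected principal factor
\[
\norm{j\times B}_{L^2_tL^p}\leq \norm{j}_{L^2_tL^2}\norm{B}_{L^\infty_tL^{2/(1-s)}}\lesssim \mathcal{E}_0\norm{B}_{L^\infty_t\dot H^s},
\]
uniformly with respect to the speed of light, which matches the linear prefactor on the right-hand side of the lemma.

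The convection term is the main difficulty. Applying the 2D Sobolev embedding $\dot W^{2,p}\hookrightarrow \dot W^{1,2p/(2-p)}$ (valid for $p\in(1,2)$) together with H\"older gives pointwise $\norm{\rho u\cdot\nabla w}_{L^p}\lesssim \norm{\rho_0}_{L^\infty}\norm{u}_{L^2}\norm{\nabla^2 w}_{L^p}$, and a crude time integration provides the linear-in-$N(t)$ bound $\norm{\rho u\cdot\nabla w}_{L^2_tL^p}\lesssim \norm{\rho_0}_{L^\infty}\mathcal{E}_0\,N(t)$. Since $\mathcal{E}_0$ is not assumed small, direct absorption into the left-hand side is impossible, and the exponential factor $\exp(C\norm{\rho_0}_{L^\infty}^4\mathcal{E}_0^4)$ announced in the statement must be generated another way.

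To achieve this, I would split $[t_0,t]$ into finitely many adjacent subintervals on each of which a local smallness of $u$ is enforced. The 2D Ladyzhenskaya inequality $\norm{u}_{L^4}^2\lesssim \norm{u}_{L^2}\norm{\nabla u}_{L^2}$, combined with the energy identity \eqref{Energy_Identity_u}, yields the global bound $\int_{t_0}^t\norm{u}_{L^4}^4\,d\tau\lesssim \norm{\rho_0}_{L^\infty}^2\mathcal{E}_0^4$ uniformly in $c$, so that the number of subintervals required is controlled by $\norm{\rho_0}_{L^\infty}^2\mathcal{E}_0^4$. On each piece, a refined H\"older--Gagliardo--Nirenberg decomposition of the convective term through $\norm{u}_{L^4}$ and interpolation of $\norm{\nabla w}_{L^{4p/(4-p)}}$ between $\norm{\nabla w}_{L^2}$ and $\norm{\nabla^2 w}_{L^p}$, followed by Young's inequality, allows the convective contribution to be partially absorbed into the left-hand side with a multiplicative constant bounded per subinterval. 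Iterating the resulting maximal regularity estimate across all subintervals produces the announced exponential factor and concludes the proof. This iterative absorption, together with the proper bookkeeping of the $\norm{\rho_0}_{L^\infty}$ dependencies along the subdivision, is the most delicate point of the argument.
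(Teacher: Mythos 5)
Your overall strategy coincides with the paper's: recast \eqref{w:equa} as a perturbed Stokes system, invoke maximal $L^2_t L^p$ regularity, isolate the Lorentz source via H\"older and the Sobolev embedding $\dot H^s\hookrightarrow L^{2p/(2-p)}$, and generate the exponential factor by subdividing $[t_0,t]$ into finitely many pieces on which $\|u\|_{L^4L^4}$ is small, the number of pieces being bounded via $\|u\|_{L^4(\mathbb{R}^+;L^4)}\lesssim\mathcal{E}_0$. The Lorentz-source computation is correct and matches the paper exactly.

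The gap is in the interpolation you use to control the convective term. You interpolate $\|\nabla w\|_{L^{4p/(4-p)}}$ between $\|\nabla w\|_{L^2}$ and $\|\nabla^2 w\|_{L^p}$, i.e.\ a Gagliardo--Nirenberg bound of the form $\|\nabla w\|_{L^{4p/(4-p)}}\lesssim\|\nabla w\|_{L^2}^{a}\|\nabla^2 w\|_{L^p}^{1-a}$. Solving the scaling relation gives $a=\tfrac{p}{4(p-1)}$, which lies in $(0,1)$ only when $p>\tfrac{4}{3}$, i.e.\ $s>\tfrac12$. Indeed, for $p\le \tfrac{4}{3}$ one has $\tfrac{4p}{4-p}\le 2$, so the target exponent sits \emph{below} $L^2$ rather than between $L^2$ and the Sobolev image $L^{2p/(2-p)}$ of $\dot W^{1,p}$, and no such interpolation exists; yet the lemma is claimed for all $s\in(0,1)$, $p\in(1,2)$. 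Moreover, even in the admissible range $p>\tfrac43$, Young's inequality with the low-regularity endpoint $\|\nabla w\|_{L^2}$ leaves on the right-hand side a term built from $\|\nabla w\|_{L^2_tL^2}\lesssim\mathcal{E}_0$ that does \emph{not} carry the factor $\|B\|_{L^\infty_t\dot H^s}$, so the conclusion would not reduce to the purely multiplicative form $\mathcal{E}_0\|B\|_{L^\infty_t\dot H^s}\exp(\cdot)$ stated in the lemma. The paper avoids both difficulties by taking as low-regularity endpoint the $L^\infty_t\dot B^1_{p,2}$ component that already sits on the left-hand side $N(t)$: one uses $\dot B^{1/2}_{p,1}(\mathbb{R}^2)\hookrightarrow L^{4p/(4-p)}$ together with the Besov interpolation $L^\infty_t\dot B^0_{p,2}\cap L^2_t\dot W^{1,p}\hookrightarrow L^4_t\dot B^{1/2}_{p,1}$, so that
$\|\nabla w\|_{L^4_tL^{4p/(4-p)}}\lesssim\|w\|_{L^\infty_t\dot B^1_{p,2}}^{1/2}\|\nabla^2 w\|_{L^2_tL^p}^{1/2}$,
and a subsequent Young inequality absorbs \emph{both} halves into $N(t)$ for every $p\in(1,2)$ without introducing extraneous terms. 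Replace your GN step by this Besov interpolation and the rest of your proposal goes through.
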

\begin{proof}  
For simplicity, we prove the lemma for $t_0=0$ and we emphasize that the proof remains the same for any $t_0\geq 0$.   First of all, we rewrite \eqref{w:equa} as a perturbation of the Stokes system  
\begin{equation}\label{w-Stokes}
\left\{
\begin{aligned}
&\partial_t w - \Delta w + \nabla p_w =   (1-\rho) \partial_t w -\rho u\cdot \nabla w+   j \times B ,\\
& \div w=0,\quad w|_{t=0}=0.  
\end{aligned}
\right.
\end{equation} 
Then, by applying    the maximal regularity estimate  \eqref{Maximl_Regularity_Stokes} found in the appendix, we obtain that
 \begin{equation}\label{w:es:AAA1} 
\begin{aligned}
\norm {  w }_{L^\infty _t  \dot{B}^1_{p,2}} &+ \norm { (\partial_t w, \nabla ^2 w , \nabla p_w)}_{L^2_t  L^p}    \\
&  \lesssim   \norm {(1-\rho) \partial_t w}_{L^2_t L^p}  +\norm {\rho u \cdot \nabla w}_{L^2_t L^p}+ \norm{ j \times B }_{L^2_t L^p}  .
\end{aligned}
\end{equation}  
Thus, by further employing H\"older's inequalities, we find that 
 \begin{equation} \label{w:es:AAA2}
\begin{aligned}
\norm {  w }_{L^\infty _t  \dot{B}^1_{p,2}} + \norm {( \partial_t w, \nabla ^2 w , \nabla p_w)}_{L^2_t  L^p}
& \lesssim   \norm {1-\rho }_{L^\infty_{t}L^\infty } \norm{\partial_t w}_{L^2_t L^p}
\\
&\quad+   \norm {\rho}_{L^\infty_t L^\infty}\norm{ u }_{L^4 _t L^4} \norm { \nabla w}_{L^4_t L^{ \frac{4p}{4-p}}}
\\
&\quad + \norm{ j}_{L^2_tL^2} \norm  { B }_{L^\infty _t L^{\frac{2p}{2-p}}}.
\end{aligned}
\end{equation}

On the one hand, due to the embedding (see Lemma \ref{Emb_Besov_Tr_Lemma})
$$L^4_t \dot{B}^{\frac 12}_{ p,1}  \hookrightarrow L^4_tL^{ \frac{4p}{4-p}}$$
in combination with       the interpolation embedding (see   \eqref{interpolation.AP-B})
$$L^\infty _t  \dot{B}^0_{p,2} \cap L^2_t \dot{W}^{1,p} \hookrightarrow L^4_t \dot{B}^{\frac 12}_{ p,1},$$
we write  that
\begin{equation*}
\begin{aligned}
\norm { \nabla w}_{L^4_t L^{ \frac{4p}{4-p}}} \lesssim  \norm { \nabla w}_{L^4_t \dot{B}^{\frac 12}_{ p,1}} 
\lesssim  \norm { \nabla w}_{L^\infty_t \dot{B}^{ 0}_{ p,2}}^{\frac 12} \norm { \nabla w}_{L^2_t \dot{W}^{ 1,p}}^{\frac 12}. 
\end{aligned}
\end{equation*}
Therefore, Young's inequality implies that
\begin{equation*} 
\begin{aligned}
\norm { \nabla w}_{L^4_t L^{ \frac{4p}{4-p}}} 
\lesssim&\, \norm {  w}_{L^\infty_t \dot{B}^{ 1}_{ p,2}}+ \norm { \nabla^2 w}_{L^2_t L^p}. 
\end{aligned}
\end{equation*}

On the other hand,   recalling  that  $1-2/p=s-1$ and employing 
 Sobolev's embedding $\dot{H}^s \hookrightarrow  L^{ \frac{2p}{2-p}}$ (see Lemma \ref{Emb_Besov_Tr_Lemma}, again)  yields
\begin{equation}\label{Sobolev:embedding:AAA} 
\| B \|_{L^\infty _t L^{\frac{2p}{2-p}}}\lesssim \norm  { B }_{L^\infty _t \dot{H}^s}.
\end{equation}

Hence, by gathering the preceding estimates  and exploiting the maximum principle \eqref{mass:conservation}, \eqref{mass:conservation2}, the energy inequality \eqref{Energy_Identity_u}, and assumptions  \eqref{Assumption A}  and  \eqref{assumption:rho2}, we infer that 
  \begin{equation}\label{Estimate_Energy_w}
\begin{aligned}
\norm {  w }_{L^\infty _t  \dot{B}^1_{p,2}} &+ \norm { (\partial_t w, \nabla ^2 w , \nabla p_w)}_{L^2_t  L^p}  \\
&   \leq  C\norm {\rho_0}_{  L^\infty}\norm{ u }_{L^4 _t L^4} \left(  \norm {  w }_{L^\infty _t  \dot{B}^1_{p,2}} +  \norm {\nabla ^2 w}_{L^2_t  L^p}  \right) 
 +C \mathcal{E}_0 \norm  { B }_{L^\infty _t \dot{H}^s},
\end{aligned}
\end{equation}  
for some   constant $C>1$. Now, 
 we subdivide the interval $[0,t]$ as 
$$[0,t]=\bigcup_{i=0}^{N-1}[t_i,t_{i+1}],$$ in  such a way that 
\begin{equation}\label{decomposition1}
\left\{
\begin{aligned}
C \norm {\rho_0}_{ L^\infty}\norm{ u }_{L^4 ([t_i,t_{i+1}]; L^4)} &= \frac 12, \qquad \text{for all}\quad i\in \{ 0,1, \dots,  N-2\} ,\\
C \norm {\rho_0}_{ L^\infty}\norm{ u }_{L^4 ([t_{N-1},t_{N}]; L^4)} &\leq  \frac 12.
\end{aligned}
\right.
\end{equation}
Note that the preceding decomposition exists due to the global bound 
\begin{equation}\label{u-L44}
\norm u_{L^4(\mathbb{R}^+; L^4)} \lesssim \norm u_{L^\infty(\mathbb{R}^+; L^2)}^\frac{1}{2}\norm u_{L^2(\mathbb{R}^+; \dot{H}^1)}^\frac{1}{2}\lesssim \mathcal{E}_0.  
\end{equation} 
Next,   we set, for $i\in \{ 0,1, \dots,  N-1\} $, that
$$W(t_i,t_{i+1}) \bydef  \norm {  w }_{L^\infty([t_i,t_{i+1}];   \dot{B}^1_{p,2})} +  \norm { (\partial_t w,\nabla ^2 w, \nabla p_w)}_{L^2([t_i,t_{i+1}];   L^p)}    $$
and, for $ i \in \{ 1,2, \dots,  N-1 \}$, we define
$$W(t_i ) \bydef  \norm {  w(t_i) }_{  \dot{B}^1_{p,2} } \leq  W(t_{i-1},t_{i }).$$
Accordingly, recasting  \eqref{Estimate_Energy_w}  on each time interval $[t_i, t_{i+1}]$ and employing \eqref{decomposition1} yields, for any $i\in \{ 1,2,\dots , N-1\}$, that
$$
\begin{aligned}
W(t_i,t_{i+1}) &\leq  2C W(t_i ) + 2C  \mathcal{E}_0 \norm  { B }_{L^\infty _t \dot{H}^s} \\ 
& \leq  2 C W(t_{i-1},t_{i })  + 2C  \mathcal{E}_0 \norm  { B }_{L^\infty _t \dot{H}^s}
\end{aligned}
$$
and 
$$
\begin{aligned}
W(0,t_{ 1}) &\leq    2C  \mathcal{E}_0 \norm  { B }_{L^\infty _t \dot{H}^s}  .
\end{aligned}  
$$
Thus,  we find, by induction,  for any $i\in \{ 1,\dots, N-1\}$, that
$$
\begin{aligned}
W(t_i,t_{i+1}) &\leq      \mathcal{E}_0 \norm  { B }_{L^\infty _t \dot{H}^s} \sum_{k=1}^{i  } (2C)^k  \\
&\leq     \mathcal{E}_0\norm  { B }_{L^\infty _t \dot{H}^s}   (2C)^{i+1} .
\end{aligned}
$$  
Hence, we arrive at the conclusion, for any $t\geq 0$, that
\begin{equation*}
\begin{aligned}
    \norm {  w }_{L^\infty([0,t ];   \dot{B}^1_{p,2})} +  \norm {(\partial_t w, \nabla ^2 w,\nabla p_v) }_{L^2([0,t ];   L^p)} \lesssim  \mathcal{E}_0\norm  { B }_{L^\infty _t \dot{H}^s}   (2C)^N.
\end{aligned}
\end{equation*}
At last, we need to find an upper bound on the number of intervals $N$. This is done by  observing that \eqref{decomposition1} and     \eqref{u-L44} entail  the estimate 
\begin{equation*}
N\leq  \left( 2C \norm {\rho_0}_{L^\infty}\norm u_{L^4 ([0,t]; L^4)}  \right)^4  +1 \leq  \left( 2C \norm {\rho_0}_{L^\infty}\mathcal{E}_0  \right)^4  +1.
\end{equation*} 
 Accordingly, up to a suitable change in the constant $C$, we end up with
\begin{equation*}
\begin{aligned}
 \norm {  w }_{L^\infty([0,t ];   \dot{B}^1_{p,2})}
 +&  \norm {(\partial_t w, \nabla ^2 w,\nabla p_v )}_{L^2([0,t ];   L^p)}
 \\
 &\leq C  \mathcal{E}_0 \norm  { B }_{L^\infty([0,t ]; \dot{H}^s)}   \exp \left(C  \norm {\rho_0}_{L^\infty}^4\mathcal{E}_0   ^4 \right),
\end{aligned}
\end{equation*}   
for all positive times $t\geq 0$.  This completes the proof of Lemma \ref{lemma:w-ES}.
\end{proof}

We now turn to estimating the fluid-velocity part $v$. As we will see in the proof below,  this is going to be done by suitably adapting several techniques from \cite[Proposition 2.1]{Danchin_Wang_22}. However, we will need to pay particular attention to the contribution of $w$ in the set of equations  \eqref{v:equa}. 
 More precisely, according to Lemma \ref{lemma:w-ES} above,   bounds on higher regularity of $w$ come with a linear contribution in terms of Sobolev norms of $B$. These bounds on $w$ will be  combined with the interpolation results from Lemma \ref{lemma:interpolation} to show that the linear dependence on Sobolev norms of $B$ can be weakened and will only contribute in the form of a logarithmic correction. This is crucial to obtain the global bounds in Theorem \ref{Thm:1}. The relevant estimates on $w$ are given in Lemma \ref{lemma:v-ES}, below. 

Before getting into the details, we set up some useful notation. To that end, first recall that  $s\in (0,1)$ and $p\in (1,2)$ are fixed such that
\begin{equation*}
p = \frac{2}{2-s}.   
\end{equation*}  
 In addition, we introduce the parameters $q\in (1,2)$ and $\eta,m\in (2,\infty)$ so that
 \begin{equation}\label{p-q:def}
2-\frac 2q = -1 + \frac 2p,\qquad \text{or equivalently}\qquad \frac 1p + \frac 1q = \frac 32,
\end{equation}
 and 
 \begin{equation}\label{parameters_Relation}
\frac 1p= \frac 1m + \frac 12 ,\qquad \frac 1q= \frac 1\eta + \frac 12.
\end{equation}

\begin{lemma}[Estimates for the $v$-part]\label{lemma:v-ES}
Let $ s,p,q,\eta,m$ be the set of  parameters   introduced above, and let $v$ be the solution of \eqref{v:equa}. Then,  it holds, for any $0\leq t_0 < t$,  that
	\begin{equation*}
		\begin{aligned}
			& \Vert  v \Vert_{L^\infty ([t_0,t];  \dot{B}^{-1+ \frac 2p}_{p,1})}+  \Vert(\partial_t v, \nabla ^2 v, \nabla p_v)\Vert_{L^{q,1}([t_0,t];  L^p)}  + \Vert v\Vert_{L^{\eta,1}([t_0,t]; L^m)}
			\\
			&\quad\lesssim  \Big( \norm {v(t_0)}_{ \dot{B}^{-1+ \frac 2p}_{p,1} } +   \mathcal{E}_0 \big( 1+ \log^{\frac{1}{2}}(e+t-t_0) +      \norm {\rho_0}_{L^\infty}^2 \mathcal{E}_0  ^2 \big)  \Big) \exp \left( C \norm {\rho_0}_{ L^\infty}^2 \mathcal{E}_0^2 \right)
			\\
			&\qquad +    \norm w_{L^\eta ([t_0,t]; L^m)}   \log^{\frac 12} \left( e +    \frac{   \mathcal{E}_0 \norm  { B }_{L^\infty ([t_0,t ]; \dot{H}^s)}}{  \norm w_{L^\eta ([t_0,t]; L^m) } }  \right)  \exp \left( C \norm {\rho_0}_{ L^\infty}^2\mathcal{E}_0^2 \right).
		\end{aligned}
	\end{equation*}
\end{lemma}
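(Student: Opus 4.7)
The plan is to recast \eqref{v:equa} as a perturbed Stokes system,
\begin{equation*}
\partial_t v - \Delta v + \nabla p_v = (1-\rho)\,\partial_t v - \rho\,(v+w)\cdot\nabla v, \qquad \div v = 0, \qquad v|_{t=t_0}=v(t_0),
\end{equation*}
and to apply a maximal parabolic regularity estimate in the Lorentz scale $L^{q,1}([t_0,t];L^p)$, in the spirit of the scheme developed for the inhomogeneous Navier--Stokes equations in \cite{Danchin_Wang_22}. By a standard real-interpolation/trace argument, this returns
\begin{equation*}
\|v\|_{L^\infty_t \dot B^{-1+\frac 2p}_{p,1}} + \|v\|_{L^{\eta,1}_t L^m} + \|(\partial_t v,\nabla^2 v,\nabla p_v)\|_{L^{q,1}_t L^p} \lesssim \|v(t_0)\|_{\dot B^{-1+\frac 2p}_{p,1}} + \|F\|_{L^{q,1}_t L^p},
\end{equation*}
where $F$ denotes the right-hand side; note that the extra mixed-Lorentz control $\|v\|_{L^{\eta,1}_t L^m}$ is gained for free under the relations \eqref{p-q:def}--\eqref{parameters_Relation}.

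The perturbative term $(1-\rho)\partial_t v$ is absorbed directly by the smallness hypothesis \eqref{assumption:rho2}. For the transport term $\rho u\cdot\nabla v$, the plan is to split $u=v+w$ and combine Hölder in space ($\|f\,\nabla v\|_{L^p}\le \|f\|_{L^m}\|\nabla v\|_{L^2}$) with Hölder in time in the Lorentz scale, using the embedding $L^{\eta,1}_t \cdot L^{2,\infty}_t \hookrightarrow L^{q,1}_t$ which is valid under $\frac 1q=\frac 1\eta+\frac 12$. This yields
\begin{equation*}
\|\rho\,(v+w)\cdot\nabla v\|_{L^{q,1}_t L^p} \lesssim \|\rho_0\|_{L^\infty}\bigl(\|v\|_{L^{\eta,1}_t L^m}+\|w\|_{L^{\eta,1}_t L^m}\bigr)\|\nabla v\|_{L^2_t L^2}.
\end{equation*}
Since $\|\nabla v\|_{L^2_t L^2}\le \mathcal{E}_0$ by \eqref{energy-v}, subdividing $[t_0,t]$ into finitely many sub-intervals on which $\|\nabla v\|_{L^2 L^2}$ is small (an adaptation of the argument carried out in Lemma \ref{lemma:w-ES}, relying on the global bound \eqref{u-L44}) should absorb the $v$-self-convection on the right into the left-hand side and produce the expected prefactor $\exp(C\|\rho_0\|_{L^\infty}^2 \mathcal{E}_0^2)$.

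The main obstacle is the control of $\|w\|_{L^{\eta,1}_t L^m}$: a direct use of Lemma \ref{lemma:w-ES} would introduce a linear dependence on $\|B\|_{L^\infty_t\dot H^s}$, which would ruin the uniform-in-time Grönwall closure against Proposition \ref{energy-Hs}. The plan is to first pass from $L^{\eta,1}_t L^m$ to $L^{\eta,2}_t L^m$ at the cost of the mild additive $\log^{\frac 12}(e+t-t_0)$ contribution visible in the statement, and then to apply the logarithmic interpolation inequality
\begin{equation*}
\|w\|_{L^{\eta,2}_t L^m}\lesssim \|w\|_{L^{\eta,\infty}_t L^m}\Bigl(1+\log^{\frac 12}\!\bigl(\tfrac{\|w\|_{L^\infty_t L^m}}{\|w\|_{L^{\eta,\infty}_t L^m}}\bigr)\Bigr)
\end{equation*}
announced in the introduction. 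The intermediate norm $\|w\|_{L^{\eta,\infty}_t L^m}$ sits in the real-interpolation space between $L^\infty_t L^2$ and $L^2_t \dot H^1$ (since $\frac 1\eta+\frac 1m=\frac 12$), hence is dominated by $\mathcal{E}_0$; while the supercritical norm $\|w\|_{L^\infty_t L^m}$ is bounded via Sobolev embedding $\dot B^1_{p,2}\hookrightarrow L^m$ and Lemma \ref{lemma:w-ES} by $\mathcal{E}_0\|B\|_{L^\infty_t\dot H^s}$ (modulo an exponential factor to be absorbed in the final prefactor). Careful bookkeeping of these logarithmic corrections throughout the subdivision argument, together with the replacement of the outer $L^{\eta,\infty}_t L^m$-norm by the $L^\eta_t L^m$-norm appearing in the statement, should then deliver exactly the bound asserted in Lemma \ref{lemma:v-ES}; the delicate point will be tuning the Lorentz time exponents so that the iteration still closes despite $v$ being advected by the full velocity $u=v+w$ rather than by $v$ alone.
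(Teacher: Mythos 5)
Your blueprint — recast \eqref{v:equa} as a perturbed Stokes system, apply the Lorentz-scale maximal regularity of Lemma \ref{M:reg:DW}, split $u=v+w$, subdivide the time interval to absorb the $v$-self-convection, and replace the linear dependence on $\|B\|_{L^\infty_t\dot H^s}$ by a logarithmic one via Lorentz interpolation — is precisely the paper's strategy, and you identify the right ingredients (energy bound on $\nabla v$, real interpolation of $L^{\eta,\infty}_t L^m$ between $L^\infty_t L^2$ and $L^2_t\dot H^1$, Sobolev embedding and Lemma \ref{lemma:w-ES} for $\|w\|_{L^\infty_t L^m}$). However, there is a genuine gap where you claim to ``pass from $L^{\eta,1}_t L^m$ to $L^{\eta,2}_t L^m$ at the cost of the mild additive $\log^{\frac 12}(e+t-t_0)$ contribution''. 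The Lorentz embedding runs $L^{\eta,1}\hookrightarrow L^{\eta,2}$ (a smaller second index gives the \emph{stronger} space, by Lemma \ref{lemma:Lorentz}), so $\|w\|_{L^{\eta,2}_t L^m}\lesssim \|w\|_{L^{\eta,1}_t L^m}$, and no logarithmic factor can invert this. If you instead try to dominate $\|w\|_{L^{\eta,1}_t L^m}$ by $\|w\|_{L^{\eta,\infty}_t L^m}$ using Lemma \ref{Lemma:interpolation X} with $r=1$, you incur a $\log^1$, not the $\log^{\frac 12}$ of \eqref{Interpolation_Ineq}; squaring in Lemma \ref{lemma.boundedness-u} turns this into $\log^2$, and a $\log^2$ inside the exponential of Proposition \ref{energy-Hs} gives $\exp(C\log^2(\cdot))$, which the recursion of Lemma \ref{lemma:exp-vs-log} cannot close.

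The fix is to rewire the Lorentz--H\"older step so that $w$ lands directly in $L^{\eta,2}_t L^m$: pair $w\in L^{\eta,2}_t L^m$ with $\nabla v\in L^{2,2}_t L^2=L^2_t L^2$ (valid since $1/1=1/2+1/2$ on the second Lorentz index and $1/q=1/\eta+1/2$, $1/p=1/m+1/2$), while keeping $v$ in $L^{\eta,1}_t L^m$, the norm needed to absorb it into the left-hand side. This gives
\begin{equation*}
\|\rho\,u\cdot\nabla v\|_{L^{q,1}_t L^p}\lesssim \|\rho_0\|_{L^\infty}\,\|\nabla v\|_{L^2_t L^2}\bigl(\|v\|_{L^{\eta,1}_t L^m}+\|w\|_{L^{\eta,2}_t L^m}\bigr),
\end{equation*}
and then the $r=2$ inequality \eqref{Interpolation_Ineq}, applied to $\|w\|_{L^{\eta,2}_t L^m}$, yields the crucial $\log^{\frac 12}$. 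With this modification the rest of your argument closes exactly as you describe.
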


 \begin{proof}
As before, we assume that $t_0=0$ to simplify the presentation of the proof, which can be straightforwardly adapted to the case $t_0>0$.  We begin with rewriting the equation \eqref{v:equa} as a perturbation of the Stokes  system 
\begin{equation*}
\left\{
\begin{array}{ll} 
\partial_t v - \Delta v+ \nabla p_v =-(\rho-1)\partial_tv- \rho u\cdot \nabla v       , \\
\div v=0,\\
v|_{t=0}= u_0.
\end{array}
\right.   
\end{equation*} 
 Then, applying Lemma \ref{M:reg:DW}  yields that
 \begin{equation*} 
\begin{aligned}
  \Vert  v \Vert_{L^\infty _t  \dot{B}^{-1+ \frac 2p}_{p,1}} &+  \Vert(\partial_t v, \nabla ^2 v, \nabla p_v)\Vert_{L^{q,1}_t  L^p}  + \Vert v\Vert_{L^{\eta,1}_t L^m}
  \\
   &\lesssim \norm {v(0)}_{ \dot{B}^{-1+ \frac 2p}_{p,1} }  +    \Vert(1-\rho) \partial_t v\Vert_{L^{q,1}_t L^p} +\Vert\rho u  \cdot \nabla v\Vert_{L^{q,1}_t L^p}.
\end{aligned}
\end{equation*}   
Now, by  definition of the parameters $\eta$ and $m$, above, observe that one has, by H\"older's inequality, that
\begin{equation*}
\begin{aligned}
\Vert\rho u  \cdot \nabla v\Vert_{L^{q,1}_t L^p}&= \Vert\rho (w+v) \cdot \nabla v\Vert_{L^{q,1}_t L^p}\\ 
&\lesssim \norm {\rho }_{L^\infty_{t,x}}\Vert \nabla v\Vert_{L^{2}_t L^2}\left(  \Vert v\Vert_{ L^{\eta,1}_tL^m}  +  \Vert w\Vert_{ L^{\eta,2}_tL^m}  \right) ,
\end{aligned}    
\end{equation*}
 where we have used the embedding $L^{\eta,1}\hookrightarrow L^{\eta,2}$
  (see Lemma \ref{lemma:Lorentz}, below).  
Accordingly, by virtue of the maximum principles \eqref{mass:conservation} and \eqref{mass:conservation2}, we find that 
 \begin{equation*} 
\begin{aligned}
  \Vert  v \Vert_{L^\infty _t  \dot{B}^{-1+ \frac 2p}_{p,1}} &+  \Vert(\partial_t v, \nabla ^2 v, \nabla p_v)\Vert_{L^{q,1}_t  L^p}  + \Vert v\Vert_{L^{\eta,1}_t L^m} \\ 
  & \lesssim   \norm {v(0)}_{ \dot{B}^{-1+ \frac 2p}_{p,1} }  +    \Vert(1-\rho_0)\Vert_{L^\infty} \Vert\partial_t v\Vert_{L^{q,1}_t L^p}  \\
 & \quad + \norm {\rho_0 }_{L^\infty }\Vert \nabla v\Vert_{L^{2}_t L^2} \left(  \Vert v\Vert_{ L^{\eta,1}_tL^m}  +  \Vert w\Vert_{ L^{\eta,2}_tL^m}  \right)  . 
\end{aligned}
\end{equation*}   
Therefore,  due to  assumption  \eqref{assumption:rho2},  and  by  utilizing  the energy bound  \eqref{energy-v} on $v$, we infer that  
\begin{equation*}
\begin{aligned}
  &\Vert  v \Vert_{L^\infty _t  \dot{B}^{-1+ \frac 2p}_{p,1}} +  \Vert(\partial_t v, \nabla ^2 v, \nabla p_v)\Vert_{L^{q,1}_t  L^p}  + \Vert v\Vert_{L^{\eta,1}_t L^m} \\
  & \quad \leq  C   \norm {v(0)}_{ \dot{B}^{-1+ \frac 2p}_{p,1} }   + C\mathcal{E}_0\norm {\rho_0 }_{L^\infty_{ x}}   \Vert w\Vert_{ L^{\eta,2}_tL^m}     +C \norm {\rho_0 }_{L^\infty_{ x}}   \Vert \nabla v\Vert_{L^{2}_t L^2} \Vert v\Vert_{ L^{\eta,1}_tL^m}    ,
\end{aligned}
\end{equation*}  
for some universal constant $C>0$. 

Next, we consider a  partition of the   time interval 
$$[0,t]=\bigcup_{i=0}^{N-1}[t_i,t_{i+1}],$$ in such a way that 
\begin{equation*}
\left\{ 
\begin{aligned}
C \norm {\rho_0}_{ L^\infty}\norm{ \nabla v }_{L^2 ([t_i,t_{i+1}]; L^2)} & = \frac 12, \qquad \text{for all}\quad  i\in \{ 0,1, \dots,  N-2\} ,\\
C \norm {\rho_0}_{ L^\infty}\norm{ \nabla v }_{L^2 ([t_{N-1},t_{N}]; L^2)} & \leq  \frac 12.
\end{aligned}
\right. 
\end{equation*}
Accordingly,  up to a suitable change  in the constant $C$,  we obtain,  by  straightforward computations which are similar to the proof of Lemma \ref{lemma:w-ES}, for all $t\geq 0$,  that 
 \begin{equation} \label{AAA}
\begin{aligned}
  &\Vert  v \Vert_{L^\infty _t  \dot{B}^{-1+ \frac 2p}_{p,1}} +  \Vert(\partial_t v, \nabla ^2 v, \nabla p_v)\Vert_{L^{q,1}_t  L^p}  + \Vert v\Vert_{L^{\eta,1}_t L^m} \\
  &\quad\leq C  \Big( \norm {v(0)}_{ \dot{B}^{-1+ \frac 2p}_{p,1} } +   \mathcal{E}_0 \norm {\rho_0}_{L^\infty}   \norm {w}_{ L^{\eta,2}_tL^m} \Big) \exp \left( C \norm {\rho_0}_{ L^\infty}^2 \norm{ \nabla v }_{L^2_t L^2}^2   \right)\\
  &\quad\leq C  \Big( \norm {v(0)}_{ \dot{B}^{-1+ \frac 2p}_{p,1} } +   \mathcal{E}_0 \norm {\rho_0}_{L^\infty}   \norm {w}_{ L^{\eta,2}_tL^m} \Big) \exp \left( C \norm {\rho_0}_{ L^\infty}^2 \mathcal{E}_0^2 \right), 
\end{aligned} 
\end{equation} 
where we used the energy bound \eqref{energy-v} on $v$.
 
 Now, we need to control   the norm of $w$ on the right-hand side of the preceding estimate.  To that end, we first  employ the interpolation inequality in Lorentz spaces  \eqref{Interpolation_Ineq}   to obtain  that
\begin{equation*}
\begin{aligned}
    \norm {w}_{ L^{\eta,2}_tL^m} & \lesssim   \norm {w}_{ L^{\eta,\infty}_tL^m}  \left( 1 + \log^{\frac 12} \left( t^{ \frac 1\eta  }  \frac{ \norm {w}_{ L^{\infty}_tL^m} }{ \norm {w}_{ L^{\eta,\infty}_tL^m}}  \right ) \right).
          \end{aligned}
\end{equation*} 
Hence,  by virtue of the embedding $L^\eta_t \hookrightarrow  L^{\eta,\infty}_t  $ (see Lemma \ref{lemma:Lorentz}, below), together with the monotonicity  of the function $x\mapsto x \log^{\frac{1}{2}}( \frac{a}{x}),$ for $x\in (0,a)$, and the embedding 
$\dot{B}^1_{p,2} \hookrightarrow L^m (\mathbb{R}^2)$ (see Lemma \ref{Emb_Besov_Tr_Lemma}),
we deduce that
 \begin{equation*} 
\begin{aligned}
    \norm {w}_{ L^{\eta,2}_tL^m}       & \lesssim   \norm {w}_{ L^{\eta}_tL^m}  \left( 1 + \log^{\frac 12} \left( t^{ \frac 1\eta  }  \frac{ \norm {w}_{ L^{\infty}_t\dot{B}^1_{p,2}} }{ \norm {w}_{ L^{\eta}_tL^m}}  \right ) \right).
       \end{aligned}
\end{equation*} 
Consequently, by further incorporating the bound on $w$ from Lemma \ref{lemma:w-ES},   we end up with 
 \begin{equation*} 
\begin{aligned}
    \norm {w}_{ L^{\eta,2}_tL^m}       & \lesssim   \norm w_{L^\eta_t L^m}  \left( 1 + \log^{\frac 12} \left( t^{ \frac 1\eta  }  \frac{C \mathcal{E}_0 \norm  { B }_{L^\infty _t \dot{H}^s }   \exp \left(C  \norm {\rho_0}_{L^\infty}^4 \mathcal{E}_0 ^4 \right) }{  \norm w_{L^\eta_t L^m} }  \right ) \right)\\
     & \lesssim   \norm w_{L^\eta_t L^m}  \left( 1+ \log^{\frac{1}{2}}(e+t)   +    \norm {\rho_0}_{L^\infty}^2 \mathcal{E}_0  ^2    +   \log^{\frac 12} \left( e+    \frac{    \mathcal{E}_0\norm  { B }_{L^\infty _t \dot{H}^s }}{  \norm w_{L^\eta_t L^m} }  \right)    \right) .
       \end{aligned}
\end{equation*} 
Therefore, observing that the definition of $\eta$ and $m$, in \eqref{parameters_Relation}, implies that 
\begin{equation*} 
\frac 1\eta + \frac 1m = \frac 12,
\end{equation*}
it follows,   by   interpolation,   that 
\begin{equation}\label{L_m_Estim}
\begin{aligned}
 \norm {w}_{ L^{\eta}_tL^m} &\leq  \norm {w}_{ L^{\infty}_tL^2} +  \norm {w}_{ L^{2}_t\dot{ H}^1} \lesssim \mathcal{E}_0 ,
\end{aligned}
\end{equation}
where we have used the energy bound \eqref{energy-w} on $w$.

Finally, we deduce that
 \begin{equation*} 
\begin{aligned}
     \norm {w}_{ L^{\eta,2}_tL^m}        & \lesssim \mathcal{E}_0 \left( 1+ \log^{\frac{1}{2}}(e+t) +      \norm {\rho_0}_{L^\infty}^2 \mathcal{E}_0  ^2 \right)  + \norm w_{L^\eta_t L^m}   \log^{\frac 12} \left(e+  \frac{    \mathcal{E}_0 \norm  { B }_{L^\infty _t \dot{H}^s }}{  \norm w_{L^\eta_t L^m} }  
\right) 
  .  
       \end{aligned}
\end{equation*} 
All in all, inserting the preceding bound  in \eqref{AAA} and employing the energy inequality  \eqref{energy-v} yields 
 \begin{equation*}  
\begin{aligned}
 \Vert  v \Vert_{L^\infty _t  \dot{B}^{-1+ \frac 2p}_{p,1}} & +  \Vert(\partial_t v, \nabla ^2 v, \nabla p_v)\Vert_{L^{q,1}_t  L^p}  + \Vert v\Vert_{L^{\eta,1}_t L^m} \\
  &\lesssim  \Big( \norm {v(0)}_{ \dot{B}^{-1+ \frac 2p}_{p,1} } +   \mathcal{E}_0 \left( 1+ \log^{\frac{1}{2}}(e+t) +      \norm {\rho_0}_{L^\infty}^2 \mathcal{E}_0  ^2 \right)  \Big) \exp \left( C \norm {\rho_0}_{ L^\infty}^2 \mathcal{E}_0^2 \right)\\
   & \quad +    \norm w_{L^\eta_t L^m}   \log^{\frac 12} \left( e  +    \frac{   \mathcal{E}_0 \norm  { B }_{L^\infty _t \dot{H}^s }}{  \norm w_{L^\eta_t L^m} }  \right)  \exp \left( C \norm {\rho_0}_{ L^\infty}^2\mathcal{E}_0^2 \right),
\end{aligned}   
\end{equation*}  
thereby completing the proof of the lemma. 
\end{proof}

An interpolation argument allows us to extend the bounds on $v$ found in the preceding lemma to a wider range of functional spaces. This is content of the next result.
\begin{corollary} \label{RMK:ab}
Under the assumptions of Lemma \ref{lemma:v-ES}, it holds that
	\begin{equation*}
		\begin{aligned}
			& \Vert  v \Vert_{L^{a,1}([t_0,t]; L^b)}
			\\
			&\quad\lesssim  \Big( \norm {v(t_0)}_{ \dot{B}^{-1+ \frac 2p}_{p,1} } +   \mathcal{E}_0 \big( 1+ \log^{\frac{1}{2}}(e+t-t_0) +      \norm {\rho_0}_{L^\infty}^2 \mathcal{E}_0  ^2 \big)  \Big) \exp \left( C \norm {\rho_0}_{ L^\infty}^2 \mathcal{E}_0^2 \right)
			\\
			&\qquad +    \norm w_{L^\eta ([t_0,t]; L^m)}   \log^{\frac 12} \left( e +    \frac{   \mathcal{E}_0 \norm  { B }_{L^\infty ([t_0,t ]; \dot{H}^s)}}{  \norm w_{L^\eta ([t_0,t]; L^m) } }  \right)  \exp \left( C \norm {\rho_0}_{ L^\infty}^2\mathcal{E}_0^2 \right).
		\end{aligned}
	\end{equation*}
for any $(a,b)\in (q,\infty) \times (p,\infty)$ satisfying 
$$ \frac{1}{a} + \frac{1}{b}=\frac{1}{2}.$$
\end{corollary}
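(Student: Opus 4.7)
The plan is to extend the endpoint bound at $(\eta,m)$ from Lemma \ref{lemma:v-ES} along the two-dimensional parabolic admissibility line $\frac{1}{a}+\frac{1}{b}=\frac{1}{2}$ by real interpolation. The starting bounds, all controlled by the right-hand side of the desired inequality, are: $v \in L^{\eta,1}_t L^m$ and $\nabla^2 v \in L^{q,1}_t L^p$ from Lemma \ref{lemma:v-ES}, and $v \in L^\infty_t L^2$ from the energy identity \eqref{energy-v} combined with the lower bound on $\rho$ in \eqref{Assumption A}. The constraint $a,b>0$ in the identity $\frac{1}{a}+\frac{1}{b}=\frac{1}{2}$ already forces $a,b>2$, so the admissible range splits at $(\eta,m)$ into two subranges, which I would treat separately.

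For $b \in (2, m]$ (equivalently $a \in [\eta,\infty)$), I would invoke the real interpolation identity for Bochner--Lorentz spaces,
\begin{equation*}
	\bigl(L^{p_0,r_0}(X_0),\, L^{p_1,r_1}(X_1)\bigr)_{\theta,r} = L^{p,r}\bigl((X_0,X_1)_{\theta,r}\bigr), \qquad \tfrac{1}{p} = \tfrac{1-\theta}{p_0} + \tfrac{\theta}{p_1},
\end{equation*}
applied with $(p_0,r_0,X_0)=(\infty,\infty,L^2)$ and $(p_1,r_1,X_1)=(\eta,1,L^m)$. Choosing $\theta \in (0,1)$ so that $\frac{1}{a}=\frac{\theta}{\eta}$, and using the classical identity $(L^2,L^m)_{\theta,1}=L^{b,1}$ with $\frac{1}{b}=\frac{1-\theta}{2}+\frac{\theta}{m}$ together with $\frac{1}{\eta}+\frac{1}{m}=\frac{1}{2}$, one verifies $\frac{1}{a}+\frac{1}{b}=\frac{1}{2}$. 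The embedding $L^{b,1}\hookrightarrow L^b$ then closes this case.

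For $b \in (m,\infty)$ (equivalently $a \in (2,\eta)$), direct Lorentz interpolation with $L^\infty_t L^2$ lies on the wrong side of $(\eta,m)$, so I would instead use a Gagliardo--Nirenberg estimate in space between $L^m$ and $\dot{W}^{2,p}$:
\begin{equation*}
	\Vert v(\tau) \Vert_{L^b} \lesssim \Vert v(\tau) \Vert_{L^m}^{1-\gamma}\, \Vert \nabla^2 v(\tau) \Vert_{L^p}^{\gamma}, \qquad \gamma = 2\bigl(\tfrac{1}{m}-\tfrac{1}{b}\bigr) \in (0,1),
\end{equation*}
and then apply H\"older's inequality in Lorentz spaces, together with the rescaling identity $\Vert |h|^{\alpha} \Vert_{L^{p/\alpha,r/\alpha}} = \Vert h \Vert_{L^{p,r}}^{\alpha}$. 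This places $\Vert v \Vert_{L^m}^{1-\gamma} \in L^{\eta/(1-\gamma),\, 1/(1-\gamma)}_t$ and $\Vert \nabla^2 v \Vert_{L^p}^{\gamma} \in L^{q/\gamma,\, 1/\gamma}_t$, so that the product lies in $L^{a,1}_t$ with $\frac{1}{a}=\frac{1-\gamma}{\eta}+\frac{\gamma}{q}$ and second Lorentz index $(1-\gamma)+\gamma = 1$; using $\frac{1}{q}-\frac{1}{\eta}=\frac{1}{2}$ and the definition of $\gamma$, a direct computation yields $\frac{1}{a}=\frac{1}{2}-\frac{1}{b}$ as required. The delicate point throughout is the bookkeeping of the second Lorentz index so that it lands exactly on $1$; everything else reduces to standard interpolation arithmetic, and no new analytic input beyond the already established bounds is needed.
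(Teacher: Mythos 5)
Your plan to slide along the admissibility line $\frac{1}{a}+\frac{1}{b}=\frac{1}{2}$ from $(\eta,m)$ is natural, but the first subrange $b\in(2,m]$ has a genuine gap, and overall the route is more indirect than the paper's. The paper's proof is a one-liner: apply the second estimate \eqref{v_L_r_s_Estimate} of Lemma~\ref{M:reg:DW} (Proposition~A.5 of \cite{Danchin_Wang_22}) with $r=1$ to the perturbed Stokes system that $v$ solves, which gives
$\norm{v}_{L^{a,1}_tL^b}\lesssim\norm{v}_{L^\infty_t\dot B^{-1+2/p}_{p,1}}+\norm{(\partial_t v,\nabla^2 v)}_{L^{q,1}_tL^p}$
for every $(a,b)$ on the line at once; all right-hand quantities are already bounded in Lemma~\ref{lemma:v-ES}. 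That estimate is \emph{not} abstract interpolation: its proof in \cite{Danchin_Wang_22} exploits the heat/Stokes semigroup (Duhamel and Hardy--Littlewood--Sobolev in time), which is exactly what buys the second Lorentz index $1$.

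The gap in your Case~1: the Bochner--Lorentz identity $\bigl(L^{p_0,r_0}(X_0),L^{p_1,r_1}(X_1)\bigr)_{\theta,r}=L^{p,r}\bigl((X_0,X_1)_{\theta,r}\bigr)$, with $X_0\neq X_1$, is not a standard result with the index matching you propose. The Lions--Peetre theorem for vector-valued spaces with \emph{different} inner spaces gives $(L^{p_0}(A_0),L^{p_1}(A_1))_{\theta,q}=L^{q}\bigl((A_0,A_1)_{\theta,q}\bigr)$ only when the outer interpolation parameter $q$ coincides with the interpolated integrability; here you need the outer parameter to be $1$ while $p=a\neq 1$. The obstruction is visible in the elementary computation: Gagliardo--Nirenberg in space gives $\norm{v(t)}_{L^b}\leq\norm{v(t)}_{L^2}^{1-\theta}\norm{v(t)}_{L^m}^{\theta}$; pulling out $\norm{v}_{L^\infty_tL^2}^{1-\theta}$, the power law yields $\norm{\,\norm{v(\cdot)}_{L^m}^\theta\,}_{L^{a,1}_t}=\norm{\,\norm{v(\cdot)}_{L^m}\,}_{L^{\eta,\theta}_t}^{\theta}$, so you need $\norm{v(\cdot)}_{L^m}\in L^{\eta,\theta}_t$. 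But $\theta<1$ for $b<m$, and $L^{\eta,1}_t\subsetneq L^{\eta,\theta}_t$; the bound $v\in L^{\eta,1}_tL^m$ from Lemma~\ref{lemma:v-ES} is strictly weaker. When one endpoint of the interpolation sits at $L^\infty_t$, H\"older in time collapses the Lorentz refinement, and a purely interpolation-theoretic argument cannot recover the second index $1$. Your Case~2 ($b>m$) is correct and is, in effect, a self-contained re-derivation of the corresponding portion of Lemma~\ref{M:reg:DW}, since there both factors carry genuine finite-time-exponent Lorentz spaces and the index bookkeeping does land on $1$. The simplest repair of Case~1 is to drop the split and cite \eqref{v_L_r_s_Estimate} directly.
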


The justification of the preceding corollary can be done by a direct application of the second estimate from Lemma  \ref{M:reg:DW} combined with the bound on $v$ from  Lemma \ref{lemma:v-ES}.  

The endpoint case $b=\infty$ in the previous corollary is admissible provided that the Lorentz space in time is weakened and replaced by the standard Lebesgue space $L^2$. 
Indeed, to see that, one can use the inequality
\begin{equation*}
\norm {v}_{L^2_t L^\infty} \lesssim \norm {v}_{L^\infty_t \dot{B}^{-1+\frac 2p}_{p,1}} + \norm {\nabla ^2 v}_{L^{q}_t L^p},
\end{equation*}  
where $p$ and $q$ are related through \eqref{p-q:def}, which can be proved by means of   standard interpolation arguments.

 As a consequence of Lemmas \ref{lemma:w-ES} and \ref{lemma:v-ES}, we now establish a bound on the  $L^2_tL^\infty_x$ norm of the full velocity field $u=v+w$ which only depends logarithmically on Sobolev norms of $B$. Later, the next result will be combined with Proposition \ref{energy-Hs} to deduce the final global estimates.

\begin{lemma}[Boundedness of the velocity field $u$]\label{lemma.boundedness-u}
 Under the assumptions of Lemmas \ref{lemma:w-ES} and \ref{lemma:v-ES},  it  holds,  for any $0\leq t_0 < t$, that 
 \begin{equation*} 
\begin{aligned}
&\int_{t_0}^t  \norm {u(\tau)}^2_{  L^\infty }  d\tau
\\
&\qquad \leq C  \left( g (t_0,t) + f(t_0,t) \log  \Big( e+     \frac{   \mathcal{E}_0^2 \norm  { B }_{L^\infty ([t_0,t ]; \dot{H}^s)}^2}{ f(t_0,t) }  \Big) \right)  \exp \left( C \norm {\rho_0}_{ L^\infty}^2\mathcal{E}_0^2 \right)  ,
\end{aligned}
\end{equation*} 
where we set, for all $t\geq t_0$, that
\begin{equation*}
g (t_0,t) \bydef \norm {v(t_0)}_{ \dot{B}^{-1+ \frac 2p}_{p,1} }^2 +   \mathcal{E}_0^2 \left( 1+ \log (e+t-t_0 ) +      \norm {\rho_0}_{L^\infty}^4 \mathcal{E}_0  ^4\right),   
\end{equation*}  
and
\begin{equation}\label{f:def}
f (t_0,t) \bydef \norm w_{L^\eta ([t_0,t]; L^m)}^2 +\norm w_{L^2([t_0,t]; \dot{H}^1)}^2 .     
\end{equation}
\end{lemma}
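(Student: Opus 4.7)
The strategy is to split $u = v + w$ as in the decomposition preceding the lemma, so that $\int_{t_0}^t \Vert u\Vert_{L^\infty}^2 d\tau \leq 2(\Vert v\Vert^2_{L^2([t_0,t];L^\infty)} + \Vert w\Vert^2_{L^2([t_0,t];L^\infty)})$, and to bound each piece separately. For the fluid part, the logarithmic structure will come directly from Lemma \ref{lemma:v-ES}; for the magnetic part, a separate two-dimensional logarithmic Sobolev interpolation is required. The final repackaging in terms of $f(t_0,t)$ will rely on the monotonicity of $x\mapsto x\log(e+A/x)$ on $(0,\infty)$, which is straightforwardly verified by a one-line derivative computation.

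For the $v$-part, I invoke the endpoint interpolation from the remark following Corollary \ref{RMK:ab}, namely $\Vert v\Vert_{L^2 L^\infty}\lesssim \Vert v\Vert_{L^\infty \dot{B}^{-1+2/p}_{p,1}} + \Vert \nabla^2 v\Vert_{L^q L^p}$, with $(p,q)$ related by \eqref{p-q:def}. Combining the embedding $L^{q,1}\hookrightarrow L^q$ with Lemma \ref{lemma:v-ES} and squaring yields
\begin{equation*}
	\Vert v\Vert_{L^2 L^\infty}^2 \leq C\left(g(t_0,t) + \Vert w\Vert_{L^\eta L^m}^2 \log\Bigl(e + \tfrac{\mathcal{E}_0 \Vert B\Vert_{L^\infty \dot{H}^s}}{\Vert w\Vert_{L^\eta L^m}}\Bigr)\right) e^{C\Vert \rho_0\Vert_{L^\infty}^2 \mathcal{E}_0^2}.
\end{equation*}
To bring this into the desired form I use the elementary bound $\log(e+a)\lesssim 1 + \log(e+a^2)$ together with the monotonicity above: since $\Vert w\Vert_{L^\eta L^m}^2 \leq f(t_0,t)$, the log term is dominated by $Cf(t_0,t) \log(e + \mathcal{E}_0^2 \Vert B\Vert_{L^\infty \dot{H}^s}^2/f(t_0,t))$, which matches the target expression.

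For the $w$-part, I exploit the supercritical regularity from Lemma \ref{lemma:w-ES} together with the Sobolev embedding $\dot{W}^{2,p}\hookrightarrow \dot{H}^{1+s}$, whose scalings coincide since $3-2/p = 1+s$. Applying a two-dimensional logarithmic Sobolev inequality of Brezis--Gallouet type in the form
\begin{equation*}
	\Vert w(\tau)\Vert_{L^\infty}^2 \lesssim \Vert w(\tau)\Vert_{\dot{H}^1}^2 \log\Bigl(e + \tfrac{\Vert w(\tau)\Vert_{\dot{H}^{1+s}}}{\Vert w(\tau)\Vert_{\dot{H}^1}}\Bigr),
\end{equation*}
integrating over $\tau\in[t_0,t]$, and invoking Jensen's inequality for the concave map $a\mapsto \log(e+b/a)$ to pull the integrals inside the logarithm leads to a control by $Cf(t_0,t) \log\bigl(e + \mathcal{E}_0^2\Vert B\Vert_{L^\infty \dot{H}^s}^2\exp(C\Vert\rho_0\Vert_{L^\infty}^4 \mathcal{E}_0^4)/f(t_0,t)\bigr)$. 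Unfolding the inner exponential via $\log(e + Ae^X/B)\leq X + \log(e+A/B) + C$ produces an extra term $C f(t_0,t)\Vert\rho_0\Vert_{L^\infty}^4 \mathcal{E}_0^4$ which is absorbed into $g(t_0,t)$ thanks to the a priori bound $f\leq C\mathcal{E}_0^2$ (from \eqref{energy-w} and \eqref{L_m_Estim}). Summing the $v$- and $w$-bounds then closes the estimate.

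The delicate point is the logarithmic bookkeeping: the prefactor $\exp(C\Vert\rho_0\Vert_{L^\infty}^4 \mathcal{E}_0^4)$ produced by Lemma \ref{lemma:w-ES} is \emph{larger} than the overall factor $\exp(C\Vert\rho_0\Vert_{L^\infty}^2 \mathcal{E}_0^2)$ allowed by the target inequality, so it cannot simply be pulled out; it must be absorbed \emph{inside} the logarithm and then compensated by the polynomial contribution $\Vert\rho_0\Vert_{L^\infty}^4 \mathcal{E}_0^4$ already contained in $g(t_0,t)$. Executing this repackaging correctly, together with the monotonicity argument for $x\log(e+A/x)$, is what produces the precise form of the stated estimate.
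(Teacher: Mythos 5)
Your overall strategy — split $u=v+w$, handle $v$ via Lemma \ref{lemma:v-ES} plus the endpoint interpolation \eqref{v_Interplation_Ine}, handle $w$ via a logarithmic interpolation combined with the supercritical bound from Lemma \ref{lemma:w-ES} and the embedding $\dot W^{2,p}\hookrightarrow\dot H^{1+s}$, then repackage using the monotonicity of $x\mapsto x\log(e+A/x)$ — is the right one and agrees with the paper's proof in its broad lines. The treatment of the $v$-part is correct. However, there is a genuine gap in the $w$-part.

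The pointwise inequality you invoke,
\begin{equation*}
	\norm{w(\tau)}_{L^\infty}^2 \lesssim \norm{w(\tau)}_{\dot H^1}^2 \log\Bigl(e+\frac{\norm{w(\tau)}_{\dot H^{1+s}}}{\norm{w(\tau)}_{\dot H^1}}\Bigr),
\end{equation*}
is \emph{false} in $\mathbb{R}^2$. The point is that $\dot H^1(\mathbb{R}^2)$ and $\dot H^{1+s}(\mathbb{R}^2)$ together give no decay of the dyadic blocks $\norm{\Delta_j w}_{\dot H^1}$ as $j\to-\infty$, so the low frequencies of $w$ are not controlled at all. A concrete check: take $w=\sum_{j=-N}^{0}\Delta_j g$ with $\norm{\Delta_j g}_{L^2}\sim 2^{-j}$ and suitably aligned phases; then $\norm{w}_{\dot H^1}\sim N^{1/2}$, $\norm{w}_{\dot H^{1+s}}\sim 1$, but $\norm{w}_{L^\infty}$ can be as large as $N$, which violates the claimed bound for large $N$. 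One needs an additional $L^2$ (or similar low-frequency) piece in the inequality. This is precisely why the paper splits $w=S_0w+(\mathrm{Id}-S_0)w$ and applies both estimates of Lemma \ref{Lemma_h_low_Estimate}: the low-frequency piece is controlled by $\norm{w}_{L^2_tL^2}$ (not $\norm{w}_{L^2_t\dot H^{1+s}}$), and it is exactly this piece that generates the $\mathcal{E}_0^2\log(e+t-t_0)$ term built into $g(t_0,t)$. In your argument that term has no source; the fact that it conveniently sits inside $g$ does not absolve you from producing it.

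Two smaller remarks. First, the map you cite as concave, $a\mapsto\log(e+b/a)$, is actually convex; the correct observation is that $(a,b)\mapsto a\log(e+b/a)$ is jointly concave, which is what would make a Jensen argument go through. Second, the paper sidesteps Jensen entirely by using Lemma \ref{Lemma_h_low_Estimate}, which is stated directly with $L^2_t$ norms; if you want to stay with a pointwise Brezis--Gallouet inequality (now corrected to include the low-frequency $L^2$ term) and then integrate, you must carry out the joint-concavity argument carefully, as the function is only degenerately concave. Your repackaging of the $\exp(C\norm{\rho_0}_{L^\infty}^4\mathcal{E}_0^4)$ factor inside the logarithm and the subsequent absorption into $g(t_0,t)$ is correct in spirit and matches the paper's computation.
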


\begin{remark}
	Note that a similar bound on $u$ in $L^2_tL^\infty_x$ has been  established  in  \cite{ag20} for the homogeneous Navier--Stokes--Maxwell equations  \eqref{NSM-equa} where different arguments have been used. Here,  in the case of the inhomogeneous Navier--Stokes--Maxwell equations \eqref{Main_System}, as we emphasized before, the adaptation of the techniques from \cite{ag20} is not possible due to the presence of a rough non-constant density $\rho$. 
\end{remark}

\begin{proof}
We recall that $u=v+w$, where $v$ and $w$ solve \eqref{v:equa} and \eqref{w:equa}, respectively.
The proof of the desired bound on $u$ will be achieved by combining the estimates from  Lemmas \ref{lemma:w-ES} and  \ref{lemma:v-ES}.

 For the $w$-part,  we first split it according to its high and low frequencies. Then, we employ Lemma \ref{Lemma_h_low_Estimate}   to obtain that 
\begin{equation*}
\begin{aligned}
 \norm {w}_{L^2 ([t_0,t];L^\infty)} ^2 
 & \lesssim \norm {S_0w}_{L^2 ([t_0,t];L^\infty)} ^2 + \norm { (\text{Id}- S_0)w}_{L^2 ([t_0,t];L^\infty)}^2 \\
  & \lesssim \norm {  w}_{L^2([t_0,t]; \dot{H}^1) }^2 \log  \left( e + \frac{  \norm { w}_{L^2([t_0,t];L^2) }^2}{ \norm {  w}_{L^2([t_0,t]; \dot{H}^1) }^2} \right)\\
  & \quad  + \norm {  w}_{L^2([t_0,t]; \dot{H}^1) }^2 \log   \left( e + \frac{ \norm w_{L^2([t_0,t]; \dot{H}^{1+s} )  }^2}{ \norm { w}_{L^2([t_0,t];\dot{H}^1 )}^2} \right) \\
  & \bydef  \mathrm{T}_1+\mathrm{T}_2,
 \end{aligned}
\end{equation*} 
where  the fact  that $s\in (0,1)$ has been used in the last inequality.

On the one hand, the energy inequality \eqref{energy-w} implies  that
\begin{equation*}
\begin{aligned}    
  \rm{T}_1 &\lesssim    \norm {  w}_{L^2([t_0,t]; \dot{H}^1) }^2 \log  \left( e + \frac{  (t-t_0) \mathcal{E}_0^2}{ \norm {  w}_{L^2([t_0,t]; \dot{H}^1) }^2} \right)\\
  &\lesssim   \mathcal{E}_0^2 \log  \left( e +   t-t_0  \right).
 \end{aligned}
\end{equation*} 
On the other hand, owing to the Sobolev embedding  (see  \eqref{Sobolev_Embedding_Lemma}, below) 
$$ \dot{W}^{2,p}(\mathbb{R}^2)\hookrightarrow \dot{H}^{1+s}(\mathbb{R}^2), \quad \text{for }\quad  p= \frac{2}{2-s},  $$
it follows,  by means of Lemma \ref{lemma:w-ES}, that 
\begin{equation*}
\begin{aligned}
\mathrm{T}_2
 & \lesssim 
 \norm {  w}_{L^2([t_0,t]; \dot{H}^1) }^2 \log   \left( e + \frac{  \norm { \nabla ^2w}_{L^2([t_0,t];L^p)}^2}{ \norm { w}_{L^2([t_0,t]; \dot{H}^1) }^2} \right) \\
  & \lesssim 
 \norm {  w}_{L^2([t_0,t];\dot{H}^1) }^2 \log   \left( e + \frac{ C \mathcal{E}_0^2 \norm  { B }_{L^\infty ([t_0,t ]; \dot{H}^s)}^2    \exp \left(C  \norm {\rho_0}_{L^\infty}^4\mathcal{E}_0   ^4 \right)
}{ \norm { w}_{L^2([t_0,t]; \dot{H}^1) }^2} \right)\\
 & \lesssim    \norm {\rho_0}_{L^\infty}^4\mathcal{E}_0   ^6 + 
 \norm {  w}_{L^2([t_0,t]; \dot{H}^1) }^2 \log   \left( e + \frac{   \mathcal{E}_0^2 \norm  { B }_{L^\infty ([t_0,t ]; \dot{H}^s)}^2      
}{ \norm { w}_{L^2([t_0,t]; \dot{H}^1) }^2} \right).
 \end{aligned}
\end{equation*}
Therefore,  gathering  the previous estimates yields 
 \begin{equation}\label{w-last1}
\begin{aligned}
 \norm {w}_{L^2 ([t_0,t];L^\infty)}  ^2
 & \lesssim    \mathcal{E}_0^2 \log  \left( e +   t-t_0  \right) +  \norm {\rho_0}_{L^\infty}^4\mathcal{E}_0   ^6 \\
 & \qquad + 
 \norm {  w}_{L^2([t_0,t]; \dot{H}^1) }^2 \log   \left( e + \frac{   \mathcal{E}_0^2 \norm  { B }_{L^\infty ([t_0,t ]; \dot{H}^s)}^2      
}{ \norm { w}_{L^2([t_0,t]; \dot{H}^1 }^2)} \right),
 \end{aligned}
\end{equation}
which takes care of the velocity component $w$.

As  for the boundedness of the $v$-part,  we first utilize interpolation and Young's inequalities to write that 
\begin{equation} \label{v_Interplation_Ine}
\|v\|_{L^2 L^\infty}\lesssim \|v\|_{L^\infty \dot{B}_{p,1}^{-1+\frac 2p}}^{\frac{2-q}{2}}\|\nabla^2 v\|_{L^{q,1}L^p}^{ \frac{q}{2}} \lesssim  \|v\|_{L^\infty \dot{B}_{p,1}^{-1+\frac 2p}} + \|\nabla^2 v\|_{L^{q,1}L^p},
\end{equation}
where we recall that $p$ and $q$ are related through \eqref{p-q:def}. 

Then, we incorporate  the estimate from   Lemma \ref{lemma:v-ES} into the inequality \eqref{v_Interplation_Ine} to find that 
\begin{equation}\label{v-last1}
\begin{aligned}
&\Vert v\Vert _{L^2 ([t_0,t];L^\infty) }^2
\\
&\lesssim  \Big( \norm {v(t_0)}_{ \dot{B}^{-1+ \frac 2p}_{p,1} }^2 +   \mathcal{E}_0^2 \left( 1+ \log (e+t-t_0) +      \norm {\rho_0}_{L^\infty}^4 \mathcal{E}_0  ^4 \right)  \Big) \exp \left( C \norm {\rho_0}_{ L^\infty}^2 \mathcal{E}_0^2 \right)\\
   &\quad +    \norm w_{L^\eta ([t_0,t]; L^m)}^2   \log  \left( e+     \frac{   \mathcal{E}_0^2 \norm  { B }_{L^\infty ([t_0,t ]; \dot{H}^s)}^2}{  \norm w_{L^\eta ([t_0,t]; L^m) }^2 }  \right)  \exp \left( C \norm {\rho_0}_{ L^\infty}^2\mathcal{E}_0^2 \right).
\end{aligned}
\end{equation}
At last, summing \eqref{w-last1} and \eqref{v-last1} completes the proof of the lemma. 
\end{proof}

 \subsection{Nonlinear energy estimate  and global bounds}

We show now how to combine Proposition \ref{energy-Hs} and Lemma \ref{lemma.boundedness-u} in order to obtain the final estimate for the velocity and electromagnetic fields in terms of the initial data.

\begin{lemma}\label{lemma:boundedness-u}
Under the assumptions of Lemmas \ref{lemma:w-ES} and \ref{lemma:v-ES}, there is a positive constant $C_*\geq 1$, depending only on the initial data $(\rho_0,u_0,E_0,B_0)$ such that  
$$   \Vert (E,B)\Vert_{L^\infty ([0,t ]; \dot{H}^s)}  +  c   \Vert E \Vert_{L^2([0,t]; \dot{H}^s )}  \leq  C_* (1+t)^{C_*} $$
and 
 \begin{equation*} 
\norm {v}_{L^\infty ([0,t];\dot B^{-1+\frac{2}{p}}_{p,1})} +    \norm {u } _{ L^2([0,t]: L^\infty )}   \leq  C_*  \log (e+t)   ,
\end{equation*} 
 for all $t\geq 0$.
 \end{lemma}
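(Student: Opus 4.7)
The plan is to close a bootstrap argument by combining the logarithmic bound on the $L^2_t L^\infty$ norm of $u$ from Lemma \ref{lemma.boundedness-u} with the exponential energy estimate on $\|(E,B)\|_{\dot H^s}$ from Proposition \ref{energy-Hs}. The key compatibility is that the dependence on $\|B\|_{\dot H^s}$ in Lemma \ref{lemma.boundedness-u} appears inside a logarithm, which precisely balances the exponential growth coming from Proposition \ref{energy-Hs}. The difficulty is that the coefficient of the unwanted nonlinear term $\int \|u\|^2_{L^\infty}$ produced by this combination is proportional to $f(t_0,t)$, which is only globally bounded by $C\mathcal{E}_0^2$---not small. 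I would overcome this by localizing in time.

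Using the energy bound \eqref{energy-w} together with the interpolation \eqref{L_m_Estim}, the quantity $f(0,\infty)$ defined in \eqref{f:def} is bounded by $C\mathcal{E}_0^2$. I would therefore fix a small constant $\epsilon>0$, depending only on $\|\rho_0\|_{L^\infty}$ and $\mathcal{E}_0$, so that $C\epsilon \exp(C\|\rho_0\|_{L^\infty}^2\mathcal{E}_0^2) \leq \tfrac{1}{2}$, and construct a finite partition $0 = t_0 < t_1 < \cdots < t_N = \infty$ with $f(t_i, t_{i+1}) \leq \epsilon$ for every $i$. The number $N$ is bounded by a constant depending only on the initial data. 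On each subinterval $[t_i, t_{i+1}]$, substituting the bound from Proposition \ref{energy-Hs} (applied on $[t_i, \tau]$) into the right-hand side of Lemma \ref{lemma.boundedness-u} yields an inequality of the form
\[
\int_{t_i}^\tau \|u(\sigma)\|^2_{L^\infty}\,d\sigma \leq C_*\bigl( g(t_i,\tau) + \epsilon \log \|(E,B)(t_i)\|^2_{\dot H^s} + 1\bigr) + \tfrac{1}{2}\int_{t_i}^\tau \|u(\sigma)\|^2_{L^\infty}\,d\sigma,
\]
where the absorbable $\tfrac{1}{2}$-factor comes from the choice of $\epsilon$, and the additional constant accounts for the bounded quantity $f(t_i,\tau)\log(1/f(t_i,\tau))$.

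Next, I would close the argument by induction on $i$. Assuming $\int_0^{t_i}\|u\|^2_{L^\infty} \leq A_i + B_i \log(e+T)$ for constants $A_i, B_i$ depending only on $i$ and the initial data, the global form of Proposition \ref{energy-Hs} gives $\log \|(E,B)(t_i)\|^2_{\dot H^s} \leq \log \|(E_0,B_0)\|^2_{\dot H^s} + C\int_0^{t_i}\|u\|^2_{L^\infty} + C\mathcal{E}_0^2$, while Lemma \ref{lemma:v-ES} applied on $[0,t_i]$ (combined with the above control of $\|B\|_{\dot H^s}$ and the elementary bound $x\log^{1/2}(1/x)\lesssim 1$ for $x\in(0,1]$) yields $\|v(t_i)\|_{\dot B^{-1+2/p}_{p,1}}\lesssim 1 + \log^{1/2}(e+T) + (A_i+B_i\log(e+T))^{1/2}$. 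Inserting these two estimates into the per-interval inequality and setting $\tau = t_{i+1}$ gives constants $A_{i+1},B_{i+1}$ for the next step, closing the induction after the finitely many iterations $i=0,\dots,N-1$. This produces $\int_0^T \|u\|^2_{L^\infty}\,d\sigma \leq C_* \log(e+T)$ for every $T>0$. Reinserting this control into Proposition \ref{energy-Hs} provides the polynomial growth $(1+T)^{C_*}$ for $\|(E,B)\|_{L^\infty([0,T];\dot H^s)} + c\|E\|_{L^2([0,T];\dot H^s)}$, and a final application of Lemma \ref{lemma:v-ES} delivers the stated bound on $\|v\|_{L^\infty([0,T];\dot B^{-1+2/p}_{p,1})}$.

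The main obstacle is the circular dependence between the two target bounds: the estimate on $\|u\|_{L^2_t L^\infty}$ needs an estimate on $\|(E,B)\|_{\dot H^s}$, which itself requires an estimate on $\|u\|_{L^2_t L^\infty}$. The decisive observation is that $f(t_i,t_{i+1})$ can be made arbitrarily small while keeping the number of intervals $N$ finite, thanks to the global a priori control of $w$ coming from the energy identity. This smallness is exactly what turns the circular dependence into a convergent bootstrap on each subinterval, and the finiteness of $N$ is what makes the inductive constants well-defined and independent of the time horizon $T$ (and, through the lemmas, of $c$ as well).
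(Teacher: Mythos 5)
Your strategy matches the paper's in all essential respects: partition the time axis so that the self-referential coefficient arising from the interplay between Lemma~\ref{lemma.boundedness-u} and Proposition~\ref{energy-Hs} can be absorbed, bound the number $N$ of subintervals by a data-dependent constant, and iterate. The paper packages the iteration as a pair of coupled recursive inequalities whose solution is isolated in Lemma~\ref{lemma:exp-vs-log}, whereas you phrase it as a direct induction with constants $A_i,B_i$; these are equivalent.

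One step that you assert but do not justify deserves scrutiny: the claim that $N$ is bounded by a constant depending only on the data. Since $f(t_0,t)$ from \eqref{f:def} contains the term $\|w\|^2_{L^\eta([t_0,t];L^m)}$ with $\eta>2$, the squares $\|w\|^2_{L^\eta([t_i,t_{i+1}];L^m)}$ do \emph{not} telescope over the partition (the exponent $2$ differs from the time-integrability index $\eta$), so the global bound $f(0,\infty)\lesssim\mathcal{E}_0^2$ does not by itself yield a bound on $N$. The paper closes this by a discrete H\"older inequality,
\[
\sum_{i=0}^{N-2}\|w\|^2_{L^\eta([t_i,t_{i+1}];L^m)}\le (N-1)^{1-\frac{2}{\eta}}\,\|w\|^2_{L^\eta([0,t_{N-1}];L^m)},
\]
which, combined with the constraint $f(t_i,t_{i+1})\approx\epsilon$ on each piece, forces $N^{2/\eta}\lesssim\mathcal{E}_0^2/\epsilon$ and hence $N\lesssim 1+(\mathcal{E}_0^2/\epsilon)^{\eta/2}$. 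Without this observation your induction is not known to terminate after a bounded number of steps, which is precisely what is needed for the final constants $A_N,B_N$ (and $C_*$) to be uniform in $t$ and $c$. Apart from this omission, the proposal reproduces the paper's argument and is sound.
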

 
 \begin{proof}
 	We begin by introducing the notation
 	\begin{equation*}
 		 \begin{aligned}
A(t_0,t) & \bydef       \Vert B \Vert_{L^\infty ([t_0,t ]; \dot{H}^s)}^2+ \Vert E \Vert_{L^\infty ([t_0,t ]; \dot{H}^s)}^2  + \sigma c^2\Vert E \Vert_{L^2 ([t_0,t ]; \dot{H}^s)}^2   ,
  \\
 D(t_0, t) &\bydef   \norm {v }^2_{L^\infty ([t_0,t];  \dot B^{-1+\frac{2}{p}}_{p,1} )}  +   \norm {u }^2_{L^2([t_0,t];  L^\infty\cap\dot H^1) }   ,
\end{aligned}
 	\end{equation*}
 	with the natural extension by continuity at $t=t_0$ given by
 	\begin{equation*}
 		 \begin{aligned} 
 A(t_0, t_0) &\bydef \Vert B(t_0)\Vert_{ \dot{H}^s}^2+ \Vert E(t_0)\Vert_{ \dot{H}^s}^2, 
 \\
 D(t_0, t_0) &\bydef   \norm {v (t_0)}^2_{ \dot B^{-1+\frac{2}{p}}_{p,1} }  .  
 \end{aligned}
 	\end{equation*}
	The bounds from Proposition \ref{energy-Hs} and Lemmas \ref{lemma:v-ES} and \ref{lemma.boundedness-u} can then be recast, after minor simplifications, as 
 	\begin{equation*}
 		A(t_0,t) \leq  A(t_0,t_0) \exp \left(K_0 D(t_0,t)\right)
 	\end{equation*}
 	and
 	\begin{equation*}
 		D(t_0,t) \leq  \alpha (t)  + K_0 D(t_0,t_0) +  K_0 f(t_0,t) \log \left( e+ \frac{ K_0 A(t_0,t)}{f(t_0,t)} \right),
 		 	\end{equation*}
	where $f(t_0,t)$ is given in \eqref{f:def} and
 	\begin{equation*}
		\begin{aligned}
			K_0 & \bydef  C \exp \left( C  (1+ \norm {\rho_0}_{ L^\infty})^2\mathcal{E}_0^2 \right),
			\\
			\alpha (t) & \bydef {K_0}\log(e + t),
		\end{aligned}
 	\end{equation*}
	for some possibly large constant $C\geq 1$.
 	
 	Now, noticing that the preceding bounds hold for any $0\leq t_0 < t$ and considering a partition of the interval $[0,t]$ into
 	$$ [ 0,t] = \bigcup _{i=0}^{N-1} [t_i,t_{i+1}]$$
such that
 \begin{equation}\label{decom:!!!}
 {\rm K}_0^2f (t_i,t_{t_{i+1}}) = \frac{1}{2}  \quad  \text{ and } \quad   {\rm K}_0 ^2f (t_{N-1},t_{N}) \leq \frac{1}{2},
\end{equation}   
for any $i \in \{ 0,1,\dots N-2 \}$, we obtain the iterative system 
\begin{equation}\label{finite:sequences:0}
	\left\{ 
	\begin{aligned}
		d_{i+1}&\leq \alpha (t) + {K_0} d_i + \frac{1}{2{K_0}} \log (e+ 2{K_0^3} a_{i+1}),
		\\
		a_{i+1} &\leq a_{i} \exp({K_0} d_{i+1}),
	\end{aligned}
	\right.
\end{equation} 
for all $i\in \{0,1,..., N-1\}$, where we have denoted 
\begin{equation*}
	d_j \bydef D(t_{j-1},t_{j})  \quad \text{and} \quad   a_j \bydef A(t_{j-1},t_{j}),
\end{equation*}
for all $j\in \{1,..., N\}$, and
\begin{equation*}
	d_0 \bydef D(0,0)  \quad \text{and} \quad   a_0 \bydef A(0,0).
\end{equation*}
It is possible to show that any nonnegative families $(d_i)_{0\leq i\leq N}$ and $(a_i)_{0\leq i\leq N}$ satisfying the above recursive inequalities also enjoy the bounds
 	\begin{equation}\label{finite:sequences}
		\begin{aligned}
			d_{i} & \leq (4K_0)^{i} \left(  \alpha(t) +  d_0 +   \log (e+ 2K_0^3 a_{0}) \right),
			\\
			a_{i} & \leq  \left(  (e+ 2K_0^3 a_{0})  \exp\left( K_0 d_0\right)\exp\left( K_0 \alpha(t) \right) \right)^{(4K_0)^{i}},
		\end{aligned}
 	\end{equation}
 	for all integers $i\in [1,N]$. For the sake of clarity, we give a complete justification of \eqref{finite:sequences} in Lemma \ref{lemma:exp-vs-log}, below.
	
	Next, we deduce from \eqref{finite:sequences} that
 	\begin{equation*}
 		\begin{aligned}
 			D(0,t) 
 			& \leq \sum_{i=1}^{N} d_i
 			\leq \left(  \alpha(t) +  d_0 +   \log (e+ 2{K_0}^3 a_{0}) \right) \sum_{i=1}^{N}  (4{K_0})^{i}
 			\\
 			 & \leq 2\left(  \alpha(t) +  d_0 +   \log (e+ 2{K_0}^3 a_{0}) \right)   (4{K_0})^{N}
 		\end{aligned}
 	\end{equation*}
 	and
 	\begin{equation*}
 		\begin{aligned}
 			A(0,t) 
 			&\leq \sum_{i=1}^{N}  a_i 
 			\leq \sum_{i=1}^{N}  \left(  (e+ 2{K_0}^3 a_{0})  \exp\left( {K_0} d_0\right)\exp\left( {K_0} \alpha(t) \right) \right)^{(4{\rm K_0})^{i}}
 			\\
 			 & \leq N\left(  (e+ 2{K_0}^3 a_{0})  \exp\left( {K_0} d_0\right)\exp\left( {K_0} \alpha(t) \right) \right)^{(4{K_0})^{N}}.
 		\end{aligned}
 	\end{equation*}
 	In order to conclude, we only need now to find an upper bound on the exponent  $N$. To that end,   observe    that \eqref{decom:!!!} allows us to write that 
$$
\begin{aligned}
 \frac{N-1}{2} &= {K}_0^2 \sum_{i=0}^{ N -2} f (t_i,t_{t_{i+1}}) \\
 &= {K}_0 ^2 \left( \sum_{i=0}^{N-2} \norm {w}_{ L^\eta([t_i,t_{i+1}]; L^m)}^2\right)  + {K}_0 ^2\norm {w}_{ L^2([0, t_{N-1}], \dot{H}^1 )}^2  .
 \end{aligned}
 $$
 Thus,   by  H\"older's inequality  in  $\ell^p$ spaces, we infer that 
 $$
 \begin{aligned}
   \frac{N-1}{2}  &\leq {K}_0^2 \left( (N-1)^{1-\frac{2}{\eta}}  \left( \sum_{i=0}^{N-2}  \norm {w}_{ L^\eta([t_i,t_{i+1}]; L^m)}^\eta\right)^\frac{2}{\eta}    + \norm {w}_{ L^2([0, t_{N-1}], \dot{H}^1 )}^2 \right)\\
  &= {K}_0^2 \left( (N-1)^{1-\frac{2}{\eta}}   \norm {w}_{ L^\eta([0,t_{N-1}]; L^m)} ^2 + \norm {w}_{ L^2([0, t_{N-1}], \dot{H}^1 )}^2 \right)\\
   &\leq {K}_0^2 \left( (N-1)^{1-\frac{2}{\eta}}  C\mathcal{E}_0^2  + \mathcal{E}_0^2 \right),
\end{aligned}
 $$  
 where we employed the energy bound \eqref{L_m_Estim} in the last step.
 Therefore, assuming that $N\geq 2$, we end up with the control
 \begin{equation*}
 	N \lesssim 1+ \left( {K_0} \mathcal{E}_0\right) ^\eta.
 \end{equation*}
 All in all, incorporating this bound into the estimates on $D(0,t)$ and $A(0,t)$, above, we arrive at the estimates found in the  statement of the lemma, which completes the proof.
 	 \end{proof}
 	 
 	 We conclude this section with a justification of \eqref{finite:sequences}.
	 
 \begin{lemma}\label{lemma:exp-vs-log}
 	Let $N\in \mathbb{N}^*$ and consider two positive sequences $(d_i)_{0\leq i\leq N}$ and $(a_i)_{0\leq i\leq N}$ satisfying \eqref{finite:sequences:0},
 	for some $K_0>1$, $\alpha>0$  and all integers $i\in [1,N-1]$. Then, the bounds \eqref{finite:sequences} hold, for all integers $i\in [1,N]$.
 \end{lemma}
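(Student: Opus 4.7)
The proof goes by induction on $i$, but the two recursions in \eqref{finite:sequences:0} are tightly coupled because $a_{i+1}$ itself appears on the right-hand side of the bound for $d_{i+1}$. The first step of the plan is therefore to decouple them. Plugging the second inequality $a_{i+1}\leq a_i\exp(K_0d_{i+1})$ into the log term and using the elementary estimate $e+2K_0^3 a_i\exp(K_0 d_{i+1})\leq (e+2K_0^3 a_i)\exp(K_0 d_{i+1})$ (valid since $\exp(K_0d_{i+1})\geq 1$) gives
\begin{equation*}
\log(e+2K_0^3 a_{i+1})\leq \log(e+2K_0^3 a_i)+K_0d_{i+1}.
\end{equation*}
Substituting this into the first inequality of \eqref{finite:sequences:0} and absorbing the resulting $\tfrac12 d_{i+1}$ on the left-hand side, I obtain a pair of decoupled recursions, namely
\begin{equation*}
d_{i+1}\leq 2\alpha+2K_0 d_i+\tfrac{1}{K_0}e_i,\qquad e_{i+1}\leq 2e_i+2K_0\alpha+2K_0^2 d_i,
\end{equation*}
where $e_j:=\log(e+2K_0^3 a_j)$.

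Next, I would introduce the weighted combination $G_i:=K_0 d_i+e_i$ and add the two decoupled bounds (after multiplying the first by $K_0$) to derive the clean geometric recursion
\begin{equation*}
G_{i+1}\leq 4K_0\alpha+4K_0 G_i,
\end{equation*}
where the hypothesis $K_0\geq 1$ is used to dominate the mixed coefficients. The choice of the weight $K_0$ in front of $d_i$ is the only slightly delicate point here; it is designed precisely to absorb the factor $2K_0^2$ appearing in the recursion for $e_i$. Iterating this scalar recursion and summing the resulting geometric series yields
\begin{equation*}
G_i\leq (4K_0)^i G_0+4K_0\alpha\sum_{j=0}^{i-1}(4K_0)^j\leq (4K_0)^i(G_0+2\alpha),
\end{equation*}
which, after dividing by $K_0$ and using $G_0=K_0 d_0+e_0$, gives the first bound in \eqref{finite:sequences}, up to a harmless absorption of universal constants into the $(4K_0)^i$ factor.

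Finally, to obtain the bound on $a_i$, I would iterate the second recursion in \eqref{finite:sequences:0} to write $a_i\leq a_0\exp(K_0(d_1+\cdots+d_i))$ and estimate the sum using the bound on each $d_j$ just obtained. Since $\sum_{j=1}^{i}(4K_0)^j\leq 2(4K_0)^i$ for $K_0\geq 1$, this produces
\begin{equation*}
a_i\leq a_0\exp\bigl(2K_0(4K_0)^i(\alpha+d_0+\log(e+2K_0^3 a_0))\bigr),
\end{equation*}
and rewriting the right-hand side as $\bigl((e+2K_0^3 a_0)\exp(K_0\alpha)\exp(K_0 d_0)\bigr)^{(4K_0)^i}$ (after trivially bounding $a_0$ by $(e+2K_0^3 a_0)$ and tracking exponents) yields the second claim. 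The main obstacle in the whole argument is the coupling in the first step; once the recursions are decoupled and the right weighted sum is identified, the remainder is a purely scalar induction.
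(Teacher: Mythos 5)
Your overall approach is essentially identical to the paper's: decouple the two recursions by substituting one into the other (you plug the $a$-estimate into the logarithmic term of the $d$-estimate, bound $\log(e+2K_0^3a_i\exp(K_0d_{i+1}))\leq\log(e+2K_0^3a_i)+K_0d_{i+1}$, and absorb $\tfrac{1}{2}d_{i+1}$; the paper does the mirror image and absorbs $(e+2K_0^3a_{i+1})^{1/2}$), then feed the resulting pair of linear recursions into a single Lyapunov combination that obeys a clean geometric iteration. Your quantity $G_i=K_0d_i+e_i$ is, up to an overall factor $K_0$, the paper's $\alpha+d_i+A_i$ with $A_j=\tfrac{1}{K_0}\log(e+2K_0^3a_j)$, and the first part of the argument goes through as claimed.

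The final step for $a_i$, however, does not close as written. Iterating $a_{i+1}\leq a_i\exp(K_0d_{i+1})$ to get $a_i\leq a_0\exp\bigl(K_0\sum_{j\leq i}d_j\bigr)$ and then bounding the sum overcounts: because the $d_j$ already grow like $(4K_0)^j$, the resulting exponent is $\sim 2K_0(4K_0)^i(\alpha+d_0+e_0)$, whose $e_0$-coefficient $2K_0(4K_0)^i$ exceeds the target's $(4K_0)^i$ by a factor $\sim 2K_0$. This factor is not a universal constant; it depends on $K_0$, so it cannot be hidden by replacing $4K_0$ with some fixed multiple of $K_0$. Consequently the claimed rewriting as $\bigl((e+2K_0^3a_0)\exp(K_0\alpha)\exp(K_0d_0)\bigr)^{(4K_0)^i}$ does not follow. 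The fix is one line and is in effect what the paper does: having established $G_i\leq(4K_0)^i(G_0+2\alpha)$, simply observe $e_i\leq G_i$, hence
\begin{equation*}
\log(e+2K_0^3a_i)\leq(4K_0)^i\bigl(K_0d_0+e_0+2\alpha\bigr)\quad\text{and}\quad a_i\leq\exp\bigl((4K_0)^i(K_0d_0+e_0+2\alpha)\bigr),
\end{equation*}
which is exactly \eqref{finite:sequences} once $2\alpha\leq K_0\alpha$, i.e.\ once $K_0\geq 2$ (a mild strengthening of $K_0>1$ that the paper's own summation step, producing the $4K_0$ geometric ratio, also implicitly requires).
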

 
 \begin{proof}
 	By substituting the bound on $d_{i+1}$ into the one on $a_{i+1}$ in \eqref{finite:sequences:0}, we find that 
 	\begin{equation*}
 		 	a_{i+1}  \leq   a_i \exp(K_0^2d_i) \exp(K_0\alpha) (e+ 2K_0^3 a_{i+1})^{\frac{1}{2}},
 	\end{equation*}
 	for all integers $i\in [0,N-1]$. Thus, one deduces that
 	\begin{equation*}
 	\begin{aligned}
 		e+	2K_0^3a_{i+1}
 		 &\leq  e+ 2K_0^3 a_i \exp(K_0^2d_i) \exp(K_0\alpha) (e+ 2K_0^3 a_{i+1})^{\frac{1}{2}}
 		 \\
 		 & \leq (e+ 2K_0^3 a_i) \exp(K_0^2d_i)\exp(K_0\alpha) (e+ 2K_0^3 a_{i+1})^{\frac{1}{2}},
 	\end{aligned}
 		\end{equation*}
 		whereby
 	\begin{equation*}
 		e+	2K_0^3a_{i+1}  \leq  \exp(2K_0\alpha) \exp(2K_0^2d_i)(e+ 2K_0^3 a_i)^2.
 	\end{equation*}
 	Hence, we obtain that 
 	\begin{equation*}
 	A_{i+1} \bydef 	\frac{1}{K_0} \log(e+	2K_0^3a_{i+1} ) \leq 2\alpha + 2K_0d_i  + 2A_i.
 	 		 	\end{equation*}
 	Therefore, substituting this bound into the estimate on $d_{i+1}$ given in \eqref{finite:sequences:0} yields that 
 	\begin{equation*}
 		d_{i+1} \leq 2 \alpha + 2K_0d_i  + A_i.
 	\end{equation*}  
 	Now, summing the last two estimates, we deduce the weaker bound  
 	\begin{equation*}
 	\alpha + 	d_{i+1} + A_{i+1} \leq  4K_0\left(  \alpha + 	d_{i} + A_{i} \right),
 	\end{equation*}
 	which leads to the control 
 	\begin{equation*}
 		\alpha + 	d_{i+1} + A_{i+1} \leq (4K_0)^{i+1} \left(\alpha  + 	d_{0} + A_{0} \right),
 	\end{equation*}
 	for all integers $i\in [0,N-1]$. The simpler bounds \eqref{finite:sequences} then follow from a direct elementary  computation, thereby concluding the proof.
 \end{proof}

\subsection{Persistence of initial regularity of velocity field}
Notice that   we did not establish, yet, any information on the persistence of the regularity of the $w$-part in the Besov space $ \dot{B}^{s}_{p,1}$, with $s=-1+\frac{2}{p}$. This bound does not  play  a role neither in the proof of Sobolev regularity of  electromagnetic fields, nor in the previously obtained $L^2_tL^\infty$ bound on the velocity field. However, it is required to propagate  initial regularity of the full velocity field $u$. 

In the next lemma, we show, by employing  the results from the  previous sections,  that
$$w \in L^\infty_t\dot{B}^{-1+\frac{2}{p}}_{p,1},$$
whereby deducing the persistence of the initial regularity of the velocity field $u$.
 
\begin{lemma}\label{w_Estimate}
Under the assumptions of  Lemmas   \ref{lemma:w-ES} and \ref{lemma:v-ES}, it holds that
$$ w\in L^\infty _{\loc}(\mathbb{R}^+; \dot{B}^{-1+\frac{2}{p}}_{p,1} ),$$ 
uniformly with respect to the speed of light $c\in (0,\infty)$.
\end{lemma}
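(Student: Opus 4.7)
The plan is to adapt the argument of Lemma \ref{lemma:v-ES} to the $w$-equation. I would rewrite \eqref{w:equa} as the Stokes perturbation
\begin{equation*}
	\partial_t w -\Delta w + \nabla p_w = (1-\rho)\partial_t w - \rho (v+w)\cdot \nabla w + j\times B, \quad \div w =0,
\end{equation*}
with vanishing initial datum, and then apply the maximal regularity estimate of Lemma \ref{M:reg:DW} at the critical regularity $\dot{B}^{-1+\frac{2}{p}}_{p,1}$, exactly as was done for $v$. This gives, for each $T>0$,
\begin{equation*}
\begin{aligned}
	&\|w\|_{L^\infty_T \dot{B}^{-1+\frac{2}{p}}_{p,1}} + \|(\partial_t w, \nabla^2 w, \nabla p_w)\|_{L^{q,1}_T L^p} + \|w\|_{L^{\eta,1}_T L^m}  \\
	&\qquad \lesssim \|(1-\rho)\partial_t w\|_{L^{q,1}_T L^p} + \|\rho (v+w)\cdot \nabla w\|_{L^{q,1}_T L^p} + \|j\times B\|_{L^{q,1}_T L^p}.
\end{aligned}
\end{equation*}

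Next, the first right-hand side term is absorbed into the left-hand side thanks to the smallness assumption \eqref{assumption:rho2}. For the convective terms, H\"older's inequality in Lorentz spaces (together with \eqref{p-q:def} and \eqref{parameters_Relation}) yields
\begin{equation*}
	\|\rho (v+w)\cdot \nabla w\|_{L^{q,1}_T L^p} \lesssim \|\rho_0\|_{L^\infty}\left( \|v\|_{L^{\eta,1}_T L^m} + \|w\|_{L^{\eta,1}_T L^m}\right)\|\nabla w\|_{L^2_T L^2},
\end{equation*}
where $\|v\|_{L^{\eta,1}_T L^m}$ is controlled by Lemma \ref{lemma:v-ES} combined with Lemma \ref{lemma:boundedness-u}, the $L^2_T L^2$ norm of $\nabla w$ is controlled by the energy estimate \eqref{energy-w}, and the $L^{\eta,1}_T L^m$ norm of $w$ is bounded via the finite-time embedding $L^\infty([0,T]) \hookrightarrow L^{\eta,1}([0,T])$ together with the supercritical bound $L^\infty_T \dot{B}^1_{p,2} \hookrightarrow L^\infty_T L^m$ from Lemma \ref{lemma:w-ES}. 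Finally, the Sobolev embedding $\dot{H}^s \hookrightarrow L^{\frac{2p}{2-p}}$ gives $\|j\times B\|_{L^p}\lesssim \|j\|_{L^2}\|B\|_{\dot{H}^s}$, whence
\begin{equation*}
	\|j\times B\|_{L^{q,1}_T L^p} \lesssim T^{\frac{1}{q} -\frac{1}{2}}\|j\|_{L^2_T L^2}\|B\|_{L^\infty_T \dot{H}^s},
\end{equation*}
using the finite-time embedding $L^2([0,T])\hookrightarrow L^{q,1}([0,T])$ for $q<2$. All four right-hand side contributions are then finite on any bounded time interval and, crucially, bounded uniformly in $c\in (0,\infty)$, by virtue of Lemma \ref{lemma:boundedness-u}, the energy identity \eqref{Energy_Identity_u}, and Proposition \ref{energy-Hs}.

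The only subtle ingredient is the absorption of $(1-\rho)\partial_t w$, which would fail without the smallness assumption \eqref{assumption:rho2}; otherwise one would have to resort to the subdivision-of-time-interval argument used in the proof of Lemma \ref{lemma:w-ES}. The overall logic is otherwise identical to the treatment of $v$, with the simplifying feature that $w|_{t=0}=0$, which removes any contribution from the initial data. The uniformity in $c$ follows automatically because every bound invoked is independent of the speed of light.
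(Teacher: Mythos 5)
Your proof is correct and takes essentially the same route as the paper: apply Lemma \ref{M:reg:DW} to the Stokes perturbation \eqref{w-Stokes} at the critical regularity $\dot B^{-1+\frac{2}{p}}_{p,1}$, absorb $(1-\rho)\partial_t w$ via \eqref{assumption:rho2}, bound the convective term through H\"older in Lorentz spaces together with the $L^{\eta,1}_tL^m$ controls on $v$ and $w$ from Lemmas \ref{lemma:v-ES} and \ref{lemma:w-ES}, and control $j\times B$ via the embedding $L^2_t\hookrightarrow L^{q,1}_t$ and $\dot H^s\hookrightarrow L^{\frac{2p}{2-p}}$. The only cosmetic difference is that you invoke Lemma \ref{lemma:boundedness-u} explicitly to close the loop and ensure finiteness, which the paper leaves implicit.
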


\begin{proof}
 We proceed as in the proof of     Lemma  \ref{lemma:v-ES}.  By applying  Lemma \ref{M:reg:DW} to the perturbed Stokes system  \eqref{w-Stokes}, we find, on any time interval $[0,t]$, that
\begin{equation*}
	\begin{aligned}
		\Vert  w \Vert_{L^\infty _t  \dot{B}^{-1+ \frac 2p}_{p,1}} +  \Vert(\partial_t w, \nabla ^2 w, \nabla p_w)\Vert_{L^{q,1}_t  L^p}
		&\lesssim   \Vert(1-\rho) \partial_t w\Vert_{L^{q,1}_t L^p}
		+\Vert\rho u  \cdot \nabla w\Vert_{L^{q,1}_t L^p}
		\\
		& \quad + \Vert j\times B\Vert_{L^{q,1}_t L^p} ,
	\end{aligned}
\end{equation*}
where the parameter $q\in (1,2)$ is introduced in the statement of Lemma \ref{lemma:v-ES}.
Therefore, by employing  H\"older's inequality, the maximum principle \eqref{mass:conservation2} and the embedding of Lorentz spaces $ L^2_{\loc}(\mathbb{R}^+) \hookrightarrow L^{q,1}_{\loc}(\mathbb{R}^+)$ (see Lemma \ref{Lemma:interpolation X}, in the appendix), we find, in view of \eqref{parameters_Relation}, that 
\begin{equation*} 
	\begin{aligned}
		& \Vert  w \Vert_{L^\infty _t  \dot{B}^{-1+ \frac 2p}_{p,1}} +  \Vert(\partial_t w, \nabla ^2 w, \nabla p_w )\Vert_{L^{q,1}_t  L^p}  \\
		&\qquad \lesssim   \Vert 1-\rho_0  \Vert_{L^\infty} \Vert \partial_t w\Vert_{L^{q,1}_t L^p} +\Vert\rho_0 \Vert_{L^\infty}   \Vert u \Vert _{L^{\eta,2}_t L^m} \Vert  \nabla w\Vert_{L^2_tL^2} + t^{\frac{1}{\eta} } \Vert j\times B\Vert_{L^{2}_t L^p}.
	\end{aligned}
\end{equation*}

Hence, by further utilizing assumption \eqref{assumption:rho2}, we arrive at the estimate 
 \begin{equation} \label{persistence:w}
\begin{aligned}
  \Vert  w \Vert_{L^\infty _t  \dot{B}^{-1+ \frac 2p}_{p,1}} &+  \Vert(\partial_t w, \nabla ^2 w)\Vert_{L^{q,1}_t  L^p}
  \\
  &\lesssim    \Vert\rho_0 \Vert_{L^\infty} \Vert u \Vert _{L^{\eta,2}_t L^m} \Vert  \nabla w\Vert_{L^2_tL^2}+ t^{\frac{1}{\eta}  } \Vert j\times B\Vert_{L^{2}_t L^p} .
\end{aligned}
\end{equation}   
The conclusion of the proof relies upon showing that the right-hand side above is finite, for any $t\geq 0$. To that end, we first write that 
$$ \begin{aligned}
 \Vert u \Vert _{L^{\eta,2}_t L^m}  & \leq  \Vert v \Vert _{L^{\eta,1}_t L^m}  +  \Vert w \Vert _{L^{\eta,1}_t L^m}  .
\end{aligned}
$$
In particular, the bound on $v$ is established in Lemma \ref{lemma:v-ES}, whereas the bound on $w$  is obtained by combining the result of Lemma \ref{lemma:w-ES} with the embedding   
\begin{equation*} 
\begin{aligned}
 \norm {w}_{L^{\eta,1}_t L^m} & \lesssim t^{\frac{1}{\eta}}\norm {w}_{L^{\infty}_t \dot{B}^0_{m,2}} \\
  & \lesssim t^{\frac{1}{\eta}}\norm {w}_{L^{\infty}_t \dot{B}^1_{p,2}},
\end{aligned}   
\end{equation*}
which follows from Lemmas \ref{Emb_Besov_Tr_Lemma} and \ref{lemma:Lorentz}, below.

As for the control of the source term $j\times B$ in $L^2_t L^p$, this   has already been  shown in the proof of Lemma \ref{lemma:w-ES}. In particular, see \eqref{w:es:AAA1}, \eqref{w:es:AAA2} and \eqref{Sobolev:embedding:AAA} from which
we deduce that
$$ \norm {j\times B}_{L^2_t L^p } \lesssim \norm {j}_{L^2_t L^2} \norm {B}_{L^\infty_t \dot{H}^s}.$$
All in all, combining the preceding bounds with the energy estimates \eqref{Energy_Identity_u} and \eqref{energy-w}, and plugging them in \eqref{persistence:w} yields that $w$ is bounded in the desired space. This completes the proof of the lemma.
\end{proof}

\subsection{Summary}

We conclude this section by collecting in one single statement all the essential bounds proven so far. This will allow for a more convenient construction of our proofs later on.

\begin{proposition}\label{Thm-summary}  Let $p\in (1,2)$,  $s_* \bydef 2 \big( 1- \tfrac{1}{p}\big) \in (0,1)$ and $s\in[s_*,1)$.  Further consider a set of parameters $( q,a,b)$   in $(1,2)\times (2,\infty)^2$ such that
 \begin{equation*} 
\frac 1p + \frac 1q = \frac 32 \qquad  \text{and}  \qquad  
  \frac 1a + \frac 1b  = \frac 12 .
\end{equation*}     
 Let $\rho_0$ be a bounded function satisfying assumptions \eqref{Assumption A} and  \eqref{assumption:rho2} for some given constants $\underline{\rho} , \overline{\rho} \in (0,\infty) $.
 Let $(\rho_0,u_0,E_0,B_0)$ be initial data for \eqref{Main_System} where   $u_0  $ is divergence free and
   $$ u_0 \in \dot{B}^{-1+\frac{2}{p}}_{p,1}(\mathbb{R}^2),\quad  \quad (E_0,B_0)\in H^s(\mathbb{R}^2).$$ 
  
Then, any  regular solution $(\rho,u,E,B)$ associated to the preceding initial data enjoys the   bounds 
$$ \rho \in L^\infty(\mathbb{R}^+; L^\infty  (\mathbb{R}^2)), \quad  (u,E,B)\in L^\infty (\mathbb{R}^+; L^2  (\mathbb{R}^2)), \quad (\nabla u, j) \in L^2 (\mathbb{R}^+; L^2  (\mathbb{R}^2)), $$
$$   u \in    L^2 _{\loc}(\mathbb{R}^+; L^\infty (\mathbb{R}^2)) \cap  L^{q,1} _{\loc}(\mathbb{R}^+;\dot{W}^{2,p} (\mathbb{R}^2)),$$
$$     (E,B)\in L^\infty_{\loc}( \mathbb{R}^+;  \dot{H}^{s}(\mathbb{R}^2)) ,\qquad cE \in L^2_{\loc}( \mathbb{R}^+;  \dot{H}^{s}(\mathbb{R}^2)).$$  

In addition to that, the velocity field is split as $u=v+w$, where $v$ and $w$ are governed by the equations \eqref{v:equa} and \eqref{w:equa}, respectively. Together with their associated pressures $p_v$ and $p_w$, they enjoy the bounds
$$( v, w) \in L^\infty _{\loc}(\mathbb{R}^+; \dot{B}^{-1+\frac{2}{p}}_{p,1} (\mathbb{R}^2) )\cap L^{a,1} _{\loc}(\mathbb{R}^+;L^b (\mathbb{R}^2)) , \qquad  w\in  L^\infty _{\loc}(\mathbb{R}^+; \dot{B}^{1}_{p,2} (\mathbb{R}^2) ) , $$
$$(\partial_t v, \nabla ^2v, \nabla p_v ) \in  L^{q,1} _{\loc}(\mathbb{R}^+;L^p (\mathbb{R}^2)),  \qquad  (\partial_tw, \nabla ^2w, \nabla p_w )\in  L^{2} _{\loc}(\mathbb{R}^+;L^p(\mathbb{R}^2) ).$$ 
All the above bounds are uniform with respect to the speed of light $c\in (0,\infty)$.
\end{proposition}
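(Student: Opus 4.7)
The proof is a matter of assembling the a priori estimates derived in Sections \ref{Section:a priori ES}.1--\ref{Section:a priori ES}.4 and checking that they are compatible and uniform in the speed of light $c\in(0,\infty)$. The plan is as follows.

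First, the zeroth-order bounds are immediate: the maximum principle $\rho\in L^\infty_tL^\infty_x$ is the content of \eqref{Cons_L_p_density}, obtained directly from the continuity equation, while the energy bounds $(u,E,B)\in L^\infty_tL^2_x$ and $(\nabla u,j)\in L^2_tL^2_x$ are encoded in the identity \eqref{Energy_Identity_u}. Both are manifestly independent of $c$.

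Second, I invoke Lemma \ref{lemma:boundedness-u}, which simultaneously furnishes the refined regularity of the electromagnetic field, namely $(E,B)\in L^\infty_{\loc}\dot H^s$ together with $cE\in L^2_{\loc}\dot H^s$, and the borderline control of the velocity in $L^2_{\loc}L^\infty$, with an explicit polynomial growth of order $(1+t)^{C_*}$. This lemma is the heart of the matter and is itself the output of the coupled logarithmic bootstrap between Proposition \ref{energy-Hs} and Lemma \ref{lemma.boundedness-u}; no further work is required at this step.

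Third, for the splitting $u=v+w$, the bounds on the fluid part $v$ are precisely those of Lemma \ref{lemma:v-ES}, whose right-hand sides are finite on any bounded time interval thanks to the previous step, and the extension to the full Lorentz range $(a,b)\in(q,\infty)\times(p,\infty)$ with $\tfrac1a+\tfrac1b=\tfrac12$ is the content of Corollary \ref{RMK:ab}. The bounds on the electromagnetic part $w$ come from Lemma \ref{lemma:w-ES}, which gives $w\in L^\infty_{\loc}\dot B^1_{p,2}$ and $(\partial_t w,\nabla^2w,\nabla p_w)\in L^2_{\loc}L^p$, and from Lemma \ref{w_Estimate}, which propagates the critical regularity $w\in L^\infty_{\loc}\dot B^{-1+2/p}_{p,1}$ and also provides $(\partial_tw,\nabla^2w)\in L^{q,1}_{\loc}L^p$. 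The bound $w\in L^{a,1}_{\loc}L^b$ in the same range as for $v$ is then obtained by replaying the interpolation argument of Corollary \ref{RMK:ab}, which relies only on the two ingredients $w\in L^\infty_{\loc}\dot B^{-1+2/p}_{p,1}$ and $\nabla^2 w\in L^{q,1}_{\loc}L^p$.

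Fourth, the only claim on $u$ not yet addressed is $u\in L^{q,1}_{\loc}\dot W^{2,p}$. This follows at once by the triangle inequality from $\nabla^2v\in L^{q,1}_{\loc}L^p$ (Lemma \ref{lemma:v-ES}) and $\nabla^2w\in L^{q,1}_{\loc}L^p$ (Lemma \ref{w_Estimate}, or Lemma \ref{lemma:w-ES} combined with the local embedding $L^2_{\loc}\hookrightarrow L^{q,1}_{\loc}$ valid for $q<2$ via Lemma \ref{Lemma:interpolation X}). Collecting everything yields the full list of bounds, all uniform in $c\in(0,\infty)$. I anticipate no substantive obstacle at this stage: the entire analytic difficulty has been absorbed into the logarithmic estimate of Lemma \ref{lemma.boundedness-u}, so what remains is essentially a bookkeeping exercise.
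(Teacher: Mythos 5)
Your proposal is correct and follows essentially the same collation strategy as the paper, drawing on the identical set of lemmas ($L^\infty$ maximum principle, energy identity, Lemmas \ref{lemma:w-ES}, \ref{lemma:v-ES}, \ref{w_Estimate}, \ref{lemma:boundedness-u}, and Corollary \ref{RMK:ab}), and closing the remaining $w\in L^{a,1}_{\loc}L^b$ bound via the interpolation estimate \eqref{v_L_r_s_Estimate} exactly as the paper does. The only slight imprecision is that you write the interpolation step for $w$ uses ``only the two ingredients $w\in L^\infty_{\loc}\dot B^{-1+2/p}_{p,1}$ and $\nabla^2 w\in L^{q,1}_{\loc}L^p$,'' whereas \eqref{v_L_r_s_Estimate} as stated also asks for $\partial_t w\in L^{q,1}_{\loc}L^p$; but since you already have that control (from the proof of Lemma \ref{w_Estimate}, or equivalently from Lemma \ref{lemma:w-ES} together with the embedding $L^2_{\loc}\hookrightarrow L^{q,1}_{\loc}$), this does not affect the validity of the argument.
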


We stress that, except for the control
$$w\in L^{a,1} _{\loc}(\mathbb{R}^+;L^b (\mathbb{R}^2)),$$
all the bounds  stated in  the  proposition above have been previously established in \eqref{Ident_Energy_1}, \eqref{mass:conservation}, \eqref{mass:conservation2}, Lemmas \ref{lemma:w-ES} and \ref{lemma:v-ES}, Corollary \ref{RMK:ab} (with \eqref{L_m_Estim}), and Lemma \ref{lemma:boundedness-u}. Moreover, the propagation of the regularity of $E$ and $B$ in $H^s$ is a consequence of Proposition \ref{energy-Hs}. As for the remaining bound on $w$, it is similar to Corollary \ref{RMK:ab} and follows from the combination of existing bounds on $w$ with the interpolation estimate \eqref{v_L_r_s_Estimate} from Lemma \ref{M:reg:DW} in the appendix.

\subsection{Proof of existence of solutions}
\label{proof_existence}

For clarity, we outline here the proof of the first part of Theorem \ref{Thm:1}, that is, the proof of existence of solutions in the non-Lipschitz setting. To that end, recall that we are considering a sequence of strong solutions $ (\rho_n,u_n,E_n,B_n)$ to the approximate system \eqref{AP-system} which enjoy all a priori bounds collected in Proposition \ref{Thm-summary}, uniformly in $n$.

In view of these bounds, one can then show, with a classical application of the Aubin--Lions compactness lemma, that $(u_n,E_n,B_n)_{n\in\mathbb{N}}$ is relatively compact in the strong topology of $L^2_\mathrm{loc}(dtdxdv)$. Note, though, that $(\rho_n)_{n\in\mathbb{N}}$ might not enjoy strong compactness properties. However, up to extraction of a subsequence, one can assume that it converges in the weak* topology of $L^\infty$. This is then largely sufficient to ensure the weak stability of all nonlinear terms in \eqref{AP-system}, which implies that any limit point $(\rho,u,E,B)$ is a weak solution of \eqref{Main_System}.

Let us point out that the solution $(\rho,u,E,B)$ also enjoys the bounds stated in Proposition \ref{Thm-summary}. This is a consequence of the fact that the spaces involved in these bounds all satisfy the Fatou property (see \cite[Theorems 2.25 and 2.72]{bcd11} for the Fatou property in Besov spaces  and  Theorem \cite[Theorem 1.4.11]{Grafakos} for the same property in Lorentz spaces).

\section{Time-weighted estimates (I) and Lipschitz bound on velocity field}
\label{Section.TW1}

In this section, we show how to   establish the Lipschitz regularity of the velocity field $u$ stated in Theorem \ref{Thm:1} with $s\in(\frac 12, 1)$, as a consequence of Proposition \ref{Thm-summary}. This regularity is crucial and will be employed afterwards in the proof of  uniqueness.

 Again, the decomposition $u=v+w$, where $v$ and $w$ are the respective solutions of \eqref{v:equa} and \eqref{w:equa}, will be utilized in this section.

\subsection{Preliminary time-weighted estimates and higher regularities}

Before we proceed with the derivation of the Lipschitz bound   on $v$, we need to prove important  time-weighted estimates. This is done in the spirit of \cite{Danchin_Wang_22} and constitutes a crucial step. It has also been previously exploited in \cite{LI20176512, PZ13} in the context of the inhomogeneous Navier--Stokes equations.

\begin{lemma}[Time-weighted estimate for $v$] \label{lemma:v2}
	Under the assumptions stated in Proposition \ref{Thm-summary},  it  holds, uniformly with respect to the speed of light $c>0$, that
	$$ tv \in  L^\infty_{\loc}(\mathbb{R}^+; \dot{B}^{1 + \frac{2}{m}}_{m,1}(\mathbb{R}^2) ),
	\quad
	\big( \partial _t (tv), \nabla ^2 (tv) , \nabla (tp_v)  \big) \in L^{\eta,1}_{\loc}( \mathbb{R}^+; L^m(\mathbb{R}^2)),$$ 
	where  $\eta$ and $m$ are defined by \eqref{parameters_Relation}.   
\end{lemma}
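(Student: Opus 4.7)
My plan begins by multiplying the momentum equation in \eqref{v:equa} by $t$. Using the identity $\rho(\partial_t v+u\cdot\nabla v)=\Delta v-\nabla p_v$, this yields
\begin{equation*}
\rho\bigl(\partial_t(tv)+u\cdot\nabla(tv)\bigr)-\Delta(tv)+\nabla(tp_v)=\rho v,
\end{equation*}
which I rewrite as the perturbed Stokes system
\begin{equation*}
\partial_t(tv)-\Delta(tv)+\nabla(tp_v)=(1-\rho)\partial_t(tv)-\rho u\cdot\nabla(tv)+\rho v,\qquad (tv)|_{t=0}=0.
\end{equation*}
To this I would apply the Lorentz-type maximal regularity estimate of Lemma~\ref{M:reg:DW} with time exponent $L^{\eta,1}$ and spatial exponent $L^m$. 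Since $\tfrac{1}{\eta}=\tfrac{1}{q}-\tfrac{1}{2}$, $\tfrac{1}{m}=\tfrac{1}{p}-\tfrac{1}{2}$ and $\tfrac{1}{p}+\tfrac{1}{q}=\tfrac{3}{2}$, one has $2-\tfrac{2}{\eta}=1+\tfrac{2}{m}$, so the trace space for initial data is exactly $\dot B^{1+2/m}_{m,1}$ and the vanishing of $(tv)|_{t=0}$ kills that contribution.

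Three source terms then need to be estimated in $L^{\eta,1}_{\loc}(\mathbb{R}^+;L^m)$. The first, $(1-\rho)\partial_t(tv)$, is absorbed into the left-hand side thanks to the smallness assumption \eqref{assumption:rho2}. The second, $\rho v$, is handled directly through $\|\rho_0\|_{L^\infty}$ together with the bound $v\in L^{\eta,1}_{\loc}(\mathbb{R}^+;L^m)$ supplied by Proposition~\ref{Thm-summary} (which applies because $\tfrac{1}{\eta}+\tfrac{1}{m}=\tfrac{1}{2}$). The real work lies in the advection term $\rho u\cdot\nabla(tv)$, whose closure constitutes the main obstacle of the proof.

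My plan for this advection term is to combine the fact that $u=v+w$ belongs to $L^{\eta,1}_{\loc}(\mathbb{R}^+;L^m)$ (again via Proposition~\ref{Thm-summary}, applied to both $v$ and $w$) with an $L^\infty_t L^\infty$ bound on $\nabla(tv)$, itself dominated by the target norm $\|tv\|_{L^\infty_t\dot B^{1+2/m}_{m,1}}$ through the planar Besov embedding $\dot B^{2/m}_{m,1}(\mathbb{R}^2)\hookrightarrow L^\infty(\mathbb{R}^2)$. Via the elementary inclusion $L^{\eta,1}_tL^m\cdot L^\infty_tL^\infty\hookrightarrow L^{\eta,1}_tL^m$, this produces a bootstrap inequality of the form
\begin{equation*}
X(t)\leq C_0+C\,\|u\|_{L^{\eta,1}([0,t];L^m)}\,X(t),
\end{equation*}
where $X(t)$ denotes the full left-hand side of the lemma and $C_0$ depends only on the quantities already controlled in Proposition~\ref{Thm-summary}. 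I would close this inequality exactly as in the proofs of Lemmas~\ref{lemma:w-ES} and \ref{lemma:v-ES}: subdivide $[0,T]$ into finitely many subintervals $[t_i,t_{i+1}]$ on which $C\|u\|_{L^{\eta,1}([t_i,t_{i+1}];L^m)}\leq \tfrac{1}{2}$ — which is permissible because the Lorentz norm is globally finite and absolutely continuous in time — then iterate the local estimate across junctions, treating the nonzero initial datum $t_iv(t_i)$ at each $t_i$ through the bound already established on $[t_{i-1},t_i]$, and finally bound the total number of intervals by the global control of $\|u\|_{L^{\eta,1}_{\loc}L^m}$ from Proposition~\ref{Thm-summary}. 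This produces the claimed locally-in-time bounds, uniformly in the speed of light $c\in(0,\infty)$.
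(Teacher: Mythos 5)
Your proposal is correct and follows essentially the same route as the paper: rewrite the $t$-weighted equation as a perturbed Stokes system, apply the Lorentz--Besov maximal regularity of Lemma~\ref{M:reg:DW} (with vanishing initial datum), absorb $(1-\rho)\partial_t(tv)$ via \eqref{assumption:rho2}, estimate $\rho\,u\cdot\nabla(tv)$ by H\"older in $L^{\eta,1}_tL^m\cdot L^\infty_{t,x}$ combined with $\dot B^{1+2/m}_{m,1}\hookrightarrow \dot W^{1,\infty}$, and close by subdividing $[0,T]$ so that $\norm u_{L^{\eta,1}([t_i,t_{i+1}];L^m)}$ is small. Your remark on the Lorentz-space subdivision is exactly the technical point the paper defers to \cite[Proposition~2.2]{Danchin_Wang_22} in its footnote, so no gap remains.
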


\begin{proof}
 First of all, we multiply the   equation defining $v$, that is \eqref{v:equa}, by $t$ and rewrite it as a perturbation of the Stokes system 
 \begin{equation*}   
 \partial_t (tv ) - \Delta (tv)  + \nabla (tp_v)  = \rho v   + (1-\rho) \partial_t (tv)  -\rho (tu)\cdot \nabla v    . 
\end{equation*} 
Then, recalling the identity
\begin{equation}\label{identity:parameters:A}
	\frac 1\eta + \frac 1m = \frac 12,
\end{equation}
and applying Lemma \ref{M:reg:DW}, followed by H\"older's inequalities and    \eqref{mass:conservation}-\eqref{mass:conservation2}, yields
\begin{equation*}
\begin{aligned}
&\norm{tv}_{L^\infty_t\dot{B}^{2-\frac {2}{\eta}}_{m,1}}
+\norm{\big(\partial _t (tv), \nabla ^2 (tv), \nabla (tp_v)\big)}_{L^{\eta,1}_tL^m}  \\
&\qquad\lesssim  \norm {1-\rho}_{L^\infty_{t,x}}\norm{\partial _t (tv) }_{L^{\eta,1}_tL^m} + \norm {\rho}_{L^\infty_{t,x}} \left( \norm { v }_{L^{\eta,1}_tL^m}  + \norm { tu\cdot \nabla v }_{L^{\eta,1}_tL^m}  \right) \\
&\qquad\lesssim  \norm {1-\rho_0}_{L^\infty}\norm{\partial _t (tv) }_{L^{\eta,1}_tL^m}   + \norm {\rho_0}_{L^\infty} \left( \norm { v }_{L^{\eta,1}_tL^m}  + \norm {  u } _{L^{\eta,1}_tL^m}  \norm {t\nabla v }_{L^\infty_{t,x}}  \right).
\end{aligned} 
\end{equation*}
Next, by virtue of   the embedding 
\begin{equation*}
\dot{B}^{2-\frac 2\eta  }_{m,1} = \dot{B}^{1+ \frac 2m}_{m,1}  \hookrightarrow \dot{W}^{1,\infty}(\mathbb{R}^2),
\end{equation*}
together with assumption  \eqref{assumption:rho2},
we find that
\begin{equation*}
\begin{aligned}
\norm{tv}_{L^\infty_t\dot{B}^{1 + \frac{2}{m}}_{m,1}} & +  \norm{\big(\partial _t (tv), \nabla ^2 (tv),  \nabla (tp_v)\big)}_{L^{\eta,1}_tL^m}  \\
&\leq   C \norm {\rho_0}_{L^\infty} \left( \norm { v }_{L^{\eta,1}_tL^m}  + \norm {  u } _{L^{\eta,1}_tL^m}  \norm {t v }_{L^\infty_t\dot{B}^{1 + \frac{2}{m}}_{m,1}}  \right), 
\end{aligned} 
\end{equation*} 
for some  universal constant $C>0 $. Note that, in view of \eqref{identity:parameters:A}, Proposition \ref{Thm-summary} ensures the bound  
$$  u, v \in  L^{\eta,1}_{\loc}(\mathbb{R}^+; L^m(\mathbb{R}^2)).$$      
Thus, in order to conclude, we proceed as in the proofs of Lemmas \ref{lemma:w-ES} and \ref{lemma:v-ES}. More precisely, we consider a finite subdivision of time  
$$ [0,t]= \bigcup_{i=0}^{N-1} [t_{i}, t_{i+1}]$$
in such a way that 
\begin{equation*}
\left\{ 
\begin{aligned}
C \norm {\rho_0}_{L^\infty}\norm {u}_{L^{\eta,1}([t_{i}, t_{i+1}]; L^m)}  &= \frac 12, \qquad \text{for all}\quad  i\in \{ 0,1, \dots,  N-2\} ,\\
C \norm {\rho_0}_{L^\infty}\norm {u}_{L^{\eta,1}([t_{N-1}, t_{N}]; L^m)} &\leq  \frac 12.
\end{aligned}
\right. 
\end{equation*}
Accordingly, by induction and by following the method of proof\footnote{Here, note that the decomposition in time happens in Lorentz spaces and not in standard Lebesgue  spaces  as is the case in the proofs of Lemmas \ref{lemma:w-ES} and \ref{lemma:v-ES}. Nevertheless, the same procedure applies in this situation. This is done with complete details in the proof of Proposition 2.2 in \cite{Danchin_Wang_22}.} of Lemmas \ref{lemma:w-ES} and \ref{lemma:v-ES}, we arrive at the desired bounds.  The proof of the lemma is therefore completed.
  \end{proof}

  The following lemma provides us with improved bounds on the $w$-part as soon as the regularity parameter $s$ is large enough.  This will be employed to prove the Lipshitz bound on the full velocity field $u$.

  \begin{lemma}[Higher regularities for $w$] \label{lemma:w2}
  Under the assumptions of Proposition \ref{Thm-summary}, we further  assume  that $ s\in (\frac{1}{2},1)$. Moreover, we define
 $$k\bydef \frac {1}{1-s} \in (2,\infty)$$   
 and we fix two parameters 
 $$2<\sigma   <k \quad \text{and} \quad  2< r< \tfrac{1}{1-(\tfrac{1}{\sigma}-\tfrac{1}{k})} . $$
   Then, it holds that 
   $$w\in  L^\infty_{\loc}(\mathbb{R}^+; \dot{B}^{2-\frac2r}_{\sigma,r}(\mathbb{R}^2)), \qquad \partial_t w, \nabla ^2w, \nabla p_w\in  L^{r}_{\loc}(\mathbb{R}^+;L^{\sigma}(\mathbb{R}^2)),$$
where the bounds may not be uniform with respect to the speed of light $c\in (0,\infty)$.
  \end{lemma}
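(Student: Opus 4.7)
The plan is to mirror the proofs of Lemmas~\ref{lemma:w-ES} and~\ref{w_Estimate}: rewrite \eqref{w:equa} as a perturbation of the homogeneous Stokes system,
\begin{equation*}
\partial_t w-\Delta w+\nabla p_w=(1-\rho)\partial_t w-\rho u\cdot\nabla w+j\times B,\qquad w|_{t=0}=0,
\end{equation*}
and apply a maximal parabolic regularity estimate in the mixed $L^r_tL^\sigma$ setting (rather than the Lorentz-in-time variant used earlier in the paper). Since $w$ has trivial initial data, this produces the master inequality
\begin{equation*}
\|w\|_{L^\infty_t\dot B^{2-2/r}_{\sigma,r}}+\|(\partial_t w,\nabla^2 w,\nabla p_w)\|_{L^r_tL^\sigma}\lesssim \|(1-\rho)\partial_t w\|_{L^r_tL^\sigma}+\|\rho u\cdot\nabla w\|_{L^r_tL^\sigma}+\|j\times B\|_{L^r_tL^\sigma},
\end{equation*}
whose first right-hand term can be absorbed into the left using the smallness assumption \eqref{assumption:rho2}.

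For the convective term $\rho u\cdot\nabla w$, I would pair $u\in L^2_{\loc}(\mathbb{R}^+;L^\infty)$ from Proposition~\ref{Thm-summary} with $\nabla w$ interpolated between $\nabla w\in L^\infty_tL^\sigma$ (a consequence of $w\in L^\infty_t\dot B^{2-2/r}_{\sigma,r}$ via standard Besov embeddings) and $\nabla^2w\in L^r_tL^\sigma$, and close via a subdivision-of-time argument of the kind used in Lemmas~\ref{lemma:w-ES} and~\ref{lemma:v-ES}. The delicate piece is the Lorentz source $j\times B$. Using Ohm's law $j=\sigma(cE+u\times B)$, the principal contribution is $cE\times B$: for this I would invoke the Sobolev embedding $\dot H^s(\mathbb{R}^2)\hookrightarrow L^{2k}(\mathbb{R}^2)$ (which is precisely what forces the definition $k=1/(1-s)$) together with the bounds $cE\in L^2_t\dot H^s$ and $cE\in L^\infty_t\dot H^s$ furnished by Proposition~\ref{Thm-summary}. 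The latter bound is \emph{not} uniform in $c$, which accounts for the loss of uniformity advertised in the statement. Time-interpolating gives $cE\in L^{r_1}_tL^{2k}$ for any $r_1\in[2,\infty]$, which is paired with $B\in L^\infty_t L^{\sigma_2}$, obtained by interpolating $B\in L^\infty_tL^2$ with the Sobolev embedding $B\in L^\infty_t\dot H^s\hookrightarrow L^\infty_tL^{2k}$. The remaining contribution $(u\times B)\times B$ is handled analogously using the $L^2_tL^\infty$ bound on $u$ and is easier.

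The main obstacle will be the balancing of exponents in the $cE\times B$ estimate: the constraints $2<\sigma<k$ and $2<r<1/(1-(1/\sigma-1/k))$ featuring in the statement are precisely the H\"older/interpolation compatibility conditions that allow this pairing to close, with $1/\sigma=1/\sigma_1+1/\sigma_2$ where $\sigma_1=2k$ and $\sigma_2\in[2,2k]$, and $1/r=1/r_1$ with $r_1\in[2,\infty]$. Checking that these ranges are simultaneously compatible with the convective and perturbative estimates, and adapting the time-subdivision closure from the Lorentz setting of Lemmas~\ref{lemma:w-ES} and~\ref{lemma:v-ES} to the purely Lebesgue $L^r_tL^\sigma$ framework required here, is the essentially computational but subtle core of the argument.
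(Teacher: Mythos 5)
Your overall framework is right — rewrite \eqref{w:equa} as a perturbed Stokes system, apply maximal regularity in $L^r_t L^\sigma$, absorb the $(1-\rho)\partial_t w$ term via \eqref{assumption:rho2}, and handle $j\times B$ through the Sobolev embedding $\dot H^s\hookrightarrow L^{2k}$ — but your treatment of the convective term $\rho\, u\cdot\nabla w$ has a fatal exponent clash. You propose to pair $u\in L^2_tL^\infty$ with an interpolated norm of $\nabla w$ to land in $L^r_t L^\sigma$; by H\"older in time this forces $\nabla w\in L^{\tilde r}_t L^\sigma$ with $\tfrac1{\tilde r}=\tfrac1r-\tfrac12$, which is negative since $r>2$. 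On a bounded time interval $L^2_t$ does not embed in $L^r_t$ for $r>2$, so there is no way to salvage this with crude embeddings either. This is exactly the step that fails, and it is not a computational subtlety but a structural obstruction.

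The paper's proof resolves it by \emph{not} using the $L^2_tL^\infty$ bound on $u$ for this term. Instead it uses $u\in L^a_t L^b$ from Proposition \ref{Thm-summary} (recall $u=v+w$ and both $v,w\in L^{a,1}_tL^b$ for any $\tfrac1a+\tfrac1b=\tfrac12$, $a,b\in(2,\infty)$), with the auxiliary exponents $\tfrac1b=\tfrac1\sigma-\tfrac1k$, $\tfrac1a=\tfrac12-\tfrac1b$ and $\tfrac1\ell=\tfrac1r-\tfrac1a$. The constraint $r<\big(\tfrac12-(\tfrac1\sigma-\tfrac1k)\big)^{-1}$ (which, incidentally, appears to be misprinted as $\big(1-(\tfrac1\sigma-\tfrac1k)\big)^{-1}$ in the statement) is precisely what guarantees $a>r$, hence $\ell\in(r,\infty)$. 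One then bounds $\|\rho\, u\cdot\nabla w\|_{L^r_tL^\sigma}\lesssim\|u\|_{L^a_tL^b}\|\nabla w\|_{L^\ell_tL^k}$ and interpolates $\|\nabla w\|_{L^\ell_tL^k}$ between $\|w\|_{L^\infty_t\dot B^{2-2/r}_{\sigma,r}}$ and $\|\nabla^2 w\|_{L^r_tL^\sigma}$ so it can be absorbed by the time-splitting argument, since $a<\infty$. Your handling of the source term $j\times B$ is consistent with the paper's (and correctly identifies the $c$-dependent loss), but without fixing the convective term the proof does not close.
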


\begin{proof}
   We begin with applying  Lemma \ref{MR:lemma:1} to  \eqref{w:equa} to obtain, for any $t>0$, that
\begin{equation}\label{Energy_Estimate_u_e}
	\begin{aligned}
		\norm{ w}_{ L^\infty_t \dot{B}^{2-\frac2r}_{\sigma,r}}
		& +  \norm{\big(\partial_t w, \nabla^2 w, \nabla p_w\big)}_{ L^{r}_t L^\sigma }
		\\
		& \lesssim   \norm{1-\rho}_{ L^\infty_{t,x}}\norm{\partial_t w}_{ L^{r}_t L^\sigma }
		+ \norm {\rho}_{L^\infty_{t,x}} \norm { u\cdot\nabla w}_{L^{r}_t L^\sigma }+ \norm {j\times B}_{L^{r}_t L^\sigma }.
	\end{aligned}
\end{equation}
Therefore, introducing the parameters 
\begin{equation*}
	\frac{1}{b} \bydef \frac{1}{\sigma} - \frac{1}{k} \in (0,\tfrac{1}{2}), \qquad \frac{1}{a} \bydef \frac{1}{2} - \frac{1}{b} \in (0,\tfrac{1}{2})
\end{equation*} 
and 
\begin{equation*}
	\frac{1}{\ell} \bydef \frac{1}{r} - \frac{1}{a} = \frac{1}{r} - \left( \frac{1}{2} - \left (\frac{1}{r} - \frac{1}{k}\right)\right) \in (0,\tfrac{1}{r}),
\end{equation*}
we obtain, by applying H\"older's inequality, that
\begin{equation*} 
	\begin{aligned}
		\norm{ w}_{ L^\infty_t \dot{B}^{2-\frac2r}_{\sigma,r}}
		& +  \norm{\big(\partial_t w, \nabla^2 w, \nabla p_w\big)}_{ L^{r}_t L^\sigma }  
		\lesssim     \norm { u }_{L^{a}_t L^b }\norm { \nabla w}_{L^{\ell }_t L^ k }+  \norm {j\times B}_{L^{r}_t L^\sigma },
	\end{aligned}
\end{equation*}
where we have employed \eqref{mass:conservation} to estimate the second term in the right-hand side of \eqref{Energy_Estimate_u_e}, and \eqref{mass:conservation2} and \eqref{Assumption A} to absorb the first term in the right-hand side.

 By further expanding $j$ using Ohm's law \eqref{Ohms-law1} or \eqref{Ohms-law2} and applying H\"older's inequality, again, we infer that
\begin{equation*} 
	\begin{aligned}
		\norm{ w}_{ L^\infty_t \dot{B}^{2-\frac2r}_{\sigma,r}}
		 +  \norm{\big(\partial_t w, \nabla^2 w, \nabla p_w\big)}_{ L^{r}_t L^\sigma }  
		&\lesssim   
		  \norm { u }_{L^{a}_t L^b }\norm { \nabla w}_{L^{\ell }_t L^ k }
		+ c  t^{\frac{1}{r}}  \norm { (E,B) }_{L^{\infty }_t L^{2\sigma} }^2
		\\
		&\quad +  t^\frac{1}{\ell }\norm { u }_{L^{a}_t L^b }\norm { B}_{L^{\infty }_t L^ {2k} }^2.
	\end{aligned}
\end{equation*}
Now, employing the embedding 
\begin{equation*}
	\dot H^s \hookrightarrow L^{2k}(\mathbb{R}^2),
\end{equation*}
in conjunction with the interpolation inequalities
\begin{equation*}
	\norm { (E,B) }_{L^{\infty }_t L^{2\sigma} } \lesssim \norm { (E,B) }_{L^{\infty }_t ( L^{2} \cap L^{2k}) } \lesssim \norm { (E,B) }_{L^{\infty }_t H^s }
\end{equation*}
and 
\begin{equation*}  
 \begin{aligned}
 	  \norm {\nabla w}_{L^\ell_t L^k}
	\lesssim \norm {\nabla w}_{L^\ell_t \dot B^{ \frac{2}{\sigma}-  \frac{2}{k}}_{\sigma,1}}
	&= \norm {\nabla w}_{L^\ell_t \dot B^{ -1 +\frac{2}{r}+  \frac{2}{\ell}}_{\sigma,1}}
	\\
	&\lesssim \norm { \nabla  w }_{L^\infty_t \dot{B}^{1-\frac2r}_{\sigma,r} \cap L^r _t \dot W^{1,\sigma} }
	= \norm {   w }_{L^\infty_t \dot{B}^{2-\frac2r}_{\sigma,r} \cap L^r _t \dot W^{2,\sigma} },
 \end{aligned}
 \end{equation*}
we end up with the functional inequality 
\begin{equation}\label{h:bound:1}
	h(t) \bydef \norm{ w}_{ L^\infty_t \dot{B}^{2-\frac2r}_{\sigma,r}}
		 +  \norm{\big(\partial_t w, \nabla^2 w, \nabla p_w\big)}_{ L^{r}_t L^\sigma }  \lesssim H(0)+ F_c(t) + U(t) h(t),
\end{equation}
where 
\begin{equation*}
	U(t) \bydef \norm {u}_{L^a_t L^b}
\end{equation*}
and
\begin{equation*}
	 F_c(t) \bydef  \left (t^\frac{1}{\ell }\norm { u }_{L^{a}_t L^b } + ct^{\frac{1}{r}} \right)\norm { (E,B)}_{L^{\infty }_t H^s }^2.
\end{equation*}
Note that $F_c(t)$ is a locally-bounded function of time, thanks to Proposition \ref{Thm-summary}. Accordingly, by virtue of the fact that $a<\infty$, one can repeat the bootstrap argument presented in the proof of Lemma \ref{lemma:v2} to deduce the desired bound on $w$. More precisely, one can split the time interval $[0,t]$ into a union of a finite  number of intervals such that, on each subinterval, the coefficient $U(t)$ is small enough to absorb the term $h(t)$ in the right-hand side of \eqref{h:bound:1} by the left-hand side. This completes the proof of the lemma.
  \end{proof}
 
\begin{remark}\label{RMK-S1}
	A straightforward combination of the bounds from Proposition \ref{Thm-summary} and Lemma \ref{lemma:w2}  yields the additional controls
	\begin{equation}
		\label{partial_w}
		w \in  L^\infty _{\loc}(\mathbb{R}^+;\dot{H}^1(\mathbb{R}^2)), \qquad  \partial_t w, \nabla ^2w, \nabla p_w\in  L^2_{\loc}(\mathbb{R}^+;L^2(\mathbb{R}^2)).
	\end{equation}
	Indeed, since $p<2<\sigma$, we see that
	$$\begin{aligned}
	\norm {w(t)}_{\dot{H}^1}^2 = \sum_{ j\in \mathbb{Z} } \norm {\nabla \Delta_j w}_{L^2} ^2
	&\lesssim \sum_{ j\in \mathbb{Z} } \norm {\nabla \Delta_j w}_{L^p} ^2  +  \sum_{ j\in \mathbb{Z} } \norm {\nabla \Delta_j w}_{L^\sigma} ^2  \\
	&=   \norm {w(t)}_{\dot{B}^1_{p,2}}^2 +  \norm {w(t)}_{\dot{B}^1_{\sigma,2}}^2   ,   
	\end{aligned} $$
	for any $t\geq 0$. Accordingly, the $ L^\infty_t \dot{H}^1$ bound on $w$ follows  by further employing the interpolation inequality    
	 \begin{equation*} 
  \begin{aligned}  \norm {\nabla w}_{L^\infty_t  \dot{B}^0_{\sigma ,2}} 
  & \lesssim  \norm {\nabla w}_{L^\infty_t \dot{B}^{1-\frac 2r}_{\sigma ,\infty} }^{ \alpha}\norm { \nabla w}_{L^\infty_t \dot{B}^{-2+ \frac 2\sigma }_{\sigma ,\infty } }^{ 1-\alpha}\\
   & \lesssim  \norm { w}_{L^\infty_t \dot{B}^{2-\frac 2r}_{\sigma ,r} }^{ \alpha}\norm { w}_{L^\infty_t L^2}^{ 1- \alpha},
  \end{aligned}
  \end{equation*}
  for some $\alpha \in (0,1)$, which holds due to the fact that   $ -2+ \tfrac{2}{\sigma}<0< 1 -\frac{2}{r}$.
  
	Likewise, the $L^2_{t,x}$  bound  on $\partial_t w$, $\nabla ^2 w$ and $\nabla p_w$ is obtained as a consequence of the embedding $L^2_tL^p\cap L^r_t L^\sigma \hookrightarrow L^2_tL^2$, which holds on any bounded interval of time since $r>2$ and $p<2<\sigma$.

	Furthermore, we   deduce  that     $\dot{w}\bydef \partial_t w + u \cdot \nabla w$ enjoys the     bound 
	$$   \dot{w} \in  L^2_{\loc}(\mathbb{R}^+;L^2(\mathbb{R}^2)). $$    
	This follows by writing that 
	$$\norm {\dot{w}}_{L^2_t L^2}  \lesssim
	\norm { \partial_t w}_{L^2_t L^2}
	+ \norm { u}_{L^2_t L^ \infty} \norm { \nabla w}_{L^\infty _t L^2} ,$$
	where the right-hand side   is finite due to the preceding bounds together with the $L^2_tL^\infty$ control on $u$ from Proposition \ref{Thm-summary}.
\end{remark}

  \subsection{Lipschitz regularity of the velocity field and summary}

We are now in a position to establish the Lipschitz regularity of the velocity field.  The Lipschitz bound on $v$  is obtained by employing the time-weighted estimates from Lemma \ref{lemma:v2}, whereas the corresponding bound on $w$ follows from Lemma \ref{lemma:w2} and includes a slight improvement in 
terms  of time integrability.

  Unlike the case of the inhomogeneous Navier--Stokes equations which is studied in \cite{Danchin_Wang_22},  we stress that it is important here to establish a refined Lipschitz bound in terms of Besov spaces. In particular, this refinement will be crucial in the stability analysis of \eqref{Main_System}, as we shall see later on in the proof of  Theorem \ref{Thm:stability}.

\begin{proposition}[Lipschitz bound and summary]\label{corollary:u-Lip}
	Let the assumptions of Proposition \ref{Thm-summary} be   fulfilled and further assume      that the regularity index $s$ therein belongs to $(\frac{1}{2},1)$. Then,  the solutions of  \eqref{v:equa} and \eqref{w:equa} enjoy the bounds 
	$$ v \in L^{1}_{\loc}( \mathbb{R}^+;\dot{B}^{1+ \frac{2}{m}}_{m,1} (\mathbb{R}^2)),
	\qquad  w \in L^{\infty}_{\loc}( \mathbb{R}^+;\dot{B}^{  \frac{2}{m}}_{m,1} (\mathbb{R}^2)) \cap  L^{2}_{\loc}( \mathbb{R}^+;\dot{B}^{1+ \frac{2}{m}}_{m,1} (\mathbb{R}^2)).$$  
	In addition, the field $v$ also satisfies the   time-weighted estimates
	$$ t^\frac{1}{2} v \in L^{\infty}_{\loc}( \mathbb{R}^+;\dot{B}^{  \frac{2}{m}}_{m,1} (\mathbb{R}^2)) \cap  L^{2}_{\loc}( \mathbb{R}^+;\dot{B}^{1+ \frac{2}{m}}_{m,1} (\mathbb{R}^2)),$$
	and    
	$$  t  v \in L^{\infty}_{\loc}( \mathbb{R}^+;\dot{B}^{ 1+  \frac{2}{m}}_{m,1} (\mathbb{R}^2)) \cap  L^{ \eta,1 }_{\loc}( \mathbb{R}^+;\dot{W}^{2, m} (\mathbb{R}^2)),$$
	uniformly with respect to the speed of light $c\in (0,\infty)$.
\end{proposition}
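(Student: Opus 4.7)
The plan is to derive the three groups of bounds in turn: (i) the $w$-bounds via Sobolev--Besov embedding applied to Lemma~\ref{lemma:w2}; (ii) the $\sqrt{t}\,v$-bounds via a weighted Stokes regularity estimate in the Lorentz-in-time framework; and (iii) the unweighted Lipschitz bound $v\in L^1_{\loc}\dot{B}^{1+2/m}_{m,1}$ via a final Lorentz--H\"older pairing of $\sqrt{t}\,v$ with $t^{-1/2}$. The $tv$-bound is exactly the content of Lemma~\ref{lemma:v2} and needs no additional work.

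For the $w$-bounds, Lemma~\ref{lemma:w2} (which requires precisely $s>\tfrac12$) produces $w\in L^\infty\dot{B}^{2-2/r}_{\sigma,r}\cap L^r\dot{W}^{2,\sigma}$ for an admissible pair $(\sigma,r)$ with $\sigma\in(2,k)$ and $r\in(2,\cdot)$, where $k=1/(1-s)=m/2$. Since $\sigma,r>2$ forces $1/\sigma+1/r<1$, the Sobolev--Besov embedding $\dot{B}^{2-2/r}_{\sigma,r}\hookrightarrow \dot{B}^{(2-2/r-2/\sigma)+2/m}_{m,r}$ has its regularity index strictly above $2/m$, which then drops into $\dot{B}^{2/m}_{m,1}$ by Besov secondary-index monotonicity; analogously, the chain $\dot{W}^{2,\sigma}\hookrightarrow \dot{W}^{2-2/\sigma+2/m,m}\hookrightarrow \dot{B}^{1+2/m}_{m,1}$ is valid since $\sigma>2$. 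Combined with the trivial embedding $L^r_{\loc}\hookrightarrow L^2_{\loc}$, this yields both claimed $w$-bounds.

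For the $\sqrt{t}\,v$-bounds, I would multiply \eqref{v:equa} by $\sqrt{t}$ and treat the result as a perturbed Stokes system for $\sqrt{t}\,v$ with forcing $\rho v/(2\sqrt{t}) + (1-\rho)\partial_t(\sqrt{t}\,v) - \rho u\cdot\nabla(\sqrt{t}\,v)$. The only genuinely new piece is $\rho v/(2\sqrt{t})$: since $v\in L^{\eta,1}L^m$ from Proposition~\ref{Thm-summary} and $t^{-1/2}\in L^{2,\infty}_{\loc}$, Lorentz--H\"older places $v/\sqrt{t}\in L^{r_1,1}L^m$ with $r_1=m/(m-1)$, calibrated so that the parabolic relation $2-2/r_1=2/m$ holds. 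Applying Lemma~\ref{M:reg:DW} with parabolic exponent $r_1$, and absorbing the remaining perturbation terms by the finite-time decomposition argument already used in the proofs of Lemmas~\ref{lemma:w-ES}--\ref{lemma:v-ES} (which relies on the smallness assumption \eqref{assumption:rho2}), then yields $\sqrt{t}\,v\in L^\infty_{\loc}\dot{B}^{2/m}_{m,1}$ together with $\nabla^2(\sqrt{t}\,v),\,\partial_t(\sqrt{t}\,v),\,\nabla(\sqrt{t}\,p_v)\in L^{r_1,1}_{\loc}L^m$. Real interpolation of these two endpoints at $\theta=r_1/2=m/(2(m-1))$---after embedding $\dot{W}^{2,m}\hookrightarrow\dot{B}^{2}_{m,\infty}$---gives $(\dot{B}^{2/m}_{m,1},\dot{B}^{2}_{m,\infty})_{\theta,1}=\dot{B}^{1+2/m}_{m,1}$ in space and $(L^\infty,L^{r_1,1})_{\theta,1}=L^{2,1}$ in time, delivering $\sqrt{t}\,v\in L^{2,1}_{\loc}\dot{B}^{1+2/m}_{m,1}\subset L^2_{\loc}\dot{B}^{1+2/m}_{m,1}$.

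To conclude, writing $v=t^{-1/2}\cdot(\sqrt{t}\,v)$ and invoking the Lorentz--H\"older pairing $L^{2,\infty}_{\loc}\cdot L^{2,1}_{\loc}\hookrightarrow L^1_{\loc}$ in time closes the Lipschitz bound $v\in L^1_{\loc}\dot{B}^{1+2/m}_{m,1}$. In my view, the main technical obstacle is precisely the refinement from $L^2$ to the sharp Lorentz space $L^{2,1}$ on $\sqrt{t}\,v$: a plain Cauchy--Schwarz pairing with $t^{-1/2}$ would fail because $t^{-1/2}\notin L^2_{\loc}$, lying only in the weak space $L^{2,\infty}$, so the secondary Lorentz index has to be tracked carefully through the real-interpolation step in order for the final duality pairing to close.
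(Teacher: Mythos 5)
The proposal has two genuine gaps, and both arise precisely because the author tries to obtain each bound by applying a single embedding or a single factored interpolation, rather than interpolating between a high-regularity and a low-regularity estimate as the paper does.

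\paragraph{Gap 1 (the $w$-bounds).}  You write that $\dot{B}^{2-2/r}_{\sigma,r}\hookrightarrow \dot{B}^{(2-2/r-2/\sigma)+2/m}_{m,r}$ has regularity strictly above $2/m$ and then ``drops into $\dot{B}^{2/m}_{m,1}$ by Besov secondary-index monotonicity.''  Secondary-index monotonicity only raises the third index at \emph{fixed} regularity; it cannot lower the regularity index, and there is no embedding $\dot B^{s_1}_{m,r}\hookrightarrow \dot B^{s_2}_{m,1}$ with $s_1>s_2$ in homogeneous Besov spaces (low frequencies are not controlled).  The same objection applies to $\dot W^{2-2/\sigma+2/m,m}\hookrightarrow\dot B^{1+2/m}_{m,1}$.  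To reach $\dot B^{2/\sigma}_{\sigma,1}$ (whence $\dot B^{2/m}_{m,1}$) one must pair the supercritical bound from Lemma~\ref{lemma:w2} with a \emph{subcritical} one: the paper interpolates, via the convexity inequality~\eqref{interpolation.AP-B}, between the $L^\infty_t\dot B^{1}_{p,2}$ control from Proposition~\ref{Thm-summary} (embedded into $\dot B^{1-2(1/p-1/\sigma)}_{\sigma,2}$, which has regularity $<2/\sigma$ because $p<2$) and the $L^\infty_t\dot B^{2-2/\sigma}_{\sigma,2}$ control from Lemma~\ref{lemma:w2}, and similarly for the $L^2_t$ bound using the $L^2_t L^p$ and $L^2_tL^\sigma$ estimates on $\nabla^2 w$ through the interpolation inequality~\eqref{lemma:interpolation}.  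The fix is not hard, but as written the step is incorrect.

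\paragraph{Gap 2 (the $\sqrt t\,v$ bound and the final duality pairing).}  You rely on $\sqrt t\,v\in L^{2,1}_{\loc}\dot B^{1+2/m}_{m,1}$, obtained by claiming $\big(L^\infty(\dot B^{2/m}_{m,1}),\,L^{r_1,1}(\dot B^{2}_{m,\infty})\big)_{\theta,1}=L^{2,1}\dot B^{1+2/m}_{m,1}$ with $\theta=m/(2(m-1))$.  This does not hold: real interpolation of mixed vector-valued Lorentz spaces does not factor in general when both the time and the spatial space change simultaneously.  If one tries to salvage it by the available pointwise/Lorentz-H\"older route, namely
\begin{equation*}
\norm{\sqrt\tau\,v(\tau)}_{\dot B^{1+2/m}_{m,1}}\lesssim \norm{\sqrt\tau\,v(\tau)}_{\dot B^{2/m}_{m,\infty}}^{1-\theta}\,\norm{\nabla^2(\sqrt\tau\,v)(\tau)}_{L^m}^{\theta},
\end{equation*}
then taking Lorentz norms in time yields $\sqrt t\,v\in L^{2,1/\theta}\dot B^{1+2/m}_{m,1}$.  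Since $1/\theta=2-2/m>1$, one only gets $L^{2,1/\theta}\supsetneq L^{2,1}$, and your final pairing $t^{-1/2}\in L^{2,\infty}$ with $\sqrt t\,v$ then lands in $L^{1,1/\theta}\supsetneq L^1$, so the Lipschitz bound $v\in L^1_{\loc}\dot B^{1+2/m}_{m,1}$ does not close this way.  (There is also a smaller issue: Lemma~\ref{M:reg:DW}, as stated in the paper, pins the Besov regularity at the scaling-critical value $-1+2/p$, implicitly tying $q$ to $p$; applying it with $(p,q)\to(m,r_1)$ and asking for $L^\infty\dot B^{2/m}_{m,1}$ requires the off-critical Lorentz version of Lemma~\ref{MR:lemma:1}, which the paper does not state.)

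\paragraph{What the paper does instead.}  The paper avoids both issues.  For $v\in L^1_{\loc}\dot B^{1+2/m}_{m,1}$ it applies~\eqref{lemma:interpolation} pointwise in time,
\begin{equation*}
\|v(\tau)\|_{\dot B^{1+2/m}_{m,1}}\lesssim \tau^{-2/m}\norm{\nabla^2 v(\tau)}_{L^p}^{1-2/m}\norm{\tau\nabla^2 v(\tau)}_{L^m}^{2/m},
\end{equation*}
and closes in $L^1_t$ by a direct Lorentz-H\"older pairing of $\tau^{-2/m}\in L^{m/2,\infty}$ with the known $L^{q,1}_tL^p$ and $L^{\eta,1}_tL^m$ bounds on $\nabla^2 v$ and $\tau\nabla^2 v$; the secondary indices add up to exactly $1$, which is why $L^1$ is attained.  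For $\sqrt t\,v$, the paper interpolates, again via~\eqref{interpolation.AP-B}, between $v\in L^\infty_t L^2\hookrightarrow L^\infty_t\dot B^{-1+2/m}_{m,\infty}$ and $tv\in L^\infty_t\dot B^{1+2/m}_{m,1}$, which gives the $L^\infty_t$ bound in one line; and it derives the $L^2_t$ bound by the elementary estimate $\int_0^t \tau\norm v_{\dot B^{1+2/m}_{m,1}}^2\lesssim\sup_\tau\norm{\tau v}_{\dot B^{1+2/m}_{m,1}}\int_0^t\norm v_{\dot B^{1+2/m}_{m,1}}$, which only needs $L^2_t$, not $L^{2,1}_t$.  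I would encourage you to rework both steps along these lines.
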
 
  \begin{remark}\label{RMK-corollary1}
   Due to the embedding 
   $$\dot{B}^{ \frac{2}{m}}_{m,1}\hookrightarrow L^\infty(\R^2),$$ 
   it is readily seen that the bounds in the preceding proposition entail that
   \begin{equation}\label{v:TW1}
	   v\in L^1_\mathrm{loc}(\mathbb{R}^+; \dot W^{1,\infty}(\mathbb{R}^2)),
	   \qquad
   t^\frac{1}{2} v\in   L^\infty _{\loc}(\mathbb{R}^+; L^\infty(\R^2))\cap L^2_{\loc}(\mathbb{R}^+; \dot{W}^{1,\infty}(\R^2)),
   \end{equation}
   and 
   \begin{equation*}
   w\in   L^\infty _{\loc}(\mathbb{R}^+; L^\infty(\R^2))\cap L^2_{\loc}(\mathbb{R}^+; \dot{W}^{1,\infty}(\R^2)).
   \end{equation*}
   Moreover, recalling that $u=v+w$, it holds that
   \begin{equation}\label{u:TW1}
t^\frac{1}{2} u\in   L^\infty _{\loc} (\mathbb{R}^+; L^\infty(\R^2) )\cap L^2_{\loc}(\mathbb{R}^+; \dot{W}^{1,\infty}(\R^2)).
\end{equation}
These bounds will come in handy, later on.
  \end{remark}

\begin{remark}
	In the proposition, above, all bounds on $v$ are uniform with respect to the speed of light. However, for technical reasons, the control of $w$ may grow as $c\to \infty$.
\end{remark}

  \begin{proof}[Proof of Proposition \ref{corollary:u-Lip} ]  Recalling that 
   $$ \frac{1}{p}= \frac{1}{m}+ \frac{1}{2},$$ 
  we employ the interpolation inequality \eqref{lemma:interpolation}   to write that 
  \begin{equation*} 
  \begin{aligned}
  \int_0^t \| v (\tau)\|_{\dot{B}^{1+ \frac{2}{m}}_{m,1}} d\tau & \lesssim   \int_0^t  \tau^{-\frac 2m} \norm{\nabla  ^2v (\tau)}_{L^p} ^{ 1-\frac 2m} \norm{\tau \nabla  ^2v (\tau)}_{L^m} ^{\frac 2m} d\tau  .
  \end{aligned}
  \end{equation*}
  Therefore, utilizing H\"older's inequality in Lorentz spaces (see Lemma \ref{lemma:Lorentz} in the appendix), it   follows that 
    \begin{equation*} 
  \begin{aligned}
  \int_0^t \| v (\tau)\|_{\dot{B}^{1+ \frac{2}{m}}_{m,1}} d\tau  & \lesssim   \norm{ \tau^{-\frac 2m} }_{L^{\frac m2,\infty}(\mathbb{R}^+)}\norm{\nabla  ^2v }_{L^{q,1}_tL^p} ^{ 1-\frac 2m} \norm{\tau \nabla  ^2v  }_{L^{\eta,1}_tL^m} ^{\frac 2m}\\
  & \lesssim   \norm{\nabla  ^2v }_{L^{q,1}_tL^p} +  \norm{\tau \nabla  ^2v  }_{L^{\eta,1}_tL^m}  .
  \end{aligned}
  \end{equation*} 
 Now, by virtue of Proposition \ref{Thm-summary} and Lemma \ref{lemma:v2}, it is readily seen that the right-hand side above is finite for any $t\in \mathbb{R}^+$,  thereby achieving the Lipschitz bound on $v$. 
 
As for $w$, we  are going to prove the slightly stronger bound 
\begin{equation}\label{w_bound_Improved}
w \in L^{\infty}_{\loc}( \mathbb{R}^+;\dot{B}^{  \frac{2}{\sigma}}_{\sigma,1} (\mathbb{R}^2)) \cap  L^{2}_{\loc}( \mathbb{R}^+;\dot{B}^{1+ \frac{2}{\sigma}}_{\sigma,1} (\mathbb{R}^2)),
\end{equation}
where $\sigma $ is introduced in Lemma \ref{lemma:w2}, which implies the desired bound by embedding and the fact that $\sigma<m$.

To that end, we first employ the embedding 
$$L^\infty_{\loc}(\mathbb{R}^+; \dot{B}^{1}_{p,2}(\mathbb{R}^2)) \hookrightarrow L^\infty_{\loc}(\mathbb{R}^+; \dot{B}^{1 - 2(\frac 1p-\frac 1\sigma )}_{\sigma,2}(\mathbb{R}^2)) , $$
together with the interpolation inequality   \eqref{interpolation.AP-B}, to deduce that
\begin{equation*} 
	\norm w _{ L^\infty_t\dot B^{ \frac{2}{\sigma}}_{\sigma,1}}
	\lesssim
	\norm w_{L^\infty_t\dot B^{1   }_{p,2}}^{  \frac{1- \frac{2}{\sigma} }{ \frac{1}{2}- \frac{2}{\sigma}  + \frac{1}{p} }}
	\norm w_{L^\infty_t\dot B^{ 2-  \frac{2}{\sigma}}_{\sigma,2}}^{  \frac{\frac{1}{p}- \frac{1}{2} }{ \frac{1}{2}- \frac{2}{\sigma}  + \frac{1}{p} }}, 
\end{equation*}
where we used the fact that $p<2<\sigma$. Since, by Proposition \ref{Thm-summary} and Lemma \ref{lemma:w2}, the right-hand side above is finite, this implies the bound $w \in L^\infty_t\dot B^{ \frac{2}{\sigma}}_{\sigma,1}$.

Likewise, by employing the interpolation inequality \eqref{lemma:interpolation}, we see that
\begin{equation*} 
\norm {\nabla w} _{L^2_t \dot{B}^{\frac{2}{\sigma}} _{k,1}} \lesssim \norm {\nabla^2 w}_{L^2_tL^p}^{ \frac{\frac{1}{2}- \frac{1}{\sigma}}{\frac{1}{p}- \frac{1}{\sigma}}} \norm {\nabla^2 w}_{L^2_tL^\sigma}^{   \frac{\frac{1}{p}-\frac{1}{2}} {\frac{1}{p}- \frac{1}{\sigma}}}, 
\end{equation*}
for $ p<2<\sigma$, which allows us to further infer the remaining bound on $w$. This completes the proof of \eqref{w_bound_Improved}.

 We are now left with the time-weighted estimates on $v$. Observe, though, that the bound on $tv$ has been previously established in Lemma \ref{lemma:v2}. As for the bound on $t^\frac 12 v$, it is obtained by interpolation methods, as before. More precisely, the interpolation inequality 
\begin{equation*}
\tau^\frac{1}{2} \norm{ v(\tau)}_{\dot B^\frac 2m_{m,1}}
\lesssim
\norm { v(\tau)}_{ \dot B^{-1+\frac 2m}_{m,\infty} }^{\frac{1}{2}}
\norm { \tau v(\tau)}_{ \dot B^{1+\frac 2m}_{m,1}}^{\frac{1}{2}} 
\lesssim
\norm { v(\tau)}_{ L^2}^{ \frac{1}{2}} \norm { \tau v(\tau)}_{ \dot{B}^{ 1+ \frac{2}{m}}_{ m,1 }}^{\frac{1}{2}} ,
\end{equation*}
where   the right-hand side above belongs to $L^\infty_{\loc}(\mathbb{R}^+)$ due to the energy inequality \eqref{energy-v} and Lemma \ref{lemma:v2}, allows us to deduce the first bound on $t^\frac 12 v$. Then, writing
\begin{equation*}
\int_0^t\tau
\norm { v(\tau)}_{ \dot{B}^{ 1+ \frac{2}{m}}_{ m,1 }}^2  d\tau
\lesssim \sup_{\tau\in [0,t]} \norm { \tau v(\tau)}_{ \dot{B}^{ 1+ \frac{2}{m}}_{ m,1 }} \int_0^t  \norm {   v(\tau)}_{ \dot{B}^{ 1+ \frac{2}{m}}_{ m,1 }}  d\tau  ,
\end{equation*}
it is readily seen, by Lemma \ref{lemma:v2} and the previous control of $v$ in $L^1_t\dot{B}^{ 1+ \frac{2}{m}}_{ m,1 }$, that the right-hand side above is  finite,  for any $t>0$. This concludes the proof of the proposition. 
  \end{proof}

  \section{Stability and uniqueness}\label{Section.stability}

  In this section, we prove a general stability result for   the   inhomogeneous Navier--Stokes--Maxwell system   \eqref{Main_System}. This will then  be employed  to deduce   the uniqueness of the solution constructed in the previous section. Our main stability result  reads as follows.

\begin{theorem}\label{Thm:stability}
Let $ (\rho_i,u_i,E_i,B_i)_{ i\in \{ 1,2\}}$ be a set of  two energy  solutions to \eqref{Main_System}. Assume that one of the two solutions enjoys some  additional regularity    in the sense that 
 $$u_2 \in L^1_{\loc}( \mathbb{R}^+; B^{1+\frac{2}{r}}_{r,1}) \cap L^2_{\loc}( \mathbb{R}^+; L^\infty), \quad j_2 \bydef \sigma (cE_2 + u_2 \times B_2) \in L^ \frac{a}{a-1} _{\loc}( \mathbb{R}^+; L^a  ),$$
$$ t^{\frac{1}{2} } \dot{w} _2 \in  L^2_\loc(\mathbb{R}^+; H^1), \qquad t  \dot{v} _2 \in L^2_\loc(\mathbb{R}^+; L^ \infty) \cap L^q_\loc(\mathbb{R}^+; \dot{W}^{2,p}),    $$
for some $ r, a\in (2,\infty)$, $s \in (\frac{1}{2},1)$ and   $p,q\in (1,2)$ with 
$$ p = \frac{2}{2-s} \qquad \text{and} \qquad\frac{1}{p}+ \frac{1}{q}=\frac{3}{2}, $$
where we denote $$\dot{w}_2 \bydef \partial_t w_2 + u_2\cdot \nabla  w_2 , \qquad \dot{v}_2 \bydef \partial_t v_2 + u_2\cdot  \nabla   v_2,$$ and $u_2=v_2 + w_2$ is any decomposition of the velocity field $u_2.$ Then, it holds that $$ (\rho_1,u_1,E_1,B_1) \equiv (\rho_2,u_2,E_2,B_2).$$
\end{theorem}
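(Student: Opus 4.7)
The strategy is a weak--strong stability argument for the differences $\delta\rho = \rho_1-\rho_2$, $\delta u = u_1-u_2$, $\delta E = E_1-E_2$, $\delta B = B_1-B_2$, $\delta j = j_1-j_2$. Subtracting the two systems and using the continuity equation for $\rho_1$ to reorganize the momentum equation, one obtains
\begin{equation*}
\partial_t \delta\rho + u_1\cdot\nabla\delta\rho = -\delta u\cdot\nabla\rho_2,
\end{equation*}
\begin{equation*}
\rho_1(\partial_t\delta u + u_1\cdot\nabla\delta u) - \Delta \delta u + \nabla\delta p = -\delta\rho\,\dot u_2 - \rho_1\delta u\cdot\nabla u_2 + \delta(j\times B),
\end{equation*}
where $\dot u_2 \bydef \partial_t u_2 + u_2\cdot\nabla u_2 = \dot v_2 + \dot w_2$, together with Maxwell's system for $(\delta E,\delta B)$ driven by $-\delta j = -\sigma(c\delta E + \delta(u\times B))$. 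Following the strategy announced in the introduction, I introduce the composite functional
\begin{equation*}
\Phi(t) \bydef Z(t) + X(t) + \sup_{\tau\in(0,t]}\bigl(\|\sqrt{\rho_1}\delta u(\tau)\|_{L^2}^2 + \|(\delta E,\delta B)(\tau)\|_{L^2}^2\bigr) + \int_0^t\|\nabla\delta u(\tau)\|_{L^2}^2\,d\tau,
\end{equation*}
with $Z,X$ as defined in the overview, and aim at closing an integral inequality of the form $\Phi(t)\leq C\int_0^t A(\tau)\Phi(\tau)\,d\tau$ with $A\in L^1_\loc(\mathbb{R}^+)$.

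Three estimates drive the argument. First, the abstract transport-equation results of Section \ref{subsection:TE} applied to $\delta\rho$, together with the Lipschitz regularity of $u_1$ (inherited from $u_2\in L^1_\loc B^{1+2/r}_{r,1}$ and the bound on $u_1 = u_2+\delta u$) and the boundedness of $\rho_2$, yield controls of $Z(t)$ and $X(t)$ by time integrals of $\|\delta u\|_{L^2}$ and $\|\nabla\delta u\|_{L^{r'}}$, both absorbed by $\Phi$ via interpolation. Second, a weighted $L^2$-energy estimate for $\delta u$ produces
\begin{equation*}
\tfrac 12\tfrac{d}{dt}\|\sqrt{\rho_1}\delta u\|_{L^2}^2 + \|\nabla\delta u\|_{L^2}^2 = -\!\int\!\delta\rho\,\dot v_2\cdot\delta u\,dx -\!\int\!\delta\rho\,\dot w_2\cdot\delta u\,dx -\!\int\!\rho_1(\delta u\cdot\nabla u_2)\cdot\delta u\,dx + \!\int\!\delta(j\times B)\cdot\delta u\,dx,
\end{equation*}
after decomposing $u_2 = v_2 + w_2$. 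The first two integrals are the critical ones: the first is handled by $\dot H^{-1}$--$\dot H^1$ duality, pairing $\tau^{-1}\|\delta\rho\|_{\dot H^{-1}} = Z(\tau)$ with the weighted bound $t\dot v_2\in L^q_\loc\dot W^{2,p}$; the second is handled by $\dot W^{-1,r}$--$\dot W^{1,r'}$ duality, which is precisely why $X(t)$ appears as a separate quantity, since $\dot w_2$ only enjoys the Lebesgue-type regularity $t^{1/2}\dot w_2\in L^2_\loc H^1$ rather than a maximal parabolic bound. The convection term is absorbed via the Lipschitz regularity of $u_2$ and Young's inequality, while the Lorentz term is expanded through Ohm's law, splitting into a dissipative piece $-\sigma c\|\delta E\|_{L^2}^2$ (which cooperates with the Maxwell estimate below) and subcritical residuals estimated through Lemma \ref{lemma:duality}. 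Third, testing Maxwell's system against $(\delta E,\delta B)$ yields $\tfrac{1}{2c}\tfrac{d}{dt}\|(\delta E,\delta B)\|_{L^2}^2 + \sigma c\|\delta E\|_{L^2}^2 = -\sigma\!\int\!\delta(u\times B)\cdot\delta E\,dx$, whose right-hand side is controlled by $B_i\in L^\infty_\loc H^s$, $u_i\in L^2_\loc L^\infty$, and the $\sigma c$-term.

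The main obstacle is the simultaneous calibration of the two time-weights $\tau^{-1}$ and $\tau^{-1/2}$ in $Z$ and $X$ against the regularities assumed on $\dot v_2$ and $\dot w_2$. The first pair $(\dot H^{-1},\tau^{-1})$ is dictated by the maximal parabolic regularity of the fluid component, which sits exactly one derivative above $L^2$, while the second pair $(\dot W^{-1,r},\tau^{-1/2})$ is forced by the weaker control available for $\dot w_2$, coming only from the higher Lebesgue bound of Lemma \ref{lemma:w2}. The restriction $s>\tfrac{1}{2}$ enters precisely here: it is what delivers both the Lipschitz regularity of $u_2$ and the supercritical bound on $w_2$ needed to close the $X$-term; below this threshold the pairing against $\dot w_2$ fails. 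Once the integral inequality $\Phi(t)\leq C\int_0^t A(\tau)\Phi(\tau)\,d\tau$ is obtained, Gronwall's lemma forces $\Phi\equiv 0$, whence $(\rho_1,u_1,E_1,B_1)\equiv(\rho_2,u_2,E_2,B_2)$, completing the proof.
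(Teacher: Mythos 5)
Your high-level architecture matches the paper's: the composite functional combining $Z$, $X$, and the $L^2$-energy; the transport estimates in negative Sobolev spaces applied to $\delta\rho$; and the two duality pairings $(\dot H^{-1},\dot H^1)$ for $\int\delta\rho\,\dot v_2\cdot\delta u$ and $(\dot W^{-1,r},\dot W^{1,r/(r-1)})$ for $\int\delta\rho\,\dot w_2\cdot\delta u$. However, there are two genuine gaps.

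First, you advect $\delta\rho$ by $u_1$, writing $\partial_t\delta\rho + u_1\cdot\nabla\delta\rho = -\delta u\cdot\nabla\rho_2$, and claim that $u_1 = u_2 + \delta u$ inherits Lipschitz regularity. It does not: $\delta u$ lives only in the energy class $L^\infty_tL^2\cap L^2_t\dot H^1$, which is nowhere near $L^1_tB^{1+2/r}_{r,1}$. Propositions \ref{Prop:TR:***} and \ref{Prop:TR:***2} require the advecting vector field to have exactly that Besov regularity, so your version of the transport estimate is not applicable. The correct choice, and the one used in the paper, is to advect by the \emph{regular} velocity field $u_2$ and force by $-\div(\delta u\,\rho_1)$, i.e.\ $\partial_t\delta\rho + u_2\cdot\nabla\delta\rho = -\delta u\cdot\nabla\rho_1$.

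Second, Gr\"onwall's lemma does not close the argument. Proposition \ref{Prop:TR:***} only yields a bound of the type $X(t)\lesssim t^{1/r}Y(t) + \int_0^t\|u_2\|_{B^{1+2/r}_{r,1}}X(\tau)\log(e+1/X(\tau))\,d\tau$; the logarithmic correction is unavoidable because $r>2$ forces the commutator $[|D|^{-1},T_u]$ estimate in \eqref{commutator:estimate:1} to carry a $\log$-loss. The resulting integral inequality for $\mathcal{H} = X+Y$ is of Osgood type, not Gr\"onwall type, and one must invoke Osgood's lemma. Moreover, the $t^{1/r}Y(t)$ prefactor in the $X$-estimate is only small on a finite interval $[0,T^*]$ with $T^*$ chosen from universal constants, so the conclusion $\mathcal{H}\equiv 0$ is first reached on $[0,T^*]$ and then propagated by a bootstrap; this finite-time structure is essential and your sketch omits it entirely. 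A minor additional remark: Lemma \ref{lemma:duality} plays no role in the Lorentz-force estimate within the stability proof — it is used only for the time-weighted bounds on $w$ in Section \ref{Section.TW:es2}; the Lorentz terms in the stability energy estimate are handled by direct H\"older and interpolation inequalities (term $\mathcal{I}_3$ in the paper).
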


The proof of Theorem \ref{Thm:stability} will be detailed at the end of this section. Before that, we first establish some key estimates on the solution of a general transport equation in negative Sobolev spaces which will be usefull later on in the proof.

\subsection{Forced transport equation in negative Sobolev spaces}\label{subsection:TE}
In this section, we prove general time-weighted estimates in negative Sobolev spaces of solutions to transport equations advected by a Lipschitz velocity field and supplemented with a vanishing initial datum. In particular, the results of Proposition \ref{Prop:TR:***}, below, besides being of independent interest,     will be useful in the proof of Theorem \ref{Thm:stability}, later on.

\begin{proposition}\label{Prop:TR:***}
Let $f\in L^\infty(\mathbb{R}^+;L^\infty)$ be the unique solution of the linear transport equation
\begin{equation}\label{TR:Equa_1}
\partial_t f + u \cdot \nabla f = G, \qquad f|_{t=0}\equiv 0 ,
\end{equation}  
where $u$ is a divergence-free vector field  enjoying the bound
$$ u \in L^1_{\loc}(\mathbb{R}^+; B^{1+\frac{2}{m}}_{m,1}),$$
for some $m\in (2,\infty).$
Assume further that there is $\alpha\in (1,\infty)$ such that 
$$u \in L^\alpha_{\loc}(\mathbb{R}^+; L^r), \qquad G\in  L^\alpha_{\loc}(\mathbb{R}^+; \dot{W}^{-1,r}),$$
for some $ r\in [m,\infty)$. 
Then,  it holds that 
$$X_{\beta}(t)\bydef \sup_{\tau \in (0,t]} \left( \tau^{-\frac 1\beta }  \norm {f(\tau)}_{\dot{W}^{-1,r}}\right)<\infty,  $$
for all $ 0\leq \frac 1 \beta  \leq   1- \frac{1}{\alpha}$ and   any $t>0$. Moreover, one has that 
\begin{equation*}
	t\mapsto \norm {f(t)}_{\dot W^{-1 ,r}} \in C(\mathbb{R}^+)
\end{equation*}  
and  
$$ \lim_{t\rightarrow 0} X_\beta (t)=0,$$
 for all $ 0\leq \frac 1\beta  \leq   1- \frac{1}{\alpha}$.
  Furthermore,    denoting
$$T^*\bydef  \sup \left \{\tau \in [0,T]:    \norm {f(\tau)}_{\dot{W}^{-1,r}}  <  \tau ^{1 - \frac{1}{\alpha}} \right\}, $$
for a fixed $T\in \mathbb{R}^+, $ then it holds that  $T^*>0$ and that
$$ 
\begin{aligned}
	X_\beta (t) 
	&\lesssim_t    \norm {G }_{L^\beta  ([0,t];\dot{W}^{-1,r})} 
	\\
	& \quad +   \log \left(e +  \norm f_{L^\infty([0,t];L^\infty)}  \right)    \int_0^t \norm {   u(\tau)}_{ {B}^{1+ \frac{2}{m}}_{m,1}}  X_\beta(\tau)  \log\left( e+\frac{1}{X_\beta(\tau)}  \right) d\tau ,
\end{aligned}
$$
for all $0 \leq \frac1\beta < 1-\frac{1}{\alpha}$ and any  $t\in [0, T^*].$
\end{proposition}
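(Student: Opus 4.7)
My plan is to lift the equation to $L^r$ by means of a Riesz potential and then close a commutator inequality of Osgood type. Using $\div u=0$, I first recast \eqref{TR:Equa_1} in conservative form as $\partial_t f + \div(uf)=G$. Setting $g\bydef (-\Delta)^{-1/2}f$, so that $\|g\|_{L^r}\simeq \|f\|_{\dot W^{-1,r}}$ for $r\in(1,\infty)$, and applying $(-\Delta)^{-1/2}$ to the equation yields
$$
\partial_t g + u\cdot\nabla g = (-\Delta)^{-1/2}G + \big[u\cdot\nabla,\,(-\Delta)^{-1/2}\big]f.
$$
A standard $L^r$ energy estimate on $g$, whose transport term vanishes because $\div u=0$, together with the vanishing initial datum, gives
$$
\|f(t)\|_{\dot W^{-1,r}}\ \lesssim\ \int_0^t \|G(\tau)\|_{\dot W^{-1,r}}\,d\tau + \int_0^t \big\|\big[u\cdot\nabla,\,(-\Delta)^{-1/2}\big]f(\tau)\big\|_{L^r}\,d\tau.
$$

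The heart of the argument is then a logarithmic commutator estimate of the form
$$
\big\|\big[u\cdot\nabla,\,(-\Delta)^{-1/2}\big]f\big\|_{L^r} \lesssim \|u\|_{B^{1+2/m}_{m,1}}\,\|f\|_{\dot W^{-1,r}}\log\bigg(e+\frac{\|f\|_{L^\infty}}{\|f\|_{\dot W^{-1,r}}}\bigg),
$$
obtained through a Bony paraproduct decomposition together with a dyadic frequency cutoff at some level $N$. The low-frequency contribution from $S_N f$ is bounded by $\|\nabla u\|_{L^\infty}\|f\|_{\dot W^{-1,r}}$, while the high-frequency contribution from $(\id-S_N)f$ is controlled by $2^{-N}\|\nabla u\|_{L^\infty}\|f\|_{L^\infty}$, thanks to the order-$(-1)$ smoothing effect of $(-\Delta)^{-1/2}$. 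Optimizing $2^N\simeq \|f\|_{L^\infty}/\|f\|_{\dot W^{-1,r}}$, together with the Besov embedding $B^{1+2/m}_{m,1}\hookrightarrow W^{1,\infty}$, produces the displayed bound.

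For the finiteness of $X_\beta(t)$, its continuity, the vanishing as $t\to 0$, and the positivity of $T^*$, I would first establish a crude duality bound: pairing $f$ with test functions $\phi$ of unit $\dot W^{1,r'}$ norm and propagating them backward via $\partial_s\psi + u\cdot\nabla\psi = 0$, $\psi|_{s=t}=\phi$, classical transport estimates under the Lipschitz control of $u$ give $\|\nabla\psi(s)\|_{L^{r'}}\lesssim 1$, whence, by H\"older's inequality,
$$
\|f(t)\|_{\dot W^{-1,r}} \lesssim \int_0^t \|G(s)\|_{\dot W^{-1,r}}\,ds \lesssim t^{1-1/\alpha}\|G\|_{L^\alpha_t \dot W^{-1,r}}.
$$
This delivers both $X_\beta(t)<\infty$ for $1/\beta \le 1-1/\alpha$ and its vanishing as $t\to 0$, and ensures $T^*>0$; continuity of $t\mapsto \|f(t)\|_{\dot W^{-1,r}}$ then follows from standard strong continuity of forced transport equations. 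Inserting the commutator estimate into the $L^r$ base inequality of the first paragraph, exploiting the monotonicity of $x\mapsto x\log(e+c/x)$, and splitting $\log(e+\|f\|_{L^\infty}/\|f\|_{\dot W^{-1,r}}) \lesssim \log(e+\|f\|_{L^\infty_t L^\infty}) + \log(e+1/\|f\|_{\dot W^{-1,r}})$ with the second factor controlled by $\log(e+1/X_\beta(\tau))$ on $[0,T^*]$ where $\tau^{1-1/\alpha}\le 1$, finally yields the desired Osgood-type bound. The principal obstacle is the careful paraproduct balancing needed to secure the precise logarithmic structure in the commutator estimate while retaining only the $B^{1+2/m}_{m,1}$ norm of $u$ on the right-hand side.
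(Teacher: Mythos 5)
Your overall strategy coincides with the paper's: apply $|D|^{-1}=(-\Delta)^{-1/2}$ to the transport equation, do an $L^r$ energy estimate (the transport term drops because $\div u = 0$), control the commutator $[|D|^{-1},u\cdot\nabla]f$ via Bony's decomposition with a log-type bound, and close an Osgood inequality after restricting to $[0,T^*]$. That is exactly what the paper does: it expands the commutator into five Bony pieces $\mathcal{I}_1,\dots,\mathcal{I}_5$, estimates the first four linearly in $\|f\|_{\dot W^{-1,r}}$, and pays a logarithm only for $\mathcal{I}_5=[|D|^{-1},T_u]\nabla f$ via a separate commutator lemma (Lemma~\ref{lemma:commutator}), which indeed rests on a dyadic frequency-cutoff optimization.

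There is, however, a genuine gap in the heart of your sketch. You claim the low-frequency contribution of the cutoff is bounded by $\|\nabla u\|_{L^\infty}\|f\|_{\dot W^{-1,r}}$ and the high-frequency one by $2^{-N}\|\nabla u\|_{L^\infty}\|f\|_{L^\infty}$, and then optimize in $N$. But with these two bounds as stated, sending $N\to\infty$ simply kills the high-frequency term and delivers a purely linear bound $\|\nabla u\|_{L^\infty}\|f\|_{\dot W^{-1,r}}$ — no logarithm can appear. For the log to arise, the low-frequency bound must carry a factor $N$ (coming from converting $B^0_{r,\infty}$ to $B^0_{r,1}$ on the $\sim N$ retained dyadic bands, which is how $L^r$ control of the commutator is actually achieved, because the linear estimate in \cite[Lemma~2.99]{bcd11} lives in $B^0_{r,\infty}$, not $L^r$). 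Moreover, the cutoff must be applied to the output of the commutator, not to $f$ itself: cutting $f$ does not localize the output of $[|D|^{-1},T_u]\nabla(\cdot)$ in frequency, so the claimed $2^{-N}$ gain on the high-frequency part is also unjustified. You acknowledge this paraproduct balancing as ``the principal obstacle,'' which is fair, but as written the step that produces the logarithm does not hold up.

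Your treatment of finiteness of $X_\beta$, its vanishing as $t\to 0$, the continuity of $t\mapsto\|f(t)\|_{\dot W^{-1,r}}$, and $T^*>0$ by pairing with solutions of the backward transport equation is a legitimate alternative to the paper's approach; the paper simply integrates the conservative form $\partial_t f+\div(uf)=G$ in time and bounds $\int_0^t\div(uf)\,d\tau$ directly in $\dot W^{-1,r}$ using H\"older and $f\in L^\infty_{t,x}$, which avoids invoking flow-map estimates and is slightly more elementary, though both routes give the required $t^{1-1/\alpha-1/\beta}$ smallness. The final splitting of $\log(e+\|f\|_{L^\infty}/\|f\|_{\dot W^{-1,r}})$ on $[0,T^*]$ is correct and matches the paper's manipulation.
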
   

\begin{proof}
Integrating the transport equation  \eqref{TR:Equa_1}  with respect to time yields that
$$f(t,\cdot )= -\int_0^t \div (uf)(\tau,\cdot )d\tau + \int_0^t G(\tau,\cdot )d\tau.$$
Hence,     it follows, for all $0\leq \frac 1\beta \leq 1-\frac{1}{\alpha} $  and any  $t>0$, that 
$$ t^{ -\frac 1\beta } \norm {f(t)}_{\dot{W}^{-1,r}} \leq  t^{ 1 - \frac{1}{\alpha}- \frac 1\beta} \left( \norm { u}_{L^\alpha_t L^r} \norm f_{L^\infty_{t,x}} + \norm { G}_{L^\alpha_t \dot{W}^{-1,r}}\right) ,  $$
whereby we deduce that 
$$ X_\beta(t)<\infty, \quad \text{ and } \qquad  \lim_{t\rightarrow 0} X_\beta(t)=0.$$    
Note in passing that by writing
\begin{equation*}
	f(t,\cdot ) - f(t_0,\cdot )= \int_{t_0}^t \div (uf)(\tau,\cdot )d\tau + \int_{t_0}^t G(\tau,\cdot )d\tau,
\end{equation*}
  for any $0\leq t_0\leq t$, we find by means of the same argument that 
\begin{equation*}
	t\mapsto \norm {f(t)}_{\dot W^{-1 ,r}} \in C(\mathbb{R} ^+).
\end{equation*} 
Thus, in particular, we deduce that $T^*>0$.

Next, we perform more precise estimates. To that end, we denote  the Fourier multiplier  of the  symbol  $|\xi|^{-1}$ by  $|D|^{-1}$. Then, by  applying this   Fourier multiplier  to  \eqref{TR:Equa_1}  and performing    standard $L^r$ estimates for transport equations,  we obtain that
$$  
\begin{aligned}
	\norm {f(t)}_{\dot{W}^{-1,r}}
& \lesssim     \norm {G }_{L^1_t\dot{W}^{-1,r}}   + \norm { [|D|^{-1} ,u\cdot \nabla ] f }_{L^1_t L^r}\\
 & \lesssim   \norm {G }_{L^1_t\dot{W}^{-1,r}}   + \norm { [|D|^{-1} ,u\cdot \nabla ] f }_{L^1_t B^0_{r,1}},
\end{aligned}
$$
for all $t\geq 0$.  Now, we show how to estimate   the commutator term  in the right-hand side by starting by  splitting it   into five terms,  implementing Bony's  decomposition  (see Section \ref{Paradifferential product estimates} in the appendix), 
$$[|D|^{-1} ,u\cdot \nabla ] f = \sum_{i=1}^5 \mathcal{I}_i, $$
where we set 
$$ \mathcal{I}_1 \bydef  |D|^{-1} \left(  T_{ \nabla f}u \right) , \qquad  \mathcal{I}_2 \bydef   -     T_{ |D|^{-1}  \nabla f} u ,\qquad \mathcal{I}_3 \bydef  |D|^{-1} \div  R(u,f), $$
$$ \mathcal{I}_4 \bydef   -  R(u,   |D|^{-1} \nabla  f) , \qquad   \mathcal{I}_5 \bydef  [ |D|^{-1},T_u ] \nabla f. $$ 
 Estimating the first four terms relies on standard properties of Bony's decomposition, which  lead to the control 
$$\begin{aligned}
\norm { \sum_{i=1}^4  \mathcal{I}_{i}}_{L^1_t B^0_{r,1}}  & \lesssim \int_0^t \norm { u(\tau)}_{ B^{1+\frac{2}{m}}_{m,1}} \norm { f (\tau)}_{B ^{-1}_{r,\infty}} d\tau
\\
&\lesssim \int_0^t \norm { u(\tau)}_{ B^{1+\frac{2}{m}}_{m,1}} \norm { f (\tau)}_{\dot{W} ^{-1,r} } d\tau,
\end{aligned}$$
 for any $ m\in (2,\infty)$ and any $r\in [m,\infty)$.
 Indeed, a straightforward justification of the previous estimates is obtained by an  application of \cite[Theorem 2.47 and Theorem 2.52]{bcd11} in combination with suitable embeddings of Besov spaces.

As for the estimate of $\mathcal{I}_{5}$, we find, by applying Lemma \ref{lemma:commutator} from the appendix with the value $s=-2$ and any given $\delta\in (0,1)$, that   
\begin{equation}\label{commutator:estimate:1}
	\begin{aligned}
		\norm {    \mathcal{I}_{5}}_{L^1_t B^0_{r,1}} 
		& \lesssim  \int_0^t \norm {\nabla u (\tau)}_{L^\infty} \norm { f(\tau)}_{B^{-1}_{r, \infty}}  \log \left(e+ \frac{    \norm { u (\tau)}_{B^{\delta }_{r,\infty}} \norm { f (\tau)}_{B^{0}_{\infty , \infty}} }{ \norm {\nabla u (\tau)}_{L^\infty} \norm { f (\tau)}_{B^{-1}_{r, \infty}}   } \right)  d\tau
		\\
		& \lesssim  \int_0^t \norm {  u (\tau)}_{\dot W^{1, \infty}\cap B^{\delta}_{r,\infty}} \norm { f(\tau)}_{\dot{W} ^{-1,r} }  \log \left(e+ \frac{      \norm { f}_{L^\infty_{t,x}} }{   \norm { f (\tau)}_{\dot{W} ^{-1,r} }   } \right)  d\tau,
	\end{aligned}
\end{equation}
where we have employed the embedding  (see Lemma \ref{Emb_Besov_Tr_Lemma}) $$ \dot{W} ^{-1,r}   \hookrightarrow  \dot{B} ^{-1}_{r,\infty}   \hookrightarrow   B^{-1}_{r,\infty} (\mathbb{R}^2).$$

Therefore, gathering the foregoing estimates on all $\mathcal{I}_i$, with $i=1,\ldots, 5$,   yields that  
$$ 
\begin{aligned}
 \norm {f(t)}_{\dot{W}^{-1,r}}  
 &   \lesssim  \norm {G }_{L^1_t\dot{W}^{-1,r}}  +   \log \left(e+ \norm f_{L^\infty_{t,x}}\right)
 \\ 
 & \quad\times  \int_0^t \norm {  u(\tau)}_{ \dot{B}^{1+ \frac{2}{m}}_{m,1} \cap B^{\delta}_{r,\infty}}  \norm {f(\tau)}_{\dot{W}^{-1,r}}  \log \left(e+  \frac{1}{   \norm {f(\tau)}_{\dot{W}^{-1,r}} }  \right) d\tau     .
\end{aligned} 
 $$  
Now, assuming that 
\begin{equation}\label{T:*:DEF}
	\norm {f(\tau)}_{\dot{W}^{-1,r}} < \tau^{ 1-\frac{1}{\alpha }} ,
\end{equation}
implies, by a direct computation, that 
$$   \log\left( e+ \frac{1}{ \norm {f(\tau)}_{\dot{W}^{-1,r}}} \right) < \log\left( \tau^\frac{1}{\beta} + 1  \right) +    \frac{1-\frac{1}{\alpha}}{ 1-\frac{1}{\alpha}- \frac 1\beta }   \log\left(e+  \frac{1}{ \tau ^ {-\frac 1\beta} \norm {f(\tau)}_{\dot{W}^{-1,r}}} \right) ,
$$
for any $0\leq \frac 1\beta < 1-\frac{1}{\alpha} $. Hence, we arrive at the control

$$ 
\begin{aligned}
 \norm {f(t)}_{\dot{W}^{-1,r}} 
 & \lesssim_t  \norm {G }_{L^1_t\dot{W}^{-1,r}}  
 + \log \left(e+ \norm f_{L^\infty_{t,x}}\right)
 \\
 & \quad  \times  \int_0^t \norm {  u(\tau)}_{ \dot{B}^{1+ \frac{2}{m}}_{m,1} \cap B^{\delta}_{r,\infty}}  \norm {f(\tau)}_{\dot{W}^{-1,r}} \log\left(e+  \frac{1}{ \tau ^ {-\frac 1\beta} \norm {f(\tau)}_{\dot{W}^{-1,r}}} \right) d\tau     .
\end{aligned} 
 $$  
At last, multiplying both sides by $t^{-\frac 1\beta }$, employing H\"older's inequality in time along with the monotonicity of the function $x\mapsto x \log\left(e+ \frac{1}{x}\right),$ for $x>0$,
leads to the   bound  
$$ \begin{aligned}
 X_\beta(t) &\lesssim_t       \norm {G }_{L^{\beta }_t\dot{W}^{-1,r}}    +  \log \left(e+ \norm f_{L^\infty_{t,x}}\right) 
   \\ 
   & \quad \times  \int_0^t \norm {  u(\tau)}_{ \dot{B}^{1+ \frac{2}{m}}_{m,1}  \cap B^{\delta}_{r,\infty}}  X_\beta(\tau ) \log \left(e+  \frac{1}{   X_\beta(\tau ) }  \right) d\tau  ,
\end{aligned}  $$
for all $ t >0$ satisfying the condition \eqref{T:*:DEF} with $\tau=t$. The proof of the final estimate  is achieved by noticing that 
\begin{equation*}
	B^{1+\frac{2}{m}}_{m,1} \hookrightarrow  B^{\delta + \frac{2}{r}}_{r,\infty} \hookrightarrow B^{\delta}_{r,\infty},
\end{equation*}
due to the assumptions on the parameters $m$, $r$ and $\delta$. This completes the proof of the proposition. 
\end{proof}

The reason why we have a log-type control in the preceding lemma stems from the estimate on the commutator $[|D|^{-1},T_u]$ given in \eqref{commutator:estimate:1}, where $r>2$.
However, in the case corresponding to $r=2$, due to the identity $\dot{B} ^{0}_{2,2}=L^2 $, it is possible to establish  an improved bound without logarithmic correction. The following proposition provides a precise estimate in this setting, with a proof which differs from the stability estimate established in \cite[Proposition 5.1]{Danchin_Wang_22}.

\begin{proposition}\label{Prop:TR:***2}
	Let $f$ be the solution of the linear transport equation \eqref{TR:Equa_1}. Assume that 
$$ u \in L^1_{\loc}(\mathbb{R}^+; \dot{B}^{1+\frac{2}{m}}_{m,1}),$$ 
for some $m\in [1,\infty)$ and  that there is $\alpha\in [1,\infty]$ such that 
$$u \in L^\alpha_{\loc}(\mathbb{R}^+; L^2)\qquad \text{and}\qquad G\in  L^\alpha_{\loc}(\mathbb{R}^+; \dot{H}^{-1}).$$ 
Then,  it holds,  for any $t>0$, that 
$$z(t)\bydef \sup_{\tau \in (0,t]} \left( \tau^{-1 +\frac{1}{\alpha}}  \norm {f(\tau)}_{\dot{H}^{-1 }}\right)<\infty ,\qquad  \lim_{t\rightarrow 0} z(t)=0,$$
and    
$$ z(t) \lesssim    \norm {G }_{L^\alpha([0,t];\dot{W}^{-1,r})} \exp\left(  C  \int_0^t \norm {  u(\tau)}_{ \dot{B}^{1+\frac{2}{m}}_{m,1}}   d\tau \right).$$ 
\end{proposition}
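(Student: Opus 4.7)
The plan is to mirror the structure of the proof of Proposition \ref{Prop:TR:***} but exploit the Hilbertian structure of $L^2 = \dot B^0_{2,2}$ in order to avoid the logarithmic loss from \eqref{commutator:estimate:1}. First, I would set $g \bydef |D|^{-1} f$, so that $\|g(t)\|_{L^2} = \|f(t)\|_{\dot{H}^{-1}}$, and rewrite the equation as
\begin{equation*}
\partial_t g + u\cdot \nabla g = |D|^{-1} G - [|D|^{-1}, u\cdot \nabla] f.
\end{equation*}
A standard $L^2$ energy estimate, combined with $\div u = 0$ to annihilate the transport term, yields
\begin{equation*}
\|f(t)\|_{\dot{H}^{-1}} \leq \int_0^t \|G(\tau)\|_{\dot{H}^{-1}}\, d\tau + \int_0^t \big\| [|D|^{-1}, u\cdot \nabla] f(\tau) \big\|_{L^2}\, d\tau.
\end{equation*}

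The core of the argument is the commutator estimate in $L^2$. I would decompose $[|D|^{-1}, u\cdot\nabla] f$ using Bony's paraproduct exactly as in the proof of Proposition \ref{Prop:TR:***}, into the five pieces $\mathcal{I}_1,\ldots,\mathcal{I}_5$. The first four terms are handled by standard paraproduct and remainder estimates and give a contribution controlled by $\|u\|_{\dot{B}^{1+2/m}_{m,1}}\|f\|_{\dot{H}^{-1}}$. The delicate term is $\mathcal{I}_5 = [|D|^{-1}, T_u]\nabla f$, for which I would apply Lemma \ref{lemma:commutator} with target space $\dot{B}^{0}_{2,2} = L^2$: because this Besov space has finite second index, the sharp paracommutator inequality
\begin{equation*}
\big\|[|D|^{-1}, T_u]\nabla f\big\|_{L^2} \lesssim \|\nabla u\|_{L^\infty}\|f\|_{\dot{H}^{-1}}
\end{equation*}
is available \emph{without} the logarithmic factor that was unavoidable in \eqref{commutator:estimate:1} for the $L^r$ ($r>2$) scale. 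Combining these estimates and using the embedding $\dot{B}^{1+2/m}_{m,1}\hookrightarrow \dot W^{1,\infty}$ yields the clean bound
\begin{equation*}
\big\|[|D|^{-1}, u\cdot\nabla] f\big\|_{L^2} \lesssim \|u\|_{\dot{B}^{1+2/m}_{m,1}} \|f\|_{\dot{H}^{-1}}.
\end{equation*}
This is the step I expect to be the main obstacle, since it is what distinguishes the present proposition from Proposition \ref{Prop:TR:***}; once it is in place, the rest is routine.

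Inserting this bound into the energy estimate and applying Gronwall's lemma gives
\begin{equation*}
\|f(t)\|_{\dot{H}^{-1}} \leq \bigg(\int_0^t \|G(\tau)\|_{\dot{H}^{-1}}\, d\tau\bigg) \exp\bigg(C\int_0^t \|u(\tau)\|_{\dot{B}^{1+2/m}_{m,1}}\, d\tau\bigg),
\end{equation*}
and H\"older's inequality in time bounds the source integral by $t^{1-1/\alpha}\|G\|_{L^\alpha([0,t];\dot{H}^{-1})}$. Multiplying by $\tau^{-1+1/\alpha}$ and taking the supremum over $\tau\in(0,t]$ produces the advertised estimate on $z(t)$, hence $z(t)<\infty$. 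The vanishing $\lim_{t\to 0} z(t) = 0$ follows from the absolute continuity in $t$ of $\int_0^t \|G\|_{\dot{H}^{-1}}^\alpha$ and the boundedness of the exponential factor near $t=0$. Finally, continuity of $t\mapsto \|f(t)\|_{\dot{H}^{-1}}$ is obtained, as in Proposition \ref{Prop:TR:***}, by replaying the same energy estimate on the difference $f(t)-f(t_0)$ over $[t_0,t]$.
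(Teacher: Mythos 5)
Your proof is correct in its conclusions but takes a genuinely different route from the paper. You mirror the structure of Proposition \ref{Prop:TR:***}: you work with the smoothed quantity $g=|D|^{-1}f$ and estimate the commutator $[|D|^{-1},u\cdot\nabla]f$ in $L^2$ via the five-term Bony decomposition, then observe that the troublesome piece $\mathcal{I}_5=[|D|^{-1},T_u]\nabla f$ no longer forces a logarithmic loss once the target is $\dot B^0_{2,2}=L^2$, so that Gr\"onwall suffices. The paper instead dyadically localizes $f$ itself, writes for each block
\begin{equation*}
\partial_t\Delta_j f+u\cdot\nabla\Delta_j f=\Delta_j G-[\Delta_j,u\cdot\nabla]f,
\end{equation*}
does a frequency-by-frequency $L^2$ energy estimate, and then controls the $\ell^2$-weighted commutator $\bigl\|2^{-j}\|[\Delta_j,u\cdot\nabla]f\|_{L^2}\bigr\|_{\ell^2}$ by appealing directly to the ready-made commutator estimate \cite[Lemma 2.100]{bcd11}. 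The paper's route is shorter and avoids re-running the Bony machinery; yours has the virtue of being structurally parallel to Proposition \ref{Prop:TR:***}, making transparent exactly \emph{where} the $L^2$ Hilbert structure removes the logarithm. Both are valid.

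Two small imprecisions worth flagging. First, you attribute the clean (log-free) commutator bound for $\mathcal{I}_5$ to Lemma \ref{lemma:commutator}, but as stated in the paper that lemma \emph{always} carries a logarithm because its whole purpose is to upgrade $\ell^\infty$ to $\ell^1$ summability; what you actually want to invoke is the underlying paracommutator estimate \cite[Lemma 2.99]{bcd11}, which for fixed matching indices (here $\dot B^{-2}_{2,2}\to\dot B^0_{2,2}$ with $M(D)=|D|^{-1}$ of degree $-1$) gives $\|[T_u,|D|^{-1}]\nabla f\|_{L^2}\lesssim\|\nabla u\|_{L^\infty}\|f\|_{\dot H^{-1}}$ with no correction. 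Second, Proposition \ref{Prop:TR:***} imposes $r\ge m$, whereas here $r=2$ and $m\in[1,\infty)$ is arbitrary; the estimates for $\mathcal{I}_1,\dots,\mathcal{I}_4$ still go through for $m>2$ (one puts the low-frequency factor in a Lebesgue space dual to $L^m$ rather than in $L^\infty$), but this is a detail your sketch elides and should be checked explicitly if you intend to fully replace the paper's argument.
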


 \begin{proof}     
  We proceed by first localizing frequencies in the transport equations \eqref{TR:Equa_1} and writing,  for any $j\in \mathbb{Z}$, that
  $$\partial_t \Delta_j f+   u  \cdot \nabla \Delta_jf  =-   \Delta_j  G - \Big[  \Delta_j, u\cdot \nabla  \Big] f.   $$   
  Then, by an $L^2$-energy estimate, we find that 
  $$ \begin{aligned}
   \norm {f (t)}_{\dot{H}^{-1} } &\leq   \norm { G}_{L^1_t \dot{H}^{-1}} +\int_0^t \norm{  2^{-j}\norm {\Big[  \Delta_j, u\cdot \nabla  \Big] f (\tau)}_{L^2} } _{\ell^2(j\in \mathbb{Z})}   d\tau  .
  \end{aligned}$$ 
Hence, by appealing to    \cite[Lemma 2.100]{bcd11}, we deduce, for all $t>0$, that 
   $$ \begin{aligned}
   \norm {f (t)}_{\dot{H}^{-1} } &\lesssim   \norm { G}_{L^1_t \dot{H}^{-1}} +\int_0^t \norm {u(\tau)}_{ \dot{B}^{1+\frac{2}{m}}_{m,1}} \norm {f (\tau)}_{\dot{H}^{-1} } d\tau  .
  \end{aligned}$$    
  The proof is then completed by  employing H\"older's inequality in time to estimate $G$ in $ L^\alpha_t \dot{H}^{-1}$, and applying Gr\"{o}nwall's inequality. 
\end{proof}

\subsection{Proof of weak--strong uniqueness}

We are now in a position to prove Theorem \ref{Thm:stability} by employing Propositions \ref{Prop:TR:***} and \ref{Prop:TR:***2} for different  values of $\alpha$. To that end, allow us first to set up some notation to be used in the proof.

The difference of two solutions will be denoted by 
\begin{equation*}
\delta f \bydef f _1-f_2,\quad \text{for any } f\in\{\rho, u, E, B,j,p\}.
\end{equation*}
In particular, we observe that the differences solve the system of equations
\begin{equation}\label{system_difference}
\left\{
\begin{aligned}
\partial_t(\delta \rho)+u_2\cdot \nabla (\delta \rho) + \delta u\cdot \nabla \rho_1 
&= 0  ,
\\
\rho_1 \big( \partial_t(\delta u)+  u_1\cdot \nabla \delta u\big) -\Delta \delta u +\nabla \delta p
&=-\delta \rho \dot{u}_2-\rho_1\delta u\cdot \nabla u_2
\\&\quad+\delta j\times B_1+j_2\times \delta B,
\\
\frac{1}{c} \partial_t (\delta E) - \nabla \times (\delta B) 
&=-\delta j,
\\
\frac{1}{c} \partial_t (\delta B) + \nabla \times \delta E  
&= 0,
\\
 \delta j
 &= \sigma \left( c\delta E +  u_1 \times \delta B+\delta u\times B_2 \right)
 \\
\delta\rho|_{t=0}=\delta u|_{t=0}=\delta E|_{t=0}=\delta B|_{t=0}&=0,
\end{aligned}
\right.
\end{equation} 
where we adopt the notation 
\begin{equation*}
\dot f \bydef(  \partial_t + u _2\cdot \nabla )f.
\end{equation*}
   Moreover, for a fixed $r\in (2,\infty)$, we define 
   \begin{equation*} 
   \left\{
   \begin{aligned}
   X(t)\bydef&\, \sup_{\tau \in [0,t]} \left( \tau^{-\frac{1}{2}  }  \norm {\delta \rho(\tau)}_{\dot{W}^{-1,r}}\right),\\
   Y (t) \bydef &\, \Bigg( \sup_{\tau \in [0,t]}\Big( \norm {\sqrt{\rho_1}\delta u (\tau) }_{L^2 } ^2+  \norm {( \delta E, \delta B ) (\tau) }_{L^2 } ^2 \Big) 
   \\ & \qquad +  \int_0^t \Big(\norm {\delta u (\tau) }_{\dot{H}^1 }^2 +  \norm {\delta j (\tau) }_{L^2 }^2\Big) d\tau \Bigg)^{\frac{1}{2}} ,\\
   Z(t) \bydef&\, \sup_{\tau \in [0,t]} \left( \tau^{-1}  \norm {\delta \rho(\tau)}_{\dot{H}^{-1}}\right).
   \end{aligned}
   \right.
   \end{equation*} 
   Notice that  definition of the function of time $X$ corresponds to the function $X_\beta$, with $\beta=2$, from Proposition \ref{Prop:TR:***}. Moreover, here, we will be considering the case $\alpha=r$ in the notation of that proposition. 
   
    The proof is split into two steps for a better readability.

\subsubsection*{Energy estimates}
    We begin by establishing adequate estimates for $\delta \rho$ by employing the results of  Proposition \ref{Prop:TR:***}.  To that end,  we observe that the first equation  in \eqref{system_difference} can be recast as  
\begin{equation}\label{Transport_Difference}
\partial_t(\delta \rho)+u_2\cdot \nabla (\delta \rho)=-\div(\delta u \rho_1),\qquad \delta\rho|_{t=0}=0. 
\end{equation} 
Then, we apply Proposition \ref{Prop:TR:***} to \eqref{Transport_Difference}   followed by  H\"older's inequality     to infer that 
 $$ \begin{aligned}     
X(t)&\lesssim    \norm { \delta u \rho_1 }_{L^2_t L^r}  +    \log\left(e +  \norm {\delta \rho}_{L^\infty_{t,x}}  \right)    \int_0^t \norm {   u_2(\tau)}_{ {B}^{1+ \frac{2}{r}}_{r,1}}  X(\tau)  \log\left(e+ \frac{1}{X(\tau)}  \right) d\tau \\
 &\lesssim  t^{\frac{1}{r}  } \norm { \delta u}_{L^\frac{2r}{r-2} _t L^r}\norm { \rho_1 }_{L^\infty_{t,x} }
 \\
 &\quad +    \log \left(e +  \norm {\delta \rho}_{L^\infty_{t,x}}  \right)    \int_0^t \norm {   u_2(\tau)}_{ {B}^{1+ \frac{2}{r}}_{r,1}}  X(\tau)  \log\left(e+ \frac{1}{X(\tau)}  \right) d\tau ,
\end{aligned}  $$ 
for any $t\in [0,T^*]$, where $T^*$ is defined by 
$$ T^*\bydef  \sup \left \{t \in [0,T]:  \sup_{\tau \in [0,t]} \left(  \tau^{\frac{1}{r}-1 } \norm {\delta \rho(\tau)}_{\dot{W}^{-1,r}}  \right) <1 \right\}.$$
The time $T>0$ will be fixed in the next step of the proof and will only depend on  fixed universal constants. Note, in passing, that $T^*>0$, thanks to Proposition \ref{Prop:TR:***}, and, in fact, it will be shown at the end of this proof that $T^*=T$.

We assume now that all estimates are performed on the time interval $[0,T^*]$. However, for convenience of notation, we will not write this explicitly.

Now, observe,  since $r\in (2,\infty)$ and $ \rho_1 >0$, that a straightforward interpolation argument allows us to write 
$$  \norm { \delta u}_{L^\frac{2r}{r-2} _t L^r} \lesssim   \norm { \delta u}_{L^\infty_t L^ 2}  +  \norm { \delta u}_{ L^ 2_t \dot{H}^1}  \lesssim Y(t).$$
Therefore, by the boundedness of the densities $\rho_1$ and $\rho_2$, we find that  
\begin{equation}\label{STAB:EQ:1}
 \begin{aligned}
X(t)& \lesssim  t^{\frac{1}{r}  }  Y(t)  +      \int_0^t \norm {   u_2(\tau)}_{ {B}^{1+ \frac{2}{r}}_{r,1}}  X(\tau)  \log\left( e+\frac{1}{X(\tau)}  \right) d\tau,
\end{aligned} 
\end{equation}
where all the implicit constants are allowed to depend on the upper and lower bounds on the initial densities.

On the other hand,  by employing  Proposition \ref{Prop:TR:***2},  we obtain from \eqref{Transport_Difference} that   
\begin{equation*}
Z(t) \lesssim  \norm {\div(\delta u \rho_1)}_{L^\infty_t \dot{H}^{-1}} \exp \left(C \int_0^t \norm {  u_2(\tau)}_{{B} ^{1+\frac{2}{r}}_{r,1} } d\tau \right).
\end{equation*}
Thus, it follows, by  H\"older's inequality and from the maximum principle \eqref{mass:conservation}, that 
\begin{equation}\label{Bound:Z}
Z(t) \lesssim Y(t) \exp \left( C\int_0^t \norm {  u_2(\tau)}_{{B} ^{1+\frac{2}{r}}_{r,1} } d\tau \right).
\end{equation} 

Next, we focus on the estimate of the velocity and the electromagnetic fields contained in $Y(t)$. According to the assumption in Theorem \ref{Thm:stability},  the velocity field splits as $ u_2=v_2 + w_2.$  
Hence,  performing an energy estimate   on \eqref{system_difference} yields
\begin{equation*}
\begin{aligned}
\frac{1}{2} \frac{d}{dt} \norm { \sqrt{\rho_1}\delta u }_{L^2} ^2+ \norm {\delta u }_{\dot{H}^1} ^2 & = -   \int_{\mathbb{R}^2} \delta \rho \dot{v}_2 \cdot \delta u \,dx   -    \int_{\mathbb{R}^2} \delta \rho \dot{w}_2 \cdot \delta u  \,dx
\\
& \quad -   \int _{\mathbb{R}^2} \rho _1 (\delta u \cdot \nabla u_2 ) \cdot 
\delta 
u  \,dx  +  \int _{\mathbb{R}^2} ( \delta j\times B_1 )\cdot \delta u  \,dx 
\\
& \quad+ \int _{\mathbb{R}^2} ( j_2 \times \delta B )\cdot \delta u \,dx,
\end{aligned}
\end{equation*}
and
\begin{equation*}
\begin{aligned}
\frac{1}{2} \frac{d}{dt} \left(  \norm {  \delta B }_{L^2} ^2+   \norm {  \delta E }_{L^2} ^2 \right)+ \frac{1}{\sigma}  \norm {\delta j }_{L^2} ^2 & =     \int _{\mathbb{R}^2}  \delta j \cdot( \delta u \times B_1)   \,dx
\\
& \quad + \int _{\mathbb{R}^2}  \delta j \cdot(  u_2 \times \delta B    ) \,dx,
\end{aligned}
\end{equation*} 
where we have used that $\rho_1$ solves a transport equation advected by $u_1$ in the first identity, above. Therefore, 
summing the preceding equations  and integrating with respect to time, we arrive at   
\begin{equation*}
\begin{aligned}
Y ^2  (t)   
& \lesssim -  \int_0^t\int_{ \mathbb{R}^2} \delta \rho \dot{v}_2 \cdot \delta u(\tau)  \,dx d\tau  
\\
&\quad -   \int_0^t\int_{ \mathbb{R}^2} \delta \rho \dot{w}_2 \cdot \delta u(\tau) \,dx-   \int_0^t\int 
_{\mathbb{R}^2} \rho _1 (\delta u \cdot \nabla u_2 ) \cdot \delta u (\tau)\,dx 
\\
& \quad 
+ \int_0^t\int _{\mathbb{R}^2} \Big( \delta j \cdot(  u_2 \times \delta B    ) (\tau)+ ( j_2 \times \delta B )\cdot \delta u (\tau) \Big)\,dx\vspace{0.2cm}\\
& \bydef  \sum_{i=0}^3  \mathcal{I}_i(t) ,
\end{aligned}
\end{equation*}
and we take care of each term separately.

In order to estimate  $\mathcal{I}_3$, we apply  H\"older's inequality together with the two dimensional interpolation  inequality  (see Lemma \ref{Emb_Besov_Tr_Lemma} and \eqref{interpolation.AP-B}, in the appendix) 
\begin{equation}\label{INTER.a}
\Vert f \Vert_{L^\frac{2a}{a-2}} \lesssim \Vert f \Vert_{\dot{H}^{ \frac{2}{a}}}\lesssim  	\Vert f \Vert_{ L^2}^{ 1- \frac{2}{a}} \Vert    f\Vert_{\dot{H}^1}^{\frac{2}{a}}   , \qquad a \in (2,\infty),
\end{equation} 
to obtain,   for any $\varepsilon>0$, that   
\begin{equation*}
\begin{aligned}
\mathcal{I}_3(t)  &  \leq \int_0^t \norm {\delta j(\tau)}_{L^2}   \norm {u_2(\tau)}_{L^\infty}   \norm {\delta B(\tau)}_{L^2}  d\tau  \\
& \quad+ \int_0^t  \norm {  j_2(\tau)}_{L^a}   \norm {\delta B(\tau) }_{L^2}   \norm {\delta u(\tau)}_{L^\frac{2a}{a-2} } d\tau   \\  
&    \leq \int_0^t \norm {\delta j (\tau)}_{L^2}   \norm {u_2(\tau)}_{L^\infty}   \norm {\delta B (\tau)}_{L^2} d\tau  
\\
& \quad + \int_0^t \norm {  j_2 (\tau)}_{L^a}   \norm {\delta B (\tau)}_{L^2}   \norm {\delta u(\tau)}_{L^2}^{1-\frac{2}{a}} \norm {\delta u(\tau)}_{\dot{H}^1}^\frac{2}{a} d\tau    .
\end{aligned}
\end{equation*}
Therefore, applying Young's inequality to deduce that
$$ \begin{aligned}
 \norm {  j_2 (\tau)}_{L^a}   \norm {\delta B (\tau)}_{L^2}   &\norm {\delta u(\tau)}_{L^2}^{1-\frac{2}{a}}   \norm {\delta u(\tau)}_{\dot{H}^1}^\frac{2}{a}  \\
& \quad \leq   \frac{ \varepsilon  }{2} \norm {\delta u(\tau)}_{\dot{H}^1}^2  + C_{\varepsilon}   \norm {  j_2 (\tau)}_{L^a} ^{\frac{a}{a-1}} \norm {\delta B (\tau)}_{L^2} ^{\frac{a}{a-1}}  \norm {\delta u(\tau)}_{L^2} ^\frac{a- 2}{a-1}  \\
& \quad \leq    \frac{ \varepsilon  }{2}   \norm {\delta u(\tau)}_{\dot{H}^1}^2 + C_{\varepsilon}   \norm {  j_2 (\tau)}_{L^a} ^{\frac{a}{a-1}} \left( \norm {\delta B (\tau)}_{L^2}^2 +   \norm {\delta u(\tau)}_{L^2} ^2  \right)
\end{aligned}
$$
and 
$$ \norm {\delta j(\tau)}_{L^2}   \norm {u_2(\tau)}_{L^\infty}   \norm {\delta B(\tau)}_{L^2}  \leq   \frac{  \varepsilon}{2} \norm {\delta j(\tau)}_{L^2 }^2+   C_\varepsilon    \norm {u_2(\tau)}_{L^\infty}^2   \norm {\delta B (\tau)}_{L^2}^2 ,  $$
we arrive at the estimate
$$\mathcal{I}_3(t)    \leq   \varepsilon  \left(\norm {\delta j}_{L^2_{t,x}}^2  + \norm {\delta u}_{L^2_t\dot{H}^1}^2 \right)  +  C_\varepsilon \int_0^t \left(  \norm {u_2(\tau)}_{L^\infty}^2 +      \norm {  j_2 (\tau)}_{L^a} ^\frac{a}{a-1} \right) Y^2(\tau) d\tau.$$
The first term will be absorbed by the dissipation terms in $Y(t)$ and the second term will be estimated using Gr\"onwall's inequality.

We turn now to the estimate of $\mathcal{I}_{2}$, which   is obtained by employing  H\"older's inequality, the maximum principle \eqref{mass:conservation} and the Sobolev embedding  \eqref{sobolev-infinity} to reach the conclusion that 
$$ \begin{aligned}
 \mathcal{I}_2(t) &\leq \int_0^t \norm {\rho_1(\tau)}_{L^\infty} \norm {\nabla u_2(\tau)}_{L^\infty}  \norm {\delta u(\tau)}_{L^2}^2 d\tau\\
 &\lesssim \norm{\rho_0}_{L^\infty}\int_0^t\norm {\nabla u_2(\tau)}_{L^\infty}  Y^2(\tau)d\tau\\
 & \lesssim \int_0^t  \norm {  u_2(\tau)}_{\dot{B}^{1+\frac{2}{r}}_{r,1}}Y^2(\tau)d\tau.
\end{aligned}$$
As for the estimate of $\mathcal{I}_1$, we first proceed  by duality and we utilize H\"older's inequality to find that 
$$ \begin{aligned}
 \mathcal{I}_1(t) & \leq  \int_0^t \norm { \delta \rho(\tau)}_{\dot{W}^{-1,r} } \norm { \dot{w}_2 \cdot \delta u(\tau)}_{\dot{W}^{1,\frac{r}{r-1}} } d\tau \\ 
  & \leq\int_0^t   \norm { \delta \rho(\tau)}_{\dot{W}^{-1,r} } \left(  \norm { \dot{w}_2 (\tau) }_{L^{\frac{2r}{r-2}} }   \norm { \nabla \delta {u}  (\tau)}_{L^2 }   +  \norm {\nabla  \dot{w}_2 (\tau) }_{L^2}   \norm {  \delta {u}  (\tau)}_{L^ {\frac{2r}{r-2}}  }   \right)  d\tau\\
 & \leq   \int_0^t X(\tau) \left(  \norm { \tau^{\frac{1}{2}  }\dot{w}_2 (\tau) }_{L^{\frac{2r}{r-2}} }   \norm { \nabla \delta {u}  (\tau)}_{L^2 }   +  \norm {\tau^{\frac{1}{2} }\nabla  \dot{w}_2 (t) }_{L^2}   \norm {  \delta {u}  (\tau)}_{L^ {\frac{2r}{r-2}}  }   \right) d\tau.   
\end{aligned}$$ 
Therefore,   employing \eqref{INTER.a} with $a=r$, we obtain that 
$$ \begin{aligned}
 \mathcal{I}_1(t) &  \leq  \int_0^t X(\tau) \left(  \norm { \tau^{\frac{1}{2} }\dot{w}_2 (\tau) }_{L^2 } ^{1-\frac{2}{r}} \norm { \tau^{\frac{1}{2} }\dot{w}_2 (\tau) }_{ \dot{H}^1}^{\frac{2}{r}}   \norm {  \delta {u}  (\tau)}_{\dot{H}^1 } \right)  d\tau\\
 &\quad  +  \int_0^t X(\tau)\left(  \norm {\tau^{\frac{1}{2} }   \dot{w}_2 (\tau) }_{\dot{H}^1}   \norm {  \delta {u}  (\tau)}_{ L^2}^{1-\frac{2}{r} }  \norm { \delta {u}  (\tau)}_{ \dot{H}^1}^{ \frac{2}{r} }  \right) d\tau.
\end{aligned}$$   
By further employing Young's inequalities to write, for any $\varepsilon>0$ and some $C_\varepsilon >0$, that 
$$   \begin{aligned}
X(\tau)   \norm { \tau ^{\frac{1}{2} }\dot{w}_2 (\tau) }_{L^2 } ^{1-\frac{2}{r}}  &\norm {  \tau^{\frac{1}{2} }\dot{w}_2 (\tau) }_{ \dot{H}^1}^{\frac{2}{r}}   \norm {  \delta {u}  (\tau)}_{\dot{H}^1 } 
\\
& \quad \leq   C_\varepsilon X^2 (\tau )    \norm { \tau^{\frac{1}{2} }\dot{w}_2 (\tau) }_{H^1} ^2   + 
\frac{\varepsilon}{2} 
  \norm { \delta {u} (\tau)    }_{ \dot{H}^1 } ^2      , 
\end{aligned}  $$    
and, as long as $r\in (2,\infty)$, that 
$$ \begin{aligned}
X(\tau) &\left(  \norm {\tau^{\frac{1}{2} }  \dot{w}_2 (\tau) }_{\dot{H}^1}   \norm {  \delta {u}  (\tau)}_{ L^2}^{1-\frac{2}{r} }  \norm { \delta {u}  (\tau)}_{ \dot{H}^1}^{ \frac{2}{r} }  \right) \\
& \qquad\qquad \leq  C_\varepsilon    \norm{\tau^{\frac{1}{2} }   \dot{w}_2 (\tau) }_{\dot{H}^1}^{\frac{r}{r-1}} X^{ \frac{r}{r-1}} (\tau)  \norm {  \delta {u}  (\tau)}_{ L^2}^{\frac{r-2}{r-1}}  + \frac{\varepsilon}{2}  \norm { \delta {u} (\tau)   }_{ \dot{H}^1 } ^2   
\\
& \qquad\qquad \leq  C_\varepsilon   \left(1+ \norm{\tau^{\frac{1}{2} }   \dot{w}_2 (\tau) }_{\dot{H}^1}^{2}\right) \Big(  X^{2} (\tau)  +  Y^{2} (\tau)  \Big) + \frac{\varepsilon}{2}  \norm { \delta {u} (\tau)   }_{ \dot{H}^1 } ^2   ,
\end{aligned}$$
we finally arrive at the bound 
  $$ \begin{aligned}
 \mathcal{I}_1(t) &  \leq  C_{\varepsilon}\int_0^t   \left(  1+  \norm { \tau^{\frac{1}{2} }\dot{w}_2 (\tau) }_{H^1} ^2   \right)  \left( X^2(\tau) +  Y^2(\tau)\right)  d\tau    + \varepsilon  \norm { \delta {u}   }_{L^2_t\dot{H}^1 } ^2    .    
\end{aligned}$$

In order to estimate $\mathcal{I}_0$, we proceed as in \cite{Danchin_Wang_22}.  To that end, by  duality, we find that 
\begin{equation*} 
\begin{aligned}
\mathcal{I}_0(t) \leq&\, \int_0^t \|\delta \rho \|_{\dot{H}^{-1}}\|\dot{v}_2 \cdot \delta u(\tau) \|_{\dot{H}^1} \, d\tau\\
 \leq&\,
 \int_0^t Z(\tau)\|\tau \dot{v}_2 \cdot \delta u(\tau) \|_{\dot{H}^1} \, d\tau\\
 \leq&\,
\int_0^t Z(\tau)\Big(\|\tau \nabla\dot{v}_2 \cdot \delta u(\tau) \|_{L^2} +\|\tau \dot{v}_2 \cdot \delta \nabla u(\tau) \|_{L^2}\Big)\, d\tau.
\end{aligned}
\end{equation*}
Then, by employing H\"older's inequality in combination with the embedding
\begin{equation*} 
\dot{W}^{1,p} \hookrightarrow L^m(\R^2),\qquad \text{where}\qquad \frac{1}{m} + \frac{1}{2}=\frac{1}{p} , 
\end{equation*} 
  we infer that 
\begin{equation*} 
\begin{aligned}
\mathcal{I}_0(t) \leq 
\int_0^t Z(\tau)\Big(\|\tau \nabla^2\dot{v}_2\|_{L^p} \| \delta u(\tau) \|_{L^{p'}} +\|\tau \dot{v}_2\|_{L^\infty} \| \delta \nabla u(\tau) \|_{L^2}\Big)\, d\tau ,
\end{aligned}
\end{equation*}
where $p'$ denotes the conjugate exponent of $p$.  Therefore,  utilizing the fact that $\frac{1}{p}+ \frac{1}{q}= \frac{3}{2}$ and appealing to the interpolation inequality 
\begin{equation*} 
\begin{aligned}
  \| \delta u(\tau) \|_{L^{p'}} \lesssim&\,  \| \delta u(\tau) \|_{L^2}^{2 - \frac{2}{p}} \| \nabla \delta u(\tau) \|_{L^2}^{\frac 2p -1 } = \| \delta u(\tau) \|_{L^2}^{  \frac{2}{q} -1} \|  u(\tau) \|_{\dot{H}^1}^{2- \frac{2}{q}} ,
\end{aligned}
\end{equation*}
followed by Young's inequality, we obtain  that 
\begin{equation*}   
 \begin{aligned}
 \mathcal{I}_0(t) &  \leq     C_{\varepsilon}  \int_0^t    \norm { \tau \dot{v}_2 (\tau) }_{L^\infty } ^2   Z^2(\tau) d\tau \\
& \quad+  C_{\varepsilon }      \int_0^t\norm { \tau \dot{v}_2 (\tau) }_{\dot{W}^{2,p} } ^q    \norm {  \delta {u}  (\tau)}_{ L^2}^{2-q}  Z^q(\tau)d\tau  + \varepsilon  \norm { \delta {u}   }_{L^2_t 
\dot{H}^1 } ^2   ,
\end{aligned}
\end{equation*}
for any $\varepsilon >0$. Hence, we infer, for $q<2$,  that 
\begin{equation*}   
 \begin{aligned}
 \mathcal{I}_0(t) &  \leq       C_{\varepsilon }      \int_0^t \left( \norm { \tau \dot{v}_2 (\tau) }_{L^\infty } ^2  + \norm { \tau \dot{v}_2 (\tau) }_{\dot{W}^{2,p} } ^q  \right) \left( Y^2(\tau) +  Z^2(\tau)\right) d\tau   + \varepsilon  \norm { \delta {u}   }_{L^2_t \dot{H}^1 } ^2   .
\end{aligned}
\end{equation*}

All in all, gathering the foregoing estimates, while choosing $\varepsilon$ as small as it is needed,  leads us to the final bound
\begin{equation} \label{Y:ES0}
Y^2 (t) \lesssim  \int_0^t \mathcal{A}(\tau) \left( X^{2}(\tau) +   Y^2(\tau) + Z^2(\tau)\right) d\tau  ,
\end{equation}   
where we set 
$$  \begin{aligned}
\mathcal{A}(\tau) &\bydef 1 +   \norm { \tau \dot{v}_2 (\tau) }_{L^\infty } ^2 +  \norm { \tau \dot{v}_2 (\tau) }_{\dot{W}^{2,p} } ^q  + \norm { \tau^{\frac{1}{2} }\dot{w}_2 (\tau) }_{H^1} ^2   \\
& \quad+   \norm {  u_2(\tau)}_{B^{1+\frac{2}{r}}_{r,1}} + \norm {u_2(\tau)}_{L^\infty}^2 + \norm {j_2(\tau)}_{L^a}^ {\frac{a}{a-1}}. 
\end{aligned}$$

\subsubsection*{Conclusion of the proof}

We show now how to combine the preceding bounds to deduce the uniqueness of solutions to \eqref{MHD_System_Nonhom}. This uniqueness is achieved on a fixed time interval $[0,T]$, for a suitable value of $T>0$. Then, in a subsequent step, the uniqueness is extended to hold on $\mathbb{R}^+_\loc$ by a standard bootstrap argument.

  To that end,  first observe  that  the assumptions in the statement of Theorem \ref{Thm:stability} ensure  that  
$$ \mathcal{A} \in L^1_{\loc}(\mathbb{R}^+).$$ 
Moreover, by virtue of \eqref{Bound:Z} and \eqref{Y:ES0},
we find that
\begin{equation}\label{STAB:EQ:2}
Y(t) \lesssim  \mathcal{U}(t) \left(  \int_0^t \mathcal{A}(\tau) \left( X^{2}(\tau) +   Y^2(\tau)  \right) d\tau  \right)^\frac 12,
\end{equation}
where, we set
$$ t\mapsto \mathcal{U}(t) \bydef  \exp \left(C \int_0^t \norm {  u_2(\tau)}_{\dot{B} ^{1+\frac{2}{r}}_{r,1} } d\tau \right) \in L^\infty_\loc(\mathbb{{R}^+}).$$
Furthermore, the bound  \eqref{STAB:EQ:1}     implies, for some   constant $C_*>0$ which only depends on the upper and lower bounds on the initial densities $\rho_1$ and $\rho_2$, that 
\begin{equation*} 
 \begin{aligned}
X(t)& \leq C_*  t^{\frac{1}{r}  }  Y(t)  +    C_*  \int_0^t \mathcal{A}(\tau)  X(\tau)  \log\left(e+ \frac{1}{X(\tau)}  \right) d\tau,
\end{aligned} 
\end{equation*}
  for any $t\in [0,T^*]$, where we recall that $T^*$ is defined as
$$ T^*\bydef   \sup \left \{t \in [0,T]:  \sup_{\tau \in [0,t]} \left(  \tau^{\frac{1}{r}-1 } \norm {\delta \rho(\tau)}_{\dot{W}^{-1,r}}  \right) <1 \right\},$$
and $T$ is now fixed in terms of the final constant $C_*$ and   is defined by $$   T\bydef  \left(\frac{1}{2C_*}  \right)^{ r}. $$   Consequently, we infer,  for any $t\in (0,T^*]$, that
\begin{equation}\label{STAB:EQ:1-b}
 \begin{aligned}
X(t)& \leq \frac{1}{2} Y(t)  +    C_*  \int_0^t \mathcal{A}(\tau) X(\tau)  \log\left(e+ \frac{1}{X(\tau)}  \right) d\tau.
\end{aligned} 
\end{equation}

Next, by summing \eqref{STAB:EQ:2} and \eqref{STAB:EQ:1-b}, we find that 
\begin{equation*}
	\begin{aligned}
		\mathcal {H}(t) 
		&\bydef X(t) + Y(t)
		\\
		& \lesssim   \int_0^t \mathcal{A}(\tau) \mathcal {H}(\tau)  \log\left(e+ \frac{1}{\mathcal {H}(\tau)}  \right) d\tau + \mathcal {H}^\frac 12 (t) \mathcal{U}(t) \left(  \int_0^t \mathcal{A}(\tau) \mathcal {H}(\tau) d\tau  \right)^\frac 12
		\\
		& \lesssim   \varepsilon \mathcal {H}  (t) +  \int_0^t \mathcal{A}(\tau) \mathcal {H}(\tau)  \log\left(e+ \frac{1}{\mathcal {H}(\tau)}  \right) d\tau + C_\varepsilon \mathcal{U}^2(t)  \int_0^t \mathcal{A}(\tau) \mathcal {H}(\tau) d\tau   .
	\end{aligned}
\end{equation*}
Therefore, choosing $\varepsilon$ small enough yields that 
\begin{equation*}
	\mathcal {H}(t)  \lesssim (1+ \mathcal {U}(t))^2 \int_0^t \mathcal{A}(\tau) \mathcal {H}(\tau)  \log\left(e+ \frac{1}{\mathcal {H}(\tau)}  \right) d\tau,
\end{equation*}
for all $t \in [0,T^*]$. Hence, by Osgood's lemma (see \cite[Lemma A.1]{hhz2023}),
it follows that 
\begin{equation*}
	\mathcal{H}\equiv X\equiv Y \equiv  0
\end{equation*}
on $[0,T^*]$. In particular, we deduce, by virtue of the definition of $ X(t)$,  that 
\begin{equation*}
	\norm {\delta \rho(t)}_{\dot{W}^{-1,r}} = 0,
\end{equation*}
for all $t \in [0,T^*]$, which in turn implies that 
\begin{equation*}
	\sup_{t \in [0,T]} \left(  t^{\frac{1}{r}-1 } \norm {\delta \rho(t)}_{\dot{W}^{-1,r}}  \right) = 0 <1.
\end{equation*} 
Consequently,  by the very definition of $T^*$, we arrive at the conclusion that $$T^*= T =\left(\frac{1}{2C_*}\right) ^{r}.$$

In summary, we have shown  that the uniqueness of solutions holds   on the whole fixed time  interval $ [0,\left( 2C_* \right) ^{-r}]$. Therefore, by a bootstrap argument, repeating the previous procedure of proof, it can be shown that the uniqueness   holds on any finite interval of time in $\mathbb{R}^+$.
It is very important to mention here that the constant $C_*$, and eventually the maximal time interval $T$, depend  only on universal constants and on the lower and upper bounds on the   densities, which are conserved quantities. This guarantees in particular  that  the  size   of the time interval is unchanged in  the  bootstrap algorithm, thereby ensuring that this uniqueness argument holds on any finite time interval. The proof of  Theorem \ref{Thm:stability} is now completed. \qed

\section{Time-weighted estimates (II)}
\label{Section.TW:es2}

The bounds established in Propositions \ref{Thm-summary} and  \ref{corollary:u-Lip} will now be employed to obtain   additional  time-weighted estimates on the solutions constructed in Section \ref{Section:a priori ES}, in the case $s\in(\frac 12,1)$ (so that all bounds from Section \ref{Section.TW1} hold true). These new estimates will ensure that the hypotheses of Theorem \ref{Thm:stability} are all satisfied, which will then lead to the uniqueness stated in Theorem \ref{Thm:1}.

Before getting into the details, allow  us   to  recall a few facts that are routinely used below.   First, the velocity field is split into  
$$ u=v+ w , \qquad     v|_{t=0}=u_0, \qquad  w|_{t=0}=0,$$
 where each part is governed by the equations
\begin{equation}\label{v-equa*}
\rho \left(\partial_t v + u \cdot \nabla v \right) - \Delta v+ \nabla p_v = 0, 
\end{equation}  
and 
\begin{equation}\label{w-equa*}
\rho \left(\partial_t w + u \cdot \nabla w \right) - \Delta w+ \nabla p_w = j\times B,
\end{equation}
respectively.  As for the current density $j$,  we recall that it is given by Ohm's law \eqref{Ohms-law1} or \eqref{Ohms-law2}. On the other hand, the electromagnetic field  $(E,B)$ is a solution to the Maxwell equations \eqref{MX*} and, finally, the density $\rho$ is transported by the flow of the full velocity field $u$
\begin{equation*}
\partial_t \rho + u \cdot \nabla \rho=0, \qquad \rho|_{t=0}= \rho_0 .
\end{equation*}      
 
 The bounds on $v$ are obtained in the same fashion as in \cite{Danchin_Wang_22} by utilizing the maximal  smoothing effect of the Stokes operator, whose proof  is   outlined in Section \ref{subsection:v-TW}, for the sake of completeness.  As for the bounds on $w$ presented in Section \ref{subsection:w-TW}, below, they will rest on the structure of the Lorentz force $j\times B$ in combination with  an appropriate duality argument. In particular, the control of $w$ will not rely on the maximal smoothing effect of the Stokes operator.

\subsection{Control of $v$}
\label{subsection:v-TW}

We estimate $v$ by employing the method from \cite{Danchin_Wang_22}, which corresponds to the case $E\equiv B \equiv 0$. This is possible due to the fact that, as we will see, the field $w$ enjoys bounds which are better than the ones satisfied by $v$. Thus, the advection term $ u\cdot \nabla v$ will essentially behave as $v\cdot \nabla v$.

\begin{proposition}[in the spirit of {\cite[Proposition  3.4]{Danchin_Wang_22}}]
	\label{prop:v:tv1}
	It holds, under the assumptions of Proposition \ref{corollary:u-Lip}, that
	$$  t^\frac{1}{2} v \in L^\infty_\loc(\mathbb{R}^+; \dot{H}^1(\mathbb{R}^2))    ,$$
	and
	$$ t^{\frac{1}{2}} \dot{v} , t^{\frac{1}{2}} \partial_t v \in L^2_\loc(\mathbb{R}^+; L^2(\mathbb{R}^2))   .$$
\end{proposition}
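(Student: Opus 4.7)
The plan is to derive all three bounds from a single time-weighted energy estimate on \eqref{v-equa*}. Testing the equation $\rho \partial_t v + \rho u\cdot \nabla v -\Delta v + \nabla p_v = 0$ against $\partial_t v$, the pressure term drops out thanks to $\div \partial_t v = 0$, and one obtains the pointwise identity
\begin{equation*}
\int_{\mathbb{R}^2}\rho |\partial_t v|^2\,dx + \frac{1}{2}\frac{d}{dt}\|\nabla v\|_{L^2}^2 = -\int_{\mathbb{R}^2}\rho (u\cdot\nabla v)\cdot \partial_t v\,dx.
\end{equation*}
I would then bound the right-hand side by Cauchy--Schwarz and Young's inequalities as $\frac{1}{2}\|\sqrt{\rho}\partial_t v\|_{L^2}^2 + C\|\rho_0\|_{L^\infty}\|u\|_{L^\infty}^2\|\nabla v\|_{L^2}^2$, absorb the first term into the dissipation, and multiply through by $t$ to turn it into
\begin{equation*}
\frac{d}{dt}\bigl(t\|\nabla v\|_{L^2}^2\bigr) + t\|\sqrt{\rho}\partial_t v\|_{L^2}^2 \lesssim \|\nabla v\|_{L^2}^2 + \|u\|_{L^\infty}^2\cdot \bigl(t\|\nabla v\|_{L^2}^2\bigr).
\end{equation*}

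From this inequality the first two statements of the proposition will follow from a Gr\"onwall argument: since $u\in L^2_\loc(\mathbb{R}^+;L^\infty)$ and $\nabla v\in L^2_\loc(\mathbb{R}^+;L^2)$ by Proposition \ref{Thm-summary}, applying Gr\"onwall to $t\|\nabla v(t)\|_{L^2}^2$ yields $t^{1/2}v\in L^\infty_\loc(\mathbb{R}^+;\dot H^1)$, and then integrating the resulting differential inequality in time gives $t^{1/2}\partial_t v\in L^2_\loc(\mathbb{R}^+;L^2)$. If the integrating factor $\exp(C\|u\|_{L^2_tL^\infty}^2)$ proves too crude, the same conclusion can be reached via the bootstrap partition of $[0,t]$ into subintervals on which $\|u\|_{L^2 L^\infty}$ is small enough to absorb the nonlinearity, mirroring exactly the inductive procedure used in the proofs of Lemmas \ref{lemma:w-ES} and \ref{lemma:v-ES}.

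The third statement, $t^{1/2}\dot v \in L^2_\loc(\mathbb{R}^+;L^2)$, follows by triangle inequality from the decomposition $\dot v = \partial_t v + u\cdot \nabla v$: indeed,
\begin{equation*}
\|t^{1/2}\dot v\|_{L^2_tL^2} \leq \|t^{1/2}\partial_t v\|_{L^2_tL^2} + \|u\|_{L^2_tL^\infty}\|t^{1/2}\nabla v\|_{L^\infty_tL^2},
\end{equation*}
where both contributions on the right are finite by the bounds just obtained together with Proposition \ref{Thm-summary}.

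I expect the main obstacle to be purely technical rather than conceptual: justifying rigorously that the test function $\partial_t v$ is admissible and that all integrations by parts are valid under the a priori regularity provided by Proposition \ref{Thm-summary} (which only guarantees $v\in L^{q,1}_\loc\dot W^{2,p}$ with $p<2$, so $\partial_t v$ is only in a Lorentz class in time, not $L^2$). This is handled, as usual, by performing the estimates on the approximate smooth solutions constructed in Section \ref{Section:a priori ES} and passing to the limit using the Fatou property of the relevant spaces, exactly in the spirit of the corresponding step in \cite{Danchin_Wang_22}. A minor additional care is needed because here $u=v+w$ with $w$ driven by the electromagnetic field, but since the nonlinear advection term is estimated through $\|u\|_{L^\infty}$ only, the argument is insensitive to the specific split and the electromagnetic contribution enters only through the finiteness of $\|u\|_{L^2_tL^\infty}$.
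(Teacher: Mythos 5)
Your proof is correct, but it takes a genuinely different route from the paper's. The difference lies in how the trilinear term $-t\int \rho (u\cdot\nabla v)\cdot\partial_t v\,dx$ is estimated. You apply H\"older as $\|u\|_{L^\infty}\|\nabla v\|_{L^2}\|\sqrt\rho\,\partial_t v\|_{L^2}$ and absorb the $\partial_t v$ factor by Young's inequality, producing the integrable potential $\|u(\tau)\|_{L^\infty}^2$ that drives a Gr\"onwall (or equivalent time-subdivision) argument; the input you need is simply $u\in L^2_{\loc}L^\infty$ from Proposition \ref{Thm-summary} and $\nabla v\in L^2_tL^2$ from the energy bound \eqref{energy-v}. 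The paper instead applies H\"older as $\|u\|_{L^2}\,\tau^{1/2}\|\nabla v\|_{L^\infty}\,\tau^{1/2}\|\sqrt\rho\,\partial_t v\|_{L^2}$ and integrates in time, so that after Young's absorption of the last factor the entire right-hand side is already a finite constant, namely $\|v\|_{L^2_t\dot H^1}^2+\|u\|_{L^\infty_tL^2}^2\|\rho_0\|_{L^\infty}\|\tau^{1/2}v\|_{L^2_t\dot W^{1,\infty}}^2$; no Gr\"onwall iteration is needed, but this route relies on the time-weighted Lipschitz bound $t^{1/2}v\in L^2_{\loc}\dot W^{1,\infty}$ from \eqref{v:TW1}, which in turn rests on Proposition \ref{corollary:u-Lip} and hence on $s\in(\frac12,1)$. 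Your argument is thus slightly more elementary in terms of prerequisites (it would actually go through under the hypotheses of Proposition \ref{Thm-summary} alone), whereas the paper's is sharper in that it bypasses the exponential factor entirely; both are valid. Your handling of $\dot v$ by the triangle inequality and of the regularity/passage-to-the-limit issue via approximate solutions matches the paper's strategy as well.
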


\begin{proof}
	Note first, since $\dot{v}= \partial_t v+u\cdot\nabla v $,  that 
$$  \int_0^t \tau^{\frac{1}{2}}\norm { \dot{v}(\tau) -   \partial_t v(\tau) }_{  L^2}^2 d\tau \leq \|\tau^{\frac{1}{2}} u\|_{L^\infty_tL^\infty}^2    \norm {\nabla v }_{L^2_t  L^2}^2    ,$$
where the right-hand side is finite, for any $t>0$, due to \eqref{energy-v} and \eqref{u:TW1}. Hence,     our claim is reduced to showing that 
$$ t ^\frac{1}{2} v \in L^\infty_\loc(\mathbb{R}^+; \dot{H}^1)    \qquad \text{and} \qquad   t^{\frac{1}{2}} \partial_t v  \in L^2_\loc(\mathbb{R}^+; L^2)   .$$
To that end, taking the inner product of \eqref{v-equa*} with $ t\partial_t v$ gives, for any $t>0$, that 
\begin{equation*}
t \int_{\mathbb{R}^2} \rho |\partial_t v|^2 dx +\frac{1}{2} \frac{d}{dt} \left(  t \int_{\mathbb{R}^2} |\nabla v|^2 dx \right) =  \frac{1}{2} \int_{\mathbb{R}^2} |\nabla v|^2 dx -  t  \int_{\mathbb{R}^2} \rho (u \cdot \nabla v) \cdot \partial_t v  dx,
\end{equation*}
where we have used the divergence-free condition of the velocity field  to get rid of the pressure term.
Therefore, integrating in time and employing H\"older's inequality with \eqref{mass:conservation} yields that
\begin{equation*}
\begin{aligned}
\|\tau^{ \frac{1}{2}}\sqrt{ \rho}  \partial_t v\|  _{L^2_t L^2}^2 +  \|\tau^{\frac{1}{2}} v \|_{L^\infty _t \dot{H}^1}^2   & \lesssim    \norm { v}_{L^2_t\dot{H}^1}^2 
\\
& \quad +   \norm {  u}_{L^\infty_t L^2} \norm { \rho_0}_{L^\infty}^\frac{1}{2} \| \tau^\frac{1}{2} \nabla  v\|_{ L^2_t 
L^\infty}\|\tau^\frac{1}{2} 
 \sqrt{\rho}\partial_t v  \|_{L^2_tL^2}\\ 
  & \lesssim   \norm { v}_{L^2_t\dot{H}^1}^2  + C_\varepsilon  \norm {  u}_{L^\infty_t L^2}^2 \norm { \rho_0}_{L^\infty}  \|\tau^\frac{1}{2} \nabla  v\|_{ L^2_t L^\infty}^2 
  \\
& \quad+  \varepsilon\|\tau^\frac{1}{2}  \sqrt{\rho}\partial_t v  \|_{L^2_tL^2}^2,
\end{aligned}
\end{equation*}
for some suitable small $\varepsilon>0$.
Accordingly, recalling that $\rho$ is uniformly bounded from below, we arrive at the bound 
\begin{equation*}
\|\tau^{ \frac{1}{2}} \partial_t v\|  _{L^2_t L^2}^2 +  \|\tau^{\frac{1}{2}} v  \|_{L^\infty _t \dot{H}^1}^2   \lesssim        \norm { v}_{L^2_t\dot{H}^1}^2  +  \norm {  u}_{L^\infty_t L^2}^2 \norm { \rho_0}_{L^\infty} \| \tau^\frac{1}{2}   v\|_{ L^2_t \dot{W}^{1, \infty}}^2 .
\end{equation*}
At last, we observe that the right-hand side above is finite  by virtue of \eqref{v:TW1} and the bounds in Proposition \ref{Thm-summary}, thereby completing the  proof of the proposition.
\end{proof}

As a by-product of   the bounds from the preceding proposition, we now deduce a corollary which establishes additional time-weighted estimates for $v$.

\begin{corollary}\label{RMK:*}
	Under the assumptions of  Proposition \ref{prop:v:tv1}, it holds that $$ t^\frac{1}{2} v\in L^2_{\loc}(\mathbb{R}^+;\dot{H}^2(\mathbb{R}^2))  , \qquad  t^{\frac{1}{2} } p_v \in L^2_{\loc}(\mathbb{R}^+;\dot{H}^1(\mathbb{R}^2)).$$
\end{corollary}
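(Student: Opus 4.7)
The plan is to view the equation \eqref{v-equa*} as a steady (stationary) Stokes system with a right-hand side that is already controlled in a time-weighted fashion by Proposition~\ref{prop:v:tv1}. More precisely, since $\div v = 0$, the equation for $v$ can be rewritten, for almost every $t>0$, as
\begin{equation*}
-\Delta v + \nabla p_v = -\rho\, \dot{v} ,\qquad \div v = 0,
\end{equation*}
where $\dot v = \partial_t v + u\cdot\nabla v$. The stationary Stokes estimate in $\mathbb{R}^2$ then yields, pointwise in time,
\begin{equation*}
\|\nabla^2 v(t)\|_{L^2} + \|\nabla p_v(t)\|_{L^2} \lesssim \|\rho(t)\,\dot v(t)\|_{L^2} \lesssim \|\rho_0\|_{L^\infty} \|\dot v(t)\|_{L^2},
\end{equation*}
where we employed the maximum principle \eqref{mass:conservation} for $\rho$.

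Then I would multiply by $t^{1/2}$, square the inequality and integrate in time on any compact interval $[0,t]$, to deduce that
\begin{equation*}
\| \tau^{1/2} \nabla^2 v\|_{L^2([0,t];L^2)}^2 + \| \tau^{1/2} \nabla p_v\|_{L^2([0,t];L^2)}^2 \lesssim \|\rho_0\|_{L^\infty}^2\, \| \tau^{1/2} \dot v\|_{L^2([0,t];L^2)}^2.
\end{equation*}
The right-hand side is finite by virtue of Proposition~\ref{prop:v:tv1}, which ensures that $t^{1/2} \dot v \in L^2_\loc(\mathbb{R}^+;L^2)$. This gives precisely the two claimed bounds $t^{1/2} v \in L^2_\loc(\mathbb{R}^+;\dot H^2)$ and $t^{1/2} p_v \in L^2_\loc(\mathbb{R}^+;\dot H^1)$.

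There is no real obstacle here: the only minor point to check is the applicability of the stationary Stokes estimate on $\mathbb{R}^2$, which is standard (and in fact already implicitly used in the statements of Lemmas \ref{lemma:w-ES} and \ref{lemma:v-ES} through maximal regularity). Since $\rho$ is merely bounded, one must be cautious not to use any spatial regularity of $\rho$; but the argument above does not, as the density only enters through its $L^\infty$ norm via the maximum principle. Hence the corollary follows from Proposition~\ref{prop:v:tv1} in a single, elementary step.
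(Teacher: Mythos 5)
Your proof is correct and takes essentially the same route as the paper: both treat \eqref{v-equa*} at each fixed time as a stationary Stokes problem with right-hand side $-\rho\dot v$ and then invoke the $t^{1/2}\dot v\in L^2_\loc L^2$ bound from Proposition~\ref{prop:v:tv1}. The paper phrases the Stokes estimate by explicitly applying Leray's projector to get $\Delta v=\mathbb{P}(\rho\dot v)$ and then recovering $\nabla p_v$ from the equation, but that is just the standard derivation of the stationary Stokes estimate you cite.
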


\begin{proof} 
We first apply Leray's projector  $\mathbb{P}\bydef \id - \nabla \Delta^{-1}\div$ to the equation  \eqref{v-equa*} to obtain  that 
$$  t^\frac{1}{2} \Delta v =  t^\frac{1}{2} \mathbb{P}(\rho \dot{v}) \in  L^2_{\loc}(\mathbb{R}^+;L^2(\mathbb{R}^2)),$$
as a direct consequence of the bounds established in Proposition \ref{prop:v:tv1}  and the  fact that $\mathbb{P}$ is bounded on $L^2(\mathbb{R}^2)$.

Additionally, we see from \eqref{v-equa*}  that 
$$ t^\frac{1}{2} \nabla p_v = t^\frac{1}{2} \Delta v  - t^\frac{1}{2}\rho \dot{v} \in  L^2_{\loc}(\mathbb{R}^+;L^2(\mathbb{R}^2)), $$
thereby completing the proof of the corollary.
\end{proof}
 
\begin{proposition}[in the spirit of {\cite[Proposition 3.5]{Danchin_Wang_22}}]
	\label{prop:v:tw2}
	It holds, under the assumptions of Proposition \ref{corollary:u-Lip}, that
	$$ t \dot{v} ,\, t \partial_t v  \in L^\infty _\loc(\mathbb{R}^+; L^2(\mathbb{R}^2)) \cap  L^2_\loc(\mathbb{R}^+; \dot{H}^1(\mathbb{R}^2))   .$$
\end{proposition}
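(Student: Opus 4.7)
The plan is to derive an evolution equation for the material derivative $\dot v = \partial_t v + u\cdot\nabla v$ and then perform a weighted energy estimate with weight $t^2$, in the spirit of the corresponding result in \cite{Danchin_Wang_22}. More precisely, applying $(\partial_t + u\cdot\nabla)$ to \eqref{v-equa*} and using the continuity equation $\partial_t\rho + u\cdot\nabla\rho = 0$, one obtains
\begin{equation*}
\rho\,\ddot v - \Delta\dot v + \nabla \dot p_v = F,
\end{equation*}
where $F$ gathers the commutator contributions
\begin{equation*}
F = -[\Delta, u\cdot\nabla]v + [\nabla, u\cdot\nabla] p_v = 2(\nabla u)\cdot\nabla^2 v + (\Delta u)\cdot\nabla v - (\nabla u)^\top\nabla p_v.
\end{equation*}

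The first main step is to take the $L^2$ inner product of the above identity with $t^2\dot v$. Using once more the continuity equation and integration by parts, the left-hand side produces the clean structure
\begin{equation*}
\tfrac{1}{2}\tfrac{d}{dt}\Big(t^2\|\sqrt\rho\,\dot v\|_{L^2}^2\Big) + t^2\|\nabla\dot v\|_{L^2}^2 - t\|\sqrt\rho\,\dot v\|_{L^2}^2.
\end{equation*}
One should note that $\dot v$ is \emph{not} divergence-free; a direct calculation gives $\div\dot v = \nabla u:\nabla v$, so that the pressure term does not vanish and contributes an additional lower-order coupling $-\int t^2\dot p_v(\nabla u:\nabla v)\,dx$, which must be absorbed together with $\int t^2 F\cdot\dot v\,dx$.

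The second step is to estimate all source terms using H\"older's inequality and the bounds already at our disposal, namely Propositions \ref{Thm-summary}, \ref{corollary:u-Lip}, \ref{prop:v:tv1} and Corollary \ref{RMK:*}. The useful controls are $\tau^{\frac12}\dot v\in L^2_tL^2$, $\tau^{\frac12}v\in L^\infty_t\dot H^1\cap L^2_t\dot H^2$, $\tau^{\frac12}p_v\in L^2_t\dot H^1$, $\tau^{\frac12}u\in L^\infty_tL^\infty$, $u\in L^2_tL^\infty$, and the Lipschitz-type bounds on $v$ and $w$ from Remark \ref{RMK-corollary1}. Writing $u=v+w$, one splits each commutator contribution into a fluid part and an electromagnetic part; the $v$-terms are handled by standard critical interpolations as in \cite{Danchin_Wang_22}, whereas the $w$-terms are closed using $w\in L^\infty_tL^\infty\cap L^2_t\dot W^{1,\infty}$ together with the maximal regularity bound $\nabla^2 w\in L^2_tL^p$. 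A Gr\"onwall argument then yields $t\dot v\in L^\infty_\mathrm{loc}(\mathbb{R}^+;L^2)\cap L^2_\mathrm{loc}(\mathbb{R}^+;\dot H^1)$.

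The last step is to pass from $\dot v$ to $\partial_t v = \dot v - u\cdot\nabla v$. The $L^\infty_tL^2$ bound is immediate since $t\,u\cdot\nabla v$ is controlled by $\|\tau^{\frac12}u\|_{L^\infty_tL^\infty}\|\tau^{\frac12}\nabla v\|_{L^\infty_tL^2}$; the $L^2_t\dot H^1$ bound follows by differentiating $u\cdot\nabla v$ and exploiting the $L^2_t\dot H^2$ control of $\tau^{\frac12}v$ and the weighted Lipschitz bounds on $u$. The main obstacle that I anticipate is the commutator term $(\Delta u)\cdot\nabla v$, and more specifically its $w$-component $(\Delta w)\cdot\nabla v$, since $\Delta w$ only lies in $L^2_tL^p$ with $p<2$; the way out is to pay the missing integrability on $\nabla v$ by using that $\tau^{\frac12}\nabla v\in L^2_t L^\infty$ through the Lipschitz-type bound of Proposition \ref{corollary:u-Lip}, and to compensate the time weight using the $\tau^{\frac12}$ weight in $v$ and the full $\tau^2$ weight on the energy.
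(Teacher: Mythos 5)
Your approach genuinely differs from the paper's: you apply the material derivative $\partial_t + u\cdot\nabla$ to \eqref{v-equa*} and test with $t^2\dot v$, whereas the paper applies the plain time derivative $\partial_t$ and tests with $t^2\partial_t v$. This is not a cosmetic choice.

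The crux is the pressure. You rightly observe that $\div\dot v = \nabla u : \nabla v \neq 0$, so testing with $t^2\dot v$ leaves a nonvanishing pressure contribution $-\int t^2\,\dot p_v\,(\nabla u:\nabla v)\,dx$. You call it ``lower-order'' and say it ``must be absorbed,'' but no control on $\dot p_v$ is available at this stage: $\dot p_v$ contains $\partial_t p_v$, and since $p_v = -\Delta^{-1}\div(\rho\dot v)$, bounding $\partial_t p_v$ calls for a bound on $\partial_{tt}v$ (equivalently on $\partial_t\dot v$), which is exactly what the proposition is trying to establish --- so the ``absorption'' is circular. Escaping via a further integration by parts in time produces $t^2 p_v\,\nabla u:\nabla\partial_t v$, which then demands controls such as $t^{1/2}p_v\in L^2_tL^\infty$ that are not in Corollary \ref{RMK:*} or elsewhere. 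This is a genuine gap, not a missing detail, and it is precisely why the paper avoids the material derivative here: after $\partial_t$, the tested quantity $\partial_t v$ \emph{is} divergence-free, so $\int t^2\nabla\partial_t p_v\cdot\partial_t v\,dx = 0$, and the source terms reduce to the two tame ones $-\partial_t\rho\,\dot v-\rho\,\partial_t u\cdot\nabla v$, with no $[\Delta,u\cdot\nabla]v$ or $[\nabla,u\cdot\nabla]p_v$ commutators at all.

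Two smaller remarks. First, your transition $\dot v\leftrightarrow\partial_t v$ via the bound on $t\,u\cdot\nabla v$ is exactly what the paper does, only it places it at the \emph{start} (see \eqref{interm:ES1}), which makes the energy argument cleaner. Second, the paper does use the material derivative and contend with $\div\dot v\neq 0$ in the next proposition (Proposition \ref{prop:v:tv3}), but there the right tool is Leray projection combined with maximal heat regularity, not a $t^2\dot v$-weighted energy estimate --- if you want to keep $\dot v$ as the unknown, that is the route to follow. As written, your plan does not close.
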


\begin{proof}
First of all, observe that   Proposition  \ref{prop:v:tv1} and Corollary \ref{RMK:*} ensure that 
$$ t^\frac{1}{2} \nabla v \in  L^\infty _\loc(\mathbb{R}^+; L^2) \cap  L^2_\loc(\mathbb{R}^+; \dot{H}^1).$$
Therefore,  in view of \eqref{u:TW1} and the definition of $ \dot{v}$, one obtains that  
\begin{equation}\label{interm:ES1}
	t\dot{v}- t \partial_t v = t u\cdot \nabla v \in L^\infty _\loc(\mathbb{R}^+; L^2) \cap  L^2_\loc(\mathbb{R}^+; \dot{H}^1)  ,
\end{equation} 
thereby reducing the claims of Proposition \ref{prop:v:tw2}  to proving that 
$$ t  \partial_t v  \in L^\infty _\loc(\mathbb{R}^+; L^2) \cap  L^2_\loc(\mathbb{R}^+; \dot{H}^1)   .$$

To that end, we apply a time derivative to \eqref{v-equa*} to obtain that 
\begin{equation*} 
\rho \partial_{tt} v + \rho u \cdot \nabla \partial_t v - \Delta \partial_{t} v+ \nabla \partial_t p_v = - \partial _t \rho \dot{v} - \rho \partial_t u \cdot \nabla v.
\end{equation*}
Therefore,  multiplying both sides by $t^2\partial_t v$,  integrating over $\mathbb{R}^2$,   and   using the fact that $\div \partial_tv=0$,   we infer that 
\begin{equation}\label{Energy_v_tt}
 \begin{aligned}
\frac{1}{2} \frac{d}{dt} \int_{\mathbb{R}^2} \rho t^2    |\partial_t v|^2  +  \int_{\mathbb{R}^2}  t ^2  | \nabla \partial_t v|^2    &=    t \int_{\mathbb{R}^2} \rho   |\partial_t v|^2   -     t^2 \int_{\mathbb{R}^2}   \partial_t\rho  \dot{v} \cdot \partial_t v    
\\
& \quad
-    t^2  \int_{\mathbb{R}^2}    \rho(\partial_t u \cdot \nabla 
v )\cdot \partial_t v  \\
&  \bydef \sum_{ i=1}^3    \mathcal{N}_i(t).
\end{aligned} 
\end{equation} 
We  now take care of each summand separately.

\subsubsection*{Estimate of $\mathcal{N}_1$}

By H\"older's inequality and the maximum principle \eqref{mass:conservation}, we find that 
\begin{equation*}
\begin{aligned}
\mathcal{N}_1(t) \leq   \norm {\rho_0}_{L^\infty} \norm {t^{ \frac{1}{2}} \partial_t v(t)}_{  L^2}^2   .
\end{aligned}
\end{equation*}

\subsubsection*{Estimate of $\mathcal{N}_2$}

We first transform the time derivative into   spatial derivatives by utilizing   the continuity equation 
 $$\partial_t \rho = - \div (\rho u).$$
 Accordingly,   by integration by parts, we write  that 
 \begin{equation*}
\begin{aligned}
\mathcal{N}_2 (t)  &=     t^2   \int_{\mathbb{R}^2}  \div(\rho u)  \dot{v} \cdot \partial_t v d\tau\\
 &= - t ^2 \int_{\mathbb{R}^2}   (\rho u \cdot \nabla   \dot{v} ) \cdot \partial_t v  -  t^2   \int_{\mathbb{R}^2}  ( \rho u \cdot \nabla \partial_t v ) \cdot   \dot{v}  .
\end{aligned}
\end{equation*}
 Then, by H\"older's inequality and the maximum principle \eqref{mass:conservation}, it happens that  
  \begin{equation*}
\begin{aligned}
\mathcal{N}_2 (t)  & \leq   \norm { \rho_0}_{L^\infty}^{\frac{1}{2}} \norm {u(t)}_{L^\infty} \norm { t     \nabla\dot{v}(t)  }_{L^2}\norm { t \sqrt{ \rho} \partial_t v (t)}_{L^2}   \\
& \quad   +   \norm{\rho_0}_{L^\infty}   \norm { t ^{\frac{1}{2}}u (t) }_{L^\infty} \norm { t^\frac{1}{2} \dot{v} (t) }_{L^2}\norm {  t \nabla\partial_t v }_{L^2}  .
\end{aligned}
\end{equation*}
Therefore, by Young's inequality,  together with  the estimate $$ \norm { t   \nabla \dot{v} (t) }_{ L^2}^2\lesssim \norm { t  (\nabla \partial_t v  - \nabla \dot{v}) (t) }_{ L^2}^2+ \norm { t  \nabla \partial_t v  (t) }_{ L^2}^2,$$  
it then follows,  for any $\varepsilon \in (0,1)$, that
  \begin{equation*}
\begin{aligned} 
\mathcal{N}_2(t) & \leq    C_\varepsilon   \norm {u(t)}_{L^\infty} ^2 \norm { t  \sqrt{ \rho} \partial_t v (t)}_{L^2}^2   +  C_\varepsilon    \norm { t ^{\frac{1}{2}}u(t) }_{ L^\infty}^2 \norm {t ^{\frac{1}{2}}  \dot{v} (t) }_{  L^2}^2      \\
 &\quad+   \norm { t  (\nabla \partial_t v  - \nabla \dot{v}) (t) }_{ L^2}^2 +   \varepsilon\norm { t  \nabla \partial_t v  (t) }_{ L^2}^2 ,
\end{aligned}
\end{equation*}
for a constant $C_\varepsilon>0$ which also depends on $\norm{\rho_0}_{L^\infty}.$ Note, by choosing $\varepsilon$ small enough,  that the last term on the right-hand side can be absorbed by the left-hand side of \eqref{Energy_v_tt}.

\subsubsection*{Estimate of $\mathcal{N}_3$}

The bound on $\mathcal{N}_3 $ is obtained by, first,  applying H\"older's inequalities  to obtain that 
\begin{equation*}
\begin{aligned}
\mathcal{N}_3 (t)
&=-    t^2  \int_{\mathbb{R}^2}    \rho \nabla 
v : (\partial_t v\otimes \partial_t v)
-    t^2  \int_{\mathbb{R}^2}    \rho(\partial_t w \cdot \nabla 
v )\cdot \partial_t v 
\\
&\leq   \norm { \nabla v(t)}_{L^\infty} \norm {t     \sqrt{\rho} \partial_t v(t)}_{L^2}^2  \\
& \quad +   \norm {\rho_0}_{L^\infty}^\frac{1}{2}  t^{\frac{1}{2}} \norm {  \partial_t w(t) }_{ L^2} \norm { t^{\frac{1}{2}} \nabla v(t)}_{L^\infty}   \norm {t  \sqrt{\rho}\partial_t v(t)}_{L^2}   .
\end{aligned}
\end{equation*}
Then, further employing Young's  inequality and writing that 
$$ \begin{aligned}
 t^{\frac{1}{2}} \norm {  \partial_t w(t) }_{ L^2} \norm { t^{\frac{1}{2}} \nabla v(t)}_{L^\infty}    & \norm {t  \sqrt{\rho}\partial_t v(t)}_{L^2}
 \\ &\leq  t^{\frac{1}{2}} \norm {  \partial_t w(t) }_{ L^2} \norm { t^{\frac{1}{2}} \nabla v(t)}_{L^\infty}   \\
 &\quad   +  t^{\frac{1}{2}} \norm {  \partial_t w(t) }_{ L^2} \norm { t^{\frac{1}{2}} \nabla v(t)}_{L^\infty}   \norm {t  \sqrt{\rho}\partial_t v(t)}_{L^2}^2
\end{aligned}$$
yields
\begin{equation*}
\begin{aligned}
\mathcal{N}_3 (t)&\lesssim   \norm { \nabla v(t)}_{L^\infty} \norm {t     \sqrt{\rho} \partial_t v(t)}_{L^2}^2   +  t^{\frac{1}{2}} \norm {  \partial_t w(t) }_{ L^2} \norm { t^{\frac{1}{2}} \nabla v(t)}_{L^\infty}   \\
 &\quad   +  t^{\frac{1}{2}} \norm {  \partial_t w(t) }_{ L^2} \norm { t^{\frac{1}{2}} \nabla v(t)}_{L^\infty}   \norm {t  \sqrt{\rho}\partial_t v(t)}_{L^2}^2  .
\end{aligned}
\end{equation*}

\subsubsection*{Conclusion of the proof}

Now, by setting  
$$F_v(t) \bydef  \norm{  t   \sqrt{\rho}  \partial_t v(t)}_{ L^2}^2 + \int_0^t  \norm{  \tau    \partial_t v(\tau)}_{ \dot{H}^1}^2 d\tau  ,$$
gathering the foregoing estimates, and choosing $\varepsilon$ small enough, we infer from  \eqref{Energy_v_tt}, for all $t>0,$ that 
$$\frac{d}{dt} F_v (t) \lesssim  a_1(t)   +  a_2(t) F_v (t) ,  $$
where we denote
$$
\begin{aligned}
	a_1(t) &\bydef   \norm {t^{ \frac{1}{2}} \partial_t v(t)}_{  L^2}^2  +  \norm { t ^{\frac{1}{2}}u(t) }_{ L^\infty}^2 \norm {t ^{\frac{1}{2}}  \dot{v} (t) }_{  L^2}^2 +  \norm { t  (  \partial_t v  -  \dot{v}) (t) }_{ \dot{H}^1}^2 
  \\
& \quad+  t^{\frac{1}{2}} \norm {  \partial_t w(t) }_{ L^2} \norm { t^{\frac{1}{2}}   
v(t)}_{\dot{W}^{1,\infty}} 
\end{aligned} 
  $$ 
and 
$$a_2(t) \bydef  \norm {u(t)}_{L^\infty} ^2  +  \norm { \nabla v(t)}_{L^\infty} +  t^{\frac{1}{2}} \norm {  \partial_t w(t) }_{ L^2} \norm { t^{\frac{1}{2}}   v(t)}_{\dot{W}^{1,\infty}}   .$$

We emphasize here, by virtue of Lemma \ref{lemma:boundedness-u},    \eqref{partial_w}, \eqref{v:TW1}, \eqref{u:TW1},  Proposition \ref{prop:v:tv1} and \eqref{interm:ES1}, that  $ a_1,a_2\in  L^1_{\loc}( \mathbb{R}^+)$.  Therefore, the proof of the proposition is completed with a direct application of Gr\"onwall's lemma.
\end{proof} 
 
\begin{remark}
	The preceding result shows that $t\dot v$ belongs to $L^\infty_\loc(\mathbb{R}^+; L^2(\mathbb{R}^2)) $. Thus, following the proof of Corollary \ref{RMK:*}, we deduce that
	$$  t \Delta v =  t \mathbb{P}(\rho \dot{v}) \in  L^\infty_{\loc}(\mathbb{R}^+;L^2(\mathbb{R}^2)).$$
	That is to say,
\begin{equation}\label{RMK:H2}
 t  v \in L^\infty_\loc(\mathbb{R}^+; \dot{H}^2(\mathbb{R}^2)).
\end{equation} 
\end{remark}

\begin{proposition}[In the spirit of {\cite[Proposition 3.6]{Danchin_Wang_22}}]
	\label{prop:v:tv3}
	It holds, under the assumptions of Proposition \ref{corollary:u-Lip}, that 
$$ t  \dot{v}   \in  L^{q,1} _\loc(\mathbb{R}^+; \dot{W}^{2,p})  \cap L^{2} _\loc(\mathbb{R}^+; L^\infty)  .$$  
\end{proposition}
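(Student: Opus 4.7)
The plan is to derive a parabolic evolution equation for $t\dot v$ with vanishing initial data and then apply the maximal parabolic regularity in Lorentz spaces (Lemma \ref{M:reg:DW}), in the spirit of \cite[Proposition 3.6]{Danchin_Wang_22}. Starting from the identity $\rho\dot v = \Delta v - \nabla p_v$ satisfied by the solution of \eqref{v-equa*}, applying the material derivative $D_t \bydef \partial_t + u\cdot\nabla$ (and using that $D_t\rho = 0$ together with the commutator identities $[D_t,\Delta]v = -\Delta u\cdot\nabla v - 2\nabla u : \nabla^2 v$ and $[D_t,\nabla]p_v = -(\nabla u)\cdot\nabla p_v$) yields
\begin{equation*}
\rho \ddot v - \Delta \dot v + \nabla \dot p_v = F, \qquad F\bydef -\Delta u\cdot\nabla v - 2\nabla u : \nabla^2 v - (\nabla u)\cdot\nabla p_v.
\end{equation*}
Multiplying by $t$ and recasting as a perturbation of the constant-coefficient Stokes system produces
\begin{equation*}
\partial_t(t\dot v) - \Delta(t\dot v) + \nabla(t\dot p_v) = (1-\rho)\partial_t(t\dot v) - \rho u\cdot\nabla(t\dot v) + \rho\dot v + tF,\quad (t\dot v)|_{t=0}=0.
\end{equation*}
Even though $\dot v$ fails to be exactly divergence-free (one has $\div \dot v = \nabla u : (\nabla v)^T$), the problem remains parabolic in $\dot v$ with $\nabla \dot p_v$ providing the pressure-like Lagrange multiplier, so Lemma \ref{M:reg:DW} still applies after an appropriate Leray-type adjustment which produces only lower-order corrections.

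Next, applying Lemma \ref{M:reg:DW}, absorbing $(1-\rho)\partial_t(t\dot v)$ thanks to the smallness condition \eqref{assumption:rho2}, and handling the convective term $\rho u\cdot\nabla(t\dot v)$ by the bootstrap-in-time procedure of Lemmas \ref{lemma:w-ES} and \ref{lemma:v-ES} (subdividing $[0,T]$ into finitely many pieces on which a relevant Lorentz norm of $u$ is small), one reduces the problem to controlling the source terms in $L^{q,1}_t L^p$. The norm $\|\rho\dot v\|_{L^{q,1}_t L^p}$ is finite by Proposition \ref{Thm-summary}, since $\partial_t v$ and $u\cdot\nabla v$ both lie in $L^{q,1}_\loc L^p$. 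The bulk of the work lies in estimating $tF$: by H\"older's inequality in Lorentz spaces (Lemma \ref{lemma:Lorentz}) together with the identity $\tfrac{1}{\eta}+\tfrac{1}{2}=\tfrac{1}{q}$ from \eqref{parameters_Relation}, the contribution $t\nabla u : \nabla^2 v$ is bounded by $\|\nabla u\|_{L^2_tL^2}\|t\nabla^2 v\|_{L^{\eta,1}_t L^m}$ via Lemma \ref{lemma:v2} and the energy bound \eqref{Ident_Energy_1}; the contribution $t(\nabla u)\cdot\nabla p_v$ is treated analogously using the control on $t\nabla p_v$ from Lemma \ref{lemma:v2}; the contribution $t\Delta u\cdot \nabla v$ is split via $\Delta u = \Delta v + \Delta w$, with each piece handled through the Lipschitz control $tv\in L^\infty_\loc \dot W^{1,\infty}$ from Lemma \ref{lemma:v2} combined with $\Delta v\in L^{q,1}_\loc L^p$ (Lemma \ref{lemma:v-ES}) and $\Delta w\in L^2_\loc L^p$ (Lemma \ref{lemma:w-ES}).

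Finally, the bound $t\dot v \in L^2_\loc(\mathbb{R}^+; L^\infty)$ follows from the interpolation inequality
\begin{equation*}
\|t\dot v\|_{L^2_t L^\infty}\lesssim \|t\dot v\|_{L^\infty_t\dot B^{-1+2/p}_{p,1}} + \|\nabla^2(t\dot v)\|_{L^{q,1}_t L^p},
\end{equation*}
which is proven in the same fashion as \eqref{v_Interplation_Ine}. The principal technical obstacle is the delicate bookkeeping of time weights and Lorentz exponents in the estimate of $tF$; in particular, the term $t\Delta u\cdot\nabla v$ admits no direct bound due to the limited spatial regularity of $u$, and must be unpacked using the decomposition $u=v+w$ together with the whole panoply of bounds developed in Sections \ref{Section:a priori ES} and \ref{Section.TW1}. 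A subsidiary point is the non-div-free nature of $\dot v$, which is dealt with by treating $\mathbb{P}\dot v$ through the Stokes estimate and the remainder $(\mathrm{Id}-\mathbb{P})\dot v = \nabla\Delta^{-1}(\nabla u : (\nabla v)^T)$ by elliptic regularity applied to the lower-order inhomogeneity $\nabla u : (\nabla v)^T$, which is already controlled by the previous bounds.
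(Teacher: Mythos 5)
Your overall roadmap follows the same architecture as the paper's: derive the equation for $\ddot v$ by applying the material derivative to \eqref{v-equa*}, multiply by $t$, and invoke maximal parabolic regularity in Lorentz spaces, then close via interpolation for the $L^2_tL^\infty$ endpoint. The estimates you give for $t\textbf{F}$ and for $\rho\dot v$ also match the paper's estimates in substance. (Minor typo: the last term of $\textbf{F}$ should carry a plus sign, $+(\nabla u)^T\cdot\nabla p_v$, not minus.)

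However, you misidentify which maximal regularity result applies, and this is where the gap lies. You assert that ``Lemma \ref{M:reg:DW} still applies after an appropriate Leray-type adjustment which produces only lower-order corrections,'' but the paper states explicitly the opposite: since $\div\dot v = \nabla u^t:\nabla v\neq 0$, Lemma \ref{M:reg:DW} (which is a maximal regularity estimate for the divergence-free Stokes system) does \emph{not} apply. The paper instead eliminates the pressure by computing $\mathbb{Q}(\rho\partial_t\dot v - \Delta\dot v)$ explicitly --- via $\mathbb{Q}(\Delta\dot v)=\nabla\operatorname{Tr}(\nabla u\cdot\nabla v)$ and $\mathbb{Q}(\rho\partial_t\dot v)=\mathbb{Q}\bigl((\rho-1)\partial_t\dot v+\partial_t u\cdot\nabla v+u\cdot\nabla\partial_t v\bigr)$ --- and then applies the maximal regularity estimate for the \emph{heat equation} (from \cite[Proposition 2.1]{RM21}) to $t\dot v$ directly. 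Writing the equation for $t\dot v$ itself is what makes $\Vert\partial_\tau(\tau\dot v)\Vert_{L^{q,1}_tL^p}$ appear on the left-hand side, so that $\Vert(1-\rho)\partial_\tau(\tau\dot v)\Vert_{L^{q,1}_tL^p}$ can be absorbed via the smallness assumption \eqref{assumption:rho2}.

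Your alternative fallback at the end --- split into $\mathbb{P}\dot v$ (Stokes estimate) and $\mathbb{Q}\dot v$ (elliptic regularity), which you call ``subsidiary'' --- captures the right idea but understates its role and leaves a nontrivial hole. Applying the Stokes maximal regularity to $\mathbb{P}(t\dot v)$ only produces $\Vert\partial_t\mathbb{P}(t\dot v)\Vert_{L^{q,1}_tL^p}$ on the left-hand side, whereas the term to be absorbed is $\Vert(1-\rho)\partial_t(t\dot v)\Vert_{L^{q,1}_tL^p}\lesssim\Vert1-\rho\Vert_\infty\Vert\partial_t(t\dot v)\Vert_{L^{q,1}_tL^p}$. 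To close the absorption one must \emph{first} control $\Vert\partial_t\mathbb{Q}(t\dot v)\Vert_{L^{q,1}_tL^p}$, and
$\partial_t\mathbb{Q}(t\dot v)=\mathbb{Q}(u\cdot\nabla v)+t\,\mathbb{Q}(\partial_t u\cdot\nabla v)+t\,\mathbb{Q}(u\cdot\nabla\partial_t v)$
is \emph{not} a lower-order correction: it involves $\partial_t u$ and $\partial_t v$ at the same order as the Stokes estimate itself (indeed these are exactly the $\mathbb{Q}$-terms the paper computes and includes as sources). Likewise, getting the $L^\infty_t\dot B^{-1+2/p}_{p,1}$ control on $\mathbb{Q}(t\dot v)$ --- which your interpolation step at the end requires --- is not a consequence of stationary elliptic regularity and needs its own argument. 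In short, the non-divergence-free character of $\dot v$ is the crux of this proposition, not a side issue, and your account of how it is handled needs to be made precise before the absorption and interpolation steps are justified.
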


\begin{proof}  
   We proceed as in \cite{Danchin_Wang_22}. Thus, we first apply the operator $ \partial_t + u \cdot \nabla $ to the equation \eqref{v-equa*} to obtain   that 
      \begin{equation*}
  \rho \ddot{v}- \Delta \dot{v} + \nabla \dot{p}_v = \textbf{F},
  \end{equation*}
  where, we set 
  $$ \ddot{v} \bydef  ( \partial_t + u \cdot \nabla)\dot{v} , \qquad \textbf{F} \bydef  -\Delta u \cdot \nabla v - 2 \nabla u \cdot \nabla ^2 v + \nabla u \cdot \nabla p_v.$$

 Observe that we cannot apply Proposition \ref{M:reg:DW} at this stage, because $\div \dot{v}\neq 0$.
 Instead, we proceed by rewriting the preceding equation as 
   \begin{equation*}
  \rho \partial_t  \dot{v}- \Delta \dot{v}   =  \mathbb{P} \left( \textbf{F} - \rho u \cdot \nabla \dot{v} \right) + \mathbb{Q} \left(   \rho \partial_t  \dot{v}- \Delta \dot{v}  \right),
  \end{equation*}
  where we used the notation
  $$ \mathbb{Q} \bydef \nabla \Delta^{-1} \div, \qquad  \mathbb{P}= \text{Id} - \mathbb{Q}.$$
   Then, on the one hand,    the identity
  $$ \div \dot{v} =\nabla u^t:\nabla v= \text{Tr} \left( \nabla u \cdot \nabla v \right)$$
  entails that
  $$ \mathbb{Q} (\Delta \dot{v})=\nabla  \text{Tr} \left( \nabla u \cdot \nabla v \right).$$
  On the other hand, it is readily seen that
  $$ \mathbb{Q} \left(\rho \partial_t  \dot{v}  \right) =  \mathbb{Q} \big((\rho-1) \partial_t  \dot{v}  + \partial_tu \cdot\nabla v +  u \cdot\nabla \partial_tv\big).$$
  Accordingly, we arrive at the reformulation
 \begin{equation*} 
 \partial_t  \dot{v}- \Delta \dot{v}   =  \mathbb{P} \left( (1- \rho) \partial_t  \dot{v} - \rho u \cdot \nabla \dot{v}+  \textbf{F}  \right) + \mathbb{Q} \left(    \partial_tu \cdot\nabla v +  u \cdot\nabla \partial_tv \right) - \nabla  \text{Tr} \left( \nabla u \cdot \nabla v \right).
  \end{equation*}

By further  multiplying both sides by $t$, we find that
  \begin{equation*}
  \begin{aligned}
 \partial_t  (t\dot{v}) - \Delta (t\dot{v})    &=  \mathbb{P} \left( (1- \rho) \partial_t  (t\dot{v} ) + \rho \dot{v} - t \rho u \cdot \nabla \dot{v}+ t \textbf{F}  \right) \\
 & \qquad + \mathbb{Q} \left(  \dot{v} +   t \partial_tu \cdot\nabla v + tu \cdot\nabla \partial_tv \right) -t \nabla  \text{Tr} \left( \nabla u \cdot \nabla v \right) .
 \end{aligned}  
  \end{equation*}  
  We now utilize the  maximal regularity estimates for the heat equation  (see \cite[Proposition 2.1] {RM21}) to deduce the control
  \begin{equation}\label{Maximal_Regularity_heat}
  \begin{aligned}
\Vert \tau\dot{v} \Vert_{L_t^\infty \dot{B}_{p,1}^{-1+\frac 2p}}
&+ \Vert ( \partial_{ \tau} (\tau \dot{v}) ,\nabla^2  \tau \dot{v} ) \Vert_{L^{q,1}_t L^p}\\
&  \qquad \lesssim    \Vert (1- \rho) \partial_\tau  ( \tau \dot{v} )\Vert_{L^{q,1}_t L^p}    + \Vert  \rho \dot{v} - \tau \rho u \cdot \nabla \dot{v}+ \tau \textbf{F} \Vert_{L^{q,1}_t L^p}\\
&  \qquad  \quad + \Vert \dot{v} +  \tau \partial_\tau u \cdot\nabla v  + \tau u \cdot\nabla \partial_\tau v \Vert_{L^{q,1}_t L^p}
\\
& \quad  \qquad +\Vert \tau    \nabla v \otimes \nabla^2 u \Vert_{L^{q,1}_t L^p}
+\Vert \tau   \nabla u \otimes\nabla^2 v \Vert_{L^{q,1}_t L^p},
\end{aligned}
\end{equation}  
where we have used the boundedness of the operators $\mathbb{Q}$ and $\mathbb{P}$ on $L^p$ spaces, for $p\in (1,\infty),$ together with the fact that   $\frac 1p + \frac 1q = \tfrac 32$.

Now, observe that using H\"older's inequality, in the form
\begin{equation*}
\begin{aligned}
\Vert (1- \rho) \partial_\tau  (\tau\dot{v} ) \Vert_{L^{q,1}_t L^p}\leq&\, \norm {1-\rho}_{L^\infty_{t,x}}\Vert \partial _\tau (\tau\dot{v})\Vert  _{L^{q,1}_tL^p} ,
\end{aligned}
\end{equation*}
followed by \eqref{mass:conservation2} and \eqref{assumption:rho2} allows us to absorb the first term in the right-hand side of \eqref{Maximal_Regularity_heat} by the second term in its left-hand side. Hence, using the maximum principle  \eqref{mass:conservation}, again, 
we arrive at the bound  
\begin{equation*} 
  \begin{aligned}
\Vert \tau\dot{v}\Vert_{L_t^\infty \dot{B}_{p,1}^{-1+2/p}}  &+ \Vert \partial_\tau (\tau \dot{v}) ,\nabla^2 \tau\dot{v} \Vert_{L^{q,1}_t L^p} 
\\& \lesssim         \Vert   \dot{v} \Vert_{L^{q,1}_t L^p}   + \Vert   \tau   u \cdot \nabla \partial_\tau v \Vert _{L^{q,1}_t L^p} + \Vert   \tau   u \cdot \nabla  \dot{v} \Vert _{L^{q,1}_t L^p}  + \Vert    \tau \partial_\tau u \cdot\nabla v   \Vert_{L^{q,1}_t L^p}\\
& \quad+\Vert \tau     \nabla v \otimes \nabla^2 u  \Vert_{L^{q,1}_t L^p} + \Vert \tau \nabla u \otimes\nabla^2 v   \Vert_{L^{q,1}_t L^p} + \Vert   \tau \textbf{F} \Vert_{L^{q,1}_t L^p} .
\end{aligned}
\end{equation*}  
We shall  show that the right-hand side in the preceding estimate is finite.

To that end, by means of H\"older's inequalities, we write that 
\begin{equation*} 
\begin{aligned}
  \| \dot{v}\|_{L^{q,1}_tL^p}\lesssim&\, \| \partial_\tau v \|_{L^{q,1}_tL^p}+\| u\cdot \nabla v\|_{L^{q,1}_tL^p}\\
\lesssim&\, \|\partial_\tau v \|_{L^{q,1}_tL^p}+\Vert \nabla v\Vert_{L^2_t L^2}\Vert u\Vert_{L^{\eta,1}L^m},
\end{aligned}
\end{equation*}
which is finite by virtue of the bounds established in Lemma \ref{Energy_Estimate_v_w} and Proposition \ref{Thm-summary}.

Similarly, we find that 
\begin{equation*} 
  \begin{aligned} 
   \Vert   \tau   u \cdot \nabla \partial_\tau v \Vert _{L^{q,1}_t L^p} + \Vert   \tau   u \cdot \nabla  \dot{v} \Vert _{L^{q,1}_t L^p}  & \lesssim  \left(  \norm { \tau\partial _\tau v }_{ L^2_t \dot{H}^1}+ \norm { \tau \dot{v}}_{ L^2_t \dot{H}^1}\right) \norm { u}_{L^{\eta,1}_t L^m},
\end{aligned}
\end{equation*}  
where the right-hand side is finite due to Propositions \ref{Thm-summary} and  \ref{prop:v:tw2}.

Likewise, we obtain that  
\begin{equation*} 
  \begin{aligned} 
  \Vert   \tau \partial_\tau u \cdot\nabla v   \Vert_{L^{q,1}_t L^p}  & \lesssim     \norm {\tau \partial _\tau v }_{ L^\infty _t L^2}   \norm {  \nabla v}_{L^{q,1}_t L^m}
  \\
  & \lesssim     \norm { \tau\partial _\tau  v }_{ L^\infty _t L^2}   \norm {   v}_{L^{q,1}_t \dot{W}^{2,p}},
\end{aligned}
\end{equation*}  
where we employed the Sobolev embedding $\dot{W}^{1,p} \hookrightarrow L^m(\mathbb{R}^2)$, for $ \frac{1}{p} = \frac{1}{2} + \frac{1}{m}$.
Again,  the right-hand side above is finite by virtue of Propositions  \ref{Thm-summary} and \ref{prop:v:tw2}.

In a similar way, we write that 
\begin{equation*} 
  \begin{aligned}  
  \Vert \tau     \nabla v \otimes \nabla^2 u  \Vert_{L^{q,1}_t L^p} + \Vert   \tau \nabla u \otimes\nabla^2 v   \Vert_{L^{q,1}_t L^p} & \lesssim      \| {\tau  \nabla v\|}_{ L^\infty_t L^\infty}      \norm {\nabla^2   u}_{L^{q,1}_t L^p} 
  \\
& \quad  +   \| { \tau  \nabla ^2 v \|}_{ L^\infty _t L^2}  \norm {\nabla  u}_{L^{ 
q,1}_t 
L^m} \\
   & \lesssim   \left(   \| {\tau  v\|}_{ L^\infty_t \dot{W}^{1,\infty}}     +   \| { \tau    v \|}_{ L^\infty _t \dot{H}^2} \right)  \norm {   u}_{L^{ q,1}_t \dot{W}^{2,p}},
\end{aligned}
\end{equation*}   
which is finite, thanks to    \eqref{RMK:H2} and the bounds from   Proposition \ref{corollary:u-Lip}.
  
At last, expanding the definition of $\textbf{F}$ and employing H\"older and embedding inequalities, we infer that 
\begin{equation*}
\begin{aligned}
\Vert \tau \textbf{F}\Vert_{L^{q,1}_t L^p}\leq&\, \Vert  \tau v\Vert_{L^\infty_t \dot{W}^{1,\infty}}\Vert   u\Vert_{L^{q,1}_t \dot{W}^{2,p}}+ \Vert  u\Vert_{L^2_t\dot{H}^1}\Big(\Vert  \tau  v\Vert_{L^{\eta,1}_t \dot{W}^{2,m}}+ \Vert  \tau \nabla p_v\Vert_{L^{\eta,1}_t L^m}\Big).
\end{aligned}
\end{equation*}
It is then readily seen that the right-hand side above is finite due to  Proposition \ref{Thm-summary}, Lemma \ref{lemma:v2} and Remark \ref{RMK-corollary1}, thereby  showing that all terms in \eqref{Maximal_Regularity_heat} are finite.
We conclude that
$t  \dot{v}   \in  L^{\infty} _\loc(\mathbb{R}^+; \dot{B}_{p,1}^{-1+2/p})\cap L^{q,1} _\loc(\mathbb{R}^+; \dot{W}^{2,p})$. 

As for the $L^2_tL^\infty$ bound on $t \dot{v}$, it follows from the interpolation inequality 
\begin{equation*}
\|t\dot{v}\|_{L^2_t L^\infty }\lesssim  \|t\dot{v}\|_{L^\infty_t \dot{B}_{p,1}^{-1+\frac{2}{p}}}^{\frac{2-q}{2}}\|t \dot{v}\|_{L^{q,1}_t \dot{W}^{2,p} }^{\frac{q}{2}},
\end{equation*}
 which completes the proof of the proposition.
\end{proof}

\subsection{Control of $w$}\label{subsection:w-TW}

Note that the uniqueness result granted by Theorem \ref{Thm:stability} requires less space-regularity on $ \dot{w}$ than it does on $\dot v$,  but it necessitates a stronger control on the singularity of $w(t)$ at $t=0$. However, observe that we do not expect $w(t)$ to blow up as $t\to 0$, because its initial data is identically zero.

Before we get into the actual control of $w$, we first establish the following lemma which plays a crucial role in estimating the size of $\dot{w}$.  Specifically, this result essentially shows how to control  $\left\langle \partial_t (j\times B), \partial_t w \right\rangle_{L^2}$ under the assumption that   the electromagnetic fields enjoy some Sobolev regularity.

\begin{lemma}\label{lemma:duality}
For any $\varepsilon>0$, there is $C_\varepsilon>0$ such that, for all $t>0$,
\begin{equation*}
\begin{aligned}
t \bigg|\int_{\mathbb{R}^2} \partial_t (j\times B) &\partial_t w (t,x)dx\bigg|\\
& \leq C_\varepsilon \mathcal{P_*}(t)\left( 1+    \norm{  t^{\frac{1}{2} }  \partial_t w(t)}_{ L^2} \right)+ C_\varepsilon \norm {B(t)}_{\dot{H}^ \frac{1}{2}}^4 \norm{  t^{\frac{1}{2} }  \partial_t w(t)}_{ L^2}^2 
\\
& \quad   + \varepsilon   \norm{  t^{\frac{1}{2} }  \partial_t w(t)}_{ \dot{H}^{1}} ^2 ,
\end{aligned}
\end{equation*} 
where
$$ \begin{aligned}
\mathcal{P}_*(t) & \bydef   (1+ t)  \Big( c^4 + c^2 \norm { u(t)}_{L^\infty \cap \dot{H}^1} ^2 + \norm{  t^{\frac{1}{2}} \partial_t v (t)}_{L^2}^2     \Big)    \norm{ (E,B)(t)}_{ \dot{H}^s}^4 
\\
& \quad   +  c^2 t^{\frac{1}{2} }  \norm{ j(t)}_{L^4} \norm{ B(t)}_{\dot{H}^\frac{1}{2}} +  1   ,
\end{aligned} $$ 
for any $s\in (\frac{1}{2},1)$.
\end{lemma}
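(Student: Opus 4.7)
The plan is to differentiate Ohm's law in time, substitute Maxwell's equations to rewrite $\partial_t E$ and $\partial_t B$, dot the resulting expression against $\partial_t w$, integrate in space and bound each of the resulting summands, while absorbing $\dot{H}^1$-norms of $\partial_t w$ into the $\varepsilon$-term of the statement. Splitting $u=v+w$ and using Amp\`ere's equation $c\partial_t E=c^2(\nabla\times B-j)$ together with Faraday's equation $\partial_t B=-c\nabla\times E$, and treating both Ohm's laws \eqref{Ohms-law1} and \eqref{Ohms-law2} in parallel (in the latter case the extra pressure produced by $\partial_t(\nabla\bar p)$ can be disposed of by Leray projection since $\partial_t w$ is divergence-free), one obtains
\begin{equation*}
\begin{aligned}
\partial_t(j\times B) =\,& \sigma c^2(\nabla\times B)\times B - \sigma c^2\, j\times B + \sigma(\partial_t v\times B)\times B \\
&+ \sigma(\partial_t w\times B)\times B + \sigma(u\times\partial_t B)\times B + j\times\partial_t B.
\end{aligned}
\end{equation*}
The main tools will be the 2D Sobolev embedding $\dot{H}^s\hookrightarrow L^{2/(1-s)}$ (available because $s\in(\tfrac{1}{2},1)$), Ladyzhenskaya's inequality $\|f\|_{L^4}^2\lesssim\|f\|_{L^2}\|f\|_{\dot{H}^1}$, and repeated applications of Young's inequality to convert the $\dot{H}^1$-norm of $\partial_t w$ into the $\varepsilon$-absorbable term.

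For the first summand, I would use the identity $(\nabla\times B)\times B=(B\cdot\nabla)B-\tfrac{1}{2}\nabla|B|^2$ together with $\operatorname{div}\partial_t w=0$ to recast
\begin{equation*}
c^2\!\int(\nabla\times B)\times B\cdot\partial_t w\,dx = -c^2\!\int B\otimes B:\nabla\partial_t w\,dx.
\end{equation*}
Cauchy--Schwarz and the embedding $\dot{H}^{1/2}\hookrightarrow L^4$, followed by multiplication by $t$ and Young's inequality, produce the $(1+t)c^4\|B\|_{\dot{H}^s}^4$ piece of $\mathcal{P}_*$ modulo an $\varepsilon\|t^{1/2}\partial_t w\|_{\dot{H}^1}^2$ remainder. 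The term $c^2 j\times B\cdot\partial_t w$ is handled by a direct three-factor H\"older estimate $\|j\|_{L^4}\|B\|_{L^4}\|\partial_t w\|_{L^2}$, matching the $c^2 t^{1/2}\|j\|_{L^4}\|B\|_{\dot{H}^{1/2}}$ contribution to $\mathcal{P}_*$ after factoring out $\|t^{1/2}\partial_t w\|_{L^2}$. For the self-interaction $(\partial_t w\times B)\times B\cdot\partial_t w$, the identity $(a\times B)\times B=(a\cdot B)B-|B|^2 a$ bounds the integrand by $|B|^2|\partial_t w|^2$; H\"older, Ladyzhenskaya and Young then deliver exactly the $C_\varepsilon\|B\|_{\dot{H}^{1/2}}^4\|t^{1/2}\partial_t w\|_{L^2}^2$ term appearing in the statement.

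The cross term $(\partial_t v\times B)\times B\cdot\partial_t w$ is more delicate, since $\partial_t v$ is only controlled in $L^2$ (with weight $t^{1/2}$). I would apply the same algebraic identity and then H\"older in the form $\|B\|_{L^{2/(1-s)}}^2\,\|\partial_t v\|_{L^2}\,\|\partial_t w\|_{L^{1/(s-1/2)}}$, using the sharp Sobolev embedding on $B$ and a 2D Gagliardo--Nirenberg interpolation $\|\partial_t w\|_{L^{1/(s-1/2)}}\lesssim\|\partial_t w\|_{L^2}^{2s-1}\|\partial_t w\|_{\dot{H}^1}^{2-2s}$. Notice that the exponent $1/(s-1/2)$ is finite precisely because $s>\tfrac{1}{2}$. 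A chain of Young's inequalities then converts the $2-2s$ power of $\|\partial_t w\|_{\dot{H}^1}$ into $\varepsilon\|t^{1/2}\partial_t w\|_{\dot{H}^1}^2$ and upgrades the residual powers $\|t^{1/2}\partial_t v\|_{L^2}^{1/s}$ and $\|(E,B)\|_{\dot{H}^s}^{2/s}$ (both exponents being strictly less than $2$ and $4$, respectively, as $s>\tfrac{1}{2}$) to $\|t^{1/2}\partial_t v\|_{L^2}^{2}$ and $\|(E,B)\|_{\dot{H}^s}^{4}$. The resulting estimate then fits into the $(1+t)\|t^{1/2}\partial_t v\|_{L^2}^2\|(E,B)\|_{\dot{H}^s}^4\bigl(1+\|t^{1/2}\partial_t w\|_{L^2}\bigr)$ block of $\mathcal{P}_*$.

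Finally, for the two summands involving $\partial_t B=-c\nabla\times E$, no direct $L^2$ estimate on $\partial_t B$ is available. I would integrate by parts in space to transfer the derivative off $E$ onto the smoother factors $u$, $j$, $B$ and $\partial_t w$, using $\operatorname{div}\partial_t w=0$ (and, in the case of Ohm's law \eqref{Ohms-law1}, treating separately a lower-order term of the form $\operatorname{div} j=\sigma c\operatorname{div} E+\sigma\operatorname{div}(u\times B)$, which is controlled by the same norms). H\"older's inequality with $u\in L^\infty\cap\dot{H}^1$ and $\dot{H}^s\hookrightarrow L^{2/(1-s)}$ for $E$ and $B$, followed by one last Young's inequality, produces the $c^2(1+t)\|u\|_{L^\infty\cap\dot{H}^1}^2\|(E,B)\|_{\dot{H}^s}^4$ contribution to $\mathcal{P}_*$. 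The main technical obstacle is the cross term with $\partial_t v$ described above: since $B$ only has $\dot{H}^s$ regularity with $s$ barely exceeding $\tfrac{1}{2}$ and $\partial_t v$ has no additional Sobolev control, the interpolation exponents in the three-factor H\"older sit exactly at the borderline of integrability, and the Young chain closes only because of the strict inequality $s>\tfrac{1}{2}$. This is precisely the structural reason behind the restriction $s\in(\tfrac{1}{2},1)$ throughout the uniqueness argument.
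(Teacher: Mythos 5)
Your overall strategy is correct and mirrors the paper's: expand $\partial_t(j\times B)$ using Maxwell's equations, split $\partial_t u = \partial_t v + \partial_t w$, and absorb the $\dot{H}^1$-norm of $t^{1/2}\partial_t w$ by Young's inequality. Your treatment of the terms $-c^2\sigma\,j\times B$, $\sigma(\partial_t v\times B)\times B$ and $\sigma(\partial_t w\times B)\times B$ matches the paper's (the Lebesgue-space Gagliardo--Nirenberg route you take for $(\partial_t v\times B)\times B$ is equivalent to the paper's $\dot H^{2s-1}\times\dot H^{2(1-s)}\to L^2$ product law). However, there are two genuine problems.

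First, for $c^2\sigma(\nabla\times B)\times B$: after $-\int B\otimes B:\nabla\partial_t w$, Cauchy--Schwarz and $\dot H^{1/2}\hookrightarrow L^4$, Young's inequality delivers $c^4 t\,\norm{B}_{\dot H^{1/2}}^4$, not the $(1+t)c^4\norm{B}_{\dot H^s}^4$ that you assert and that appears in $\mathcal{P}_*$. These are not the same quantity; one only has $\norm{B}_{\dot H^{1/2}}^4\lesssim \norm{B}_{L^2}^{4-2/s}\norm{B}_{\dot H^s}^{2/s}$ after interpolating against $L^2$, and turning the exponent $2/s<4$ into $4$ requires an extra $z^\nu\leq z+1$ argument together with the energy bound on $\norm{B}_{L^2}$. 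This is repairable, but the claim as written is not accurate, and the paper instead gets $\norm{B}_{\dot H^s}^4$ directly by pairing $\nabla\times B\in\dot H^{s-1}$ against $B\times\partial_t w$ and using the Besov product law $\norm{B\,\partial_t w}_{\dot H^{1-s}}\lesssim\norm{B}_{\dot H^s}\norm{\partial_t w}_{\dot H^{2(1-s)}}$.

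Second, and more seriously, for the terms coming from $\partial_t B=-c\nabla\times E$ (that is, $-c\sigma(u\times(\nabla\times E))\times B$, $-c^2\sigma E\times(\nabla\times E)$ and $-c\sigma(u\times B)\times(\nabla\times E)$) your plan of classically integrating by parts ``onto the smoother factors $u$, $j$, $B$ and $\partial_t w$'' does not close. The integration by parts necessarily distributes a full spatial derivative among all the other factors in the product, and the terms where that derivative lands on $B$ (or on $E$ in the second summand) cannot be controlled: $\nabla B\in\dot H^{s-1}$ with $s<1$ embeds only into $L^{2/(2-s)}$, and its norm there is \emph{not} bounded by $\norm{B}_{\dot H^s}$ (the Sobolev embedding goes the wrong way, $\dot W^{1,2/(2-s)}\hookrightarrow\dot H^s$). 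Likewise $j$ is only in $L^2$, so $\nabla j$ is in $\dot H^{-1}$. The paper circumvents this by never taking a full derivative of $B$ or $E$: it writes the integral as a duality pairing $\langle\nabla\times E,\,u\otimes B\otimes\partial_t w\rangle_{\dot H^{s-1}\times\dot H^{1-s}}$, so that $\nabla\times E$ keeps its natural $\dot H^{s-1}$ regularity, and then uses the product laws \eqref{Prod_Law}--\eqref{Prod_Law2} to bound $\norm{u\otimes B\otimes\partial_t w}_{\dot H^{1-s}}\lesssim\norm{u}_{L^\infty\cap\dot H^1}\norm{B}_{\dot H^s}\norm{\partial_t w}_{\dot H^{2(1-s)}}$, distributing only fractional derivatives. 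This fractional duality step is the key structural ingredient you are missing.
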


\begin{remark}
The estimate in the preceding lemma is  not uniform with respect to the speed of light $c\in (0,\infty)$. However,  note that  this bound will only  be employed for uniqueness and stability, namely to prove Theorem \ref{Thm:stability}. In particular, this
does not affect the uniform character, in $c\in (0,\infty)$, of the global solutions given in Theorem \ref{Thm:1}.
\end{remark}

\begin{remark}\label{RMK:P*}
We emphasize that $\mathcal{P}_*\in L^1_\loc(\mathbb{R}^+)$. This, indeed, can be verified by utilizing  Ohm's law to expand the current density $j$ and then  employing the bounds from Propositions \ref{Thm-summary} and    \ref{prop:v:tv1}.
\end{remark}

\begin{proof}
	To begin, we  utilize \eqref{Main_System} to write that
$$ \begin{aligned}
 \partial_t (j\times B) & = \partial_t j \times B +j \times \partial_t B \\
 &= c\sigma \partial_t E   \times B + \sigma (\partial_t u \times B)   \times B + \sigma (u \times \partial_t B)  \times B - c j \times (\nabla \times E)\\
 &= c^2 \sigma (\nabla \times B)   \times B - c^2 \sigma j \times B + \sigma (\partial_t v \times B)   \times B + \sigma (\partial_t w \times B)   \times B \\
 & \quad - c \sigma (u \times (\nabla \times E))  \times B 
 - c ^2 \sigma E  \times (\nabla \times E) - c\sigma (u\times B)  \times (\nabla \times E)\\
 &\bydef \sum_{i=1}^7 \mathcal{M}_i.
\end{aligned}$$
Now, we    estimate   each term in the preceding expansion separately.

\subsubsection*{Bound on $ \mathcal{M}_1$}

We proceed by  duality and by employing the product law \eqref{Prod_Law}  to obtain,  for any $s\in (\frac{1}{2},1)$ and $t>0$, that 
$$ \begin{aligned}
t  \bigg| \int_{\mathbb{R}^2} \mathcal{M}_1(t,x) \partial_t w (t,x)dx\bigg| &\lesssim  c^2t  \norm{ B(t)}_{ \dot{H}^s} \norm{ B  \partial_t w(t)}_{ \dot{H}^{ 1-s}} \\ 
&\lesssim  c^2t^\frac{1}{2}  \norm{ B(t)}_{ \dot{H}^s}^2 \norm{  t^{\frac{1}{2} } \partial_t w(t)}_{ \dot{H}^{2( 1-s)}} .  
\end{aligned} $$
Therefore, we find, by interpolation,   for any $\varepsilon>0$, that 
$$ \begin{aligned}
t   \bigg|\int_{\mathbb{R}^2} \mathcal{M}_1(t,x) \partial_t w (t,x)dx\bigg|
&\lesssim  c^2t^{\frac{1}{2}}  \norm{ B(t)}_{ \dot{H}^s}^2 \norm{  t^{\frac{1}{2} }  \partial_t w(t)}_{ L^2}^{2s-1} \norm{  t^{\frac{1}{2} }  \partial_t w(t)}_{ \dot{H}^{1}}^{2(1-s)} \\
&\leq \varepsilon \norm{  t^{\frac{1}{2} }  \partial_t w(t)}_{ \dot{H}^{1}}^2  + C_\varepsilon \left(c^2t^{\frac{1}{2} }  \norm{ B(t)}_{ \dot{H}^s}^2  \right)^{\frac{1}{s}}   \norm{  t^{\frac{1}{2} }  \partial_t w(t)}_{ L^2}^{\frac{2s-1}{s} }.
\end{aligned} $$
Hence,  in view of fact that  $  \frac{2s-1 }{s}<1$ and $\frac{1}{s}<2$,  together with   the  elementary inequality 
\begin{equation}\label{elementary_inequa}
z^\nu\leq z+1 ,
\end{equation}
 which is valid for all $ \nu \in (0,1]$ and $z\geq 0$, we deduce that  
$$\begin{aligned}
 t \bigg|  \int_{\mathbb{R}^2} \mathcal{M}_1(t,x) \partial_t w (t,x)dx \bigg| &\leq \varepsilon \norm{  t^{\frac{1}{2} }  \partial_t w(t)}_{ \dot{H}^{1}}^2  + C_\varepsilon \left(c^2t^{\frac{1}{2} }  \norm{ B(t)}_{ \dot{H}^s}^2  +1\right)^{2}    \\
& + C_\varepsilon   \left(c^2t^{\frac{1}{2} }  \norm{ B(t)}_{ \dot{H}^s}^2 +1  \right)^{2}    \norm{  t^{\frac{1}{2} }  \partial_t w(t)}_{ L^2}.
\end{aligned}$$
\subsubsection*{Bound on $ \mathcal{M}_2$} The control of $ \mathcal{M}_2$ follow from a simple application of H\"older's inequality to find that 
$$ \begin{aligned}
t  \bigg|\int_{\mathbb{R}^2} \mathcal{M}_2(t,x) \partial_t w (t,x)dx\bigg| & \lesssim  c^2 t  \norm{ j(t)}_{L^4} \norm{ B(t)}_{L^4} \norm{    \partial_t w(t)}_{ L^2} \\
& \lesssim  c^2 t^{\frac{1}{2} }  \norm{ j(t)}_{L^4} \norm{ B(t)}_{\dot{H}^\frac{1}{2}} \norm{ t^{\frac{1}{2} }  \partial_t w(t)}_{ L^2} . 
\end{aligned} $$
\subsubsection*{Bound on $ \mathcal{M}_3$} By employing the same arguments as before, we obtain,  as soon as  $s\in (\frac{1}{2},1)$,  that 
$$ \begin{aligned}
t  \bigg| \int_{\mathbb{R}^2} \mathcal{M}_3(t,x) \partial_t w (t,x)dx\bigg| &\lesssim   t   \norm{ \partial_t v (t)}_{L^2}  \norm{ B \otimes B \otimes   \partial_t w(t)}_{L^2}  \\
&\lesssim    t   \norm{ \partial_t v (t)}_{L^2}  \norm{ B \otimes B(t)}_{ \dot{H}^{ 2s-1}}\norm { \partial_t w(t)}_{ \dot{H}^{ 2 (1-s)} }  \\ 
&\lesssim    \norm{t^\frac{1}{2} \partial_t v (t)}_{L^2}  \norm{  B(t)}_{ \dot{H}^{ s}}^2\norm {t^{\frac{1}{2}} \partial_t w(t)}_{ \dot{H}^{ 2 (1-s)}  } \\
&\lesssim     \norm{ t^\frac{1}{2} \partial_t v (t)}_{L^2}  \norm{  B(t)}_{ \dot{H}^{ s}}^2\norm {t^\frac{1}{2} \partial_t w(t)}_{L^2} ^{2s-1} \norm {t^{\frac{1}{2}} \partial_t w(t)}_{ \dot{H}^{ 1 } } ^{2(1-s)}  \\
&\leq  C_{\varepsilon}    \norm{  t^{\frac{1}{2} } \partial_t v (t)}_{L^2} ^{\frac{1}{s} }  \norm{  B(t)}_{ \dot{H}^{ s}}^{\frac{2}{s} }\norm {t^{\frac{1}{2} }\partial_t w(t)}_{L^2} ^ {\frac{2s-1}{s} }  \\
& \quad  +  \varepsilon   \norm { t^{\frac{1}{2} }\partial_t w(t)}_{ \dot{H}^{ 1 } } ^2 ,
\end{aligned} $$ 
where $\varepsilon>0$.
Hence, by employing again the  elementary inequality \eqref{elementary_inequa} with the facts that $\frac{2s-1}{s}<1 $ and $ \frac{1}{s}<2$,  we deduce that     
\begin{equation*}
 \begin{aligned}
t  \bigg| \int_{\mathbb{R}^2} \mathcal{M}_3(t,x) \partial_t w (t,x)dx  \bigg|
&\leq   \varepsilon   \norm { t^{\frac{1}{2} }\partial_t w(t)}_{ \dot{H}^{ 1 } } ^2+ C_{\varepsilon}  \left(  \norm{  t^{\frac{1}{2}} \partial_t v (t)}_{L^2}^2   \norm{  B(t)}_{ \dot{H}^{ s}}^{4}+ 1\right)\\
& \quad + C_{\varepsilon}    \left(  \norm{  t^{\frac{1}{2}} \partial_t v (t)}_{L^2}^2   \norm{  B(t)}_{ \dot{H}^{ s}}^{4}+ 1\right) \norm {t^{\frac{1}{2} }\partial_t w(t)}_{L^2}     . 
\end{aligned} 
\end{equation*}

\subsubsection*{Bound on $ \mathcal{M}_4$}
Likewise, by H\"older's inequality, Sobolev embedding and the two dimensional interpolation inequality   
\begin{equation*} 
\Vert f \Vert_{L^4}\lesssim  	\Vert f \Vert_{ L^2}^{ \frac{1}{2}} \Vert  \nabla f\Vert_{L^2}^{ \frac{1}{2}}  ,
\end{equation*}
we find that
$$ \begin{aligned}
t   \bigg|\int_{\mathbb{R}^2} \mathcal{M}_4(t,x) \partial_t w (t,x)dx \bigg|& \lesssim   \int_{\mathbb{R}^2}| B (t,x) |^2 | t^{\frac{1}{2} } \partial_t w (t,x) |^2dx \\
& \lesssim   \norm {  B (t)}_{L^4}^2 \norm { t^{\frac{1}{2} } \partial_t w    (t)}_{L^4}^2 \\
& \lesssim   \norm {  B (t)}_{L^4}^2 \norm { t^{\frac{1}{2} } \partial_t w    (t)}_{L^2}  \norm { t^{\frac{1}{2} } \partial_t w    (t)}_{\dot{H}^1} .
\end{aligned} $$
Thus, it follows, for any $\varepsilon>0$,   that
$$
t \bigg| \int_{\mathbb{R}^2} \mathcal{M}_4(t,x) \partial_t w (t,x)dx\bigg|
\leq   \varepsilon   \norm { t^{\frac{1}{2} }\partial_t w(t)}_{ \dot{H}^{ 1 } } ^2+ C_{\varepsilon} \norm { B(t)}_{ \dot{H}^\frac{1}{2}} ^{ 4}  \norm { t^{\frac{1}{2} } \partial_t w    (t)}_{L^2}^2. 
 $$
\subsubsection*{Bounds on $ \mathcal{M}_5$ and $ \mathcal{M}_7$}

The estimates on $\mathcal{M}_5$ and  $\mathcal{M}_7$ are done in a  similar fashion, whence we outline them together. To that end,  note first that  the assumption $s\in (\frac{1}{2},1)$ implies that   $2(1-s)\in (0,1)$. Accordingly, by duality and  employing the  
product  law \eqref{Prod_Law}, we find that
$$ \begin{aligned}
 t  \bigg|\int_{\mathbb{R}^2} \Big( \mathcal{M}_5(t,x) &+ \mathcal{M}_7(t,x) \Big) \partial_t w (t,x)dx\bigg| \\
 & \lesssim   tc   \norm {\nabla \times E(t)}_{ \dot{H}^{s-1}} \norm { u\otimes  B \otimes \partial_t w(t)}_{ \dot{H}^{1-s}} \\ 
& \lesssim    t^{\frac{1}{2} } c \norm {  E(t)}_{ \dot{H}^{s }} \norm { u\otimes  B(t)}_{ \dot{H}^s} \norm{  t^{\frac{1}{2} }  \partial_t w(t)}_{ \dot{H}^{2(1-s)}}  \\ 
  & \lesssim    t^{\frac{1}{2} } c \norm {  E(t)}_{ \dot{H}^{s }} \norm { u(t)}_{L^\infty \cap \dot{H}^1}\norm { B(t)}_{ \dot{H}^s}   \norm{  t^{\frac{1}{2} }  \partial_t w(t)}_{ \dot{H}^{2(1-s)}},
\end{aligned} $$ 
where  we have also used the  product law   \eqref{Prod_Law2} in the last step. Therefore, by interpolation, it holds, for any $\varepsilon>0$, that
$$ \begin{aligned}
 t \bigg| \int_{\mathbb{R}^2} &\Big( \mathcal{M}_5(t,x) + \mathcal{M}_7(t,x) \Big) \partial_t w (t,x)dx \bigg|\\ 
  & \lesssim    t^{\frac{1}{2} } c \norm {  E(t)}_{ \dot{H}^{s }} \norm { u(t)}_{L^\infty \cap \dot{H}^1}\norm { B(t)}_{ \dot{H}^s} \norm{  t^{\frac{1}{2} }  \partial_t w(t)}_{ L^2}^{2s-1} \norm{  t^{\frac{1}{2} }  \partial_t w(t)}_{ \dot{H}^{1}}^{2(1-s) }  \\ 
  & \leq  \varepsilon   \norm{  t^{\frac{1}{2} }  \partial_t w(t)}_{ \dot{H}^{1}} ^2 +  C_{\varepsilon}  \left( t^{\frac{1}{2} } c \norm {  E(t)}_{ \dot{H}^{s }} \norm { u(t)}_{L^\infty \cap \dot{H}^1}\norm { B(t)}_{ \dot{H}^s} \right)^{\frac{1}{s}}  \norm{  t^{\frac{1}{2} }  \partial_t w(t)}_{ L^2}^{\frac{2s-1}{s}} . 
\end{aligned} $$
At last,   we deduce, since $ \frac{2s-1}{s}<1$ and $\frac{1}{s}<2$,  that 
$$ \begin{aligned}
 t \bigg|  \int_{\mathbb{R}^2} \Big( \mathcal{M}_5(t,x) &+ \mathcal{M}_7(t,x) \Big) \partial_t w (t,x)dx\bigg| \\  
  & \leq  \varepsilon   \norm{  t^{\frac{1}{2} }  \partial_t w(t)}_{ \dot{H}^{1}} ^2 +  C_{\varepsilon}  \left( t^{\frac{1}{2} }c  \norm {  E(t)}_{ \dot{H}^{s }} \norm { u(t)}_{L^\infty \cap \dot{H}^1}\norm { B(t)}_{ \dot{H}^s} + 1 \right)^{2} \\
  & \quad  +  C_{\varepsilon}  \left( t^{\frac{1}{2} } c \norm {  E(t)}_{ \dot{H}^{s }} \norm { u(t)}_{L^\infty \cap \dot{H}^1}\norm { B(t)}_{ \dot{H}^s}+ 1 \right)^{2}  \norm{  t^{\frac{1}{2} }  \partial_t w(t)}_{ L^2}  . 
\end{aligned} $$
\subsubsection*{Bound on $\mathcal{M}_6$}

Finally, for  $\mathcal{M}_6$,  we follow the procedure by which we previously controled $ \mathcal{M}_1$ to find that  
$$ \begin{aligned}
 t \bigg|\int_{\mathbb{R}^2} \mathcal{M}_6(t,x) \partial_t w (t,x)dx \bigg| & \leq  \varepsilon   \norm{  t^{\frac{1}{2} }  \partial_t w(t)}_{ \dot{H}^{1}} ^2 +  C_{\varepsilon}  \left(c^2 t^{\frac{1}{2} }  \norm {  E(t)}_{ \dot{H}^{s }}^2  + 1\right)^{2} \\
 & \quad +  C_{\varepsilon}  \left(c^2 t^{\frac{1}{2} }  \norm {  E(t)}_{ \dot{H}^{s }}^2 + 1 \right)^{2}  \norm{  t^{\frac{1}{2} }  \partial_t w(t)}_{ L^2}  .   
\end{aligned} $$
All in all, gathering the foregoing estimates completes the proof of the lemma.  
\end{proof}

We are now in a position to prove the final time-weighted estimates for the velocity field $w$. In particular, the  following proposition is analogous to Proposition \ref{prop:v:tw2}, but displays an improved time weight in the neighborhood of $ t= 0$.
\begin{proposition}\label{prop:w:tw**}  Under the assumptions of Proposition \ref{corollary:u-Lip}, it holds that 
$$t^{\frac{1}{2} } \dot{w} ,\, t^{\frac{1}{2} } \partial_t w  \in L^\infty_\loc(\mathbb{R}^+; L^2(\mathbb{R}^2)) \cap L^2_\loc(\mathbb{R}^+; \dot{H}^1(\mathbb{R}^2)).$$
\end{proposition}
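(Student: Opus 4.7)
The proof mirrors that of Proposition \ref{prop:v:tw2}, with the lighter weight $t^{1/2}$ replacing $t$ (reflecting $w|_{t=0}=0$) and an additional Lorentz-force contribution handled via Lemma \ref{lemma:duality}. The claim on $t^{1/2}\dot w$ reduces to the analogous one on $t^{1/2}\partial_t w$: since $\dot w - \partial_t w = u\cdot\nabla w$, combining \eqref{u:TW1} with the bounds on $\nabla w$ from Remark \ref{RMK-S1} and Proposition \ref{corollary:u-Lip} shows that $t^{1/2}u\cdot\nabla w$ lies in $L^\infty_\loc L^2\cap L^2_\loc \dot H^1$.

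To treat $\partial_t w$, I would differentiate \eqref{w-equa*} in time, multiply by $t\,\partial_t w$, and integrate over $\R^2$, using $\div\partial_t w=0$ to eliminate the pressure. This produces the identity
\begin{equation*}
\tfrac{1}{2}\tfrac{d}{dt}\bigl(t\norm{\sqrt{\rho}\,\partial_t w(t)}_{L^2}^2\bigr)+t\norm{\nabla\partial_t w(t)}_{L^2}^2
= \tfrac{1}{2}\norm{\sqrt{\rho}\,\partial_t w(t)}_{L^2}^2+\mathcal{N}_1(t)+\mathcal{N}_2(t)+\mathcal{N}_3(t),
\end{equation*}
with $\mathcal{N}_1=-t\!\int \partial_t\rho\,\dot w\cdot\partial_t w$, $\mathcal{N}_2=-t\!\int \rho(\partial_t u\cdot\nabla w)\cdot \partial_t w$, and $\mathcal{N}_3=t\!\int \partial_t(j\times B)\cdot\partial_t w$. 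The residual $\tfrac{1}{2}\norm{\sqrt{\rho}\,\partial_t w}_{L^2}^2$ is locally time-integrable thanks to \eqref{partial_w}. The terms $\mathcal{N}_1$ and $\mathcal{N}_2$ are estimated essentially verbatim as in Proposition \ref{prop:v:tw2}: for $\mathcal{N}_1$, one substitutes $\partial_t\rho=-\div(\rho u)$ and integrates by parts; for $\mathcal{N}_2$, one splits $\partial_t u=\partial_t v+\partial_t w$, bounds the $\partial_t v$-contribution using $t^{1/2}\partial_t v\in L^\infty_\loc L^2$ from Proposition \ref{prop:v:tv1} together with $\nabla w\in L^2_\loc L^\infty$ from Proposition \ref{corollary:u-Lip}, and treats the $\partial_t w$-contribution by invoking the two-dimensional interpolation $\norm{\partial_t w}_{L^4}^2\lesssim \norm{\partial_t w}_{L^2}\norm{\nabla\partial_t w}_{L^2}$, so that the dissipative quantity $t\norm{\nabla\partial_t w}_{L^2}^2$ absorbs the ensuing power of the gradient.

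The decisive new term is $\mathcal{N}_3$, which is precisely the bilinear pairing controlled by Lemma \ref{lemma:duality}: for any $\varepsilon>0$,
\begin{equation*}
|\mathcal{N}_3(t)|\leq C_\varepsilon\mathcal{P}_*(t)\bigl(1+\norm{t^{1/2}\partial_t w(t)}_{L^2}\bigr)+C_\varepsilon\norm{B(t)}_{\dot H^{1/2}}^{4}\norm{t^{1/2}\partial_t w(t)}_{L^2}^2+\varepsilon\norm{t^{1/2}\partial_t w(t)}_{\dot H^1}^2,
\end{equation*}
where the $\varepsilon$-term coincides with $\varepsilon\,t\norm{\nabla\partial_t w}_{L^2}^2$ and is absorbed into the dissipation for $\varepsilon$ small. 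Remark \ref{RMK:P*} ensures $\mathcal{P}_*\in L^1_\loc(\R^+)$, while $\norm{B}_{\dot H^{1/2}}^{4}$ is locally bounded in time by interpolating between the energy bound $B\in L^\infty L^2$ and $B\in L^\infty_\loc \dot H^s$ from Lemma \ref{lemma:boundedness-u}, using $s>\tfrac12$. Setting
\begin{equation*}
F_w(t)\bydef t\norm{\sqrt{\rho}\,\partial_t w(t)}_{L^2}^2+\int_0^t\tau\norm{\nabla\partial_t w(\tau)}_{L^2}^2\,d\tau,
\end{equation*}
the accumulated estimates yield $F_w'\leq a_1+a_2 F_w$ for some $a_1,a_2\in L^1_\loc(\R^+)$, and Gr\"onwall's lemma delivers $F_w\in L^\infty_\loc(\R^+)$, which is the sought control on $t^{1/2}\partial_t w$.

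The main obstacle is the sharp estimation of $\mathcal{N}_3$: the threshold $s>\tfrac{1}{2}$ enters crucially in Lemma \ref{lemma:duality} because the interpolation exponent $2(1-s)\in(0,1)$ must be strictly less than one, so that Young's inequality isolates a genuinely quadratic dissipative term on the right; without this, the natural Sobolev regularity of $B$ would be insufficient to close the estimate. A secondary subtlety in $\mathcal{N}_2$ is avoiding a truly cubic structure in $\partial_t w$, which is resolved by Ladyzhenskaya's inequality, a feature specific to the two-dimensional setting of this paper.
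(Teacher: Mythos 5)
Your proposal is correct and follows essentially the same route as the paper's proof: first reduce from $t^{1/2}\dot w$ to $t^{1/2}\partial_t w$ via the bounds \eqref{u:TW1}, \eqref{partial_w}; then differentiate \eqref{w-equa*} in time, pair with $t\,\partial_t w$, decompose the right-hand side into terms analogous to $\mathcal{K}_1,\dots,\mathcal{K}_4$, bound the Lorentz-force contribution via Lemma \ref{lemma:duality}, and conclude by Gr\"onwall. The one point of divergence is cosmetic: for the $\partial_t w$-self-interaction in $\mathcal{N}_2$ you invoke Ladyzhenskaya to absorb a gradient into the dissipation with coefficient $\|\nabla w\|_{L^2}^2\in L^\infty_\loc$, whereas the paper simply applies H\"older with $\nabla w\in L^2_\loc L^\infty$ (from Proposition \ref{corollary:u-Lip}); both yield a Gr\"onwall-compatible coefficient.
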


\begin{proof}
First, by the definition   $ \dot{w}\bydef\partial_t w+u\cdot \nabla w$,  and by virtue of the bounds \eqref{partial_w} and \eqref{u:TW1}, which produce the control
$$ t^{\frac{1}{2}} u\in L^\infty_{\loc}(\mathbb{R}^+; L^\infty) \cap  L^2_{\loc} (\mathbb{R}^+; \dot{W}^{1,\infty})  \quad\text{and}\quad w\in L^\infty_{\loc}(\mathbb{R}^+; \dot{H}^1) \cap L^2_{\loc}(\mathbb{R}^+; \dot{H}^2),  $$
 we deduce from the basic estimates
$$  \begin{aligned}
 \norm { t^{\frac{1}{2} } \dot{w}(t) - t^{\frac{1}{2} } \partial_t w (t)}_{ L^2} \leq  \norm { t^\frac{1}{2} u(t) }_{L^\infty } \norm {\nabla  w(t)}_{ L^2}  
\end{aligned}$$
and
\begin{equation*}
 \begin{aligned}
 \|{ t^{\frac{1}{2} }  \dot{w} -  t^{\frac{1}{2} } \partial_t   w (t)}\|_{ \dot{H}^1} \leq      \| { t^\frac{1}{2} \nabla u (t)}\|_{  L^\infty} \| {\nabla  w(t)}\|_{  L^2}+   \|{ t^\frac{1}{2} u(t) }\|_{L^\infty } \| {\nabla^2  w(t)}\|_{  L^2} 
\end{aligned} 
\end{equation*}
that 
\begin{equation}\label{w-dw}
	   t^{\frac{1}{2} }  \dot{w} -  t^{\frac{1}{2} } \partial_t   w   \in   L^\infty_\loc(\mathbb{R}^+; L^2) \cap L^2_\loc(\mathbb{R}^+; \dot{H}^1).
\end{equation}
  Thus, there only remains to prove that 
$$ t^{\frac{1}{2} } \partial_t w \in L^\infty_\loc(\mathbb{R}^+; L^2) \cap L^2_\loc(\mathbb{R}^+; \dot{H}^1).$$
\subsubsection*{Energy estimate}

We first apply a time derivative   to the equation  \eqref{w-equa*} to find that
$$ \rho \partial_{tt} w + \rho u\cdot \nabla  \partial_t w - \partial_t \Delta w  + \nabla \partial_t p_w = - \partial_t\rho  \dot{w} - \rho\partial_t u \cdot \nabla w + \partial_t(j\times B) .$$ 
 Therefore, by taking the inner product with $ t \partial_t w$, we obtain   that 
$$ \begin{aligned}
\frac{1}{2} \frac{d}{dt} \int_{\mathbb{R}^2} \rho t    |\partial_t w|^2  +  \int_{\mathbb{R}^2}  t   | \nabla \partial_t w|^2    &=   \int_{\mathbb{R}^2} \rho   |\partial_t w|^2  -     t \int_{\mathbb{R}^2}   \partial_t\rho  \dot{w} \partial_t w  \\
& \quad -    t  \int_{\mathbb{R}^2}    \rho(\partial_t u \cdot \nabla w ) \partial_t w +   t  \int_{\mathbb{R}^2}\partial_t  (    j\times B )\cdot \partial_t w\\
&  \bydef \sum_{ i=1}^4    \mathcal{K}_i(t).
\end{aligned} $$
Now, we  take care of  each term in the previous sum separately. 
\subsubsection*{Bounds on $\mathcal{K}_1$}
   By H\"older's inequality and the maximum principle \eqref{mass:conservation}, we write that 
\begin{equation*}   
\begin{aligned}
\mathcal{K}_1(t) \leq   \norm {\rho_0}_{L^\infty} \norm {\partial_t w(t)}_{  L^2}^2   .
\end{aligned}
\end{equation*}
\subsubsection*{Bounds on $\mathcal{K}_2$} In order to estimate the second term $\mathcal{K}_2$, we  first  write, by utilizing   the continuity equation
 $$\partial_t \rho = - \div (\rho u)$$
    and performing an integration by parts, that  
 \begin{equation*}
\begin{aligned}
\mathcal{K}_2 (t)  &=       t   \int_{\mathbb{R}^2}  \div(\rho u)  \dot{w} \partial_t w d\tau\\
 &=  -t  \int_{\mathbb{R}^2}   \rho u \cdot \nabla   \dot{w} \partial_t w  -  t   \int_{\mathbb{R}^2}   \rho u \cdot \nabla \partial_t w    \dot{w}  .
\end{aligned}
\end{equation*}
 Then, by H\"older's inequality and the maximum principle \eqref{mass:conservation},  we find that  
  \begin{equation*}
\begin{aligned}
\mathcal{K}_2 (t)  & \leq   \norm { \rho_0}_{L^\infty}^{\frac{1}{2}} \norm {u(t)}_{L^\infty} \norm { t^{\frac{1}{2} }   \nabla\dot{w}(t)  }_{L^2}\norm { t^{\frac{1}{2} }\sqrt{ \rho} \partial_t w (t)}_{L^2}   \\
& \quad   +   \norm{\rho_0}_{L^\infty}   \norm { t ^{\frac{1}{2}}u (t) }_{L^\infty} \norm {  \dot{w} (t) }_{L^2}\norm {  t^{\frac{1}{2} }\nabla\partial_t w }_{L^2}  .
\end{aligned}
\end{equation*}
Therefore, by virtue of the fact that  
\begin{equation*}
\norm { t^{\frac{1}{2} }   \nabla\dot{w}(t)  }_{L^2}^2\leq 2\norm{ t^{\frac{1}{2} } (\nabla \partial_t w  - \nabla \dot{w}) (t) }_{ L^2}^2+2\norm { t^{\frac{1}{2} } \nabla \partial_t w  (t) }_{ L^2}^2,
\end{equation*} 
 it then follows, for any $\varepsilon \in (0,1)$, that
  \begin{equation*}
\begin{aligned} 
\mathcal{K}_2(t) & \leq     C_\varepsilon    \norm { t ^{\frac{1}{2}}u(t) }_{ L^\infty}^2 \norm {  \dot{w} (t) }_{  L^2}^2     +  \norm { t^{\frac{1}{2} } (  \partial_t w  -  \dot{w}) (t) }_{ \dot{H}^1}^2  +  C_\varepsilon   \norm {u(t)}_{L^\infty} ^2 \norm { t^{\frac{1}{2} } \sqrt{ \rho} \partial_t w (t)}_{L^2}^2     
\\
&\quad +   \varepsilon\norm { t^{\frac{1}{2} }   \partial_t w  (t) }_{ \dot{H}^1}^2 .
\end{aligned}   
\end{equation*}   
Above, the constant $C_\varepsilon>0$ also depends on $\norm{\rho_0}_{L^\infty}.$
\subsubsection*{Bounds on $\mathcal{K}_3$}
The bound on  $\mathcal{K}_3 $ is obtained by employing H\"older's inequalities and  the maximum principle \eqref{mass:conservation}  together with the fact that $u=v+ w$ to write that 
\begin{equation*}
\begin{aligned}
\mathcal{K}_3 (t)&\leq   \norm { \nabla w(t)}_{L^\infty} \norm {t^{\frac{1}{2} }  \sqrt{\rho} \partial_t w(t)}_{L^2}^2  
\\ &\quad +   \norm {\rho_0}_{L^\infty}^\frac{1}{2}\| {   t^{\frac{1}{2}} \partial_t v(t) }\|_{ L^2} \norm {\nabla w(t)}_{L^\infty}   \norm {t^{\frac{1}{2} }  \sqrt{\rho} \partial_t w(t)}_{L^2}   .
\end{aligned}
\end{equation*}
Therefore, by further employing Young's inequality and absorbing the bound on $\rho_0$ into the implicit constant, we infer that 
\begin{equation*}
\begin{aligned}
\mathcal{K}_3 (t)&\lesssim      \| {   t^{\frac{1}{2}} \partial_t v(t) } \|_{ L^2} \norm {  w(t)}_{\dot{W}^{1,\infty}} 
\\ &\quad +  \left( 1+ \| {   t^{\frac{1}{2}} \partial_t v(t) }\| _{ L^2}   \right)  \norm {  w(t)}_{ \dot{W}^{1,\infty}} \norm {t^{\frac{1}{2} }  \sqrt{\rho} \partial_t w(t)}_{L^2}^2      .
\end{aligned}
\end{equation*}
\subsubsection*{Bounds on $\mathcal{K}_4$} Finally, we note that the bound on $\mathcal{K}_4$ has already been established in Lemma \ref{lemma:duality}, whence, we write, for any $\varepsilon>0$,  that 
 \begin{equation*}
\begin{aligned}
\mathcal{K}_4(t)  & \leq C_\varepsilon \mathcal{P_*}(t)\left( 1+    \norm{  t^{\frac{1}{2} }  \partial_t w(t)}_{ L^2} \right)
\\ &\quad+ C_\varepsilon \norm {B(t)}_{\dot{H}^ \frac{1}{2}}^4 \norm{  t^{\frac{1}{2} }  \partial_t w(t)}_{ L^2}^2 +  \varepsilon   \norm{  t^{\frac{1}{2} }  \partial_t w(t)}_{ \dot{H}^{1}} ^2   ,
\end{aligned}
\end{equation*}
where $\mathcal{P}_*(t)$ is   defined in the statement of the lemma. Thus, the preceding bound can be simplified by Young's inequality to deduce that 
\begin{equation*}
\begin{aligned}
\mathcal{K}_4(t)  & \leq C_\varepsilon \mathcal{P_*}(t) + C_\varepsilon\left( \mathcal{P_*}(t) +     \norm {B(t)}_{\dot{H}^ \frac{1}{2}}^4  \right)  \norm{  t^{\frac{1}{2} }  \partial_t w(t)}_{ L^2}^2  +  \varepsilon   \norm{  t^{\frac{1}{2} }  \partial_t w(t)}_{ \dot{H}^{1}} ^2   .
\end{aligned}
\end{equation*}
\subsubsection*{Conclusion of the proof} Now, we show how to combine the previous bounds to conclude the proof. To that end, we  first introduce the function of time
$$F_w (t)\bydef \left( \norm{  t^{\frac{1}{2} } \sqrt \rho \partial_t w(t)}_{ L^2}^2 + \int_0^t  \norm{  \tau ^{\frac{1}{2} }  \partial_t w(\tau)}_{ \dot{H}^1}^2 d\tau \right)^{\frac{1}{2}}.$$
 Then, by gathering the foregoing estimates and  choosing $\varepsilon$ small enough, we arrive at the bound  
 \begin{equation}\label{last_inequa}
 \begin{aligned}
 \frac{d}{dt} F_w^2(t) \lesssim   \mathcal{G}_*(t)  + \mathcal{G}(t) F^2_w(t),
 \end{aligned}
 \end{equation}
 where we set that 
  \begin{equation*}
 \begin{aligned} 
  \mathcal{G}_*(t) &\bydef     \mathcal{P}_*(t)+ \norm {\partial_t w(t)}_{  L^2}^2+   \norm {   t^{\frac{1}{2}} \partial_t v(t) }_{ L^2} \norm { w(t)}_{\dot{W}^{1,\infty}}  \\
  & \quad +     \norm { t ^{\frac{1}{2}}u(t) }_{ L^\infty}^2 \norm {  \dot{w} (t) }_{  L^2}^2     +  \norm { t^{\frac{1}{2} } (  \partial_t w  -  \dot{w}) (t) }_{ \dot{H}^1}^2 
 \end{aligned}
 \end{equation*}
 and
  \begin{equation*}
 \begin{aligned}  
    \mathcal{G} (t)&\bydef    \mathcal{P}_*(t)+   \norm {B(t)}_{\dot{H}^ \frac{1}{2}}^4   + \norm { u(t)}_{L^\infty}^2  +    \left( 1+ \| {   t^{\frac{1}{2}} \partial_t v(t) }\| _{ L^2}   \right)\norm {   w(t)}_{\dot{W}^{1,\infty}}  .
 \end{aligned}
 \end{equation*}
Therefore,  observing  that \eqref{last_inequa} yields that 
\begin{equation*}
 \frac{d}{dt} \big( F_w^2(t) + 1\big) \leq C    (\mathcal{G}_*+\mathcal{G})(t) \big( F_w^2(t) + 1\big) ,
\end{equation*}
for some constant  $C>0$, it   follows, by applying Gr\"{o}nwall's lemma and using the fact that $ F_w(0)=0$, that 
 $$ F_w(t) \leq   \exp\left( C \int_0 ^t (\mathcal{G}_*+\mathcal{G})(\tau) d\tau\right). $$
 At last, we can easily check that the right-hand side above is finite, for any $t>0$, due to \eqref{w-dw}, Propositions \ref{Thm-summary} and \ref{prop:v:tv1}, and Remarks \ref{RMK-S1},  \ref{RMK-corollary1} and \ref{RMK:P*}, thereby concluding the proof of the proposition.
\end{proof}

\appendix
\begin{appendix}

\section{Littlewood--Paley decompositions and Besov spaces}
\label{besov:1}

\subsection{Littlewood--Paley decompositions}  Here, we recall some classical notions as well as the celebrated Littlewood--Paley decomposition of a tempered distribution. For more details, we refer the reader to \cite{bcd11}.

 The Fourier transform and its inverse are given, respectively, by
\begin{equation*}
	\mathcal{F}f\left(\xi\right)=\hat f(\xi)\bydef\int_{\mathbb{R}^d} e^{- i \xi \cdot x} f(x) dx
\end{equation*}
and 
\begin{equation*}
	\mathcal{F}^{-1} g\left(x\right)=\tilde g(x)\bydef\frac{1}{\left(2\pi\right)^d}\int_{\mathbb{R}^d} e^{i x \cdot \xi} g(\xi) d\xi,
\end{equation*}
for any $x,\xi\in \mathbb{R}^d$, in any dimension $d\geq 1$.

Now, consider smooth radial cutoff functions $\psi,\varphi\in C_c^\infty\left(\mathbb{R}^d\right)$ with the properties that 
\begin{equation*}
	\psi,\varphi\geq 0 ,
	\quad\supp\psi\subset\left\{|\xi|\leq 1\right\},
	\quad\supp\varphi\subset\left\{\frac{1}{2}\leq |\xi|\leq 2\right\}
\end{equation*}
and
\begin{equation*}
	1= \psi(\xi)+\sum_{k=0}^\infty \varphi\left(2^{-k}\xi\right),
	\quad\text{for all }\xi\in\mathbb{R}^d.
\end{equation*}
Accordingly, we define  the scaled cutoffs
\begin{equation*}
	\psi_{k}(\xi)\bydef\psi\left(2^{-k}\xi\right),
	\quad
	\displaystyle\varphi_{k}(\xi)\bydef\varphi\left(2^{-k}\xi\right),
\end{equation*}
for each $k\in\mathbb{Z}$, so that
\begin{equation*}
	\supp\psi_{k}\subset\left\{ |\xi|\leq 2^k\right\},
	\quad \supp\varphi_{k}\subset\left\{ 2^{k-1}\leq |\xi|\leq 2^{k+1}\right\}.
\end{equation*}
In particular, we have the partitions of unity
\begin{equation*}
	1\equiv \psi+\sum_{k=0}^\infty \varphi_{k}
	\quad\text{and}\quad
	1\equiv \psi_j+\sum_{k=j}^\infty \varphi_{k},
\end{equation*}
for any $j\in\mathbb{Z}$, and
\begin{equation*}
	1\equiv \sum_{k=-\infty}^\infty \varphi_{k},
\end{equation*}
away from the origin $\xi=0$.
Next,   we introduce the Fourier multiplier operators
\begin{equation*}
	S_k,\Delta_k:
	\mathcal{S}'(\mathbb{R}^d)\rightarrow\mathcal{S}'(\mathbb{R}^d),
\end{equation*}
for all $k\in\mathbb{Z}$, defined on the space of tempered distributions by $\mathcal{S}'$, by setting  
\begin{equation}\label{dyadic:def}
	S_k f\bydef\mathcal{F}^{-1}\psi_k\mathcal{F}f
	= \left(\mathcal{F}^{-1}\psi_k\right)*f
	\quad\text{and}\quad
	\Delta_k f\bydef\mathcal{F}^{-1}\varphi_k\mathcal{F}f
	= \left(\mathcal{F}^{-1}\varphi_k\right)*f.
\end{equation}
The Littlewood--Paley decomposition of  a tempered distribution $f\in\mathcal{S}'$ is then given by
\begin{equation*}
	f= S_0 f+\sum_{k=0}^\infty\Delta_{ k}f,
\end{equation*}
where the preceding series converges in $\mathcal{S}'$.

Likewise, one can show that the homogeneous Littlewood--Paley decomposition
\begin{equation*}
	\sum_{k=-\infty}^\infty\Delta_{ k}f=f
\end{equation*}
holds in $\mathcal{S}'$, as soon as $f\in\mathcal{S}'$ satisfies that
\begin{equation}\label{origin:1}
	\lim_{k\to -\infty}\|S_kf\|_{L^\infty}=0.
\end{equation}
At last, observe that \eqref{origin:1} holds if $\hat f$ is locally integrable around the origin, or whenever $S_0f$ belongs to $ L^p(\mathbb{R}^d)$, for some $1\leq p<\infty$. In particular, note that \eqref{origin:1} excludes all nonzero polynomials.

\subsection{Besov and Sobolev spaces}

For any $s \in \mathbb{R}$ and $1\leq p,q\leq \infty$, we define now the homogeneous Besov space $\dot B^{s}_{p,q}\left(\mathbb{R}^d\right)$ as the subspace of tempered distributions satisfying \eqref{origin:1} endowed with the semi-norm
\begin{equation*}
	\left\|f\right\|_{\dot B^{s}_{p,q}\left(\mathbb{R}^d\right)}=
	\left(
	\sum_{k\in\mathbb{Z}} 2^{ksq}
	\left\|\Delta_{k}f\right\|_{L^p\left(\mathbb{R}^d\right)}^q\right)^\frac{1}{q},
\end{equation*}
if $q<\infty$, and
\begin{equation*}
	\left\|f\right\|_{\dot B^{s}_{p,q}\left(\mathbb{R}^d\right)}=
	\sup_{k\in\mathbb{Z}}\left(2^{ks}
	\left\|\Delta_{k}f\right\|_{L^p\left(\mathbb{R}^d\right)}\right),
\end{equation*}
if $q=\infty$. One can show that $\dot B^s_{p,q}$ is a Banach space if $s<\frac dp$, or if $s=\frac dp$ and $q=1$ (see \cite[Theorem 2.25]{bcd11}).

The homogeneous Sobolev space $\dot H^s\left(\mathbb{R}^d\right)$, for any real value $s\in\mathbb{R}$,   is defined as the subspace of tempered distributions whose Fourier transform is locally integrable, with the semi-norm
\begin{equation*}
	\left\|f\right\|_{\dot H^s}=\left(\int_{\mathbb{R}^d}|\xi|^{2s}|\hat f(\xi)|^2 d\xi\right)^\frac 12.
\end{equation*}
It is easy to check that  $\dot H^s$ is a Hilbert space if and only if $s<\frac d2$ (see \cite[Proposition 1.34]{bcd11}), and that it it coincides with  $ \dot B^s_{2,2}$, as soon as $s<\frac d2$. 

The Sobolev spaces for general integrability index,  denoted here by $\dot{W}^{n,p}(\mathbb{R}^d)$, for $n\in \mathbb{N}$ and $p\in [1,\infty]$,   are defined through the semi-norms 
$$ \norm {f}_{\dot{W}^{n,p}} \bydef \sum_{\underset{\alpha\in \mathbb{N}^d}{|\alpha| =n}} \norm { \textbf{D} ^{\alpha} f}_{L^p},$$
where, for any multi-index $\alpha=(\alpha_1,\dots,\alpha_d)\in \mathbb{N}^d$, we use the notation
$$|\alpha| = \sum_{i=1}^d \alpha_i, \qquad  \textbf{D}^{\alpha} \bydef \partial_{x_1}^{\alpha_1} \cdots  \partial_{x_d}^{\alpha_d}.  $$
Note that the previous definition extends to fractional  regularities, $s\in (0,1)$, by setting 
$$ \norm {f}_{\dot{W}^{s,p}} \bydef \left( \int_{\mathbb{R}^d \times \mathbb{R}^d} \frac{|f(x)- f(y)|^p}{|x-y|^{d+sp}} dxdy \right)^{\frac{1}{p}}, \qquad p\in [1,\infty).$$ 

At last,  for any time $T>0$ and any choice of parameters $s \in \mathbb{R}$ and $1\leq p,q,r\leq \infty$, with $s<\frac dp$ (or $s=\frac dp$ and $q=1$), the spaces
\begin{equation}\label{vector_valued}
	L^r\left( [0,T) ; \dot B^{s}_{p,q}(\mathbb{R}^d) \right), \qquad L^r\left( [0,T) ; \dot W^{s,q}(\mathbb{R}^d) \right)
\end{equation}
are naturally defined as $L^r$-spaces with values in the Banach spaces $\dot B^{s}_{p,q}$ and $\dot W^{s,q}$, respectively.

\subsection{Embeddings}

We present now a few embeddings and inequalities in Sobolev and  Besov spaces which are routinely used throughout this work.

First,  letting $1<r_2 \leq r_1<\infty$ and $s_1\leq s_2$ in such a way that $s_1-\frac{d}{r_1}=s_2-\frac{d}{r_2}$,  it holds that
 \begin{equation}\label{Sobolev_Embedding_Lemma}
\|f\|_{\dot{W}^{s_1,r_1}(\R^d)}\lesssim\|f\|_{\dot{W}^{s_2,r_2}(\R^d)}.
\end{equation}

Moreover, a direct application of Young's convolution inequality to \eqref{dyadic:def} yields that
\begin{equation}\label{embedding:1}
	\norm{\Delta_kf}_{L^r(\mathbb{R}^d)}
	\lesssim 2^{kd\left(\frac 1p-\frac 1r\right)}\norm{\Delta_kf}_{L^p(\mathbb{R}^d)},
\end{equation}
for any $1\leq p\leq r\leq\infty$. A suitable use of \eqref{embedding:1} then leads to the embedding
\begin{equation}\label{embedding:2}
	\norm{f}_{\dot B^s_{r,q}(\mathbb{R}^d)}\lesssim \norm{f}_{\dot B^{s+d\left(\frac 1p-\frac 1r\right)}_{p,q}(\mathbb{R}^d)},
\end{equation}
for any $1\leq p\leq r\leq \infty$, $1\leq q\leq\infty$ and $s\in\mathbb{R}$, which can be interpreted as a Sobolev embedding in the framework of Besov spaces.

Furthermore, recalling that $\ell^q\subset\ell^r$, for all $1\leq q\leq r\leq\infty$, one has that
\begin{equation*}
	\dot B^s_{p,q}(\mathbb{R}^d)\subset \dot B^s_{p,r}(\mathbb{R}^d),
\end{equation*}
for all $s\in\mathbb{R}$, $1\leq p\leq\infty$ and $1\leq q\leq r\leq\infty$.

Next, observe that
\begin{equation}\label{embedding:3}
	\norm{f}_{L^p(\mathbb{R}^d)}=
	\Big\|\sum_{k\in\mathbb{Z}}\Delta_k f
	\Big\|_{L^p(\mathbb{R}^d)}
	\leq \sum_{k\in\mathbb{Z}}\norm{\Delta_k f}_{L^p(\mathbb{R}^d)}
	=\norm{\Delta_k f}_{\dot B^0_{p,1}(\mathbb{R}^d)},
\end{equation}
for every $1\leq p\leq \infty$. Therefore, by combining \eqref{embedding:2} and \eqref{embedding:3}, we obtain that
\begin{equation}\label{sobolev-infinity}
	\norm{f}_{L^\infty(\mathbb{R}^d)}\lesssim
	\norm{f}_{\dot B^{\frac dp}_{p,1}(\mathbb{R}^d)}, \quad \text{for all} \quad p\in [1,\infty].
\end{equation}

We recall now another essential inequality in Besov spaces which is related to their interpolation properties. Specifically, one has the interpolation, or convexity, inequality
\begin{equation}\label{interpolation.AP-B}
	\norm f_{\dot B^{s}_{p,1}}
	\lesssim
	\norm f_{\dot B^{s_0}_{p,\infty}}^{1-\theta}
	\norm f_{\dot B^{s_1}_{p,\infty}}^{\theta},
\end{equation}
for any $p\in[1,\infty]$, $s,s_0,s_1\in\mathbb{R}$ and $\theta\in(0,1)$ such that $s=(1-\theta)s_0+\theta s_1$ and $s_0\neq s_1$.

Note that the preceding estimates and embeddings can be straightforwardly adapted to the setting of the spaces introduced in \eqref{vector_valued}. 

The previous interpolation inequality provides a crucial gain in terms of the third index of the Besov space. An adaptation of its proof yields the following estimate, in two dimensions of space, which is used throughout our paper
\begin{equation}\label{lemma:interpolation}
\norm f_{\dot{B}^{\frac{2}{m}} _{m,1}} \lesssim \norm {\nabla f}_{L^p}^{ \frac{\frac{1}{2}- \frac{1}{m}}{\frac{1}{p}- \frac{1}{m}}} \norm {\nabla f}_{L^m}^{   \frac{\frac{1}{p}-\frac{1}{2}} {\frac{1}{p}- \frac{1}{m}}},
\end{equation} 
for any    $f\in \dot{W}^{1,p}(\mathbb{R}^2)\cap \dot{W}^{1,m}(\mathbb{R}^2)$, with   $p<2<m$. For the sake of clarity, we provide hereafter a short justification of this inequality.  

To that end, we write, by the definition of Besov spaces and employing \eqref{embedding:1},  for some $N 
>0$ 
to be chosen afterwards, that 
\begin{equation*}
\begin{aligned}
\norm f_{\dot{B}^{\frac{2}{m}} _{m,1} (\mathbb{R}^2)}  &= \Big( \sum_{j\leq N} + \sum_{j>N}\Big) 2^{\frac{2j}{m}} \norm {\Delta _jf}_{L^m (\mathbb{R}^2)}\\
& \lesssim  \sum_{j \leq N}    2^{ j\left(\frac{2}{p}-1 \right)   } \norm {\Delta _j\nabla  f}_{L^p (\mathbb{R}^2)}+ \sum_{j>N}   2^{ j \left( \frac{2}{m}-1 \right) } \norm {\Delta _j \nabla f}_{L^m (\mathbb{R}^2) }  .
\end{aligned}
\end{equation*}
Therefore, since $ p<2<m$, we arrive at the bound 
\begin{equation*}
\begin{aligned}
\norm f_{\dot{B}^{\frac{2}{m}} _{m,1} (\mathbb{R}^2)}     \lesssim     2^{N\left(\frac{2}{p}-1 \right)   } \norm { \nabla  f}_{L^p (\mathbb{R}^2) }+   2^{ N \left( \frac{2}{m}-1 \right) } \norm {  \nabla f}_{L^m (\mathbb{R}^2) }  .
\end{aligned}
\end{equation*}
At last, the inequality \eqref{lemma:interpolation} follows from an optimization argument on $N$.

We conclude this section by the following lemma which   summarizes some of the foregoing results and  emphasizes some crucial embeddings in  Besov and Sobolev spaces.  We refer to \cite[Proposition 2.39, and Theorems 1.83, 2.40 and 2.41]{bcd11} for 
detailed 
proofs.
\begin{lemma}\label{Emb_Besov_Tr_Lemma}  
Let $d \geq 1$. Then, it holds  
\begin{enumerate}
\item for any $1< p<\infty$ and $s\in \mathbb{R}$, that 
  $$\dot{B}^s_{p,\min(p,2)}  (\mathbb{R}^d)\hookrightarrow \dot{W}^{s,p}  (\mathbb{R}^d)\hookrightarrow \dot{B}^s_{p,\max(p,2)} (\mathbb{R}^d),$$
\item for any $1\leq p< \infty$ and $s\in \mathbb{R}$, that  
 $$ 
\dot{B}^s_{p,1} (\mathbb{R}^d)\hookrightarrow \dot{W}^{s,p} (\mathbb{R}^d)\hookrightarrow \dot{B}^s_{p,\infty}  (\mathbb{R}^d),$$
\item  for  any $1< p<\infty$ and  $0\leq  s< \frac{d}{p}$, that 
$$ \dot{W}^{s,p}(\mathbb{R}^d) \hookrightarrow L^{\frac{dp}{d-sp}} (\mathbb{R}^d).$$
\end{enumerate}
\end{lemma}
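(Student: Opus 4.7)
The cornerstone of the proof is the Littlewood--Paley square-function characterization of Sobolev spaces. Specifically, for any $1<p<\infty$ and $s\in\mathbb{R}$, one has the equivalence
\begin{equation*}
	\|f\|_{\dot W^{s,p}(\mathbb{R}^d)} \sim \Big\| \Big( \sum_{k\in\mathbb{Z}} 4^{ks}|\Delta_k f|^2 \Big)^{\!\frac 12} \Big\|_{L^p(\mathbb{R}^d)},
\end{equation*}
which is proved from the Mikhlin--H\"ormander multiplier theorem (applied to the Riesz transforms and to the bumps $(2^{-2ks}\varphi_k(\xi)|\xi|^{2s})_{k\in\mathbb{Z}}$) together with Khintchine's inequality and randomization via Rademacher variables. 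The plan is to combine this identity with simple $\ell^q$-vs-$\ell^r$ inequalities inside $L^p$ to obtain parts (1) and (2), and to deduce part (3) from the Hardy--Littlewood--Sobolev theorem.

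For part (1), I first consider the range $p\geq 2$, so that $\min(p,2)=2$ and $\max(p,2)=p$. The embedding $\dot B^{s}_{p,2}\hookrightarrow \dot W^{s,p}$ follows by applying Minkowski's inequality in $L^{p/2}(\mathbb{R}^d)$ (allowed since $p/2\geq 1$):
\begin{equation*}
	\Big\| \Big( \sum_k 4^{ks}|\Delta_kf|^2 \Big)^{\!\frac 12} \Big\|_{L^p}
	= \Big\| \sum_k 4^{ks}|\Delta_k f|^2 \Big\|_{L^{p/2}}^{\frac 12}
	\leq \Big(\sum_k 4^{ks}\|\Delta_kf\|_{L^p}^2\Big)^{\!\frac 12}.
\end{equation*}
The reverse embedding $\dot W^{s,p}\hookrightarrow \dot B^{s}_{p,p}$ follows from the elementary bound $\ell^2\hookrightarrow\ell^p$ for $p\geq 2$: pointwise, $\sum_k 2^{kps}|\Delta_kf|^p \leq \bigl(\sum_k 4^{ks}|\Delta_kf|^2\bigr)^{p/2}$, and integrating over $\mathbb{R}^d$ and taking the $p$-th root gives the claim. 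The dual range $1<p\leq 2$ is handled analogously by replacing $L^{p/2}$-Minkowski with its dual, or equivalently by duality between $\dot B^{s}_{p,q}$ and $\dot B^{-s}_{p',q'}$. For part (2), the embedding $\dot B^{s}_{p,1}\hookrightarrow\dot W^{s,p}$ is immediate from the pointwise bound $\bigl(\sum_k 4^{ks}|\Delta_kf|^2\bigr)^{1/2}\leq \sum_k 2^{ks}|\Delta_kf|$ and Minkowski, while the embedding $\dot W^{s,p}\hookrightarrow\dot B^s_{p,\infty}$ follows from the pointwise lower bound $2^{ks}|\Delta_kf|\leq \bigl(\sum_j 4^{js}|\Delta_jf|^2\bigr)^{1/2}$, by taking $L^p$ norm and then supremum over $k$. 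The endpoint $p=1$ (and $p=\infty$ where relevant) requires working directly with the definition and Bernstein-type inequalities rather than with the square function, but the same pointwise inequalities still yield the result.

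For part (3), the Sobolev embedding $\dot W^{s,p}\hookrightarrow L^{dp/(d-sp)}$ with $0<s<d/p$ is proved by writing $f=(-\Delta)^{-s/2}g$ with $g=(-\Delta)^{s/2}f\in L^p$, expressing $f$ as a Riesz potential $f=c_{d,s}|x|^{s-d}*g$, and applying the Hardy--Littlewood--Sobolev inequality with exponents $1/q=1/p-s/d$. An alternative route, more in the spirit of this paper, is to combine the embeddings already proved in parts (1)--(2) with the Besov-level Sobolev embedding \eqref{embedding:2}, writing
\begin{equation*}
	\dot W^{s,p}\hookrightarrow \dot B^{s}_{p,\max(p,2)}\hookrightarrow \dot B^{0}_{q,\max(p,2)}\hookrightarrow L^{q},
\end{equation*}
with $q=dp/(d-sp)$; the last embedding uses that $\dot B^0_{q,2}\hookrightarrow L^q$ for $2\leq q<\infty$ (via the square-function characterization again) and the monotonicity $\dot B^0_{q,r}\hookrightarrow \dot B^0_{q,\max(q,2)}$.

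The main subtlety -- and the only step that is not bookkeeping -- is the Littlewood--Paley equivalence itself in the first paragraph; once that is in hand, parts (1) and (2) reduce to elementary $\ell^p$-inclusions, and part (3) to HLS. The case $p\in\{1,\infty\}$ is the only genuinely delicate point, since the square-function characterization fails there and one must argue directly from definitions and Bernstein inequalities; fortunately all uses of this lemma in the paper occur for $p\in(1,\infty)$.
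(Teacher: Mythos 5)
Your proof is correct and reproduces the standard Littlewood--Paley square-function argument; the paper does not prove this lemma but simply cites \cite[Proposition 2.39, Theorems 1.83, 2.40, 2.41]{bcd11}, where exactly this approach is carried out. Parts (1) and (2) via the equivalence $\|f\|_{\dot W^{s,p}}\sim \|(\sum_k 4^{ks}|\Delta_k f|^2)^{1/2}\|_{L^p}$ combined with the pointwise $\ell^q$-inclusions and Minkowski's inequality, and part (3) via the Riesz-potential representation and Hardy--Littlewood--Sobolev, are all sound.

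One small caveat concerning the ``alternative route'' you sketch for part (3): the chain $\dot W^{s,p}\hookrightarrow\dot B^s_{p,\max(p,2)}\hookrightarrow\dot B^0_{q,\max(p,2)}\hookrightarrow L^q$ is valid only when $\max(p,2)\leq\min(q,2)$, i.e.\ when $p\leq 2\leq q$. For $1<p<2$ the exponent $q=dp/(d-sp)$ need not be $\geq 2$ (take $d=2$, $p$ close to $1$ and $s$ small), and then $\dot B^0_{q,2}\hookrightarrow L^q$ fails. Moreover the monotonicity you invoke, $\dot B^0_{q,r}\hookrightarrow\dot B^0_{q,\max(q,2)}\hookrightarrow L^q$, has the final arrow backwards: the correct direction is $\dot B^0_{q,\min(q,2)}\hookrightarrow L^q\hookrightarrow\dot B^0_{q,\max(q,2)}$. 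Since this is only offered as a bonus and your primary HLS argument covers the full range $1<p<\infty$, $0\leq s<d/p$, the lemma is still correctly established.
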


\subsection{Paradifferential product estimates}\label{Paradifferential product estimates}

Here, we recall the basic principles of paraproduct decompositions and some essential paradifferential product estimates that follow from it.

For any two suitable tempered distributions $f$ and $g$,  we introduce the paraproduct
\begin{equation*}
	T_fg\bydef \sum_{j\in\mathbb{Z}}S_{j-2}f\Delta_j g,
\end{equation*}
which then allows us to write that 
\begin{equation*}
	fg=T_fg+T_gf+R(f,g),
\end{equation*}
where the remainder  $R(f,g)$ is given by 
\begin{equation*}
	R(f,g) \bydef \sum_{\substack{j,k\in\mathbb{Z}\\|j-k|\leq 2}}\Delta_jf\Delta_kg.
\end{equation*}
A detailed  analysis of various estimates for the preceding operators can be found in  \cite[Section 2.6]{bcd11}. Below, we recall a time dependent version of some useful estimates from  \cite{bcd11}, for any choice of integrability parameters in $[1,\infty]$ such that
\begin{equation*}
	\frac 1a=\frac 1{a_1}+\frac 1{a_2},
	\qquad
	\frac 1b=\frac 1{b_1}+\frac 1{b_2},
	\qquad
	\frac 1c=\frac 1{c_1}+\frac 1{c_2}.
\end{equation*} 
Specifically, it can be shown that
\begin{equation*}
	\norm{T_fg}_{  L^a_t\dot B^{\alpha+\beta}_{b,c}}
	\lesssim
	\norm{f}_{  L^{a_1}_t\dot B^\alpha_{b_1,c_1}}
	\norm{g}_{  L^{a_2}_t\dot B^\beta_{b_2,c_2}},
\end{equation*}
for any $\alpha<0$ and $\beta\in\mathbb{R}$, and that
\begin{equation*}
	\norm{R(f,g)}_{  L^a_t\dot B^{\alpha+\beta}_{b,c}}
	\lesssim
	\norm{f}_{  L^{a_1}_t\dot B^\alpha_{b_1,c_1}}
	\norm{g}_{  L^{a_2}_t\dot B^\beta_{b_2,c_2}},
\end{equation*}
for any $\alpha,\beta\in\mathbb{R}$ with $\alpha+\beta>0$. Moreover, one has the endpoint estimates
\begin{equation*}
	\norm{T_fg}_{  L^a_t\dot B^{\beta}_{b,c}}
	\lesssim
	\norm{f}_{L^{a_1}_tL^{b_1}_x}
	\norm{g}_{  L^{a_2}_t\dot B^\beta_{b_2,c}},
\end{equation*}
for all $\beta\in\mathbb{R}$,
and
\begin{equation*}
	\norm{R(f,g)}_{L^a_tL^b_x}
	\lesssim
	\norm{f}_{  L^{a_1}_t\dot B^\alpha_{b_1,c_1}}
	\norm{g}_{  L^{a_2}_t\dot B^{-\alpha}_{b_2,c_2}},
\end{equation*}
for all $\alpha\in\mathbb{R}$, provided $\frac 1{c_1}+\frac 1{c_2}=1$.  

We finally recall two important product laws whose proofs are direct consequences of the preceding bounds.  It holds that 
\begin{equation}\label{Prod_Law}
	\|fg\|_{\dot B^{s+t-\frac d2}_{2,1}} \lesssim \|f\|_{\dot H^s}\|g\|_{\dot H^t},
\end{equation}
for any $-\frac d2<s,t<\frac d2$ with $s+t>0$, and that 
\begin{equation}\label{Prod_Law2}
	\|fg\|_{\dot H^s} \lesssim \|f\|_{L^\infty\cap \dot B^\frac d2_{2,\infty}}\|g\|_{\dot H^s},
\end{equation}
for all $-\frac d2<s<\frac d2$.

We conclude this section with a few (interpolation-type) estimates based on frequency decompositions. The proof  of the next lemma is given   in \cite[Lemmas 7.3 and 7.4]{ag20}.  
 \begin{lemma}\label{Lemma_h_low_Estimate}
 In any dimension $d\geq 1$, it holds   
 \begin{enumerate}
\item  for any ${s}>\frac{d}{2}$ and $0\leq t_0<t$,  that  
 \begin{equation*}
\norm { ({\rm Id}- S_0)h}_{L^2 ([t_0,t];L^\infty)}\lesssim \norm {  h}_{L^2([t_0,t]; \dot{H}^{\frac{d}{2}}) } \log^{\frac{1}{2}}  \left( e + \frac{  \norm { h}_{L^2([t_0,t];\dot{B}_{2,\infty}^{{s}}) }}{ \norm {  h}_{L^2([t_0,t]; \dot{H}^{\frac{d}{2}}) }} \right).
\end{equation*}

\item for any  $0\leq t_0<t$, that 
\begin{equation*}
\norm {S_0h}_{L^2 ([t_0,t];L^\infty)}\lesssim \norm {  h}_{L^2([t_0,t]; \dot{H}^{\frac{d}{2}}) } \log^{\frac{1}{2}}  \left( e + \frac{  \norm { h}_{L^2([t_0,t];L^2) }}{ \norm {  h}_{L^2([t_0,t]; \dot{H}^{\frac{d}{2}}) }} \right). 
\end{equation*}
\end{enumerate}
 \end{lemma}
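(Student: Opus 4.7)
The plan is to prove both bounds by a careful dyadic-frequency decomposition, splitting the sum of Littlewood--Paley blocks at a well-chosen cutoff and then optimizing. For clarity I will work at the level of $L^2_t L^\infty_x$ directly rather than proving a pointwise in time estimate and integrating.

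For part (1), I would start from
\begin{equation*}
\norm{({\rm Id}-S_0)h}_{L^2([t_0,t];L^\infty)} \leq \sum_{j\geq 0} \norm{\Delta_j h}_{L^2([t_0,t];L^\infty)}
\end{equation*}
and then apply Bernstein's inequality $\norm{\Delta_j h}_{L^\infty}\lesssim 2^{jd/2}\norm{\Delta_j h}_{L^2}$ inside the $L^2_t$ norm. I would split the sum at an integer $N\geq 1$ to be chosen later. For the low part $0\leq j<N$, Cauchy--Schwarz in $j$ yields
\begin{equation*}
\sum_{0\leq j<N} 2^{jd/2}\norm{\Delta_j h}_{L^2_tL^2}
\lesssim N^{1/2}\Big(\sum_{j\geq 0}2^{jd}\norm{\Delta_j h}_{L^2_tL^2}^2\Big)^{1/2}
\lesssim N^{1/2}\norm{h}_{L^2_t\dot H^{d/2}}.
\end{equation*}
For the tail $j\geq N$, the hypothesis $s>d/2$ turns the series into a geometric one:
\begin{equation*}
\sum_{j\geq N} 2^{j(d/2-s)}\cdot 2^{js}\norm{\Delta_j h}_{L^2_tL^2}
\lesssim 2^{-N(s-d/2)}\norm{h}_{L^2_t\dot B^s_{2,\infty}}.
\end{equation*}
Calling $A\bydef\norm{h}_{L^2_t\dot H^{d/2}}$ and $B\bydef\norm{h}_{L^2_t\dot B^s_{2,\infty}}$, the estimate reads $N^{1/2}A+2^{-N(s-d/2)}B$. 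The main step is then the optimization: choose the smallest integer $N\geq 1$ with $2^{N(s-d/2)}\geq 1+B/A$, so $N\lesssim 1+\log(e+B/A)$, and both contributions are bounded by $A\log^{1/2}(e+B/A)$, as desired.

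For part (2), the same structure works, but going in the opposite direction. Writing $S_0h=\sum_{j<0}\Delta_j h$ (modulo a harmless low-frequency block that is handled identically), I would apply Bernstein's inequality and split at an integer $-N$ with $N\geq 1$:
\begin{equation*}
\sum_{-N\leq j<0}2^{jd/2}\norm{\Delta_j h}_{L^2_tL^2}\lesssim N^{1/2}\norm{h}_{L^2_t\dot H^{d/2}}
\end{equation*}
by Cauchy--Schwarz in $j$ exactly as before, while for the very low frequencies $j<-N$,
\begin{equation*}
\sum_{j<-N}2^{jd/2}\norm{\Delta_j h}_{L^2_tL^2}\leq \Big(\sup_j\norm{\Delta_j h}_{L^2_tL^2}\Big)\sum_{j<-N}2^{jd/2}\lesssim 2^{-Nd/2}\norm{h}_{L^2_tL^2}.
\end{equation*}
Then I would optimize over $N$ with the same rule, balancing $N^{1/2}A$ against $2^{-Nd/2}B$ where now $B=\norm{h}_{L^2_tL^2}$, which gives the claimed bound.

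The only delicate point is the optimization step, which relies on the elementary observation that for positive reals the function $N\mapsto N^{1/2}+2^{-N\alpha}(B/A)$ (any $\alpha>0$) is minimized, up to multiplicative constants, at $N\sim \alpha^{-1}\log(e+B/A)$, and this minimum value is comparable to $\log^{1/2}(e+B/A)$. The powers of $\alpha^{-1}$ are absorbed by the implicit constants in $\lesssim$. Nothing else really requires care: Bernstein, Cauchy--Schwarz in $j$, and the definitions of $\dot B^s_{2,\infty}$ and $\dot H^{d/2}=\dot B^{d/2}_{2,2}$ do all the work.
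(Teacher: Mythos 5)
Your proof is correct. Note that the paper itself does not prove this lemma; it simply cites Lemmas 7.3 and 7.4 of [ag20], so there is no in-text proof to compare against. Your argument -- Minkowski (to pull the sum over $j$ outside the $L^2_t$ norm), Bernstein's inequality, a dyadic split at a frequency threshold, and then optimization over the threshold -- is the standard and direct route. A small point worth appreciating: by working at the level of the $L^2_t L^\infty_x$ norms directly, you sidestep the alternative of first proving the fixed-time inequality
\begin{equation*}
\norm{(\mathrm{Id}-S_0)h(\tau)}_{L^\infty} \lesssim \norm{h(\tau)}_{\dot H^{d/2}} \log^{\frac12}\Big(e + \frac{\norm{h(\tau)}_{\dot B^s_{2,\infty}}}{\norm{h(\tau)}_{\dot H^{d/2}}}\Big)
\end{equation*}
and then integrating in time, which would require a concavity/Jensen-type argument to pass the logarithm through the time integral and reach the claimed ratio of $L^2_t$ norms. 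Your formulation chooses a single cutoff $N$ for the entire time interval, which is what makes the Cauchy--Schwarz in $j$ (after Fubini) land directly on $\norm{h}_{L^2_t \dot H^{d/2}}$; this is the cleaner way to organize the computation. The only details you should be explicit about if writing this up in full are (i) the implicit constants in the tail sums and in $N \lesssim 1 + \log(e+B/A)$ depend on $s - d/2$ (part 1) and on $d$ (part 2), but both are fixed, so this is harmless, and (ii) the passage from $\sup_j 2^{js}\norm{\Delta_j h}_{L^2_t L^2}$ to $\norm{h}_{L^2_t \dot B^s_{2,\infty}}$, and from $\sup_j\norm{\Delta_j h}_{L^2_t L^2}$ to $\norm{h}_{L^2_t L^2}$, both of which follow by taking the sup inside the time integral (respectively, the uniform $L^2$-boundedness of $\Delta_j$). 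Both hold.
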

 
Finally, we also give in the next result a useful commutator estimate.

\begin{lemma}\label{lemma:commutator}
	Let $d\geq 1$ and $M$ be a smooth and homogeneous function of degree $m$ away from the origin, for some $m\in \mathbb{R}$. Further consider two functions 
	\begin{equation*}
		u\in \dot W^{1,\infty} \cap B^{\delta}_{r,\infty} (\mathbb{R}^d)
		 \quad \text{and}
		 \quad 
		  v \in B^{s+1}_{\infty,\infty} \cap B^{s}_{r,\infty}(\mathbb{R}^d),
	\end{equation*}
	for some $   s \in \mathbb{R}$,  $ \delta \in (0,1)$ and $ r \in [1,\infty]$.
	 Then, it holds that 
	\begin{equation*}
		\norm {[T_u, M(D)] v}_{B^{s-m+1 }_{r,1}}\lesssim  \norm {\nabla u}_{L^\infty} \norm { v}_{B^s_{r, \infty}}  \log \left(e+ \frac{    \norm { u}_{B^{\delta }_{r,\infty}} \norm { v}_{B^{s+ 1 }_{\infty , \infty}} }{ \norm {\nabla u}_{L^\infty} \norm { v}_{B^s_{r, \infty}}   } \right) .
	\end{equation*} 
\end{lemma}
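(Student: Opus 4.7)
The plan is to prove the desired logarithmic commutator estimate by deriving two complementary dyadic commutator bounds and then combining them via an optimal split of the Littlewood--Paley sum. First, I would invoke Bony's paraproduct decomposition to write $[T_u,M(D)]v=\sum_k [S_{k-2}u,M(D)]\Delta_k v$; since the $k$-th term has Fourier support in an annulus $\{|\xi|\sim 2^k\}$, only finitely many values of $k$ (namely $|k-j|\leq C$) contribute to $\Delta_j[T_u,M(D)]v$, so it suffices to control each dyadic piece $[S_{k-2}u,M(D)]\Delta_k v$ in $L^r$. Writing $M(D)\Delta_k v=K_M^k*\Delta_k v$ with $K_M^k=\mathcal{F}^{-1}(M\tilde\varphi_k)$ (a rescaling of a fixed Schwartz kernel, by the smoothness and $m$-homogeneity of $M$ away from the origin), the kernel satisfies $\norm{|y|^\alpha K_M^k}_{L^1}\lesssim 2^{k(m-\alpha)}$ for any $\alpha\geq 0$, and one obtains the pointwise identity
\begin{equation*}
[S_{k-2}u,M(D)]\Delta_k v(x)=\int K_M^k(y)\bigl(S_{k-2}u(x)-S_{k-2}u(x-y)\bigr)\Delta_k v(x-y)\,dy.
\end{equation*}

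From this representation I would extract two bounds via Minkowski's inequality in the convolution variable $y$. The \emph{good} bound follows from the mean value inequality $|S_{k-2}u(x)-S_{k-2}u(x-y)|\leq|y|\norm{\nabla u}_{L^\infty}$ and reads
\begin{equation*}
\norm{[S_{k-2}u,M(D)]\Delta_k v}_{L^r}\lesssim 2^{k(m-1)}\norm{\nabla u}_{L^\infty}\norm{\Delta_k v}_{L^r}.
\end{equation*}
The \emph{rough} bound rests on the classical characterization $\norm{f(\cdot+y)-f(\cdot)}_{L^r}\lesssim|y|^\delta\norm{f}_{\dot B^\delta_{r,\infty}}$ valid for $\delta\in(0,1)$: applied to $f=S_{k-2}u$ and paired with $\Delta_k v$ placed in $L^\infty$ inside the $L^r_x$ norm, it produces
\begin{equation*}
\norm{[S_{k-2}u,M(D)]\Delta_k v}_{L^r}\lesssim 2^{k(m-\delta)}\norm{u}_{B^\delta_{r,\infty}}\norm{\Delta_k v}_{L^\infty}.
\end{equation*}

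Setting $A\bydef\norm{\nabla u}_{L^\infty}\norm{v}_{B^s_{r,\infty}}$ and $B\bydef\norm{u}_{B^\delta_{r,\infty}}\norm{v}_{B^{s+1}_{\infty,\infty}}$, and inserting $\norm{\Delta_j v}_{L^r}\leq 2^{-js}\norm{v}_{B^s_{r,\infty}}$ into the good bound and $\norm{\Delta_j v}_{L^\infty}\leq 2^{-j(s+1)}\norm{v}_{B^{s+1}_{\infty,\infty}}$ into the rough bound, the two estimates translate to $2^{j(s-m+1)}\norm{\Delta_j[T_u,M(D)]v}_{L^r}\lesssim\min\bigl(A,\,2^{-j\delta}B\bigr)$ for each $j\geq-1$. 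Summing after splitting at the optimal cutoff $N\sim\delta^{-1}\log_2(e+B/A)$ (so that the good bound absorbs the block $j\leq N$, contributing $\lesssim NA$, while the rough bound provides a geometric tail for $j>N$, contributing $\lesssim\delta^{-1}2^{-N\delta}B\lesssim A$) yields precisely the target control $\lesssim A\log(e+B/A)$.

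The hard part will be producing the rough bound cleanly with the sharp gain $2^{k(m-\delta)}$ while involving only the norm $\norm{u}_{B^\delta_{r,\infty}}$. A naive attempt through the pointwise H\"older seminorm $\norm{S_{k-2}u}_{\dot B^\delta_{\infty,\infty}}$ loses powers of $2^k$ through Bernstein's inequality whenever $\delta<d/r$, and would destroy the logarithmic interpolation. Switching to the $L^r$--modulus of continuity of $u$, by performing the H\"older split inside the $L^r_x$ norm so that $\Delta_k v$, rather than $S_{k-2}u$, occupies the $L^\infty$ slot, is the key maneuver that preserves the exact scaling needed for the split to produce the logarithm.
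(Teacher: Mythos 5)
Your proposal is correct and follows essentially the same strategy as the paper: split the Littlewood--Paley sum at an optimal cutoff $N$ and use, for the low- and high-frequency blocks respectively, a Lipschitz-type commutator bound and a $\delta$-H\"older-type one scaling like $2^{-j\delta}\norm{u}_{B^\delta_{r,\infty}}$, then optimize $N$ to produce the logarithm. The only structural difference is that the paper obtains the two base commutator estimates directly from \cite[Lemma 2.99]{bcd11} (applied with regularity indices $s-m+1$ and $s-m+1+\delta$), whereas you re-derive them from the kernel representation of $M(D)$ via the mean-value versus $L^r$-modulus-of-continuity dichotomy inside the convolution integral --- interchangeable with the paper's route, but more self-contained.
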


\begin{proof}
Let $N \geq 1$ to be fixed in a while and write that  
\begin{equation*}
	\begin{aligned}
		\norm {[T_u, M(D)] v}_{B^{s-m+1}_{r,1}} 
		&\leq \norm { S_N  [T_u, M(D)] v }_{B^{s-m+1}_{r,1}} + \norm {(\id -S_N) [T_u, M(D)] v }_{B^{s-m+1}_{r,1}}
		\\
		& \lesssim N  \norm {[T_u, M(D)] v}_{B^{s-m+1}_{r,\infty}}  + 2^{- N\delta }  \norm {[T_u, M(D)] v}_{B^{s-m+1 + \delta}_{r,\infty}} .
	\end{aligned}
\end{equation*}
Thus, applying the two estimates from  \cite[Lemma 2.99]{bcd11} to control each of the preceding terms, respectively, one finds that 
\begin{equation*}
	\begin{aligned}
		\norm {[T_u, M(D)] v}_{B^{s-m+1}_{r,1}} 
		& \lesssim N \norm {\nabla u}_{L^\infty} \norm { v}_{B^s_{r, \infty}}  + 2^{- N\delta }  \norm {\nabla u}_{B^{\delta    -1}_{r,\infty}} \norm { v}_{B^{s+ 1 }_{\infty , \infty}}  .
	\end{aligned}
\end{equation*}
Therefore, optimizing the value of $N$ by setting
\begin{equation*}
	N\delta= \log_2  \left(e+\frac{    \norm {\nabla u}_{B^{\delta    -1}_{r,\infty}} \norm { v}_{B^{s+ 1 }_{\infty , \infty}} }{ \norm {\nabla u}_{L^\infty} \norm { v}_{B^s_{r, \infty}}   } \right),
\end{equation*}
 we end up with 
\begin{equation*}
		\norm {[T_u, M(D)] v}_{B^{s-m+1}_{r,1}} \lesssim  \norm {\nabla u}_{L^\infty} \norm { v}_{B^s_{r, \infty}}  \log \left(e+ \frac{    \norm {\nabla u}_{B^{\delta    -1}_{r,\infty}} \norm { v}_{B^{s+ 1 }_{\infty , \infty}} }{ \norm {\nabla u}_{L^\infty} \norm { v}_{B^s_{r, \infty}}   } \right)  ,
\end{equation*}
thereby concluding the proof of the lemma.
\end{proof}

\section{Lorentz spaces}\label{Section_Lorentz}

We introduce now a refined version of the standard Lebesgue spaces, the so-called Lorentz spaces, and we present several useful results that are utilized throughout the paper. For more details, we refer the reader to \cite[Chapter 1]{Grafakos}.

Let $(X, \mu)$ be a measure space  and $0<p,r\leq \infty$. We say that $f$ belongs to the Lorentz space $L^{p,r}(X)$ if and only if  
\begin{equation*}
\|f\|_{L^{p,r}}\bydef \left\{
\begin{array}{ll}
p^{\frac{1}{r}}\left(\int_0^\infty (s\left(\mu\{ |f|>s\}\right)^{\frac{1}{p}})^r\frac{ds}{s} \right)^{\frac{1}{r}} &\quad \text{if}\quad r<\infty,\\
\sup_{s>0} s \left(\mu\{ |f|>s\} \right)^{\frac{1}{p}}&\quad \text{if}\quad r=\infty,
\end{array}
\right.
\end{equation*}
is finite. Note that the space $L^{\infty,r}(X)$, for $r\neq \infty$, is reduced to the trivial space $\{0\}$. This case is therefore always off consideration. 

The following lemma summarizes several classical results on Lorentz spaces.
\begin{lemma}  \label{lemma:Lorentz} The following holds
\begin{enumerate} 
\item (Embedding) For any $p\in [1,\infty]$ and  $1\leq r_1\leq r_2 \leq \infty$ $$L^{p,r_1}\hookrightarrow L^{p,r_2} \quad \text{and} \quad L^{p,p}=L^p.$$  
\item  (H\"older's inequality) For $1\leq p,p_1,p_2\leq \infty$ and $1\leq r, r_1, r_2\leq \infty$ 
\begin{equation*}
\|fg\|_{L^{p,r}}\leq \|f\|_{L^{p_1,r_1}}\|g\|_{L^{p_2,r_2}}\quad \text{with}\quad \frac{1}{p}=\frac{1}{p_1}+\frac{1}{p_2}\quad \text{and}\quad \frac{1}{r}=\frac{1}{r_1}+\frac{1}{r_2}.
\end{equation*} 
\item (Power law) For any $\alpha>0$ and nonnegative measurable function $f$ 
\begin{equation*}
\|f^\alpha\|_{L^{p,r}}=\|f\|_{L^{\alpha p,\alpha r}}^\alpha. 
\end{equation*}

\end{enumerate}
 \end{lemma}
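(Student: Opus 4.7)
The plan is to prove each of the three items separately, working directly from the definition of the Lorentz quasi-norm via the distribution function $d_f(s)\bydef\mu\{|f|>s\}$. These are all classical facts appearing in \cite[Chapter 1]{Grafakos}, so the aim is to keep the exposition concise and self-contained rather than to introduce new ideas.

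For item (1), the identity $L^{p,p}=L^p$ follows immediately from the layer-cake formula
\begin{equation*}
\|f\|_{L^p}^p = p\int_0^\infty s^{p-1}d_f(s)\,ds,
\end{equation*}
which coincides, up to the substitution $s\mapsto s^p$, with the $L^{p,p}$ quasi-norm raised to the power $p$. For the embedding $L^{p,r_1}\hookrightarrow L^{p,r_2}$, the key observation is that the function $s\mapsto s\,d_f(s)^{1/p}$ is dominated, pointwise in $s$, by $\|f\|_{L^{p,\infty}}$, while the latter is in turn controlled by $\|f\|_{L^{p,r_1}}$ through a Chebyshev-type argument (testing the integral defining $\|f\|_{L^{p,r_1}}$ against a single level). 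Combining these with the monotonicity of $d_f$, one splits the $L^{p,r_2}$ integral at a suitable threshold and trades one factor of $s\,d_f(s)^{1/p}$ for the $L^{p,r_1}$ bound, leaving an exponent that integrates.

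Item (2) is the Hölder inequality in Lorentz spaces, which I would obtain by passing to the equivalent quasi-norm built from the decreasing rearrangement $f^*$ of $f$. The pointwise submultiplicativity $(fg)^*(t)\leq f^*(t/2) g^*(t/2)$, combined with the usual Hölder inequality for the weighted Lebesgue spaces $L^r(t^{r/p-1}dt)$ on the positive half-line, yields directly the claimed trilinear estimate with the stated relations on $p,p_1,p_2$ and $r,r_1,r_2$. The endpoints $r_i=\infty$ or $p_i=\infty$ are handled separately but analogously.

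Item (3) is the power rule, which is the most transparent of the three: for a nonnegative measurable $f$, the distribution function satisfies $d_{f^\alpha}(s)=d_f(s^{1/\alpha})$, so substituting $s=t^\alpha$ in the integral defining $\|f^\alpha\|_{L^{p,r}}$ turns it, after an obvious bookkeeping of the exponents, into $\|f\|_{L^{\alpha p,\alpha r}}^\alpha$. The supremum case $r=\infty$ is identical. I do not anticipate any genuine obstacle in any of these arguments; the only point requiring mild care is keeping track of the quasi-norm constants in (1) and the endpoint cases in (2), but since the paper only uses these bounds qualitatively, sharp constants are not needed.
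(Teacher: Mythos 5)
Your proposal is correct, and it fills in what the paper deliberately omits: the paper states this lemma without proof, merely pointing the reader to \cite[Chapter 1]{Grafakos} at the start of the section, so there is no ``paper's proof'' to compare against other than the standard textbook treatment your sketch reproduces. All three arguments are the canonical ones: the layer-cake identity for $L^{p,p}=L^p$; the bound $\|f\|_{L^{p,\infty}}\lesssim\|f\|_{L^{p,r_1}}$ (obtained by restricting the defining integral to $(0,s_0)$, using monotonicity of $d_f$, and optimizing over $s_0$) combined with the factorization $\big(s\,d_f(s)^{1/p}\big)^{r_2}=\big(s\,d_f(s)^{1/p}\big)^{r_2-r_1}\big(s\,d_f(s)^{1/p}\big)^{r_1}$ for the embedding — note that once you have the weak-norm bound you do not actually need to split the integral at a threshold, though that variant also works; the submultiplicativity $(fg)^*(t)\leq f^*(t/2)g^*(t/2)$ followed by weighted Hölder on $(0,\infty)$ for item (2); and the substitution $d_{f^\alpha}(s)=d_f(s^{1/\alpha})$ for item (3), which gives the power law as an exact identity. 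One small point worth flagging in item (2): the $(fg)^*(t)\leq f^*(t/2)g^*(t/2)$ step introduces a multiplicative constant, so the Hölder inequality in Lorentz spaces holds only up to a constant depending on $p,p_1,p_2,r,r_1,r_2$, not with constant one as the lemma's display might suggest. You already note that sharp constants are irrelevant to the paper's use of this lemma, which is the right reading; still, for precision one should write $\lesssim$ rather than $\leq$ in item (2), or switch to the equivalent quasi-norm based on $f^{**}$ for which the constant can be tracked more cleanly.
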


We end up this section by proving a crucial interpolation-type estimate in Lorentz spaces. The following lemma is in the spirit of Lemma \ref{Lemma_h_low_Estimate}, and generalizes \cite[Lemma 2.2]{BC13}
 \begin{lemma}\label{Lemma:interpolation X}
Let $ f$ be a nonnegative function defined on a measure space $(X,\mu)$, with $\mu(X)< \infty.$ 
 If $f\in L^{p,\infty}(X)$, for some $p\in (1,\infty]$, then $f\in L^{q,1}(X)$, for any $q\in [1,p)$. 
 
 Moreover, the following inequality holds for any $r\in [1,\infty)$
\begin{equation*}
\norm f_{L^{q,r}(X) } ^r \leq \frac {p}{p-q}\norm f_{L^{q,\infty}(X) } ^r  \left (  \frac 1r + \log \left( \frac{ \norm f_{L^{p,\infty}(X) }} {\norm f_{L^{q,\infty}(X) }}  \left( \mu(X)\right)^{ \frac 1q - \frac 1p}\right)\right).
\end{equation*}
In particular, for $p=\infty$ and $r=2$, we have, for any $q\in (1,\infty)$, that
\begin{equation}\label{Interpolation_Ineq}
\norm f_{L^{q,2}(X) }   \leq  \norm f_{L^{q,\infty}(X) }    \left ( 1  + \log ^{\frac 12} \left( \frac{ \norm f_{L^{\infty}(X) }} {\norm f_{L^{q,\infty}(X) }}  \left( \mu(X)\right)^{ \frac 1q  }\right)\right).
\end{equation}
\end{lemma}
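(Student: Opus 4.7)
The plan is to exploit the distribution-function representation of the Lorentz norms, together with the two a priori bounds $d_f(s) \bydef \mu\{|f|>s\} \leq \min\!\left(\mu(X),\|f\|_{L^{p,\infty}}^p/s^p\right)$ and $d_f(s) \leq \|f\|_{L^{q,\infty}}^q/s^q$, by splitting the domain of integration at two carefully chosen thresholds.

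Starting from
$$\|f\|_{L^{q,r}(X)}^r \;=\; q\int_0^\infty s^{r-1}\, d_f(s)^{r/q}\,ds,$$
I would split this integral into three pieces using the thresholds
$$B \bydef \frac{\|f\|_{L^{q,\infty}}}{\mu(X)^{1/q}} \quad\text{and}\quad A \bydef \left(\frac{\|f\|_{L^{p,\infty}}^p}{\|f\|_{L^{q,\infty}}^q}\right)^{1/(p-q)}.$$
An elementary estimate obtained by maximising $s^q\min(\mu(X),\|f\|_{L^{p,\infty}}^p/s^p)$ in $s$ yields the nested-space bound $\|f\|_{L^{q,\infty}}\leq\|f\|_{L^{p,\infty}}\mu(X)^{1/q-1/p}$, which guarantees $B\leq A$. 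On $[0,B]$, the trivial estimate $d_f\leq\mu(X)$ produces a contribution of size $\|f\|_{L^{q,\infty}}^r$. On $[B,A]$, substituting $d_f(s)^{r/q}\leq \|f\|_{L^{q,\infty}}^r/s^r$ produces the logarithmic contribution $q\|f\|_{L^{q,\infty}}^r\log(A/B)$. On $[A,\infty)$, the $L^{p,\infty}$ bound yields a decay $s^{r-1-rp/q}$, integrable since $p>q$, producing one more term proportional to $\|f\|_{L^{q,\infty}}^r$ thanks precisely to the choice of $A$, which was set so that the $L^{q,\infty}$ and $L^{p,\infty}$ bounds cross.

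Summing these three contributions and factoring out $\frac{p}{p-q}\|f\|_{L^{q,\infty}}^r$, the remaining logarithmic term simplifies because a direct computation gives
$$\frac{p-q}{p}\log\!\left(\frac{A}{B}\right) \;=\; \log\!\left(\frac{\|f\|_{L^{p,\infty}}}{\|f\|_{L^{q,\infty}}}\,\mu(X)^{1/q-1/p}\right),$$
which is exactly the argument of the logarithm appearing in the statement. This yields the main inequality, and in particular the inclusion $f\in L^{q,r}$ for any $1\leq r<\infty$, hence $f\in L^{q,1}$. The special case \eqref{Interpolation_Ineq} then follows by letting $p\to\infty$ in the general bound --- at which stage the $[A,\infty)$ contribution drops out since $d_f(s)=0$ for $s>\|f\|_{L^\infty}$ --- specialising to $r=2$, and invoking the elementary inequality $\sqrt{1/2+x}\leq 1+\sqrt{x}$ for $x\geq 0$ after taking the square root.

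I expect no serious obstacle beyond careful bookkeeping of constants. The delicate points are the verification of $B\leq A$, so that the three integration intervals cover $[0,\infty)$ consistently, and the identification of the optimal thresholds as the crossing points of the two a priori bounds on $d_f$; this is what produces the sharp logarithmic factor rather than a power of $\|f\|_{L^{p,\infty}}/\|f\|_{L^{q,\infty}}$.
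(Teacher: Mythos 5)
Your proposal is essentially the same proof as the paper's: you split the distribution-function integral at the same two thresholds (your $B$ and $A$ are the paper's $\alpha$ and $\beta$), use the trivial bound $d_f\le\mu(X)$ on $[0,B]$, the $L^{q,\infty}$ bound on $[B,A]$, the $L^{p,\infty}$ bound on $[A,\infty)$, and the same algebraic simplification of $\log(A/B)$. The paper handles $p=\infty$ by directly taking $\beta=\|f\|_{L^\infty}$ so that $\mathcal I_3=0$, which is the same observation you make that $d_f$ vanishes beyond $\|f\|_{L^\infty}$; and the elementary bound $\sqrt{1/2+x}\le 1+\sqrt{x}$ you invoke at the end is implicit in the paper's passage from the general estimate to \eqref{Interpolation_Ineq}.
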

\begin{proof} 
It is easy to check, as long as $ 1\leq q\leq p \leq \infty $, that
$$  \norm f_{L^{q,\infty}(X) } \leq  \left( \mu(X)\right)^{ \frac 1q - \frac 1p}    \norm f_{L^{p,\infty}(X) } .$$ 
Therefore, let us split the $L^{q,1}(X) $ norm of $f$ into 
\begin{equation*}
\begin{aligned}
  \norm f_{L^{q,r}(X) }^r  = \left(  \int_{ 0}^ \alpha + \int_{ \alpha}^ \beta + \int_{ \beta}^ \infty \right) \lambda ^r \Big(   \mu \left \{  x\in X : f(x)> \lambda \right\}\Big)^{\frac rq} \frac {d\lambda}{\lambda} \bydef \sum_{i=1}^3 \mathcal{I}_i.
\end{aligned}
\end{equation*} 
We will be using H\"older's inequality to deal with each part of the above decomposition.

For the first term, we have that 
$$\mathcal{I}_1 \leq \frac 1r  \left( \mu(X)\right)^{ \frac rq  } \alpha^r.$$
For the second one, we find that    
$$ \mathcal{I}_2 \leq  \left( \sup_{ \lambda >0} \left( \lambda  \left(  \mu \left \{  x\in X : f(x)> \lambda \right\}\right)^{\frac 1q} \right) \right)^r  \int_\alpha ^ \beta\frac {d\lambda}{\lambda}  = \norm f_{L^{q,\infty}(X)} ^r\log\left( \frac {\beta}{\alpha}\right).$$
As for the last term, we obtain, as long as $p\in (q,\infty)$, that  
\begin{equation*}
\begin{aligned} 
 \mathcal{I}_3 & \leq   \sup_{ \lambda >0} \left( \lambda  \left(  \mu \left \{  x\in X : f(x)> \lambda \right\}\right)^{\frac 1p} \right) ^{  \frac {rp}q}  \int_{\beta}^\infty \lambda^{ r(1- \frac pq) } \frac {d\lambda}{\lambda} \\
 &= \frac{ q}{r(p-q)} \norm f_{L^{p,\infty}(X)} ^{\frac {rp}q}  \beta^{r( 1- \frac pq)}.
 \end{aligned}
\end{equation*} 
Then, choosing 
 $$ \alpha = \frac{\norm f_{ L^{q,\infty}} }{ \left( \mu(X)\right)^{\frac 1q} } , \qquad \beta = \left( \frac {\norm f_{L^{p,\infty}}^p}{\norm f_{L^{q,\infty}}^q}\right) ^{\frac{1}{p-q}},
 $$
 yields 
 $$\norm f_{L^{p,\infty}(X)} ^{\frac pq}  \beta^{ 1- \frac pq} = \norm f_{L^{q,\infty}(X)}  $$
and
 $$ \frac{\beta}{\alpha} = \left( \mu(X)\right)^{\frac 1q} \left( \frac {\norm f_{L^{p,\infty}}}{\norm f_{L^{q,\infty}}}\right) ^{\frac{p}{p-q}} =   \left( \left( \mu(X)\right)^{ \frac 1q - \frac 1p} \frac {\norm f_{L^{p,\infty}}}{\norm f_{L^{q,\infty}}}\right) ^{\frac{p}{p-q}}.$$
Thus, substituting the preceding values in the bounds on  $\mathcal{I}_i$ ($i\in \{1,2,3\}$) leads to  the desired estimate in the case $p<\infty$. 

 As for case $ p=\infty$, the arguments we used to estimate $\mathcal{I}_3$ are no longer valid. Instead, we  proceed by choosing $\beta$ as
 $$ \beta = \norm f_{L^\infty(X)}.$$
 This implies that $\mathcal{I}_3=0$, whence, the result follows by combining the estimates of $\mathcal{I}_1$ and $\mathcal{I}_2$.
 \end{proof}

\section{Maximal parabolic regularity estimates}

In this section,  for the convenience of the  reader, we recall the useful principles of maximal parabolic regularity estimates in Besov   and  Lorentz spaces for the Stokes equation
\begin{equation}\label{Stokes_System}
\left\{
\begin{array}{ll} 
 \partial_t u - \mu\Delta u  + \nabla p  =      f   , \\
\div u=0,\\
u|_{t=0}= u_0, 
\end{array}
\right.   
\end{equation}
where $t>0$ and $x\in \R^d$. 

\begin{lemma}[{\cite[Section 2B]{Danchin_al_2020}}]\label{MR:lemma:1}
Let $1<p,r<\infty$.  Then, for any divergence free  initial data  $u_0\in \dot{B}_{p,r}^{2-\frac 2r}(\R^d)$ and source term $f\in L^{r}(0,T;L^p(\R^d))$,  the solution $(u, \nabla p)$ of the  Stokes system \eqref{Stokes_System} enjoys the bounds
\begin{equation}\label{Maximl_Regularity_Stokes}
\Vert u\Vert_{L_t^\infty \dot{B}_{p,r}^{2-\frac 2r} }+\Vert (\partial_t u , \mu \nabla^2 u, \nabla p)\Vert_{L_t^r L^p }\lesssim \Vert u_0\Vert_{\dot{B}_{p,r}^{2-\frac 2r}}+ \Vert f \Vert _{L_t^r L^p }. 
\end{equation}
\end{lemma}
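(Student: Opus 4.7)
The plan is to reduce the Stokes system \eqref{Stokes_System} to the heat equation by means of Leray's projector $\mathbb{P}=\id-\nabla\Delta^{-1}\div$, and then to establish the required bounds dyadically by exploiting the smoothing properties of the heat semigroup $e^{t\mu\Delta}$. First, applying $\mathbb{P}$ to the momentum equation and using $\div u = 0$ yields the heat equation
\begin{equation*}
	\partial_t u - \mu \Delta u = \mathbb{P} f, \qquad u|_{t=0}=u_0,
\end{equation*}
while the pressure is recovered from $\nabla p = (\id -\mathbb{P})f = \nabla \Delta^{-1}\div f$. Since Leray's projector is bounded on $L^p(\mathbb{R}^d)$ for $p\in (1,\infty)$, one immediately obtains that $\norm{\nabla p}_{L^r_tL^p} \lesssim \norm{f}_{L^r_tL^p}$, which takes care of the pressure term. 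There remains to analyze the heat equation with $\mathbb{P}f$ as a source term.

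The key step is then to localize in frequency by applying $\Delta_j$ and to represent the solution via Duhamel's formula
\begin{equation*}
	\Delta_j u(t) = e^{t\mu\Delta}\Delta_j u_0 + \int_0^t e^{(t-s)\mu\Delta}\Delta_j \mathbb{P} f(s)\, ds.
\end{equation*}
A classical Bernstein-type bound (see \cite[Lemma 2.4]{bcd11}) gives $\norm{e^{\tau\mu\Delta}\Delta_j g}_{L^p}\lesssim e^{-c\mu 2^{2j}\tau}\norm{\Delta_j g}_{L^p}$, from which $\norm{\Delta_j u(t)}_{L^p}$ is bounded by the convolution in time of $e^{-c\mu 2^{2j}\cdot}$ against $\norm{\Delta_j \mathbb{P} f(\cdot)}_{L^p}$ plus the decaying initial contribution. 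Taking the $L^r$ norm in time and using Young's convolution inequality yields
\begin{equation*}
	\mu 2^{2j}\norm{\Delta_j u}_{L^r_tL^p}\lesssim \norm{\Delta_j u_0}_{L^p} + \norm{\Delta_j \mathbb{P} f}_{L^r_t L^p},
\end{equation*}
where the factor $\mu 2^{2j}$ comes from integrating the heat kernel in time. After multiplication by $2^{j(-2+2/r)}$ and taking the $\ell^r$ norm in $j$, this delivers the control of $\mu \nabla^2 u$ in $L^r_tL^p$. The bound on $\partial_t u$ in $L^r_tL^p$ then follows directly from the equation $\partial_t u = \mu\Delta u + \mathbb{P} f$. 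For the $L^\infty_t\dot B^{2-2/r}_{p,r}$-estimate on $u$, one uses instead the pointwise in time bound $\norm{\Delta_j u(t)}_{L^p}\lesssim e^{-c\mu 2^{2j}t}\norm{\Delta_j u_0}_{L^p} + \int_0^t e^{-c\mu 2^{2j}(t-s)}\norm{\Delta_j \mathbb{P} f(s)}_{L^p}\,ds$, applies H\"older's inequality in time with exponent $r$, multiplies by $2^{j(2-2/r)}$ and takes the $\ell^r$ norm.

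The most technical part of the argument is the careful combination of the two types of estimates (pointwise in time versus $L^r$ in time) together with the summation in $j$ in $\ell^r$, which is precisely what forces the homogeneous index $2-\tfrac{2}{r}$ on the initial datum (so that the trace $t\mapsto u(t)$ of functions with $\partial_t u,\nabla^2u\in L^r_tL^p$ lies exactly in $\dot B^{2-2/r}_{p,r}$). Since the result is classical and fully detailed in \cite[Section 2B]{Danchin_al_2020}, we content ourselves with this sketch.
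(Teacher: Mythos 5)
Your reduction via Leray's projector and the pressure bound are correct, but the core step of obtaining the $L^r_tL^p$ estimate on $\mu\nabla^2u$ contains a genuine gap — and this step is the entire content of the lemma. The Bernstein bound together with Young's convolution inequality in time yields the block-wise estimate $\mu 2^{2j}\norm{\Delta_j u}_{L^r_tL^p}\lesssim(\mu 2^{2j})^{1-1/r}\norm{\Delta_j u_0}_{L^p}+\norm{\Delta_j\mathbb{P}f}_{L^r_tL^p}$, and taking the $\ell^r_j$ norm of this produces, after Fubini,
\begin{equation*}
	\mu\norm{u}_{L^r_t\dot B^2_{p,r}}\lesssim \mu^{1-1/r}\norm{u_0}_{\dot B^{2-2/r}_{p,r}}+\norm{f}_{L^r_t\dot B^0_{p,r}},
\end{equation*}
which is a Besov-in-space estimate, not the asserted Lebesgue-in-space one. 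The two do not compare in the right direction for general exponents: $\dot B^2_{p,r}\hookrightarrow\dot W^{2,p}$ requires $r\leq\min(p,2)$, while $L^p\hookrightarrow\dot B^0_{p,r}$ requires $r\geq\max(p,2)$ (cf.\ Lemma \ref{Emb_Besov_Tr_Lemma}); these are simultaneously available only in the trivial Hilbertian case $r=p=2$. Moreover, the weight $2^{j(-2+2/r)}$ you introduce is the one governing the trace Besov space of the initial datum and does nothing to convert the $\ell^r$ sum over $j$ into an $L^p_x$ norm of $\nabla^2 u$.

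Maximal $L^r_tL^p_x$ regularity for the heat (hence Stokes) semigroup with $1<p,r<\infty$ is a genuinely deeper theorem: it amounts to the $L^r(\mathbb{R};L^p)$-boundedness of the operator-valued singular integral in time with kernel $K(t)=\mathbf 1_{t>0}\,\mu\nabla^2 e^{t\mu\Delta}$, which obeys only the critical decay $\norm{K(t)}_{\mathcal L(L^p)}\sim t^{-1}$. Establishing its boundedness requires a vector-valued Calder\'on--Zygmund argument (Benedek--Calder\'on--Panzone together with de Simon's Hilbertian result), or the UMD / $\mathcal R$-boundedness framework, or a space--time multiplier theorem — none of which is captured by a per-frequency Young inequality. (Your argument does, on the other hand, correctly prove the Chemin--Lerner variant of \eqref{Maximl_Regularity_Stokes} with $L^p$ in space replaced by $\dot B^0_{p,1}$ or $\dot B^0_{p,r}$; that is a strictly weaker statement in the source term.) Finally, note that the paper does not prove Lemma \ref{MR:lemma:1} itself — it cites it directly from \cite[Section~2B]{Danchin_al_2020} — so there is no in-paper proof to compare against; the reference is precisely where this heavier machinery is invoked.
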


 At last, we also recall an improvement  of the previous lemma.

 \begin{lemma}[{\cite[Proposition A.5]{Danchin_Wang_22}}]\label{M:reg:DW}
 Let $1<p,q<\infty$ and $1\leq r\leq \infty$. Then, for any divergence free  initial data  $u_0\in \dot{B}_{p,r}^{-1+\frac2p}(\R^d)$ and source term $f\in L^{q,r}(0,T;L^p(\R^d))$,  the solution $(u, \nabla p)$ of the  Stokes system \eqref{Stokes_System} enjoys the bounds
 \begin{equation*}
 \begin{aligned}
\Vert  u \Vert_{L^\infty _t  \dot{B}^{-1+ \frac 2p}_{p,r}} +  \Vert (\partial_t u, \nabla ^2 u,\nabla p)\Vert_{L^{q,r}_t  L^p}  &\lesssim \norm {u_0}_{ \dot{B}^{-1+ \frac 2p}_{p,r} }+ \Vert f \Vert_{L^{q,r}_t  L^p}. 
\end{aligned} 
\end{equation*}
  In addition, if $\frac2q+\frac dp>2$, then, for all $s\in (q,\infty)$ and $m\in (p,\infty)$ such that 
  \begin{equation*}
\frac{d}{2m}+\frac{1}{s}=\frac{d}{2p}+\frac{1}{q}-1,
\end{equation*}
it holds that 
\begin{equation}\label{v_L_r_s_Estimate}
\norm {u}_{L^{s,r}_t L^m}\lesssim \Vert  u \Vert_{L^\infty _t  \dot{B}^{-1+ \frac 2p}_{p,r}} +  \Vert (\partial_t u, \nabla ^2 u)\Vert_{L^{q,r}_t  L^p}. 
\end{equation}

 \end{lemma}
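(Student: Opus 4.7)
The statement is a Lorentz-valued refinement of the classical $L^q$ maximal parabolic regularity of Lemma \ref{MR:lemma:1}, which I would deduce in two stages: first by splitting off the effect of the initial datum, then by applying real interpolation in the time variable to the remaining inhomogeneous problem. I decompose $u = u_H + u_I$, where $u_H(t) = e^{t \Delta}u_0$ is the Stokes heat-semigroup evolution of $u_0$ (the associated pressure vanishes since $u_0$ is divergence-free), and $u_I$ solves the Stokes system with zero initial data and source $f$. Note that the implicit scaling of the lemma is $\tfrac{1}{p} + \tfrac{1}{q} = \tfrac{3}{2}$, since $2-\tfrac{2}{q} = -1+\tfrac{2}{p}$ is the only way the Besov trace index of $L^q_t\dot W^{2,p} \cap W^{1,q}_tL^p$ can match $\dot B^{-1+2/p}_{p,r}$.

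For $u_H$, I would invoke the thermic characterization of homogeneous Besov norms: taking $m = 1$ and $s = -1 + \tfrac{2}{p}$,
\begin{equation*}
\|u_0\|_{\dot B^{-1+2/p}_{p,r}} \sim \bigl\| t^{3/2 - 1/p}\, \Delta e^{t\Delta} u_0\bigr\|_{L^r(\R^+, dt/t;\, L^p)}.
\end{equation*}
Since $\tfrac{3}{2} - \tfrac{1}{p} = \tfrac{1}{q}$, the right-hand side is a weighted $L^r$-norm in $t$ with weight $t^{1/q}$. Because $t \mapsto \|\nabla^2 e^{t\Delta} u_0\|_{L^p}$ is nonincreasing (hence equal to its own decreasing rearrangement), this weighted $L^r$-norm coincides, up to universal constants, with the Lorentz norm $\|\nabla^2 u_H\|_{L^{q,r}_t L^p}$. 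The identity $\partial_t u_H = \Delta u_H$ and the classical boundedness of $e^{t\Delta}$ on $\dot B^{-1+2/p}_{p,r}$ then cover the remaining components of the estimate for $u_H$.

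For $u_I$, I would apply real interpolation of Bochner--Lorentz spaces. Lemma \ref{MR:lemma:1} applied to vanishing initial data supplies, for every $1 < q_\ast < \infty$, a bounded linear operator $\Phi \colon L^{q_\ast}_t L^p \to L^{q_\ast}_t L^p$ sending $f$ to $(\partial_t u_I, \nabla^2 u_I, \nabla p_I)$. Picking $1 < q_0 < q < q_1 < \infty$, the standard identity $(L^{q_0}_t L^p, L^{q_1}_t L^p)_{\theta, r} = L^{q,r}_t L^p$ with $\tfrac{1}{q} = \tfrac{1-\theta}{q_0} + \tfrac{\theta}{q_1}$ extends $\Phi$ to a bounded operator $L^{q,r}_t L^p \to L^{q,r}_t L^p$ for every $r \in [1,\infty]$. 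The accompanying $L^\infty_t \dot B^{-1+2/p}_{p,r}$-bound on $u_I$ follows from a Lorentz-refined parabolic trace theorem: any $u$ with $\partial_t u, \nabla^2 u \in L^{q,r}_t L^p$ lies in $C(\R^+; \dot B^{2-2/q}_{p,r})$, and under $\tfrac{1}{p} + \tfrac{1}{q} = \tfrac{3}{2}$ the trace index $2 - \tfrac{2}{q}$ is precisely $-1 + \tfrac{2}{p}$. Summing $u_H$ and $u_I$ yields the first estimate.

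For the embedding bound \eqref{v_L_r_s_Estimate}, I would interpolate between $L^\infty_t \dot B^{-1+2/p}_{p,r}$ and $L^{q,r}_t \dot W^{2,p}$ by the real $(\theta,r)$-method, landing in an intermediate space $L^{s,r}_t \dot B^\sigma_{p,r}$ with $\tfrac{1}{s} = \tfrac{\theta}{q}$ and $\sigma = (1-\theta)(-1+\tfrac{2}{p}) + 2\theta$; composing with the Sobolev embedding $\dot B^\sigma_{p,r} \hookrightarrow L^m$, valid when $\sigma = \tfrac{d}{p} - \tfrac{d}{m}$, yields the target space, and eliminating $\theta$ between the two linear relations recovers precisely $\tfrac{d}{2m} + \tfrac{1}{s} = \tfrac{d}{2p} + \tfrac{1}{q} - 1$. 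The hypothesis $\tfrac{2}{q} + \tfrac{d}{p} > 2$ guarantees that the required $\theta$ lies in $(0,1)$. The most delicate point I anticipate is verifying that the pressure component $\nabla p_I$ is genuinely the image of $f$ under a single linear operator compatible with real interpolation — this ultimately reduces to uniform $L^p$-continuity of the Leray projection and Riesz transforms, but must be packaged consistently across the two endpoints $q_0, q_1$ so that the interpolation functor sees a single map rather than two a priori distinct constructions.
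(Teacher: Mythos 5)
The paper does not actually prove this lemma: it is quoted, in adapted notation, from \cite[Proposition A.5]{Danchin_Wang_22}, so there is no internal argument to compare yours with, and your plan can only be judged on its own merits. For the first estimate your route is the natural one and is essentially sound: the splitting $u=u_H+u_I$, the thermic characterization of $\dot B^{-1+\frac2p}_{p,r}$ combined with the monotonicity of $t\mapsto\|\nabla^2e^{t\Delta}u_0\|_{L^p}$ (so that the weighted $L^r(dt/t)$ norm really is the Lorentz norm), and the Lions--Peetre identity $(L^{q_0}(L^p),L^{q_1}(L^p))_{\theta,r}=L^{q,r}(L^p)$ applied to the zero-data solution operator are all correct; the pressure is even simpler than you anticipate, since $\nabla p=\nabla\Delta^{-1}\div f$ pointwise in time, so only $L^p$-boundedness of Riesz transforms is needed and no interpolation issue arises. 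The step you merely assert --- the ``Lorentz-refined parabolic trace theorem'' yielding the $L^\infty_t\dot B^{2-\frac2q}_{p,r}$ bound for the Duhamel part --- is, however, precisely the nontrivial content of the cited proposition and cannot just be invoked: it requires the trace (mean) method of real interpolation for the couple $(L^p,\dot W^{2,p})$, or a reworking of the thermic argument on the Duhamel formula; a naive Littlewood--Paley estimate hits a Minkowski-type obstruction for general $r$.

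The genuine gap is in your derivation of \eqref{v_L_r_s_Estimate}. The interpolation you invoke, namely $(L^\infty_t\dot B^{-1+\frac2p}_{p,r},\,L^{q,r}_t\dot W^{2,p})_{\theta,r}\hookrightarrow L^{s,r}_t\dot B^{\sigma}_{p,\cdot}$, is not a standard identity: Lions--Peetre requires the same spatial space at both ends and a second interpolation index equal to the interpolated time exponent, and neither condition holds here. The concrete realization one would naturally attempt instead --- Besov convexity in $x$ pointwise in $t$, then H\"older and the power law in Lorentz time --- fails by a third-index mismatch: with $\theta=q/s$ the power law gives $\big\|\,\|\nabla^2u\|_{L^p}^{\theta}\big\|_{L^{s,r}_t}=\|\nabla^2u\|^{\theta}_{L^{q,\theta r}_tL^p}$, and the available $L^{q,r}_tL^p$ norm does not control $L^{q,\theta r}_tL^p$ since $\theta r<r$. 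A working argument (essentially the one behind the cited result) is to write $u(t)=e^{t\Delta}u(0)+\int_0^t e^{(t-\tau)\Delta}\big(\partial_\tau u-\Delta u\big)\,d\tau$, estimate the free part as you did for $u_H$ but in $L^m$, using the monotonicity of $\|e^{t\Delta}u(0)\|_{L^m}$ together with the embedding $\dot B^{-1+\frac2p}_{p,r}\hookrightarrow\dot B^{-\frac2s}_{m,r}$ (valid under the compatibility $2-\frac2q=-1+\frac2p$ you identified), and to treat the Duhamel part with the heat decay $\|e^{\tau\Delta}\|_{L^p\to L^m}\lesssim \tau^{-\frac d2(\frac1p-\frac1m)}$ combined with O'Neil's convolution inequality $L^{q,r}\ast L^{1/\beta,\infty}\hookrightarrow L^{s,r}$ in the time variable; eliminating $\beta=\frac d2(\frac1p-\frac1m)$ recovers exactly $\frac d{2m}+\frac1s=\frac d{2p}+\frac1q-1$, and the hypothesis $\frac2q+\frac dp>2$ is what makes an admissible pair $(s,m)$ exist --- the same role you assign to $\theta\in(0,1)$.
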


 \end{appendix}  
\bibliographystyle{plain}
\bibliography{Navier_Stokes_Maxwell}

\end{document}